\documentclass[11pt,letterpaper]{amsart}

\usepackage{mathrsfs,amsfonts,amsmath,amssymb}
\usepackage{latexsym}
\usepackage{comment} 

\usepackage{footnote} 
\usepackage[symbol*]{footmisc}

\newtheorem{satz}{Theorem}
\newtheorem{proposition}[satz]{Proposition}
\newtheorem{theorem}[satz]{Theorem}
\newtheorem{lemma}[satz]{Lemma}
\newtheorem{claim}[satz]{Claim}
\newtheorem{definition}[satz]{Definition}
\newtheorem{corollary}[satz]{Corollary}
\newtheorem{remark}[satz]{Remark}

\newtheorem{problem}[satz]{Problem}
\newtheorem{assumption}[satz]{Assumption}

\def\Z{\mathbb {Z}}
\def\F{\mathbb {F}}
\def\E{\mathsf{E}}

\def\a{\alpha}

\def\d{\delta}
\def\o{\omega}
\def\({\big (}
\def\){\big )}

\def\G{\Gamma}

\def\le{\leqslant}
\def\ge{\geqslant}
\def\_phi{\varphi}
\def\eps{\varepsilon}

\def\Gr{{\mathbf G}}
\def\FF{\widehat}

\def\ov{\overline}

\def\t{\tilde}

\def\la{\lambda}
\def\D{\Delta}

\def\SL{{\rm SL}}

\newcommand{\zk}{\mathcal{M}}

\newcommand{\bp}{\bigskip}

\author{I.D. Shkredov}
\title{On some results of Korobov and Larcher and Zaremba's conjecture}
\date{}
\begin{document}
	\maketitle  


\begin{center}
	Annotation.
\end{center}

{\it \small
We prove, in particular,  the well--known Zaremba conjecture from the theory of continued fractions for any 
prime denominator.
%
%
More precisely, we show, firstly, that under some mild conditions, for any sufficiently large $q$, 
there exists    
$a$  coprime to $q$ such that all  partial quotients of $a/q$ are bounded by 
$O(\sqrt{\log q})$, and,  
moreover we find asymptotically tight lower bound for the number of such $a$. 
Secondly, 
we obtain 
a good 
lower bound for the number $a$  
 such that the sum of all partial quotients of $a/q$ is bounded by $O(\log q \cdot \sqrt{\log \log q})$.
This, accordingly, improves on some results of Korobov and Larcher.
Finally, we show that for all sufficiently large $\mathcal{M}$ there are $\Omega(q^{1-O(1/\mathcal{M})})$ numbers  $a$  coprime to $q$ such that all  partial quotients of $a/q$ are bounded by $\mathcal{M}$. 
}
\\

\section{Introduction}

Let $a$ and $q$ be two positive coprime integers, $0<a<q$. 
By the Euclidean algorithm, a rational $a/q$ can be uniquely represented as a regular continued fraction
\begin{equation}\label{exe}
\frac{a}{q}=[0;c_1,\dots,c_s]=
\cfrac{1}{c_1 +\cfrac{1}{c_2 +\cfrac{1}{c_3+\cdots +\cfrac{1}{c_s}}}}
\,,\qquad c_s \ge 2.
\end{equation}
Assuming $q$ is known, we use $c_j(a)$, $j=1,\ldots,s=s(a)$ to denote the partial quotients of $a/q$; that is,
\begin{equation}\label{def:aq} 
    \frac aq := [ 0; c_1(a),\ldots,c_{s}(a)].
\end{equation}
Also, for any $a$ put 
\begin{equation}\label{def:M(a)} 
    M(a) := \max \{ c_1(a),\ldots,c_{s}(a) \} \,.
\end{equation}

Zaremba's famous conjecture \cite[Page 76]{zaremba1972methode} posits that there is an absolute constant $\zk$ with the following property:
for any positive integer $q$ there exists $a$ coprime to  $q$ such that in the continued fraction expansion (\ref{exe}) all partial quotients are bounded:
\[
c_j (a) \le \zk,\,\, \quad \quad 1\le j  \le s = s(a).
\]
    In other words, there is $a$ coprime to $q$ such that $M(a) \le \zk$. 
Similar questions were considered by Korobov in sixties, see \cite{Korobov_Zaremba} and \cite{Korobov_book}.
In fact, Zaremba conjectured that $\zk=5$.
For large prime $q$, even $ \zk=2$ should be enough, as conjectured by Hensley \cite{hensley_SL2}, \cite{hensley1996}.
This topic is quite  popular, especially recently; see, for example, 
papers 	
\cite{bourgain2011zarembas}--\cite{CP_linear}, 
\cite{chang2011partial}, \cite{hensley1989distribution}--\cite{hensley1996}, 
\cite{KanIV}, 
\cite{Kan_3thms}, 
\cite{Mosh_A+B}, 
\cite{MS_Zaremba_mod}, \cite{Niederreiter_dyadic},  \cite{s_Chevalley}, \cite{Shulga_Zaremba} and many others. 
The history of the question can be found, e.g.,  in \cite{Kontorovich_survey}, \cite{Mosh_survey}, \cite{MMS_popular}. 
We just notice here a remarkable progress of Bourgain  and Kontorovich \cite{bourgain2011zarembas}, \cite{BK_Zaremba} who proved Zaremba's conjecture for ``almost all'' 
denominators 
$q$. 
As for an arbitrary {\it fixed} $q$, using some exponential sums,  Korobov \cite{Korobov_Zaremba} 
proved  in 1960 
that for any  prime  $q$  there is  $a$, 
$1\le a<q$
such that 
\begin{equation}\label{f:Korobov_log}
    M(a) \ll \log q \,.
\end{equation}
The same result takes place for composite $q$, see  \cite{Ruk}.
Our new Theorem \ref{t:Zaremba} below 
gives us an immediate consequence in the classic case $q=p$, started by Korobov,  
\cite{Korobov_Zaremba} and \cite{Korobov_book}$^*$.\footnote{$^*$Korobov pondered prime denominators until his death \cite{PC1}.}

\begin{corollary}
    There is an absolute constant $\mathcal{M}\ge 2$ such that for any prime (or square--free) number $p$ there exists $a$, $(a,p)=1$ such that 
\begin{equation}\label{f:Zaremba_primes}
    \frac{a}{p} = [0;c_1,\dots,c_s] \,, \quad \quad c_j \le \mathcal{M} \,, \quad \quad   j= 1,\dots, s\,.
\end{equation}
\label{cor:Zaremba}
\end{corollary}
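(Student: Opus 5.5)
The plan is to realize $a/p$ as a product of continuation matrices and count solutions modulo $p$. Recall that $a/q=[0;c_1,\dots,c_s]$ exactly when $q$ (and $a$) occur as entries of
\[
\begin{pmatrix} 0&1\\1&c_1\end{pmatrix}\cdots\begin{pmatrix} 0&1\\1&c_s\end{pmatrix}\in \SL_2(\Z)\ \text{(up to sign)}.
\]
Let $A_\zk\subset \SL_2(\Z)$ (or the semigroup $\Gr$) be the set of such products with all $c_j\le\zk$. Reducing modulo $p$, it suffices to find a product $g\in A_\zk$, $g=\begin{pmatrix} *&*\\ *&q'\end{pmatrix}$, whose lower right entry $q'$ equals $p$ \emph{as an integer}. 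Working only modulo $p$ is not enough: $q'\equiv 0 \pmod p$ allows $q'=kp$. So I would use the standard device (as in Moshchevitin--Shkredov and the paper's Theorem~\ref{t:Zaremba}): count continued fractions $a/q$ with $q$ in a dyadic range of size about $p$ and with bounded quotients. By Hensley, the number of $a/q$ with $q\le N$ and $M(a)\le\zk$ is $\asymp N^{2\delta_\zk}$, where the Hausdorff dimension satisfies $\delta_\zk=1-O(1/\zk)$.

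The key step is to show equidistribution of the reductions of these matrices in $\SL_2(\F_p)$. I take $\Omega\subset A_\zk$, the matrices of norm $\le p^{1+\eps}$; then $|\Omega|\ge p^{2+2\eps-O(1/\zk)}$, which exceeds $|\SL_2(\F_p)|^{2/3+\kappa}$ once $\zk$ is large. Writing elements as products $g=g_1g_2g_3$ of three independent pieces of comparable size, I apply the growth/non-abelian Fourier (Gowers quasirandomness) bound for $\SL_2(\F_p)$: every nontrivial irreducible representation has dimension $\ge (p-1)/2$. This gives that any set $X$ with $|X|\gg p^{3-\delta}$, $\delta<1$, satisfies $XXX=\SL_2(\F_p)$ robustly, with the count of representations of any target close to $|X|^3/|\SL_2(\F_p)|$. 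Then I fix the target set $T=\{g: g_{22}\equiv 0 \pmod p\}$, of size about $p^2$, and obtain many $g\in\Omega^3$ with $q(g)\equiv 0\pmod p$.

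To pass from $q\equiv 0$ to $q=p$, I exploit that the denominators in $\Omega^3$ are below $p^{3+3\eps}$. I then need a further constraint picking multiples $kp$ with $k=1$, or else I reduce the problem: choose the sizes so that $q(g)\in[p/2,p]$ for the product, i.e. take $\Omega$ of norm about $p^{1/3}$. But then $|\Omega|$ is only about $p^{2/3\cdot(1-O(1/\zk))}$, too small for quasirandomness.

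This is the main obstacle. The plan for it is to instead use the paper's main Theorem~\ref{t:Zaremba}, which I assume gives a count of $a$ coprime to $q$ with $M(a)\le \zk$, valid for square-free $q$ (or $q$ with a large prime factor), via $\SL_2(\F_p)$ expansion plus a sieve/Chinese remainder argument for square-free moduli. Granting that statement, the corollary follows by fixing $\zk$ large enough that its lower bound $q^{1-O(1/\zk)}$ is positive for all large $p$. Small $p$ are handled by enlarging $\zk$ to cover finitely many cases, for instance $\zk\ge p$ trivially works with $a=1$.
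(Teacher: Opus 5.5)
Your final step is the paper's own deduction. Primes and square-free numbers beyond an absolute threshold lie in $\mathcal{Z}$, so Theorem~\ref{t:Zaremba} with a fixed $M\ge\mathcal{M}$ gives $a$ coprime to $p$ with all $c_j\le 100M$; the finitely many smaller moduli are absorbed by enlarging the constant, e.g.\ via $1/p=[0;p]$.

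The matrix-product detour you abandon is not needed. As stated, its quasirandomness threshold is also too optimistic: Gowers' criterion $|X|^3>|\SL_2(\F_p)|^3/m$ with $m\approx p/2$ requires $|X|\gg p^{8/3}$, not just $|X|\gg p^{3-\delta}$ for some $\delta<1$.
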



Before discussing the proofs of Corollary \ref{cor:Zaremba} and Theorem \ref{t:Zaremba}, 
let us say a few words about the motivation for such questions.
Zaremba's conjecture is connected with some problems of  numerical integration. 
First we state the fundamental result of 1961 due to Koksma and Hlawka (see \cite{Koksma}, \cite{Hlawka} or \cite[Chap. 2]{KN_book}).

	\begin{theorem}
		Let $f: [0,1]^d \to \mathbb{R}$  be a function of bounded variation $\mathrm{V}(f)$ and  $X \subseteq [0,1]^d$ be a finite set. Then 
	\begin{equation}\label{f:VD}
		\left| \int_{[0,1]^d} f(u)\,du - \frac{1}{|X|} \sum_{x\in X} f(x) \right| \le \mathrm{V}(f) \cdot \mathrm{Disc}(X) \,,
	\end{equation}
		where
	\begin{equation}\label{f:Disc}
		\mathrm{Disc}(X)  := \sup_{R = \prod_{i=1}^d [a_i,b_i] } \left| \frac{|X\cap R|}{|X|}- \mathrm{Vol} (R) \right| \,.
	\end{equation}
	\end{theorem}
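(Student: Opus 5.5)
The plan is to reduce to one variable and then induct on dimension. Here $\mathrm{V}(f)$ is the Hardy--Krause variation, and I use the fact that the supremum in (\ref{f:Disc}) can be restricted to anchored boxes $[0,u)=\prod_{i}[0,u_i)$. Strictly, this only holds up to a factor $2^d$: the anchored (star) discrepancy $D^*(X)$ satisfies $D^*(X)\le \mathrm{Disc}(X)$. Since what I actually bound is $\mathrm{V}(f)\, D^*(X)$, the stated inequality follows.

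\textbf{Step 1: the error as a Stieltjes integral.} Write $\nu = \frac1{|X|}\sum_{x\in X}\delta_x - \lambda$, where $\lambda$ is Lebesgue measure, and put
\[
g(u) := \nu([0,u)) = \frac{|X\cap[0,u)|}{|X|} - \mathrm{Vol}([0,u)) .
\]
Then $|g(u)|\le D^*(X)$ for every $u$, and the left-hand side of (\ref{f:VD}) equals $\bigl|\int f\,d\nu\bigr|$. The case $d=1$ then follows from integration by parts:
\[
\int_{[0,1]} f\,d\nu = f(1)\,g(1) - \int_{[0,1]} g(u)\,df(u) .
\]
Here $g(1)=0$ (up to the convention at the endpoint $1$, which is handled by using boxes closed at $1$). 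Hence $\bigl|\int f\,d\nu\bigr|\le \mathrm{V}(f)\sup|g|$.

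\textbf{Step 2: several dimensions.} For $d\ge 2$ I would use the Zaremba--Hlawka identity, a multidimensional integration by parts. It expands $\int f\,d\nu$ as a signed sum over nonempty subsets $\emptyset\ne S\subseteq\{1,\dots,d\}$. The term for $S$ is the integral of the projected local discrepancy $g$, evaluated with the coordinates outside $S$ set to $1$, against the mixed measure $d_S f$ of the restriction of $f$ to the face $\{u_j=1,\ j\notin S\}$. The constant term vanishes because $\nu$ has total mass $0$. Each term is then bounded by $\sup|g|\cdot \mathrm{V}^{(|S|)}(f|_{\text{face}})$ in the Vitali sense. Summing these Vitali variations over $S$ gives exactly the Hardy--Krause variation $\mathrm{V}(f)$, which yields (\ref{f:VD}).

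\textbf{Main obstacle.} The difficulty is justifying the multidimensional integration by parts for an $f$ that is merely of bounded Hardy--Krause variation. For $f$ with continuous mixed partials $\partial_S f$, the identity follows from Fubini and the fundamental theorem of calculus applied coordinatewise. For general $f$ I would approximate by smooth functions whose variation converges, or else represent $f(u)=f(1)+\sum_S \pm\mu_S([u,1]\text{-face})$ through signed measures $\mu_S$ whose total variations are the Vitali variations. Integrating this representation against $\nu$ and applying Fubini gives the identity directly.
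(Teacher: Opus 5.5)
The paper does not prove this statement; it quotes it from Koksma, Hlawka and Kuipers--Niederreiter. Your outline is the classical argument: Zaremba--Hlawka summation by parts, the face Vitali variations adding up to the Hardy--Krause variation, and $D^*(X)\le\mathrm{Disc}(X)$. For $f$ with continuous mixed partials $\partial_S f$ it is complete.

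\textbf{The gap.} The gap is the step you flag yourself, and neither remedy you propose closes it. The left-hand side of \eqref{f:VD} uses the actual values $f(x)$, $x\in X$, and a function of bounded variation may be discontinuous exactly there.
\begin{itemize}
\item Mollified approximants converge only at continuity points. For $f=1_{\{x_0\}}$ with $x_0\in X$ they are identically $0$, so the point sum vanishes in the limit. You would need approximants converging to $f$ at every point of $X$ and in $L^1$, with $\limsup_k \mathrm{V}(f_k)\le \mathrm{V}(f)$, and you do not construct them.
\item The representation $f(u)=f(1)+\sum_S\pm\mu_S(\cdot)$ exists only for $f$ continuous along coordinatewise increasing sequences, since each $u\mapsto\mu_S([u,1])$ is. The function $1_{\{x_0\}}$ is not.
\item The same problem already breaks Step~1 as written. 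Take $X=\{x_0\}$, $0<x_0<1$, $f=1_{(x_0,1]}$, so $df=\delta_{x_0}$. Then $\int f\,d\nu=x_0-1$, but $-\int g\,df=-g(x_0)=x_0$.
\end{itemize}
Your Step~1 formula holds for right-continuous $f$. In general, a jump of $f$ at $x\in X$ must be paired as $(f(x)-f(x-))\,g(x)+(f(x+)-f(x))\,g(x+)$, where $g(x+)=\nu([0,x])$.

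\textbf{A repair valid for every $f$ of bounded Hardy--Krause variation.} Do the summation by parts on a finite grid.
\begin{itemize}
\item Let $G=G_1\times\dots\times G_d$ with $0,1\in G_i$ and $G_i$ containing the $i$-th coordinates of all points of $X$. Let $\lambda_G$ put mass $\mathrm{Vol}(J)$ at the upper corner of each grid cell $J$.
\item Then $\nu_G=\frac{1}{|X|}\sum_{x\in X}\delta_x-\lambda_G$ lives on grid points and has total mass $0$.
\item For every grid point $b$, $\nu_G([0,b])=\frac{|X\cap[0,b]|}{|X|}-\mathrm{Vol}([0,b])$. This is a closed box, so $|\nu_G([0,b])|\le\mathrm{Disc}(X)$ directly.
\end{itemize}
Iterating one-dimensional Abel summation gives the exact identity
\[
\int f\,d\nu_G=\sum_{\emptyset\ne S\subseteq[d]}(-1)^{|S|}\sum_{J}\nu_G([0,\underline{J}])\,\Delta_S(f;J).
\]
Here $J$ runs over the grid cells of the face $\{u_j=1,\ j\notin S\}$, $\underline{J}$ is the lower corner of $J$, and $\Delta_S(f;J)$ is the alternating sum of $f$ over the vertices of $J$. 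These increments are exactly the quantities whose absolute sums define the Vitali variations. Hence, for every such grid,
\[
\Bigl|\frac{1}{|X|}\sum_{x\in X}f(x)-\int f\,d\lambda_G\Bigr|\le\mathrm{V}(f)\,\mathrm{Disc}(X).
\]
Now $\int f\,d\lambda_G$ is a Riemann sum, and $f$ is Riemann integrable as a difference of coordinatewise nondecreasing functions. Refining the grid therefore gives \eqref{f:VD}. This needs no smoothness, no continuity at points of $X$, and no passage from half-open to closed boxes.
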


We call $\mathrm{Disc}(X)$ in \eqref{f:Disc} the {\it discrepancy}   of a sequence of points $X$. It measures the uniform distribution of $X$ in the cube $[0,1]^d$ and, in the light of \eqref{f:VD}, it determines the quality of the approximation of the integral of $f$ by a finite sum over $X$. Thus, the problem reduces to finding sequences $X$ whose discrepancy has as nice an upper bound as possible.


A remarkable theorem due to Schmidt \cite{Schmidt} in the theory of irregularity of distributions gives a lower bound for the discrepancy of the distribution of an {\it arbitrary} finite sequence $X\subseteq [0,1]^2$, namely, $\mathrm{Disc}(X) \gg \frac{\log |X|}{|X|}$.
%
%
%
    Thus, the discrepancy of any sequence cannot be too small.
The Monte Carlo method can be shown to provide only very weak estimates for discrepancy: 
 $\frac{1}{|X|^{1/2+o(1)}} \ll \mathrm{Disc}(X) \ll  \frac{1}{|X|^{1/2-o(1)}}$.
Therefore,  there is a need for efficient construction of sequences $X$ with small $\mathrm{Disc}(X)$.
We content ourselves with the two--dimensional case below.

	In \cite{zaremba1966} Zaremba (and before that Korobov in \cite{Korobov_optimal_I, Korobov_optimal_II})  proposed to consider a discrete winding of a two dimensional torus:
\begin{equation}\label{def:X(a,q)}
	X = X(a,q) = \left\{ \left( \frac{j}{q}, \frac{aj}{q} \right) \right\}_{j=1}^{q} 
	\subseteq [0,1]^2 \,. 
\end{equation}
    Here $a$ and $q$ are, as above, two coprime positive integers such that $0 < a < q$. For $X(a,q)$ the discrepancy can be estimated in terms of the continued fraction expansion of $a/q$ (see \cite[Corollary 5.2]{zaremba1966}).

	\begin{theorem}
		Let $a,q$ be positive coprime integers, $\frac{a}{q} = [0; c_1,\dots,c_s]$
		and let  
        $M=M(a)$. 
		Then 
	\begin{equation}\label{f:Zaremba_D}
		\mathrm{Disc}(X (a,q)) \le \left( \frac{4M}{\log (M+1)} + \frac{4M+1}{\log q} \right) \frac{\log q}{q} \,.
	\end{equation}
    \label{t:Zaremba_D}
	\end{theorem}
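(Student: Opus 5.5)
We prove Theorem \ref{t:Zaremba_D} by reducing the discrepancy of $X(a,q)$ to a one-dimensional counting problem and then controlling that count through the denominators of the convergents of $a/q$.

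\textbf{Reduction to anchored boxes.} It suffices to bound, up to a factor $4$, the local discrepancy of boxes $[0,\a)\times[0,\beta)$, since every box is a signed combination of at most four such anchored boxes. For a fixed anchored box, the count $|X\cap R|$ is the number of $j\le \a q$ with $\{aj/q\}<\beta$. Rescaling time, this is the discrepancy of the first $N\approx \a q$ terms of the sequence $\{j a/q\}$ against the interval $[0,\beta)$, plus an error of $O(1/q)$ from rounding $\a q$ and from the fact that the points lie on the grid $\frac1q\Z^2$. So the main task is to bound
\[
D_N:=\sup_{\beta}\Bigl|\#\{1\le j\le N:\ \{ja/q\}<\beta\}-N\beta\Bigr| .
\]

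\textbf{Ostrowski decomposition.} Let $p_k/q_k$ be the convergents of $a/q$, so that $q_s=q$ and $q_{k+1}=c_{k+1}q_k+q_{k-1}$. Write $N$ in the Ostrowski numeration
\[
N=\sum_k b_k q_k, \qquad 0\le b_k\le c_{k+1},
\]
with the usual admissibility rule: $b_k=c_{k+1}$ forces $b_{k-1}=0$. This splits $\{1,\dots,N\}$ into consecutive blocks of length $q_k$, with $b_k$ blocks of each length. Within one block of length $q_k$, the points $\{ja/q\}$ are a translate of $\{jp_k/q_k\}$ perturbed by at most $|a/q-p_k/q_k|\cdot q_k<1/q_{k+1}$ per step, i.e. by $O(1/q_k)$ in total. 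Hence each block has discrepancy at most an absolute constant; the classical value is $\le 1$, or $2$ allowing for the perturbation. Summing over blocks gives
\[
D_N\le C\sum_{k} b_k \le C\sum_{k} \min(c_{k+1},\,\cdot\,).
\]

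\textbf{Optimisation, the main obstacle.} The crude sum $\sum_k c_{k+1}\le Ms$ is too weak. The stated form needs a sharper count, and this step is the delicate one. Since $q_k\ge$ the $k$th continuant, the growth $q_{k+1}\ge (c_{k+1}+1)q_k/ \ldots$ gives
\[
\prod_{k}(c_{k+1}+1)\ll q \quad\text{up to a factor } 2^{s}.
\]
I would maximise $\sum_k b_k$ subject to $b_k\le c_{k+1}\le M$ and $\sum_k\log(c_{k+1}+1)\le \log q+O(1)$. Because $x/\log(x+1)$ is increasing, the extremal configuration has all $c_{k+1}=M$. This yields
\[
\sum_k b_k\le \frac{M}{\log(M+1)}\log q + (M+1),
\]
where the additive term comes from the last, incomplete block. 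Tracking constants carefully through the Ostrowski admissibility rule (which roughly halves the bound) and the factor $4$ from the reduction gives
\[
q\,\mathrm{Disc}\le \frac{4M}{\log(M+1)}\log q+4M+1,
\]
which is \eqref{f:Zaremba_D}.

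The step I expect to be fiddly is getting exactly the constants $4$ and $4M+1$. This depends on the precise per-block bound and on how the boundary blocks are handled. If necessary I would accept a slightly worse absolute constant and cite \cite[Corollary 5.2]{zaremba1966} for the sharp form.
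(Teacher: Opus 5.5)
The paper does not prove this statement; it quotes it from \cite[Corollary 5.2]{zaremba1966}, so your argument has to stand on its own. Your first two steps are sound in substance. The reduction to anchored boxes is correct, though lossy. The per-block bound $2$ holds because a block of length $q_k$ is a translate of the full grid $\frac{1}{q_k}\mathbb{Z}/\mathbb{Z}$, with all points displaced in the same direction by at most $1/q_{k+1}$.

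The optimisation step, which carries the whole content of the theorem, is wrong. The constraint $\sum_k\log(c_{k+1}+1)\le\log q+O(1)$ is false. Continuants satisfy $\prod_i c_i\le q\le\prod_i(c_i+1)$, i.e.\ $q_{k+1}\le(c_{k+1}+1)q_k$, so your inequality points the wrong way and fails by an amount linear in $s$. For $c_i\equiv 1$ one has $\sum_i\log(c_i+1)=s\log 2$ while $\log q\sim s\log\phi$, with $\phi=\frac{1+\sqrt5}{2}$. For $c_i\equiv M$ one has $q\approx\lambda^s$ with $\lambda=\frac{M+\sqrt{M^2+4}}{2}<M+1$.

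The correct constraint $\sum_i\log c_i\le\log q$ puts no price on digits equal to $1$. So the claim that the extremal configuration is $c_{k+1}\equiv M$ is unfounded. In fact, using only $b_k\le c_{k+1}$ as you do, the bound you derive for $\sum_k b_k$ is false. For $c_i\equiv M$ one has $\sum_k c_{k+1}=Ms$, whereas $\frac{M}{\log(M+1)}\log q\approx Ms\,\frac{\log\lambda}{\log(M+1)}$ is smaller by $\gg s/\log(M+1)$, which is unbounded.

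Admissibility does not ``roughly halve'' anything either: for $c_i\equiv M$ the admissible maximum of $\sum_k b_k$ is $(M-1)s+O(M)$, far from half of $Ms$ once $M\ge 3$. So even a correct sharp digit count, multiplied by your factors $4$ (anchoring) and $2$ (per block), would only give about $\frac{8M}{\log(M+1)}\log q$. The constants $4$ and $4M+1$ are never obtained, and falling back on a worse constant or on the citation does not prove the stated inequality.

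The factor you cannot afford is the one from anchoring; once it is removed, no sharp optimisation is needed. For a box $[\alpha_1,\alpha_2]\times[\beta_1,\beta_2]$, the relevant indices $j$ form a run of $L$ consecutive integers with $|L-(\alpha_2-\alpha_1)q|\le 1$. Apply the Ostrowski splitting to this run directly, and your block argument to $[\beta_1,\beta_2]$ itself, viewed as an arc of $\mathbb{R}/\mathbb{Z}$. In each block the unperturbed grid count differs from $q_k(\beta_2-\beta_1)$ by at most $1$. The monotone displacement is less than $1/q_k$ when $q_{k+1}>q_k$ (a block with $q_{k+1}=q_k$ has length $1$), so at most one point enters and at most one leaves. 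Each block therefore contributes at most $2$, giving $q\,\mathrm{Disc}(X(a,q))\le 2\sum_{i=1}^{s} c_i+1$.

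The crude count then suffices. We have $\sum_i c_i=s+\sum_i(c_i-1)\le 1+\frac{\log q}{\log\phi}+\frac{M-1}{\log M}\log q$, using $q\ge\phi^{s-1}$, $q\ge\prod_i c_i$ and the monotonicity of $\frac{c-1}{\log c}$. Moreover $\frac{1}{\log\phi}+\frac{M-1}{\log M}\le\frac{2M}{\log(M+1)}$ for every integer $M\ge 2$. For $M=2$ check this numerically. For $M\ge 3$ use $\frac{M-1}{\log M}\le\frac{M}{\log(M+1)}$ and $\frac{1}{\log\phi}<\frac{3}{\log 4}\le\frac{M}{\log(M+1)}$. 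This gives $q\,\mathrm{Disc}\le\frac{4M}{\log(M+1)}\log q+3$, which implies the theorem.
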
 
It was later shown that the discrepancy in \eqref{f:Zaremba_D} can be estimated using a more flexible quantity, namely, the {\it sum} of the partial quotients 
\begin{equation}\label{def:S(a)}
    S(a) := 
    \sum_{j=1}^s c_j (a) \,, 
\end{equation}
see 
\cite{Larcher} and  also \cite{KN_book}, \cite{Larcher_survey}. 
Moreover, in terms of the 
sum 
$S(a)$, Larcher  \cite{Larcher}  found a series of further applications for evaluating  the discrepancies of other sequences. For example, if $q$ is a prime number and $g$ is  a primitive root modulo $q$, then the  discrepancies 
\begin{equation}\label{f:Larcher_disc}
    \mathrm{Disc} \left\{ \frac{jg}{q} \right\}_{j=1}^{q} \,,
        \quad \quad 
    \mathrm{Disc} \left\{ \left( \frac{j}{q}, \frac{gj}{q} \right) \right\}_{j=1}^{q} \,,
     \quad \quad 
    \mathrm{Disc} \left\{ \left( \frac{g^j}{q}, \frac{g^{j+1}}{q} \right) \right\}_{j=1}^{q} 
\end{equation}
are all bounded by $O(S(g)/q)$. 
The corresponding conjecture on $S(a)$, namely, 
\begin{equation}\label{conj:Moser}
    \min_{a~:~(a,q)=1}\,  S(a) = O(\log q)
\end{equation}
is attributed  to Moser (see \cite{Larcher_survey}). 
Of course, Zaremba's conjecture implies \eqref{conj:Moser}. 
The first result concerning 
the minimum in \eqref{conj:Moser} 
belongs to Larcher, see  \cite[Corollary 3]{Larcher}. 

\begin{theorem}
    Let $q$ be a  positive integer.
    Then there is a positive integer $a$, $(a,q)=1$ such that for 
\[
    \frac{a}{q} = [0;c_1,\dots,c_s] 
\]
    one has 
\begin{equation}\label{f:Larcher_old}
    \frac{1}{\log q} \sum_{j=1}^s c_j \ll 
    \frac{q}{\_phi (q)} \cdot \log \log q \,.
\end{equation}
\label{t:Larcher_old}
\end{theorem}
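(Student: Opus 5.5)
We prove Theorem \ref{t:Larcher_old} by averaging: we bound the mean of $S(a)$ over all $a$ coprime to $q$, and then pick an $a$ whose $S(a)$ does not exceed that mean. The starting point is a standard divisor description of the partial quotients. Running the Euclidean algorithm on $a/q$ produces a sequence of remainders. Each partial quotient $c_j$ equals the floor of the ratio of two consecutive remainders.

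For the counting, the cleanest route is the classical lattice parametrization. For a fixed $q$, the pairs $(a, j)$ with $c_j(a) \ge t$ correspond, up to a bounded factor, to solutions of
\[
a m \equiv \pm r \pmod q, \qquad |r| \le \frac{m}{t}\,, \qquad 1\le m\le q \,.
\]
Here $m$ runs over the denominators of convergents and $r$ over the corresponding small residues. In words: $c_{j+1}\ge t$ means that the convergent denominator $q_j$ satisfies $\|a q_j/q\| \le 1/(t q_j)$, roughly. Hence
\[
S(a) \ll s(a) + \sum_{m\le q} \ \sum_{1\le |r|\le q/m} \frac{q}{|r| m}\, \mathbf{1}\{a m\equiv r \pmod q\} \,,
\]
where we use that $c_{j+1}\asymp q/(q_j r_j)$ with $r_j=|a q_j \bmod q|$. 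Since $s(a)\ll \log q$, only the double sum matters.

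Next we sum this over $a$ with $(a,q)=1$. Fix $m$ and $r$ and write $d=(m,q)$. The congruence $am\equiv r \pmod q$ has solutions $a$ coprime to $q$ only when $(r,q)=d$, and then it has at most $d$ of them. This gives
\[
\sum_{(a,q)=1} S(a) \ll \phi(q)\log q + \sum_{d\mid q} d \sum_{\substack{m\le q\\ d\mid m}} \ \sum_{\substack{|r|\le q/m\\ d\mid r}} \frac{q}{|r|m} \,.
\]
Substitute $m=dm'$ and $r=dr'$. The terms with $dm'\, d r' \le q$ then contribute
\[
\sum_{d\mid q} \frac{q}{d} \sum_{m' r'\le q/d^2} \frac{1}{m' r'} \ll q \sum_{d\mid q}\frac{\log^2 q}{d} \,.
\]
This is too crude, so the main obstacle is to sharpen the counting. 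We need two things. First, for a fixed $a$, the $j$ with $c_j$ large are sparse. Second, the sum over $r$ should really be weighted by $q/(rm)$ only when $rm\le q$, in the precise form $\sum_{rm\le q} q/(rm)\cdot(\#a)/q$.

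Done carefully, using that $\sum_{n\le x}\tau(n)/n\asymp \log^2 x$ together with the restriction that each $a$ admits $O(\log q)$ pairs, the averaged bound becomes
\[
\frac1{\phi(q)}\sum_{(a,q)=1} S(a) \ll \log^2 q \,.
\]
That is still too weak. The fix, which I expect to be the key step, is not to average $S(a)$ itself. Instead we count the $a$ having some $c_j > T$. The number of $a$ with a partial quotient exceeding $T$ is
\[
\ll \sum_{d\mid q} d\cdot \#\{(m',r') : m'r'\le q/(d^2T)\} \ll \frac{q\log q}{T}\sum_{d\mid q}\frac1d \ll \frac{q\log q}{T}\cdot\frac{q}{\phi(q)}\,.
\]
Choose $T = C\log q\cdot q/\phi(q)$ with $C$ large. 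Then at least half of the $a$ coprime to $q$ satisfy $M(a)\le T$.

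Among these $a$ we average the truncated sum $S_T(a)=\sum_j c_j\mathbf{1}\{c_j\le T\}$. The contribution of $c_j\in(2^k,2^{k+1}]$ is at most $2^{k+1}$ times the number of such events. By the same count, the average of that number is $\ll \frac{\log q}{2^k}\cdot\frac{q}{\phi(q)}$. Summing over the $O(\log T)=O(\log\log q)$ dyadic ranges $2^k\le T$ gives an average of
\[
\ll \log q\cdot\frac{q}{\phi(q)}\log\log q
\]
over the good $a$. Picking an $a$ no worse than this average yields \eqref{f:Larcher_old}, with the $q/\phi(q)$ factor coming from $\sum_{d\mid q}1/d$. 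The delicate point throughout is the divisor-weighted lattice count with its dependence on $d=(m,q)$, which is what produces the factor $q/\phi(q)$.
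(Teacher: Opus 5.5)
The paper itself only cites \cite{Larcher} for this statement; its own use of the method in Subsection \ref{ssec:Larcher} has the same shape as your final argument. That architecture is the right one: discard the $a$ with some $c_j>T$, then average the dyadically truncated sum over the surviving reduced residues, so that each of the $O(\log T)=O(\log\log q)$ dyadic ranges contributes $O(\log q)$.

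\textbf{The gap.} The arithmetic bookkeeping, which is the entire content of the factor $q/\varphi(q)$, fails as written. After substituting $m=dm'$, $r=dr'$, your count of pairs $(a,j)$ with $(a,q)=1$ and $c_j(a)>X$ is $\ll \frac{q\log q}{X}\sum_{d\mid q}\frac1d$. The factor $\sum_{d\mid q}1/d=\sigma(q)/q$ is always of order $q/\varphi(q)$, since $\sigma(q)/q\ge\frac{6}{\pi^2}\,q/\varphi(q)$. You then divide by the number of admissible $a$, which is of order $\varphi(q)$, not $q$, and this costs a second factor $q/\varphi(q)$. Concretely:
\begin{enumerate}
\item With $T=C\log q\cdot q/\varphi(q)$, your bound for the number of bad $a$ is $\ll q/C$. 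This is not $\le\varphi(q)/2$ for bounded $C$.
\item Your count only gives that the average over the good $a$ of $\#\{j: c_j(a)>2^k\}$ is $\ll \frac{\log q}{2^k}\cdot\frac{\sigma(q)}{\varphi(q)}$. This is of order $\frac{\log q}{2^k}\bigl(\frac{q}{\varphi(q)}\bigr)^2$, not $\frac{\log q}{2^k}\cdot\frac{q}{\varphi(q)}$.
\end{enumerate}
So what you actually prove is \eqref{f:Larcher_old} with $(q/\varphi(q))^2$ in place of $q/\varphi(q)$. Your attribution of the factor to $\sum_{d\mid q}1/d$ is mistaken.

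\textbf{The fix.} The loss is in the substitution itself. Bounding the sum over $m$ with $(m,q)=d$ by the sum over all multiples of $d$ counts each $m$ roughly $\tau((m,q))$ times. Instead, fix $m$ with $1\le m<q$ and $(m,q)=d$. The number of residues $a$ with $am\equiv r\pmod q$ for some $0<|r|\le q/(Xm)$ is at most $d\cdot\#\{r\neq 0:\ d\mid r,\ |r|\le q/(Xm)\}\le 2q/(Xm)$, uniformly in $d$. Summing over $m$ gives $\ll q\log q/X$ with no arithmetic factor at all; this is the Korobov-type count that goes with Lemma \ref{l:M_crit}.

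With this count the argument closes as follows:
\begin{enumerate}
\item For $T=C\log q\cdot q/\varphi(q)$ and $C$ large, there are $\ll q\log q/T\le\varphi(q)/2$ bad $a$.
\item Over all reduced $a$ there are $\ll q\log q/2^k$ events $c_j>2^k$. So their average over the at least $\varphi(q)/2$ good $a$ is $\ll\frac{q}{\varphi(q)}\cdot\frac{\log q}{2^k}$.
\item Multiplying by $2^{k+1}$ and summing over $2^k\le T$ gives $\ll\frac{q}{\varphi(q)}\log q\log T$.
\item Finally $\log T\ll\log\log q$, since $q/\varphi(q)\ll\log\log q$.
\end{enumerate}
Thus the single factor $q/\varphi(q)$ comes only from averaging over the $\varphi(q)$ reduced residues.

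A minor point: the condition in your first display should read $|r|\le q/(tm)$, not $|r|\le m/t$.
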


Theorem \ref{t:Larcher_old} was improved by 
Rukavishnikova \cite{Ruk}, who reduced the factor $\frac{q}{\_phi (q)}$ in \eqref{f:Larcher_old}, and further by Aistleitner--Borda--Hauke who, in particular, found the currently best implicit  constants in \eqref{f:Korobov_log} and  \eqref{f:Larcher_old}; see \cite[Corollary 2, Corollary 4]{ABH_Ruk+} for {\it all} $q$.



Using some 
mixture  
of methods: the Bourgain--Gamburd machine  (see \cite{BG}, \cite{BG_p^n}, \cite{sh_BG} and the survey \cite{sh_non_survey}), 
the affine sieve \cite{BGS_affine} 
both based on the Helfgott expansion result \cite{Helfgott_growth}
as well as an exact  structural result from \cite{Mosh_A+B} (see Lemma \ref{l:A+B} below) 
we recently improved Korobov's bound \eqref{f:Korobov_log} in \cite{MMS_Korobov}. 
The most important case when $q$ is a (sufficiently large) prime number and the reader, not wishing to think about the peculiarities  of the growth in $\SL_2 (\Z/q\Z)$ for composite $q$, can assume that $q$ is a prime.

\begin{theorem}
	Let $q$ be a  
	positive integer 
	with sufficiently large prime factors. 
	Then there is a positive integer $a$, $(a,q)=1$ and 
	\begin{equation}\label{f:main_M}
		M= O(\log q/\log \log q)
	\end{equation}
	such that 
	\begin{equation}\label{f:main_expansion}
		\frac{a}{q} = [0;c_1,\dots,c_s] \,, \quad \quad c_j \le M\,, \quad \quad   j= 1,\dots, s\,.
	\end{equation}
	Also, if $q$ is a sufficiently large square--free number, then \eqref{f:main_M}, \eqref{f:main_expansion} take place.\\ 
	Finally, if $q=p^n$, $p$ is an arbitrary prime, then \eqref{f:main_M}, \eqref{f:main_expansion}  hold for sufficiently large $n$. 
	\label{t:main}
\end{theorem}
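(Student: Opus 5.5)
We prove Theorem \ref{t:main} by the strategy of \cite{MMS_Korobov}: encode continued fractions with bounded partial quotients by matrices in $\SL_2(\Z)$, and show that the set $F_M$ of such fractions of a given length, reduced modulo $q$, covers a large portion of $\SL_2(\Z/q\Z)$. Then the bottom-left entry hits $0$ modulo $q$ with top-left entry coprime to $q$.

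\emph{Step 1: Encoding.} For $c\ge 1$ put $E(c)=\begin{pmatrix} 0&1\\1&c\end{pmatrix}$. The product $E(c_1)\cdots E(c_s)$ has entries given by the continuants. In particular, the lower right entry is the denominator $q_s$ of $[0;c_1,\dots,c_s]$, and the upper right entry is its numerator $p_s$. Let $A_M$ be the set of such products of fixed length $s$ with all $c_j\le M$. Its cardinality is $\asymp M^{s}$ up to $\exp(O(s))$ factors, and its entries are of size $\le (M+1)^s$. Hensley's theory of the Hausdorff dimension $w_M=1-\frac{6}{\pi^2 M}+O(\frac{\log M}{M^2})$ of the set of reals with bounded partial quotients tells us more. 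The number of fractions in $[0,1]$ with denominator $\le Q$ and all partial quotients $\le M$ is $\asymp Q^{2w_M}$. Taking $Q\approx q^{1+o(1)}$, this is $q^{2-O(1/M)}$ elements, already comparable to $|\SL_2(\Z/q\Z)|\approx q^3$ to a power close to $2/3$.

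\emph{Step 2: Expansion.} Reduce $A_M$ modulo $q$. Its elements generate $\SL_2$ (or the relevant subgroup; parity issues are handled by $\det E=-1$, passing to even length). The aim is to show that the $k$-fold product set $A_M^{k}$ covers $\SL_2(\Z/q\Z)$ almost uniformly, for bounded $k$, in the sense of an $\ell^2$ flattening. The tool is Helfgott's growth theorem together with the Bourgain--Gamburd machine: a symmetric random walk on $A_M$ has no escape from subgroups and equidistributes in $O(\log q)$ steps with a spectral gap depending on $M$. The key is that the gap and nonconcentration constants are uniform enough in $M$ that we can take $M$ as large as $\log q/\log\log q$. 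Combined with Step 1, this gives a product of $O(1)$ blocks whose total length $L$ satisfies $(M+1)^{L}\ll q^{C}$. I expect this uniformity in $M$, with exact control of the dependence on $M$, to be the main obstacle. For composite $q$ we would replace Helfgott by Bourgain--Gamburd for square-free moduli and by \cite{BG_p^n} for prime powers.

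\emph{Step 3: Hitting the lower-left zero.} Equidistribution on $\SL_2(\Z/q\Z)$ gives products $g\in A_M^{k}$ with $g\equiv \begin{pmatrix} *&*\\ 0&*\end{pmatrix}\pmod q$, the target set having density $\asymp 1/q$. Here the structural Lemma \ref{l:A+B} from \cite{Mosh_A+B} will be used. It converts the existence of such a matrix into a genuine continued fraction $a/q$ whose partial quotients are, after a bounded number of local "gluing" corrections at the block junctions, still bounded by $O(M)$. This is possible because concatenating continued fractions modulo $q$ amounts to multiplying the matrices.

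\emph{Step 4: Balancing parameters.} The counting requires $|A_M|^{k}$ to exceed $q^{3}$ times the error losses. These losses are $\exp(O(L))$ and come from $L$ being large relative to $\log M$. With $L\asymp \log q/\log M$, the losses are $q^{O(1/\log M)}$, while Hensley's exponent deficit $q^{-O(1/M)}$ is negligible. Optimizing over $M$, the obstruction coming from the effective expansion constants (roughly a factor $M^{-L}$ versus $e^{-cL}$) forces $\log M\cdot M\gtrsim \log q$. This yields $M=O(\log q/\log\log q)$ as claimed.
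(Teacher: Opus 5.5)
\paragraph{The gap: Step 3 gives a multiple of $q$, not $q$.}
Equidistribution of your product set modulo $q$ only yields a product $E(c_1)\cdots E(c_s)$ whose chosen entry is \emph{congruent} to $0 \pmod q$. That entry is a continuant equal to $mq$ for some integer $m\ge 1$. What you get is a fraction with bounded partial quotients whose denominator is a multiple of $q$, which is the ``modular'' form of Zaremba's problem, not a fraction $a/q$.

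In your regime nothing forces $m=1$. Near-covering $\SL_2(\Z/q\Z)$ needs $\gg q^3$ distinct products, but only $\asymp X^{2w_M}\le X^2$ products with digits $\le M$ have entries $\le X$. So the entries are at least $q^{3/2}$. A hit with $m=1$ can only occur in the sparse subfamily of products with entries of size $\asymp q$, and equidistribution of the whole set says nothing about that subfamily. Controlling it modulo $q$ is the original problem.

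A bounded number of ``gluing corrections'' cannot shrink a continuant by a power of $q$. Lemma \ref{l:A+B} cannot supply the conversion either: it describes the set $Z_M(t)\subseteq\Z/q\Z$, not matrices modulo $q$.

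Two further steps are unsupported. The uniform-in-$M$ spectral gap for the semigroup generated by $E(c)$, $c\le M$, is flagged by you as the main obstacle but not proved. The balance in Step 4 (``$M^{-L}$ versus $e^{-cL}$'') is not derived from any actual constraint.

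\paragraph{The missing idea: work with numerators in $\Z/q\Z$.}
The paper makes the denominator equal to $q$ by construction. Set $t=q^{1/2-\eps}$, $N=q^{2\eps}$, and $A=Z_M(t)$, the set of $a$ whose partial quotients with convergent denominator $<t$ are $\le M$. By Lemma \ref{l:A+B}, $A$ is a union of intervals of length $\ge N$ with total size about $q^{w_M}N^{1-w_M}$.

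Expansion is used only to show $A\cap A^{-1}\ne\emptyset$ (Lemma \ref{l:T-action}, Corollary \ref{c:A_A*_asymptotic}). The relevant maps are shifts by $[N]$ combined with inversion, with an absolute $\kappa$. So no $M$-uniform spectral gap is needed; the only requirement is $|A|N^{\kappa}\gg q$, i.e.\ $\eps\gg 1/M$.

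For $a\in A\cap A^{-1}$, the reversal formulas \eqref{f:inverse1}, \eqref{f:inverse2} control the end of the expansion of $a/q$. By Lemma \ref{l:M_crit}, a large partial quotient would then require $x,|ax|\ge t$ with $x|ax|<q/(4M)$. This is impossible once $t^2=q/N\ge q/(4M)$, i.e.\ $N\le 4M$.

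The two conditions $q^{2\eps}\le 4M$ and $\eps\gg 1/M$ are compatible exactly when $M\log M\gg\log q$. That is where $M=O(\log q/\log\log q)$ comes from. The condition $(a,q)=1$ is handled by a M\"obius-inversion count (cf.\ Lemma \ref{l:coprimality}).
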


In our first new result, we improve Theorem \ref{t:Larcher_old} and, moreover,  obtain 
a good 
lower bound \eqref{f:Larcher_number} for the number $a$ 
with small 
$S(a)$. 
By $\mathcal{Z}$ denote the collection of all  denominators $q$ satisfying  the conditions of Theorem \ref{t:main}. 
In other words, $q\in \mathcal{Z}$ if one of the following three conditions is satisfied:{\medskip}\\ 
$\bullet$ $q=p^{\a_1}_1 \dots p^{\a_s}_s$, where $p_j$ are sufficiently large primes.\\
$\bullet$ $q=p_1 \dots p_s$, where $p_j$ are primes and $q$ is sufficiently large.\\
$\bullet$ $q= p^n$, where $p$ is any prime and $n$ is a sufficiently large number.
{\medskip}\\ 
In particular,  we assume that $q$ is a sufficiently large number.
Here and everywhere, by ``sufficiently large'' we mean the existence  an absolute constant such that $q$ (or other quantities) is greater than this constant.
Note that the set $\mathcal{Z}$ has positive density (at least $6/\pi^2$, but not density equal to one, as in the ``almost all'' result of Bourgain  and Kontorovich, see \cite{bourgain2011zarembas}, \cite{BK_Zaremba}) and includes all
sufficiently large 
primes. 
Finally, 
the important quantity 
$w_M$ is the the Hausdorff dimension of the set 
$\{ \a = [0; c_1,c_2,\dots] \in [0,1] ~:~ \forall c_j \le M \}$, see  formula  \eqref{f:Hensley_HD} below.

\begin{theorem}
    Let $q\in \mathcal{Z}$ be a positive integer and 
\[
    M = \sqrt{\log \log q} 
    \,.
\]
    Then there are at least 
\begin{equation}\label{f:Larcher_number}
    q^{2w_M-1} 
    \cdot \exp \left( \Omega \left(\frac{\log q}{\log \log q} \right) \right) 
\end{equation}
positive integers $a$, $(a,q)=1$ such that for 
\[
    \frac{a}{q} = [0;c_1,\dots,c_s] 
\]
    one has 
\begin{equation}\label{f:Larcher}
    \frac{1}{\log q} \sum_{j=1}^s c_j \ll 
    \sqrt{\log \log q} \,.
\end{equation}
\label{t:Larcher}
\end{theorem}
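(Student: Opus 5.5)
We follow the strategy behind Theorem \ref{t:main} from \cite{MMS_Korobov}, adapted so that the ensemble of continued fractions has only $O(\log q)$ partial quotients in total, almost all of them at most $M=\sqrt{\log\log q}$. Every rational $a/q=[0;c_1,\dots,c_s]$ corresponds to a product of matrices
\[
\begin{pmatrix} 0 & 1\\ 1 & c_1\end{pmatrix}\cdots \begin{pmatrix} 0 & 1\\ 1 & c_s\end{pmatrix}.
\]
The condition that the denominator equals $q$ becomes a condition modulo $q$ on this product in $\SL_2(\Z/q\Z)$ (up to sign), namely that a prescribed entry vanishes.

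We split the expansion into three blocks. The two outer blocks are long words $A$ and $B$ with all partial quotients at most $M$. Each has length about $\frac{1}{2}\log q/\log M$ quotients (scaled suitably), so that the denominators are of size roughly $q^{1/2+\eps}$. The middle block is short, of length $O(\log q/\log\log q)$ partial quotients, each at most $M$; it serves to mix the two outer blocks.

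The number of continuants up to $Q$ with all quotients at most $M$ is $\asymp Q^{2w_M}$ (Hensley, formula \eqref{f:Hensley_HD}). The set of such $a/q$ with $q$ in a dyadic range therefore has size $Q^{2w_M}$, which is the source of $q^{2w_M-1}$ after counting solutions of a congruence modulo $q$. The bonus factor $\exp(\Omega(\log q/\log\log q))$ comes from the freedom in the middle block: it has about $\log q/\log\log q$ positions, each with $\gg 1$ choices, and these choices are not penalized when we divide by $q$.

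The counting step is the following. Let $\Omega\subseteq \SL_2(\Z/q\Z)$ be the image of the set of admissible words, and consider the equation
\[
\text{(entry of } A\,C\,B) \equiv 0 \pmod q.
\]
We count its solutions by Fourier analysis on $\SL_2(\Z/q\Z)$, i.e.\ by the Bourgain--Gamburd machine. Helfgott's growth result gives, for $q\in\mathcal Z$ (this is exactly why the hypothesis $q\in \mathcal{Z}$ is there), that the $\ell$-fold product of the middle measure flattens once $\ell\gg \log q/\log M$. The outer sets $A$ and $B$ are handled with the structural Lemma \ref{l:A+B} from \cite{Mosh_A+B}. The result is that the number of solutions is $|A||B||C|/q\cdot(1+o(1))$ plus an error term controlled by spectral gap.

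Finally, the quotient sum. Admissible words have all $c_j\le M$ and length $s\ll \log q$, since continuants grow at least like the golden ratio per step. Hence
\[
\sum_j c_j\le M s\ll \log q\sqrt{\log\log q},
\]
which is \eqref{f:Larcher}. The choice $M=\sqrt{\log\log q}$ balances two requirements: $2w_M-1$ must stay close to $1$, since $1-w_M\asymp 1/M$ by Hensley, so the factor $q^{2w_M-1}$ is only $q\exp(-O(\log q/M))$; and this loss must be compensated by the middle-block gain.

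The main obstacle is the expansion/flattening step with parameters degenerating with $q$. We need the Bourgain--Gamburd argument for generating sets of growing size $M$, uniformly over moduli in $\mathcal Z$, and strong enough that the error is smaller than the main term $q^{2w_M-1}\exp(c\log q/\log\log q)$. We would first try to reuse the quantitative version from \cite{MMS_Korobov} verbatim, checking that its losses are $\exp(O(\log q/\log\log q))$ with a smaller constant, and then tune the middle-block length to win.
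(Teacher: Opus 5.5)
There is a genuine gap. Your construction only produces numerators with \emph{all} partial quotients at most $M=\sqrt{\log\log q}$, and for that set the count you are aiming for is false. The number of fractions $u/v$ with $v\le Q$ and all partial quotients at most $M$ is $\ll M^{O(1)}Q^{2w_M}$ (bounded distortion/Hensley; compare Lemma \ref{l:A+B} and Remark \ref{r:optimality}). Averaging over the primes $q\in[Q,2Q]$, a typical large prime $q\in\mathcal Z$ therefore has only $\ll M^{O(1)}\log q\cdot q^{2w_M-1}$ such numerators, far fewer than $q^{2w_M-1}\exp(\Omega(\log q/\log\log q))$.

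The ``free middle block'' gives no bonus. Its choices are already among the $Q^{2w_M}$ words and are penalized by the congruence modulo $q$ like all the others. The mixing step also clashes with your parameters. A block of $O(\log q/\log\log q)$ quotients $\le M$ has at most $M^{O(\log q/\log\log q)}=q^{o(1)}$ elements, so it cannot flatten anything in $\SL_2(\Z/q\Z)$; your own threshold $\ell\gg\log q/\log M$ is not met. Your ``balance'' fails as well: $\exp(\Omega(\log q/\log\log q))$ cannot offset a loss of $\exp(-c\log q/\sqrt{\log\log q})$, and the theorem needs no such compensation.

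The missing idea is to leave the middle range of denominators essentially unconstrained and control it only on average.

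\begin{itemize}
\item The paper takes $A=Z_M(t)$ with $t=\sqrt{q/N}$, $N=q^{2\eps}$, $\eps\asymp 1/M$. This restricts only the quotients with $q_j<t$. Intersecting with $A^{-1}$ restricts, via \eqref{f:inverse1}, \eqref{f:inverse2}, the quotients with $q_j>q_M/t$.
\item Corollary \ref{c:A_A*_asymptotic} and Lemma \ref{l:coprimality} (this is where $q\in\mathcal Z$ enters) give $|A\cap A^{-1}|\gg |A|^2/q^{1+o(1)}\gg q^{2w_M-1}N^{2(1-w_M)}$.
\item The factor $N^{1-w_M}=q^{\Omega(1/M^2)}$ is exactly $\exp(\Omega(\log q/\log\log q))$. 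It counts the freedom in the middle segment $[t,q_M/t]$, whose quotients are \emph{not} bounded by $M$.
\item The middle segment has logarithmic length $\asymp\eps\log q$. One deletes the small proportion of $a$ having a middle quotient larger than $M_*\asymp M^{-1}\log q$.
\item Larcher's averaging argument then shows that, for a positive proportion of the surviving $a$, the middle quotients sum to $\ll\log\bar M_*\cdot\log(q_M/t^2)\ll M^{-1}\log q\,\log\log q$.
\item The outer quotients contribute $\ll M\log q$.
\end{itemize}

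Choosing $M=\sqrt{\log\log q}$ balances these two terms; that is the actual origin of $\sqrt{\log\log q}$ in \eqref{f:Larcher}.
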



In addition to  the new bounds for the discrepancies \eqref{f:Larcher_disc}
Theorem \ref{t:Larcher} gives a series of applications to some questions in  pure combinatorics, namely, 
new  bounds for the number  of spanning trees in simple graphs \cite{CKP_trees_Zaremba},  \cite{CPS_loglog}, 
a phase transition for independent sets of graphs \cite{CSP_Zaremba}, 
and for linear extensions 
\cite{KS_linear}, \cite{CP_linear} (see also recent paper \cite{CKP_planar_CF2}). 
There are other applications, for example to McMullen's conjecture \cite[Conjecture 6.2]{Mcmullen} on real quadratic extensions, see \cite[Theorem 1.11]{Mercat_Z}.

\bp 

As a byproduct of our argument 
we significantly improve Theorem \ref{t:main} and this is our second main result. 

\begin{theorem}
    Let 
    $q\in \mathcal{Z}$ be a  positive integer and 
\begin{equation}\label{f:main_M_new}
        M \ge C \sqrt{\log q}
\,,
\end{equation}
    where $C>1$ is an absolute constant. 
    Then there are at least 
\begin{equation}\label{f:main_new_number}    
    q^{2w_M-1-o(1)} \cdot 
\end{equation}
    positive integers $a$, $(a,q)=1$ with 
    $M(a) \le M+2$. 
\label{t:main_new}
\end{theorem}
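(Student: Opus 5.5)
We follow the scheme of \cite{MMS_Korobov}, but make the ensemble of matrices much larger. The key choices are the length of the continued fractions and where the excess factor $q^{o(1)}$ is lost.

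\emph{Setup.} Let $F_M$ be the set of continuants, i.e.\ matrices
\[
\begin{pmatrix} 0&1\\1&c_1\end{pmatrix}\cdots\begin{pmatrix} 0&1\\1&c_k\end{pmatrix},\qquad c_j\le M .
\]
Take $A\subseteq F_M$ to be the set of such products whose bottom-right entry (the denominator) lies in a dyadic-type window $[Q,2Q]$. By Hensley's asymptotics for $w_M$ (formula \eqref{f:Hensley_HD}), $|A|\approx Q^{2w_M}$. We choose $Q$ slightly below $\sqrt q$, namely $Q=q^{1/2-\eps}$.

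\emph{Reduction.} We want products $g_1g_2^{\top}$-type concatenations. Concretely, $a/q=[0;c_1,\dots,c_k,x,c'_{l},\dots,c'_1]$ is produced from two continuants $u,v\in A$ and a junction, and the condition "denominator $\equiv 0 \pmod q$" becomes a linear condition on the pair $(u,v)$ modulo $q$. Following Lemma~\ref{l:A+B} from \cite{Mosh_A+B}, the denominator of the concatenation equals $q_u q_v\,(x+ \alpha_u+\beta_v)$, up to the bounded junction partial quotient $x\le 2$. Here $\alpha_u$ and $\beta_v$ are the ratios $q_{k-1}/q_k$. The junction is why the bound reads $M+2$ rather than $M$. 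Hence counting solutions reduces to counting $N=\#\{(u,v,w): \text{ a bilinear congruence mod } q\}$, which we write via characters of $\SL_2(\Z/q\Z)$ or additive characters as
\[
N = \frac{|A|^2\cdot(\ldots)}{q}+\text{Error}.
\]

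\emph{Error term.} This is the crux. The error is controlled by the nontrivial Fourier coefficients of the uniform measure on $A$ (projected mod $q$), equivalently by $\|\widehat{\mu_A}(\rho)\|$ over nontrivial irreducible representations $\rho$. We run the Bourgain--Gamburd machine. Convolving $\mu_A$ with itself $\ell$ times, Helfgott's growth result (and its composite analogues, available exactly for $q\in\mathcal{Z}$) gives flattening once $A$ is not concentrated on proper subgroups. Quasirandomness of $\SL_2$ (minimal representation dimension $\gg q^{1/3}$, or via the affine sieve structure for composite $q$) then yields $\|\widehat{\mu_A}(\rho)\|\le q^{-\delta}$.

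The reason $M\ge C\sqrt{\log q}$ suffices, rather than $\log q/\log\log q$ as in Theorem~\ref{t:main}, is the non-concentration input. The proportion of $A$ in any coset of a proper subgroup or any Borel must be at most $|A|^{-c}$. Moreover, the exponent $2w_M$ needs $1-w_M\approx \frac{6}{\pi^2 M}$ to be small enough that
\[
Q^{2w_M}\cdot Q^{2w_M}\ge q^{1+\text{gain}}.
\]
That is, we need $2w_M-1\gg$ the loss $q^{O(1/M)}$, and this loss occurs twice (once in the window restriction and once in the error). Balancing $1/M$ against the term $M/\log q$ coming from the number of steps $k\approx \log q/\log M$ and Hensley's second-order error $O(\log M/ M^2)$ for the window counts forces $M^2\gg \log q$. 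I expect this balancing, i.e.\ uniform counting of $A$ in short windows with error $q^{O(1/M^2)\cdot\log q}$, to be the main obstacle; I would try Hensley's transfer-operator spectral gap uniform in $M$ first.

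\emph{Conclusion.} Once the error is below the main term, we get
\[
N\ge |A|^2/q^{1+o(1)}=q^{2w_M-1-o(1)} .
\]
Distinct triples give distinct $a$ by uniqueness of continued fractions, each with $M(a)\le M+2$ and $(a,q)=1$ automatically.
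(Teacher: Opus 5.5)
Your proposal has a genuine gap, and it sits exactly where the paper's proof does its real work.

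\textbf{The sizes do not reach $q$.} Take $q_u,q_v\in[Q,2Q]$ with $Q=q^{1/2-\eps}$ and a junction digit $x\le 2$. The domino identity \eqref{f:domino} gives the concatenated denominator as $q_u(xq_v+q''_v)+q'_uq_v\le 16Q^2=16q^{1-2\eps}<q$. So no concatenation has denominator $q$, and the congruence ``denominator $\equiv 0\pmod q$'' has no admissible solutions. To reach $q$ you need a middle block of continuant size $\asymp q^{2\eps}$. Bounding the partial quotients of that block is the entire difficulty, and your proposal never addresses it.

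\textbf{Repairing the sizes hits the square-root barrier.} Suppose you take $Q\asymp\sqrt q$ instead. Your reduction is then a single congruence $q'_u/q_u+q'_v/q_v\equiv -x\pmod q$ for a set $\mathcal{A}\subseteq\Z/q\Z$ with $|\mathcal{A}|\approx q^{w_M}<q$. The main term $|\mathcal{A}|^2/q$ is smaller than the Parseval/Cauchy--Schwarz error $|\mathcal{A}|$. An $L^2$ argument with a uniform bound $\|\widehat{\mu_A}(\rho)\|\le q^{-\d}$ beats it only if $\d>1-w_M/2>1/2$. This is the square-root barrier behind Hensley's conjecture. Helfgott and Bourgain--Gamburd give only a tiny $\d$. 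Indeed, since $\mu_A$ lives on $|A|<q$ elements, its component on functions on the projective line has norm $\gg|A|^{-1/2}$, which forces $\d\le w_M/2$. So ``once the error is below the main term'' never happens. Your explanation of the $\sqrt{\log q}$ threshold, balancing Hensley's second-order term against the number of steps, corresponds to no working mechanism. Likewise, the $+2$ is not a junction digit.

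\textbf{The missing idea: leave the middle free and use it as an averaging variable.} The paper works with numerators.
\begin{itemize}
\item It sets $A=Z_M(t)$ with $t=q^{1/2-\eps}$ and restricts only the partial quotients with $q_j<t$. By Lemma \ref{l:A+B}, $A$ is then a union of intervals of length $\ge N=q^{2\eps}$, and $|A|\gg q^{w_M}N^{1-w_M}$.
\item This interval structure is what Lemma \ref{l:T-action} and Corollary \ref{c:A_A*_asymptotic} exploit. They give $|A\cap A^{-1}|=|A|^2/q+O(|A|N^{-\kappa})$, whose main term wins once $\eps\gg 1/M$.
\item The tail of $a/q$ is controlled because $a^{-1}\in A$, via the reversal formulas \eqref{f:inverse1}, \eqref{f:inverse2} and Lemma \ref{l:M_crit}. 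This is where $M+2$ comes from (Remark \ref{r:M_crit}).
\item Coprimality $(a,q)=1$ comes from Lemma \ref{l:coprimality}.
\end{itemize}
What remains is the middle window $q_j\in[t,q_M/t]$, which contains about $\eps\log q\asymp\log q/M$ partial quotients. Deleting the fundamental intervals in which some middle quotient exceeds $M_*$ costs only a small proportion once $M_*\gg\log q/M$. Alternatively one can use Lemma \ref{l:AD}. The critical-denominator analysis of Proposition \ref{p:number_x} handles denominators near $\sqrt q$, where the interval structure would otherwise be lost. Finally, $M_*\le M$ holds exactly when $M\gg\sqrt{\log q}$, equivalently $N^{1-w_M}=q^{O(1/M^2)}=O(1)$. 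That is the true source of the $C\sqrt{\log q}$ condition, and the count $\gg|A|^2\varphi(q)/q^2\ge q^{2w_M-1-o(1)}$ follows.
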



%
%
%
The expected number of numerators $a$ such that $M(a)\le M$  is 
$q^{2w_M-1-o(1)}$ 
similar to \eqref{f:Larcher_number} and \eqref{f:main_new_number}, see  \cite{hensley_SL2}, \cite{hensley1996},  \cite{Kontorovich_survey} or just discussion in Remark \ref{r:optimality} below. 
The constant $2$ in $M+2$ is a technical point that can be easily improved, see Lemma \ref{l:M_crit}  and Remark \ref{r:M_crit} below.
So, our methods allow to obtain the same lower bound for the number of such numerators  (an optimal upper bound can be found in  \cite{MMS_popular}).
Finally, note that the 
lower 
estimate 
on $M$ in \eqref{f:main_M_new} is 
close to the 
optimal within the framework of the method,  see Remark \ref{r:optimality} below.
Furthemore, 
we emphasize 
that there is a huge difference between  regimes $M=o(\sqrt{\log q})$ and $M=\Omega(\sqrt{\log q})$ as the remark shows us. 
Namely, if $M=\Omega(\sqrt{\log q})$ the number of $a$ such that $M(a) \ll M$ is comparable with the size of the larger set: 
\begin{equation}\label{f:middle_int}
    \left\{ \frac{a}{q}=[ 0; c_1(a),\ldots,c_{s}(a)] ~:~ c_j (a) \ll M,\,~~~ \forall j\notin s\cdot \left[ \frac{1}{2}-\eps, \frac{1}{2}+\eps \right] \right\} \,,
\end{equation}
where $\eps \sim 1/M$. Therefore, in the case $M=\Omega(\sqrt{\log q})$ we have a new interesting phenomenon: if partial quotients $c_j(a)$ are bounded  by  $O(M)$ for all $j$ except the middle interval $s\cdot [1/2-\eps,1/2+\eps]$, then a positive proportion of such $a$ has {\it all} partial quotients bounded by $O(M)$. 
This sounds unexpected, but on the other hand, it immediately becomes clear that the numbers $a$ from \eqref{f:middle_int} have rather special properties (see, for example, Proposition \ref{p:repulsion} or formula \eqref{f:rough_G}), and its number coincides with 
\eqref{f:main_new_number}. 
On the contrary, if $M=o(\sqrt{\log q})$, then the size of the set \eqref{f:middle_int} significantly exceeds the number of Zaremba numerators $a$, and hence this is a poor approximation to the problem.

In the proof we follow the argument from \cite{MMS_Korobov}, where a huge 
part of the work was done. 
In this paper we add some  new ideas.
First of all, we exploit Cantor's  structure of the set of rationals with bounded partial quotients and show that 
our set of Zaremba's numerators $a$ lives in a certain Ahlfors--David set $\mathcal{AD}$, see Section \ref{sec:AD} and papers \cite{DZ_AD},  \cite{BD_Iab}, \cite{BD_AD_def}. 
While   the irregularity of Ahlfors--David sets in the spirit of \cite{DZ_AD}, \cite{BD_Iab}, \cite{Fraser_almostAP} is useful in Zaremba--type questions, in Section \ref{sec:X} we introduce an even more powerful Diophantine idea that may be interesting in its own right.
Let $J$ be an interval of our  Ahlfors--David set $\mathcal{AD}$,  and we are interested in the 
denominators $x=x(a)$ (let $x\le \sqrt{q}$, say)  of some convergents to $a/q \in J$, which we call  critical denominators. 
Since the elements of $J$ are very close enough to each other (both in the Euclidean sense and in the continued fraction expansion sense), it follows (see Section \ref{sec:X}) that the behavior of the various $x(a_k)$, where $a_k \in J$, resembles the behavior of the denominators $x(a)$ for a {\it single} $a\in J$.
In particular, it can be shown that the set of critical denominators $x(a)$, $a\in J$ is quite  sparse and even ``independent'' in a certain  sense, see Lemmas \ref{l:k=2}, \ref{l:k=3} and Proposition \ref{p:number_x}. 
This independency of $x(a)$,  combined with some simple Diophantine problems of dual  nature,  allow us to 
eliminate 
the denominators $x(a)$ that are too close to $\sqrt{q}$ and obtain our main result.


\begin{theorem}
    There is an absolute constant $\mathcal{M}$
    such that for all $M \ge \mathcal{M}$ 
    and 
    any  $q \in \mathcal{Z}$ 
   there are at least 
\begin{equation}\label{f:main_new_number_Zaremba}
    q^{2w_M-1-o(1)}  
\end{equation}
     positive integers $a$, $(a,q)=1$  
    with 
\begin{equation}\label{f:Zaremba}
    \frac{a}{q} = [0;c_1,\dots,c_s] \,, \quad \quad c_j \le 100 M \,, \quad \quad   j= 1,\dots, s\,.
\end{equation}
\label{t:Zaremba}
\end{theorem}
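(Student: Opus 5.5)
We follow the strategy of \cite{MMS_Korobov}, as in the proof of Theorem \ref{t:main_new}, and then remove the loss coming from the middle of the expansion. Write $q$-dependent quantities with $M\ge \mathcal{M}$ fixed (or slowly growing). Let $Q\approx \sqrt{q}$ and consider the set $\mathcal{F}_M(Q)$ of matrices $g\in \SL_2(\Z)$ whose entries are continuants $\langle c_1,\dots,c_k\rangle$ with all $c_j\le M$ and bottom-right entry of size about $Q$; by Hensley's estimates its cardinality is $Q^{2w_M+o(1)}$. A numerator $a$ with $M(a)\le 100M$ arises when $q$ is represented as the bottom-right entry of a product $g_1 \cdot h\cdot g_2$, where $g_1,g_2$ come from $\mathcal{F}_M$ and $h$ is a short ``middle'' matrix whose partial quotients are at most $100M$. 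Counting such representations modulo $q$ reduces to counting solutions of $x_1 y_2 + \dots \equiv 0 \pmod q$, i.e.\ to an equation in $\SL_2(\Z/q\Z)$. There we apply the Bourgain--Gamburd machine and Helfgott expansion (available precisely for $q\in\mathcal{Z}$): the projection of the random walk on $\mathcal{F}_M$ is equidistributed in $\SL_2(\Z/q\Z)$ up to an error $q^{-\delta}$, which gives a main term of order $q^{2w_M-1-o(1)}$ for the number of pairs whose product yields a numerator with bounded quotients \emph{outside} the middle window, exactly as in \eqref{f:middle_int} with $\eps\sim 1/M$.

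The remaining task, and the main obstacle, is to show that a positive proportion of these candidates actually have \emph{all} partial quotients $\le 100M$, in particular those produced in the middle window, where a denominator $x=x(a)$ of a convergent lands too close to $\sqrt q$ and creates a huge quotient. In Theorem \ref{t:main_new} this was handled by the crude requirement $M\gg\sqrt{\log q}$; here we instead use the structure of Section \ref{sec:AD} and Section \ref{sec:X}. First, using Lemma \ref{l:A+B} we embed the candidate set into an Ahlfors--David set $\mathcal{AD}$ of dimension $w_M$ and partition it into intervals $J$. Second, for $a\in J$ we study the critical denominators $x(a)\le \sqrt q$. By Lemmas \ref{l:k=2}, \ref{l:k=3} and Proposition \ref{p:number_x}, these are sparse and behave ``independently'': the number of $a\in J$ sharing a given critical $x$, or a pair/triple of them, is controlled. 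Third, a large quotient $c_j(a)>100M$ in the middle forces a Diophantine relation $|ax - qy|\le q/(100 M x)$ with $x\asymp\sqrt q$. We then bound the number of such $a\in J$ by counting lattice points in the dual problem (fixing $x$, the residues $a$ with $\|ax/q\|$ small form an arithmetic progression, and we intersect it with the Ahlfors--David set using its regularity).

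Summing over $x$ and over intervals $J$, the bad $a$ should number at most $O(1/M)$ times the good count (the factor $100$ giving room to absorb constants), so for $M\ge\mathcal{M}$ they are at most half of all candidates. This yields \eqref{f:main_new_number_Zaremba}. I expect the delicate part to be the third step: making the independence of critical denominators strong enough that the union bound over $x\asymp\sqrt q$ does not lose the factor $q^{o(1)}$ that previously forced $M\gg\sqrt{\log q}$. I would first try a second-moment (pair-counting via Lemma \ref{l:k=2}) argument on the number of bad $x$ per interval $J$.
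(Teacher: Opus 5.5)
\textbf{There is a genuine gap, and it is in your third step.} The problem is not that a union bound loses a factor $q^{o(1)}$.

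First, the expansion input you need is not available in the form you state. Lemma \ref{l:T-action} and Corollary \ref{c:A_A*_asymptotic} do not save $q^{-\delta}$. They save only $N^{-\kappa}$, where $N=q^{2\eps}$ is the length of the intervals of $Z_M(t)$ and $t=\sqrt{q/N}$. This is why one needs $\eps\gg 1/M$ as in \eqref{cond:eps}.

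Consequently, for any candidate set compatible with this (elements of $Z_M(t)\cap Z_M^{-1}(t)$, i.e.\ a set of type \eqref{f:middle_int}), the uncontrolled window of convergent denominators spans a factor at least $q^{c/M}$. It typically contains $\asymp \eps\log q\gg \log q/M$ partial quotients, about which the construction says nothing.

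Now look at what your count actually computes. For fixed $x$, the condition $\|ax/q\|\le 1/(100Mx)$ says that $x$ is a convergent denominator followed by a partial quotient $\ge 100M-2$. The best a count based on Ahlfors--David regularity can give is the equidistributed density $\asymp 1/(Mx)$. Summing over $x$ in the window gives $\asymp \eps\log q/M\gg \log q/M^2$ bad positions per candidate. This is below $1/2$ only when $M\gg\sqrt{\log q}$, which is exactly the barrier of Theorem \ref{t:main_new} (see Remark \ref{r:optimality}).

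This is not an artefact of the union bound. Since the middle quotients are free, one expects only a proportion of about $N^{-2(1-w_{100M})}=q^{-c\eps/M}$ of the candidates to have all of them $\le 100M$. So the inequality ``bad $\le O(1/M)\cdot$good'' you aim for should be false for bounded $M$, and no second-moment refinement can prove it.

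Also, Lemmas \ref{l:k=2}, \ref{l:k=3} and Proposition \ref{p:number_x} do not give the counts you attribute to them (how many $a\in J$ share a critical denominator). They assert the absence of small integer relations among critical denominators of distinct points of one $J_{u/v}$, together with a dichotomy concerning the $\delta$-Assumption.

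\textbf{The paper's route is not a count at all.} It works as follows.
\begin{itemize}
\item It takes $A=Z_M(t)\cap Z_{M_*}(tH)$ with $H=N^{9/20}$ and $1-w_M=10(1-w_{M_*})$, so $M_*\approx 10M$.
\item This keeps $|A\cap A^{-1}|\ge |A|^2\varphi(q)/(2q^2)$ (Lemma \ref{l:AD}, Corollary \ref{c:A_A*_asymptotic}, Lemma \ref{l:coprimality}), while narrowing the window to $[tH,\sqrt{qN}/H]$.
\item It then argues by rigidity. In Proposition \ref{p:number_x}, an interval of length $\asymp N/(\delta^{60}\tilde M H^2)$ satisfying the $\delta$-Assumption produces (via Lemma \ref{l:equidistributed}) well-separated $a<b<c$ with critical denominators $x,y,z$, which Lemma \ref{l:k=3} declares $\vec C$-independent.
\item Lemma \ref{l:linear_problem} and Corollary \ref{c:repulsion_dep}, applied to $b\in Z_M^{-1}(t)$, then force $C_1x+C_2y+C_3z\ge q/(4Mt)$, contradicting the choice of the $C_j$.
\item The dichotomy after Assumption \ref{a:half} then gives a positive proportion of $A\cap A^{-1}$ with no $\tilde M$-critical denominators.
\item The constraints $\eps\gg 1/M$ and $\eps<1/18-o_M(1)$ are what force $M\ge\mathcal M$.
\end{itemize}

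Be warned, however, that everything rests on that independence, which is precisely where the heuristic above bites (the window still spans a factor $N^{1/10}$).
\begin{itemize}
\item The identity \eqref{f:d_CR} gives the nonzero relation $d(y,z)x+d(z,x)y+d(x,y)z=0$.
\item For convergent denominators $u,v$ of $a_u/q,a_v/q$ one has $|d(u,v)|\le uv|a_u-a_v|/q+u/v+v/u$.
\item With the parameters of Proposition \ref{p:number_x} ($c-a\ll N^{1/10}$, $y,z\in[tH,\sqrt q]$, $x\le N^{1/1000}\sqrt q\,N^{2/7}$, and $C_j$ as defined there), my check gives $|d(y,z)|=o(C_1)$, $|d(z,x)|=o(C_2)$ and $|d(x,y)|=o(C_3)$.
\end{itemize}
So Lemma \ref{l:linear_problem} may simply return this relation, i.e.\ alternative \eqref{f:linear_problem_I}. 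Importing that proposition would therefore not close your third step either.
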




In the proof of Theorem \ref{t:Zaremba} we also follow the argument from \cite{MMS_Korobov}, as well as the proofs of Theorems \ref{t:Larcher}, \ref{t:main_new}.
The constant $\mathcal{M}$ in Theorem \ref{t:Zaremba} is large,  although  computable; see the approximate calculations in Section \ref{sec:appendix}.
  The main losses arise  from the results concerning the growth in the modular group, where some methods of multiplicative combinatorics are used (namely, as we said above: the Bourgain--Gamburd machine, the affine sieve, and  the Helfgott expansion result). 
Interestingly, our method would give a much better upper bound on $\mathcal{M}$ (though still far from $\mathcal{M}=5$), provided that uniform distribution results can be obtained for the set $A^{-1}$, where $q$ is  a prime number, $A\subseteq \Z/q\Z$ is an arbitrary collection of intervals of length $q^{1/9 - o_{\mathcal{M}}(1)}$, see estimates   
\eqref{cond:eps_upper}, \eqref{cond:eps_upper_final} 
below.
In particular, if we want to find only one  $a$ such that \eqref{f:Zaremba} holds, then the so--called first stage of the Bourgain--Gamburd  machine is not required
(but this is still necessary in the case of a general composite $q$).  In other words, to prove Theorem \ref{t:Zaremba} (but not Theorems \ref{t:main}, \ref{t:Larcher}, \ref{t:main_new}) one does not in principle need the full power of multiplicative combinatorics (it works effectively with intervals of any size), and we can limit ourselves to working with intervals of very specific (but still short) length.
In conclusion, it should be noted that although the constant $1/9$ can be improved, it is still much smaller than the square--root barrier $1/2$.

The expected number of numerators $a$ 
in \eqref{f:Zaremba} is $q^{2w_{M}-1 - o(1)}$ similar to \eqref{f:Larcher_number} and \eqref{f:main_new_number}, see  \cite{hensley_SL2}, \cite{hensley1996},  \cite{Kontorovich_survey} or just Remark \ref{r:optimality}. 
The existence of $q^{1-o(1)}$ such $a$ is important for some applications see, for example, McMullen's Conjecture 6.2 in \cite{Mcmullen}. 
In contrast to Theorems \ref{t:Larcher}, \ref{t:main_new} in Theorem \ref{t:Zaremba} 
our methods allow us to obtain a lower bound of the form 
$q^{1-O(1/M)}$. 
This is, of course, worse than the lower bounds in \eqref{f:Larcher_number} and  \eqref{f:main_new_number} and {\it cannot} be explained by the technical Lemma \ref{l:M_crit}.
Although 
some improvements are possible, 
it would be interesting to obtain the correct order for the number of Zaremba's numerators $a$, see Problem \ref{pr:correct_a}.



The signs $\ll$ and $\gg$ are the usual Vinogradov symbols. 
Let us denote by $[n]$ the set $\{1,2,\dots, n\}$.
For two finite subsets $A,B$ of an abelian group denote by $A+B$ its sumset $\{a+b ~:~ a\in A,b\in B\}$ and if $|A+B|=|A||B|$, then we write $A\dotplus B$. We  use the notation $S(x)$ for the characteristic function of a set $S$. 
All logarithms are to base $e$.

The author would like to thank Max Planck Institute for Mathematics (Bonn) for providing excellent working conditions.
Also, he thanks Nikolay Moshchevitin for his faith in this method and his support, Alisa Sedunova for her advices on Lemma \ref{l:coprimality}, 
and Igor Pak for useful discussions concerning possible combinatorial applications of the results of this paper.

\begin{remark}
    This paper was submitted in late 2025 and posted on arXiv in March 2026. In early May, Xin Zhang informed the author that the Bougain--Gamburd--Helfgott--Sarnak--Varj\'u   theory had been significantly advanced by him in collaboration with Jincheng Tang (see \cite{Tang_Zhang_super_app}, \cite{Tang_Zhang_SP}); consequently, Lemma \ref{l:T-action} holds for any $q$ and any bounded $M$. It follows that Corollary \ref{cor:Zaremba} is valid for any $q$ (although the remaining results of the paper remain unchanged), and thus we now possess a proof of Zaremba's conjecture in full generality, see \cite{Zhang_Zaremba_q}.
\end{remark}

\section{Preliminary results}
\label{sec:preliminary}


In this section we present some notations and preliminary results, following mainly the paper \cite{MMS_Korobov}. 
Formally, the new useful result is Lemma \ref{l:A_A*_str} below, concerning the uniform distribution of the inverse set $A^{-1}$ of a Cantor--type set $A$ in itself. 

We 
start with 
a well--known lemma, see  \cite[Lemma 5, pages 25--27]{Korobov_book} or \cite[Section 9]{Mosh_A+B} or \cite[Lemma 2]{MMS_Korobov}. 
It says that, basically, the partial quotients of a rational number are controlled via the hyperbola $x|y| = q/M$.

\begin{lemma}
    Let $a$ be coprime with $q$ and  $a/q = [0;c_1,\dots,c_s]$. 
    Consider the equation
\begin{equation}\label{eq:M_crit}
    ax \equiv y \pmod q \,, \quad \quad 1\le x<q \,, \quad  1\le |y|<q \,.
\end{equation}
    If for all solutions $(x,y)$ of the equation above one has $x|y| \ge q/M$, then $c_j \le M$, $j\in [s]$. 
    On the other hand, if for all $j\in [s]$ the following holds $c_j \le M$, then all solutions $(x,y)$ of \eqref{eq:M_crit} satisfy $x|y|\ge q/4M$. 
\label{l:M_crit}
\end{lemma}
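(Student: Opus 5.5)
The plan is to use the standard theory of convergents. Write $p_j/q_j$ for the convergents of $a/q=[0;c_1,\dots,c_s]$, so that $q_0=1$, $q_1=c_1$, $q_s=q$. Recall the identity
\[
\Bigl|\frac aq-\frac{p_j}{q_j}\Bigr| = \frac{1}{q_j(q_{j+1}+\theta_j q_j)} \,,
\]
more precisely $|aq_j - qp_j| = q/(q_{j+1}+ \xi_j q_j)$ with $\xi_j\in[0,1)$ given by the tail of the expansion. Equivalently, $|aq_j-qp_j|$ equals the remainder $r_{j}$ in the Euclidean algorithm, and $q_j r_j + q_{j+1} r_{j+1}$-type identities hold, the cleanest being $q_{j}r_{j-1}+q_{j-1}r_j = q$.

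For the first assertion, take $x=q_{j-1}$ and $y \equiv aq_{j-1} \pmod q$ with $y=\pm r_{j-1}$; for $1\le j\le s$ this is admissible, since $1\le x<q$ and $y\neq 0$. Then
\[
x|y| = q_{j-1} r_{j-1} < q_{j-1}\cdot \frac{q}{q_j}= \frac{q}{q_j/q_{j-1}} \le \frac{q}{c_j}\,,
\]
using $r_{j-1}<q/q_j$ (which follows from $q_j r_{j-1}+q_{j-1}r_j=q$ with $r_j\ge 0$) and $q_j\ge c_jq_{j-1}$. Hence if $c_j>M$ we get a solution with $x|y|<q/M$, a contradiction. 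The edge index $j=s$, where $r_s=0$, needs a moment's care, but $r_{s-1}\ge 1$ and the inequality is still strict.

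For the converse, assume all $c_j\le M$ and let $(x,y)$ be any solution. I would invoke the best approximation property: if $q_{j-1}\le x<q_j$, then $|ax-qz|\ge r_{j-1}$ for every integer $z$, and $|y|$ is of the form $|ax-qz|$. Therefore
\[
x|y|\ge q_{j-1}r_{j-1} \,.
\]
From $q=q_jr_{j-1}+q_{j-1}r_j$ and $r_j\le r_{j-1}$ we get $q\le (q_j+q_{j-1})r_{j-1}\le (c_j+2)q_{j-1}r_{j-1}$, so $x|y|\ge q/(c_j+2)\ge q/(M+2)\ge q/(3M)\ge q/(4M)$ for $M\ge1$.

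The main obstacle is the bookkeeping at the boundaries: $j=1$ (where $q_0=1$), the range $x$ close to $q$ (where $x<q_s=q$ still fits), and the handling of negative $y$ with $|y|<q$. For the last point, note that the best approximation statement concerns $\min_z|ax-qz|$, and our $|y|$ is at least this minimum, so the lower bound applies regardless of which representative $y$ is.
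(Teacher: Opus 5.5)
Your proof is correct and is the standard convergent/best-approximation argument that the paper takes from its cited sources (Korobov, Moshchevitin, \cite{MMS_Korobov}) rather than proving in the text; your chain $x|y|\ge q_{j-1}r_{j-1}\ge q/(c_j+2)$ in fact gives the sharper bound $q/(M+2)$ mentioned in Remark~\ref{r:M_crit}. One harmless slip: $r_{j-1}\le q/q_j$ is not strict at $j=s$ (there $r_{s-1}=1=q/q_s$, and for $s=1$ one even has $x|y|=q/c_1$), but strictness is never needed, since $c_j>M$ already gives $q/c_j<q/M$.
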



\begin{remark}
\label{r:M_crit}
    The constant $4$ in lemma above is a technical point that can be easily improved.
    For example, $4M$ can be replaced by  $M+2$; see the proof of \cite [Lemma]{MMS_Korobov}.
\end{remark}

Let $1\le t\le \sqrt{q}$ be a real number. 
Having 
a 
rational number  $\frac{a}{q} = [0;c_1,\dots,c_s]= \frac{p_s}{q_s}$, we write $\frac{p_\nu}{q_\nu}$ for its $\nu$--th convergent.
Let  $M\ge 1$ be an integer.
Define 
\begin{equation}\label{def:Z_M(t)}
    Z_M (t) = \left\{ \frac{a}{q} = [0;c_1,\dots,c_s] ~:~ c_j \le M \mbox{ for all } j \mbox{ s.t. } q_j < t \right\} \,.
\end{equation}
 With some abuse of the notation we denote by the same letter $Z_M(t)$ the set of the {\it numerators} $a\in [q]$, $(a,q)=1$ from \eqref{def:Z_M(t)}. 
 In this sense $Z_M (t)$ can be considered as  a subset of $\Z/q\Z$. 
Further, following \cite{Mosh_A+B} define 
\begin{equation}\label{def:Q_M(t)}
    Q_M (t) = \left\{ \frac{u}{v} = [0;c_1,\dots,c_s] ~:~ c_j \le M,\, \forall j\in [s],\, v < t \right\} \,,
\end{equation}
and
\begin{equation}\label{def:Q'_M(t)}
    \overline{Q_M (t)} = \left\{ \frac{u}{v} = [0;c_1,\dots,c_s] \in Q_M (t)  ~:~ \mathcal{K}( c_1,\dots,c_s, 1 ) \ge t \right\} \,,
\end{equation}
where by $\mathcal{K}(d_1,\dots,d_k)$ we have denoted the correspondent continuant, see \cite{Hinchin}.
In other words, if we consider $\frac{u}{v} \in Q_M (t)$ and $\frac{u'}{v'}:=[0;c_1,\dots,c_s,1]$, then $v'\ge t$.   
Also, let $Q^c_M (t)$ 
be the complement to $Q_M (t)$.
It should be noted 
that 
the structures of the sets $Q^c_M (t)$
 and  $Q_M (t)$
 are completely  different.
Indeed, the set $Q_M(t)$ is of Cantor type (see Section \ref{sec:AD}), on the contrary, it is easy to see that any interval in $\Z/q\Z$ contains a point from $Q^c_M (t)$
 and, consequently, this set can be considered ``everywhere dense''.

The sets $\overline{Q_M (t)}$ and $Z_M (t)$ are closely connected to each other, see \cite{Mosh_A+B}, Lemma \ref{l:A+B} and Section \ref{sec:AD} below. 
%
To formulate further results we need one more definition from the real setting. 
Let  $M\ge 1$ be an integer.
Consider the set of {\it real} numbers  $F_M$, having all partial quotients  bounded by $M$. It is well--known \cite{Hinchin}, 
that for any  $M$ the Lebesgue measure of the set  $F_M$ is zero and its Hausdorff dimension  $w_M:=\mathcal{HD} (F_M)$ is  $w_M = 1-O(1/M)$, as  $M\to \infty$. Good bounds and asymptotic formulae on $w_M$ are contained in papers 
\cite{hensley1989distribution}, \cite{hensley1992continued}. 
In particular, in \cite{hensley1992continued} Hensley 
obtained 
the following result.

\begin{theorem}
		For 
		$M\to \infty$ 
		one has  
\begin{equation}\label{f:Hensley_HD}
		w_M := \mathcal{HD} \left(\{ \a = [0; c_1,c_2,\dots] \in [0,1] ~:~ \forall c_j \le M \} \right) 
		=
		1-\frac{6}{\pi^2 M} - \frac{72 \log M}{\pi^4 M^2} + O\left( \frac{1}{M^2} \right)
		\,. 
\end{equation}
	\label{t:Hensley_HD}
	\end{theorem}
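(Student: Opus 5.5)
The plan is to characterize $w_M$ as the zero of a pressure function and then perturb around the Gauss map. For $M\le\infty$ and real $s$ near $1$, I will use the weighted transfer operator
\[
(\mathcal{L}_{s,M} f)(x) = \sum_{c=1}^{M} \frac{1}{(c+x)^{2s}}\, f\!\left(\frac{1}{c+x}\right),
\]
acting on the Banach space of functions holomorphic in a fixed complex disc containing $[0,1]$ and bounded there. On this space $\mathcal{L}_{s,M}$ is compact and positive. For $M=\infty$ one needs $\mathrm{Re}\, s>1/2$.

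The steps, in order:

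1. \textbf{Pressure characterization.} By the standard Bowen--Ruelle argument, $w_M$ is the unique $s$ with $\lambda_M(s)=1$, where $\lambda_M(s)$ is the leading eigenvalue of $\mathcal{L}_{s,M}$. Equivalently, $w_M$ is the abscissa of convergence of $\sum \mathcal{K}(c_1,\dots,c_n)^{-2s}$ over words with $c_j\le M$, since the $n$-fold iterate produces exactly these continuant weights.

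2. \textbf{Unperturbed operator.} At $M=\infty$, $s=1$ this is the Gauss--Kuzmin operator. Its leading eigenvalue is $1$, with eigenfunction $h(x)=\frac{1}{\log 2}\cdot\frac{1}{1+x}$. The dual eigenfunctional is Lebesgue integration. There is a spectral gap (Wirsing), so $\lambda_\infty(s)$ and its eigenprojection are analytic near $s=1$.

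3. \textbf{Tail as a small perturbation.} Write $\mathcal{L}_{s,M}=\mathcal{L}_{s,\infty}-\mathcal{T}_{s,M}$, where $\mathcal{T}_{s,M}$ is the sum over $c>M$. Its operator norm on the holomorphic space is $O(1/M)$ uniformly for $s$ near $1$. Kato's analytic perturbation theory then gives
\[
\lambda_M(s) = \lambda_\infty(s) - \int_0^1 \mathcal{T}_{s,M}h_s \;+\; O\bigl(\|\mathcal{T}\|^2\bigr),
\]
where the quadratic error is made explicit through the reduced resolvent.

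4. \textbf{First-order terms.} We have $\lambda_\infty'(1)=-2\int_0^1 \log\frac{1}{x}\,h(x)\,dx\cdot$ (appropriately signed) $=-\frac{\pi^2}{6\log 2}$, which is minus the Lyapunov exponent. Also
\[
\int_0^1 \mathcal{T}_{1,M}h = \mu_{\mathrm{Gauss}}(c_1>M) = \frac{\log(1+\frac{1}{M+1})}{\log 2} = \frac{1}{M\log 2}+O(M^{-2}).
\]
Solving $\lambda_M(s)=1$ to first order gives $1-w_M=\frac{6}{\pi^2M}+O\bigl(\frac{\log M}{M^2}\bigr)$.

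5. \textbf{Second-order term.} Go to second order in both $(1-s)$ and $\mathcal{T}$. The only contribution producing $\log M$ is the $s$-derivative of the tail:
\[
\partial_s \int_0^1 \mathcal{T}_{s,M}h\,\Big|_{s=1} = -2\sum_{c>M}\int_0^1 \frac{\log(c+x)}{(c+x)^2}h\!\left(\tfrac{1}{c+x}\right)dx \approx -\frac{2\log M}{M\log 2}.
\]
Multiplied by $(s-1)\approx-\frac{6}{\pi^2 M}$ and divided by $|\lambda'_\infty(1)|=\frac{\pi^2}{6\log 2}$, this gives $-\frac{2\cdot 6\cdot 6\log M}{\pi^4 M^2}=-\frac{72\log M}{\pi^4M^2}$. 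The remaining terms are $O(1/M^2)$ without a logarithm:
\begin{itemize}
\item $\lambda''_\infty(1)(s-1)^2$,
\item the reduced-resolvent term $\int \mathcal{T}\,R\,\mathcal{T}h$,
\item the correction from $h_s-h$ paired with $\mathcal{T}$.
\end{itemize}

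The main obstacle is step 5. It requires a uniform (in $M$ and in $s$ near $1$) spectral gap and resolvent bound for $\mathcal{L}_{s,M}$ on the holomorphic space, so that the second-order remainder is genuinely $O(M^{-2})$. It also requires checking that no further $\log M$ contributions hide in the resolvent term.

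For the resolvent term, I would first bound $\mathcal{T}_{s,M}$ in norm by $\sum_{c>M}c^{-2}\ll 1/M$ and its $s$-derivative by $\sum_{c>M}c^{-2}\log c\ll \log M/M$. With these, all cross terms other than the one isolated above are visibly $O(M^{-2})$, or $O(\log M/M^3)$, which is smaller.
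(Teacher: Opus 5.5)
The paper does not prove this statement. It quotes it from Hensley \cite{hensley1992continued} and uses it only as a black box, essentially to know $1-w_M\asymp 1/M$; in the appendix it is replaced by Kurzweil's explicit bounds. Your outline is a correct sketch of the standard transfer-operator proof, which is in substance the operator-perturbation argument of the cited paper.

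Your constants are right. Write $\tau_M(s)=\int_0^1\mathcal{T}_{s,M}h$. The substitution $y=1/(c+x)$ gives $\tau_M(1)=\mu_{\mathrm{Gauss}}\bigl([0,\frac{1}{M+1}]\bigr)$ and $\partial_s\tau_M(1)=-2\int_0^{1/(M+1)}\log(1/y)\,h(y)\,dy=-\frac{2(\log M+1)}{M\log 2}+O\bigl(\frac{\log M}{M^2}\bigr)$. Solving $(w_M-1)\bigl(\lambda_\infty'(1)-\partial_s\tau_M(1)\bigr)=\tau_M(1)+O(M^{-2})$ with $\lambda_\infty'(1)=-\frac{\pi^2}{6\log 2}$ gives exactly $-\frac{6}{\pi^2M}-\frac{72\log M}{\pi^4M^2}+O(M^{-2})$.

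Compared with the bare citation, your route makes the constants transparent. The $6/\pi^2$ is visibly the Gauss mass of $\{c_1>M\}$ divided by the Lyapunov exponent. The $\log M$ term comes solely from the $s$-dependence of the tail weights $(c+x)^{-2s}$.

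When you write it out, tighten three points.

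First, $\|\mathcal{T}_{s,M}\|\asymp\sum_{c>M}c^{-2s}\asymp M^{1-2s}$. So the bound $O(1/M)$ is not uniform on a fixed neighbourhood of $s=1$; it holds only for $|s-1|\ll 1/\log M$. You therefore need a preliminary localisation. For example, run the perturbation on $|s-1|\le 1/8$, where $\|\mathcal{T}_{s,M}\|\ll M^{-3/4}$, to get $|w_M-1|\ll M^{-3/4}$. Then redo the expansion in that window.

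Second, for $s\neq 1$ the first-order term is $\nu_s(\mathcal{T}_{s,M}h_s)$, where $\nu_s$ is the left eigenvector of $\mathcal{L}_{s,\infty}$, not $\int_0^1$. The discrepancy is $O(|s-1|/M)=O(M^{-2})$, the same type as your $h_s-h$ correction.

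Third, the uniform spectral gap you single out as the main obstacle is not a separate problem. It is inherited from the gap of $\mathcal{L}_{s,\infty}$ near $s=1$ once $\|\mathcal{T}_{s,M}\|\to 0$, which is exactly the regime where Kato's expansion applies. The resolvent term is free of $\log M$ because on the disc $|z-1|<3/2$ one has $\|\mathcal{T}_{s,M}\|\le\sum_{c>M}(c-\tfrac12)^{-2s}$.
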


%
%
The 
next result is contained in \cite[Lemma 3]{MMS_Korobov}
(it is a combination of Lemma 2 and Lemma 3 of \cite{Mosh_A+B}). 

\begin{lemma}
    Let $t \le \sqrt{q}$. 
    Then for some absolute constants $c_1,c_2>0$ one has 
\[
    Z_M (t) = I_1 \bigsqcup \dots \bigsqcup I_T \,,
    \quad \quad c_1 t^{2w_M} \le T\le c_2 t^{2w_M} \,,
\]
where $I_j$ are some disjoint  intervals and for all $j\in [T]$ the following holds 
\[
    [q/t^2] \le |I_j| \le \left(\frac{8 (M+1) q}{t^2} +1 \right) 
    \,.
\]
\label{l:A+B}
\end{lemma}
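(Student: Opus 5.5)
The plan is to reduce the structure of $Z_M(t)$ to the Cantor-type structure of $\overline{Q_M(t)}$, following Moshchevitin's approach. The key observation is that membership of $a/q$ in $Z_M(t)$ depends only on the initial partial quotients $c_1,\dots,c_\nu$ with $q_\nu<t$. For each admissible prefix we then get an interval of $a$. Concretely, take a fraction $u/v=[0;c_1,\dots,c_k]\in\overline{Q_M(t)}$: all $c_j\le M$, $v<t$, and the continuant $\mathcal{K}(c_1,\dots,c_k,1)\ge t$. Let $u'/v'$ be the previous convergent. The set of real $\alpha$ whose expansion begins with $c_1,\dots,c_k$ is the interval with endpoints $u/v$ and $(u+u')/(v+v')$, of length $1/(v(v+v'))$. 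Rescaled by $q$, this gives an interval of integers $a$ of length $\asymp q/(v(v+v'))$.

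\textbf{Step 1: matching the sets.} I would show that $a\in Z_M(t)$ if and only if $a/q$ lies in one of these intervals attached to a maximal admissible prefix, i.e.\ one with $v<t$ but with the next denominator at least $t$. Some care is needed here.
\begin{itemize}
\item Cutting at exactly "next denominator $\ge t$" requires controlling the next partial quotient. The condition $\mathcal{K}(c_1,\dots,c_k,1)\ge t$, i.e.\ $v+v'\ge t$, guarantees that every continuation has next denominator $\ge t$, so no further constraint is imposed.
\item Conversely, one takes the longest convergent with $q_\nu<t$ and checks the two cases: either $q_\nu+q_{\nu-1}\ge t$, so the prefix lies in $\overline{Q_M(t)}$, or one splits according to the next quotient.
\item Since $t\le\sqrt q$, the denominators satisfy $v<t\le\sqrt q$, so $a/q$ has a genuinely long expansion and the prefix is well defined. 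Only the usual ambiguity of the last quotient needs handling.
\end{itemize}
Disjointness of the intervals follows from uniqueness of prefixes: distinct maximal prefixes give disjoint cylinders.

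\textbf{Step 2: interval lengths.} Because $v+v'\ge t$ and $v'<v<t$, we have $t\le v+v'<2t$ and $v\ge t/2$. Hence the real length $1/(v(v+v'))$ lies between $1/(2t^2)$ and $2/t^2$, up to the effect of the largest admissible next quotient. The count of integers $a$ in an interval of real length $\ell$ is $q\ell+O(1)$. To get the stated range $[q/t^2]\le|I_j|\le 8(M+1)q/t^2+1$, I would group the cylinders more finely: merge the sub-cylinders with next quotient $c_{k+1}\le M$, where some next denominator is still $<t$. This accounts for the factor $M+1$ in the upper bound. Adjacent merged pieces may be concatenated, which is why only intervals, not exact cylinders, are claimed.

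\textbf{Step 3: counting $T$.} The number $T$ is comparable to $|\overline{Q_M(t)}|$, or to the number of maximal prefixes. The number of fractions with partial quotients $\le M$ and denominator $<t$ is $\asymp t^{2w_M}$, with absolute constants. This is the standard Hensley/thermodynamic count, via the transfer operator with leading eigenvalue at $s=w_M$ and the fact that $\sum_{v<t}\#\asymp t^{2w_M}$. The constants must be uniform in $M$, and I expect this uniformity to be the main obstacle. I would handle it with the self-similarity $\sum_{\text{prefixes}}(v(v+v'))^{-w_M}\asymp1$ over a "stopping-time" cut set, i.e.\ the Moran/mass-distribution identity. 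Every maximal prefix has $v\asymp t$, so each term is $\asymp t^{-2w_M}$. The cut set is exactly our family of maximal prefixes, so the identity gives $T\asymp t^{2w_M}$ directly, with constants independent of $M$ thanks to the bounded-distortion $v+v'\asymp v$.
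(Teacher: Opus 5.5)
\textbf{There is a genuine gap: Step 1 uses the wrong intervals, and Step 3's count does not give the lower bound.} Your overall plan matches what lies behind the statement: index the blocks by $\overline{Q_M(t)}$, use $t/2\le v<t$ for their lengths, and count them via Hensley. The paper does not reprove the lemma; it quotes \cite[Lemma 3]{MMS_Korobov} (Lemmas 2, 3 of \cite{Mosh_A+B} plus Hensley's count) and identifies the blocks as the intervals $J_{u/v}$ of \eqref{f:u',v'}.

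\textbf{Step 1.} Both endpoints of $J_{u/v}$ carry the partial quotient $M+1$. So the blocks are cut exactly where the partial quotient that carries the denominators past $t$ would exceed $M$. Your blocks are instead the full cylinders with endpoints $u/v$ and $(u+u')/(v+v')$, which allow an arbitrarily large next quotient. Your second case ($q_\nu+q_{\nu-1}<t$, ``split according to the next quotient'') is never assigned to any block.

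Under the literal reading you adopt (only $c_j$ with $q_j<t$ constrained), that case is fatal. For $\nu=0$ it contains every $a$ with $c_1(a)=\lfloor q/a\rfloor\ge t$, i.e.\ the whole block $1\le a\le q/t$. Its length is at least $\sqrt q$, far above $8(M+1)q/t^2+1$ once $t\gg M$. More generally, every $[0;c_1,\dots,c_\nu]$ with $q_\nu+q_{\nu-1}<t$ carries a block of length $\asymp q/(q_\nu t)$. No regrouping in Step 2 can repair a violated upper bound.

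The missing idea is to read $Z_M(t)$ as requiring $c_j\le M$ whenever $q_{j-1}<t$. This is also what the use of Lemma \ref{l:M_crit} at the start of Section \ref{sec:proof} needs; that step fails for $a=1$. In this reading your full cylinders are not even contained in $Z_M(t)$, but the decomposition becomes exact. Let $\nu$ be the last index with $q_\nu<t$, and $c^*$ the least $c$ with $cq_\nu+q_{\nu-1}\ge t$, so $c^*\le c_{\nu+1}\le M$. Then $a/q$ lies strictly between $[0;r_1,\dots,r_l,M+1]$ and $[0;r_1,\dots,r_{l-1},M+1]$ for a unique word $(r_1,\dots,r_l)$ with all $r_i\le M$ and $q_l<t\le q_l+q_{l-1}$. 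That word is $(c_1,\dots,c_\nu,c^*-1)$ if $c^*\ge 2$, and $(c_1,\dots,c_\nu)$ if $c^*=1$. These intervals are the paper's $J_{u/v}$ up to the labelling of the last digit. They give disjointness, one block per word, and lengths between $\asymp q/t^2$ and $\asymp Mq/t^2$.

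\textbf{Step 3.} The cylinders of the words in $\overline{Q_M(t)}$ are pairwise disjoint. So the conformal-measure identity only gives $\sum\mu(C_{u/v})\le 1$, i.e.\ the upper bound $T\ll t^{2w_M}$. These cylinders do not cover the limit set $F_M$, precisely because of your leftover case, so they are not a cut set. Your ``maximal prefixes'' do not form an antichain either: both $(c_1,\dots,c_k)$ and $(c_1,\dots,c_k,1)$ can qualify.

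The sets $J_{u/v}\cap F_M$ do partition $F_M$, but bounded distortion does not make their masses comparable to $t^{-2w_M}$. Each $J_{u/v}$ also contains the sibling cylinders $[0;r_1,\dots,r_{l-1},c,\dots]$ with $r_l<c\le M$. Hence $\mu(J_{u/v})$ can exceed $\mu(C_{u/v})\asymp v^{-2w_M}$ by a factor of order $M$, for instance when $r_l\asymp M/2$. This route alone therefore yields only $T\gg t^{2w_M}/M$. A lower bound with absolute constants needs a further argument controlling how often the last digit $r_l$ is large, or, as in the paper, an appeal to Hensley's counting results.
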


    As we know from Lemma \ref{l:A+B} (also, see definitions \eqref{def:Z_M(t)}---\eqref{def:Q'_M(t)}) the set  $Z_M (t)$ is a Cantor--type set,  more precisely, it is the intersection of the progression (fractions $b/q$ can be reducible) 
    $$
    R_q := \left\{ \frac{b}{q} ~:~ b\in [q]
    \right\}$$ 
    with the  Cantor--type set $Q_M (t)$. 
    In other words, it is a disjoint union of some intervals $J_{u/v}$ of length at least $N = [q/t^2]$ and these intervals are indexed by the set $\overline{Q_M (t)}$.
    All details can be found in \cite[Lemmas 2,3]{Mosh_A+B}.
    It gives us the decomposition 
    \begin{equation}\label{f:Q_M_via_J}
    Q_M (t) = \bigsqcup_{u,v ~:~ \frac{u}{v} \in \overline{Q_M (t)}} J_{u/v} \,, 
    \end{equation}
    where 
    the 
    interval $J_{u/v}$, $\frac{u}{v}=[0;r_1,\dots,r_l] = \frac{p_l}{q_l}$, $r_l\ge 2$ has  ends 
\begin{equation}\label{f:u',v'}
    \frac{u'}{v'}=[0;r_1,\dots,r_{l-1},M+1]\,,  
    \quad \quad 
    \frac{u''}{v''}=[0;r_1,\dots,r_{l-1},r_l-1,M+1] \,.
\end{equation}
    It is easy to see (or consult \cite[formula (7)]{Mosh_A+B}) that $v<v',v''<(M+1)v$ and 
\begin{equation}\label{f:J_uv}
    \frac{1}{v^{2}} <|J_{u/v}|< \frac{2(M+1)}{v^{2}} \,.
\end{equation}
 Also, recall the domino identity for  continuants which we will use above 
\[
    \mathcal{K}_n(x_1,\dots, x_n) = \mathcal{K}_m (x_1,\dots,x_m) \mathcal{K}_{n-m} (x_{m+1}, \dots, x_n) 
    + 
\]
\begin{equation}\label{f:domino}
    \mathcal{K}_{m-1} (x_1,\dots,x_{m-1}) \mathcal{K}_{n-m-1} (x_{m+2}, \dots, x_n) \,.
\end{equation}
Further information about continued fractions can be found in \cite{Hinchin}.

\bigskip



We need the crucial result from \cite[Lemma 4]{MMS_Korobov} (for prime $q$ this  result is actually contained in \cite[Proposition 7]{MMS_popular}). 
The proof 
is 
an application of the Bourgain--Gamburd machine \cite{BG}, based on Helfgott's expansion result \cite{Helfgott_growth}.
Recall that by $\mathcal{Z}$ we denoted the set of all denominators $q$ satisfying the conditions of Theorem \ref{t:main}.

\begin{lemma}
    Let $q \in \mathcal{Z}$, $N$ be a positive integer,  and  
    $A,B \subseteq \Z/q\Z$ be sets.
    Then there is an absolute constant $\kappa>0$ such that 
\begin{equation}\label{f:T-action}
    |\{ (a+2c)(b+2c) = 1 ~:~ a\in A,\, b\in B,\, c \in [N] \}| 
    - \frac{N|A||B|}{q} 
    \ll \sqrt{|A||B|} N^{1-\kappa} \,.
\end{equation}
\label{l:T-action}
\end{lemma}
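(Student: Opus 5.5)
We are asked to plan a proof of Lemma \ref{l:T-action}: the number of solutions of $(a+2c)(b+2c)=1$ is close to its expected value $N|A||B|/q$. The plan is to reduce the count to an exponential-sum (equivalently, operator) estimate for the action of the unipotent elements $\begin{pmatrix}1&2c\\0&1\end{pmatrix}$ and then to invoke expansion in $\SL_2(\Z/q\Z)$ via the Bourgain--Gamburd machine, as in \cite[Lemma 4]{MMS_Korobov}.

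\textbf{Step 1: rewrite the count as a bilinear form.} The equation $(a+2c)(b+2c)=1$ says $b = (a+2c)^{-1}-2c$. Write $\sigma(x)=x^{-1}$ (an involution on units) and $\tau_c(x) = x+2c$. Then the count equals
\[
\sum_{c\in[N]} \langle 1_B,\ \tau_{-c}\sigma\tau_c 1_A\rangle .
\]
So define the averaging operator
\[
T = \frac1N\sum_{c\in[N]} \tau_{-c}\,\sigma\,\tau_c ,
\]
which acts on functions on the projective line over $\Z/q\Z$ via Möbius transformations. Subtract the constant parts, writing $1_A = \frac{|A|}{q} + f_A$ with $f_A$ of mean zero. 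Cauchy--Schwarz then reduces the claim to
\[
\|T f\|_2 \ll N^{-\kappa}\|f\|_2 \qquad \text{for mean-zero } f.
\]
The projective points at infinity and the non-units contribute only lower-order error terms.

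\textbf{Step 2: spectral gap for a non-symmetric random walk.} Each map $\tau_{-c}\sigma\tau_c$ is the Möbius action of the matrix
\[
g_c = \begin{pmatrix}1&-2c\\0&1\end{pmatrix}\begin{pmatrix}0&1\\1&0\end{pmatrix}\begin{pmatrix}1&2c\\0&1\end{pmatrix},
\]
up to sign and determinant normalisation. Let $\mu$ be the uniform measure on $\{g_c\}_{c\in[N]}$, viewed in $\mathrm{PGL}_2$ or, after composing two steps, in $\SL_2$. We need $\|\mu * \cdot\|$ to be $\le N^{-\kappa}$ on the nontrivial part of the quasi-regular representation. Consider $T^*T$, which has the form $\mu^{*}*\mu$, and bound it using
\[
\|T^*T\|^{k} \le \bigl(q\,\|(\mu^{*}*\mu)^{(k)}\|_\infty\bigr)^{1/2}
\]
or a similar estimate. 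This is valid because nontrivial representations of $\SL_2(\Z/q\Z)$ have dimension $\gg q^{1-o(1)}$ (Frobenius quasirandomness for prime $q$, extended to $q\in\mathcal Z$). The Bourgain--Gamburd scheme then proceeds in three stages:
\begin{enumerate}
\item non-concentration of $\mu^{(k)}$ on proper subgroups, for $k\sim\log q/\log N$;
\item $\ell^2$-flattening via the Balog--Szemerédi--Gowers theorem and Helfgott's product theorem \cite{Helfgott_growth};
\item once $\|\mu^{(k)}\|_2^2 \le q^{-3+\epsilon}$, quasirandomness yields the gap.
\end{enumerate}
Since $\mu$ has support of size $N$, the gain per step is a power of $N$, which gives $N^{-\kappa}$.

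\textbf{Main obstacle.} The hard step is the first stage: escaping subgroups for the very structured set $\{g_c\}$. These are conjugates of a fixed involution by a one-parameter unipotent family, for arbitrary $N$, including $N$ small compared to $q$. For $N$ a small power of $q$ I would first show that products of $k$ elements have distinct entries, by an argument of Bourgain--Gamburd type that lifts to $\SL_2(\Z)$ where the word map is injective. The key point is that the $g_c$ generate a free-like semigroup over $\Z$ since $2c\ge2$, as in the continued fraction (Zaremba) semigroup. Then entries of size $< q^{1/2}$ force distinct images mod $q$, giving non-concentration $\mu^{(k)}(H)\le N^{-\delta k}$ on proper subgroups $H$.

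For composite $q\in\mathcal Z$, I would additionally use the affine sieve/multi-scale version \cite{BGS_affine}, \cite{BG_p^n} to control subgroups arising from divisors of $q$. This is exactly where the conditions defining $\mathcal Z$ are needed. Finally, if $N \ge q^{C}$ the bound is trivial after splitting $[N]$ into shorter blocks, so we may assume $N\le q$.
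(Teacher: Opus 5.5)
Your reduction is the right one, and it is the route behind the result the paper cites without proof (\cite[Lemma 4]{MMS_Korobov}, \cite[Proposition 7]{MMS_popular}). Write $\sigma$ for the swap matrix and $U_x$, $L_x$ for the upper and lower unipotent matrices. Then $g_c=U_{-2c}\sigma U_{2c}$ satisfies $g_c^2=I$, so your $T$ is self-adjoint. For prime $q$ the count is exactly $N\langle 1_B,T1_A\rangle$ on $\mathbb{P}^1(\F_q)$, and everything reduces to $\|T\|\le N^{-\kappa}$ on mean-zero functions via the three Bourgain--Gamburd stages (the appendix runs them with $m=\frac14\log_N p$ steps).

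The gap is in the first stage, which you flag as the obstacle but do not actually resolve. Injectivity of the word map and distinctness of images modulo $q$ only give $\ell^\infty$-control of $\mu^{(k)}$. They say nothing about $\mu^{(k)}(gH)$ for a proper subgroup $H$: a Borel subgroup has about $q^2$ elements while $\mu^{(k)}$ lives on at most $N^k<q$ points, so a priori the whole walk could sit inside $H$.

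What is missing is the law argument:
\begin{enumerate}
\item For large prime $q$, every proper subgroup of $\SL_2(\F_q)$ has bounded order or is metabelian, so it satisfies $[[x,y],[z,w]]=1$.
\item If $\mu^{(2k)}(H)\ge \mu^{(k)}(gH)^2$ were not small, many short words would satisfy this identity modulo $q$. Taking $k\le c\log q/\log N$ with $c$ small enough that the commutator words still have entries below $q/2$, the identity then holds in $\langle g_c\rangle\cong \ast_{c\in[N]}\Z/2\Z$.
\item A free-group argument confines such words to a set of size $k^{O(1)}$.
\item Kesten's bound $\max_x\mu^{(2k)}(x)\le (2\sqrt{N-1}/N)^{2k}$ for the walk on the $N$-regular tree then gives $\mu^{(2k)}(H)\le N^{-\delta k}$.
\end{enumerate}
This is the ``Kesten's lemma and free group computations'' step the paper alludes to (\cite{Kesten}, \cite[Lemma 12]{MMS_popular}).

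Your structural heuristic also needs correcting. The $g_c$ are involutions, and $g_{c_1}\cdots g_{c_k}$ is, up to a trailing $\sigma$, an alternating product $U_{-2c_1}L_{2(c_1-c_2)}U_{2(c_2-c_3)}\cdots$ with signed entries. So the group is a free product of copies of $\Z/2\Z$, with injectivity coming from ping-pong for $\langle U_2,L_2\rangle$. It is not a positive free semigroup as for Zaremba's generators.

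Two of your boundary reductions are also wrong, because the displayed bound genuinely fails in those regimes; they must be excluded by hypotheses rather than dismissed.
\begin{itemize}
\item \textbf{Large $N$.} Nothing is trivial here. For an odd prime $q$ and $A=B=\{0\}$ the count is $\#\{c\le N: 4c^2\equiv 1\}\approx 2N/q$, against the main term $N/q$. This violates the bound once $N^{\kappa}$ is large compared with $q$. So one needs $N\le q^{O(1)}$; in that range your block splitting does reduce to $N\le q^{c}$, at the cost of shrinking $\kappa$. This is harmless, since the paper only applies the lemma with $N$ a small power of $q$.
\item \textbf{Composite $q$.} Representations pulled back from $\SL_2(\Z/d\Z)$ with small $d\mid q$ have small dimension, and non-units are not of lower order. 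For $q=2p\in\mathcal Z$, $A$ the even residues and $B=\Z/q\Z$, there are no solutions at all, while $N|A||B|/q=Np$ far exceeds $\sqrt{|A||B|}N^{1-\kappa}\asymp pN^{1-\kappa}$. So the composite case needs extra hypotheses (e.g.\ all prime factors of $q$ large in terms of $N$) before a multiscale argument can work.
\end{itemize}

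A minor point: the quasirandomness step should read $\lambda^{2k}\le |G|\,\|\nu^{(k)}\|_2^2/d_{\min}$, which puts $q^2$, not $q$, inside your root. As written, your inequality fails for $\nu$ uniform on a Borel subgroup.
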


    Lemma \ref{l:T-action} was generalized in \cite[Theorem 3]{sh_BG} (in \cite{sh_BG} we considered the case of prime $q$ but the argument of \cite{MMS_Korobov} gives us the same for general $q \in \mathcal{Z}$). 
    Below, the constant $\kappa <1$ may vary from line to line.

\begin{lemma}
    Let $q \in \mathcal{Z}$, $\delta \in (0,1]$, 
    $N\ge 1$ be a sufficiently large integer, $N\le q^{c\delta}$ for an absolute constant $c>0$, 
    $A,B\subseteq \Z/q\Z$ be sets, and $g\in \SL_2 (\Z/q\Z)$ be a nonlinear map. 
    Suppose that $S$ is a set, 
    $S \subseteq [N]\times [N]$, $|S| \ge N^{1+\delta}$.
    Then there is 
    a
    constant $\kappa = \kappa (\d) >0$ such that 
\begin{equation}\label{f:BG_new}
    |\{ g(\a+a) = \beta+b ~:~ (\a,\beta) \in S,\,a\in A,\, b\in B \}|
    - \frac{|S||A||B|}{q}
    \ll_g \sqrt{|A||B|} |S|^{1-\kappa} \,.
\end{equation}
\label{l:BG_new}
\end{lemma}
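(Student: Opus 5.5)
The statement is Lemma \ref{l:BG_new}. It is presented as a generalization of Lemma \ref{l:T-action}, obtained in \cite[Theorem 3]{sh_BG} for prime $q$. My plan is to run the Bourgain--Gamburd machine for the measure on $\SL_2(\Z/q\Z)$ induced by $S$ and the fixed nonlinear map $g$. Then I would transfer the prime case to $q\in\mathcal{Z}$ using the composite-modulus expansion input already used in \cite{MMS_Korobov}.

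\textbf{Step 1: reduce to a matrix operator.} Write the count as
\[
\sum_{(\a,\beta)\in S}\sum_{x} A(x-\a)\,B(g x-\beta).
\]
Let $u_\a$ be the unipotent translation $x\mapsto x+\a$. The inner sum is then $\langle \rho(u_{-\beta} g u_\a) \mathbf 1_A,\mathbf 1_B\rangle$, where $\rho$ is the natural action of $\SL_2(\Z/q\Z)$ on functions on $\mathbb{P}^1(\Z/q\Z)$; the point at infinity contributes $O(|S|)$, which is negligible. Put
\[
\mu=\frac{1}{|S|}\sum_{(\a,\beta)\in S}\delta_{u_{-\beta}gu_\a}.
\]
The quantity to bound is $|S|\,\langle \rho(\mu)\mathbf 1_A,\mathbf 1_B\rangle$. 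Subtracting the projection onto constants produces the main term $|S||A||B|/q$. By Cauchy--Schwarz it then suffices to show that
\[
\|\rho(\mu)|_{\mathbf 1^\perp}\|\ll |S|^{-\kappa}.
\]

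\textbf{Step 2: bound the operator norm via a convolution power.} Set $\nu=\mu^*\ast\mu$ and estimate $\|\rho(\nu^{*n})|_{\mathbf 1^\perp}\|^{1/2n}$ for some $n\asymp 1/\delta$, following Bourgain--Gamburd in three stages. In the first stage, all entries of the matrices in the support of $\mu$ are polynomials in $\a,\beta$ of size $\ll_g N^{O(1)}$. Since $N^{O(n)}<q$ because $N\le q^{c\delta}$, products of length $n$ lift to $\SL_2(\Z)$ (more precisely, to a fixed number field determined by the entries of $g$), and there is no wrap-around. In that lifted setting I would use a free-group or ping-pong argument for the words in $u_\a g$ to obtain the non-concentration bound
\[
\nu^{*n}(hH)\le |S|^{-\kappa}
\]
for every proper subgroup $H$ and every coset $hH$. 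The nonlinearity of $g$, i.e.\ $g$ not upper triangular, is what prevents all elements from lying in a Borel subgroup. The density assumption $|S|\ge N^{1+\delta}$ is what makes $\ell^2$-flattening start: the $\ell^2$ mass of $\mu$ is only $|S|^{-1/2}$, and one needs $|S|$ to exceed the $N$ translations on a single line in order to escape the unipotent subgroup. In the second stage, Helfgott's product theorem, and for composite $q\in\mathcal{Z}$ the extension used in \cite{MMS_Korobov}, gives $\ell^2$-flattening until $\|\nu^{*m}\|_2^2\le q^{-3+\varepsilon}$. In the third stage, quasirandomness of $\SL_2(\Z/q\Z)$ (Frobenius: every nontrivial irreducible representation has dimension $\gg q^{1-o(1)}$ over the relevant primes) converts this into the spectral gap. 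Taking $2n$-th roots yields the bound $|S|^{-\kappa}$, and Step 1 then finishes the proof.

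\textbf{Main obstacle.} The hardest step is the first stage, the escape from subgroups with a saving that is polynomial in $|S|$ and not merely in $N$. The set $S$ is an arbitrary subset of $[N]^2$, so it may concentrate on curves. I would first try to bound the number of coincidences $u_{-\beta}gu_\a=h\,u_{-\beta'}gu_{\a'}$ inside a Borel subgroup or a dihedral-type subgroup. These coincidences reduce to polynomial equations in $\a,\beta$ of bounded degree, depending on $g$. A Bezout or Schwartz--Zippel count over $[N]^2$ then shows that at most $O_g(N)$ points of $S$ lie on any such curve. Because $|S|\ge N^{1+\delta}$, a fraction at most $N^{-\delta}\le|S|^{-\delta/2}$ of the mass lies in any coset. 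This is where the dependence $\kappa=\kappa(\delta)$ and the implicit constant $\ll_g$ enter.
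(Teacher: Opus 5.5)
\textbf{There is a genuine gap, and it sits in your first stage.}

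The paper does not reprove this lemma. It quotes \cite[Theorem 3]{sh_BG} for prime $q$ and transfers it to $q\in\mathcal{Z}$ via the argument of \cite{MMS_Korobov}. It also remarks that this proof does \emph{not} use Kesten's lemma or the free-group computations of the first stage of the Bourgain--Gamburd machine. Your Step 1 (for prime $q$) and the overall architecture are fine.

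Your one-step count is also correct, and can be done directly mod $p$. A Borel coset is $\{x : x\xi=\eta\}$, and $u_{-\beta}gu_\alpha\xi=\eta$ means $\beta=g(\xi+\alpha)-\eta$. So at most $N$ points of $S$ lie in it, at most $2N$ for torus normalizers, and $O(1)$ for exceptional subgroups. Hence $\mu(hH)\ll N^{-\delta}$.

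The problem is that this is all your first stage delivers. The ``free-group or ping-pong'' route is not available for this family. Take $gx=-1/x$, i.e. $g=\begin{pmatrix}0&-1\\1&0\end{pmatrix}$, and put $w_{\alpha,\beta}=u_{-\beta}gu_\alpha$. Then:
\begin{itemize}
\item $\mathrm{tr}\,w_{\alpha,\beta}=\alpha-\beta$, so each $w_{\alpha,\alpha}$ has order $4$;
\item $w_{a,b}w_{c,a}=u_{-b}g^2u_c=-u_{c-b}$, which gives relations among distinct generators such as $w_{1,1}w_{2,1}=w_{2,1}w_{2,2}$.
\end{itemize}

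A one-step coset bound also does not improve under convolution. All that follows is $\nu^{*k}(hH)\le\max_y\mu(yH)\ll N^{-\delta}$ for every $k$. This is not enough for the flattening stage, for two reasons.
\begin{itemize}
\item When $\|\nu^{*2k}\|_2\ge K^{-1}\|\nu^{*k}\|_2$, Balog--Szemer\'edi--Gowers gives an approximate group $H$ with $\nu^{*k}(xH)\ge K^{-C}$. To keep $H$ out of a proper subgroup you need every coset of a proper subgroup to have mass below $K^{-C}$. Here $K$ is a power of $\|\nu^{*k}\|_2^{-1}$, so in the late stages it is a power of $q$.
\item $\|\nu^{*m}\|_2^2\ge |S|^{-2m}\ge N^{-4m}$, so reaching $q^{-3+\varepsilon}$ takes $m\gg \log q/\log N$ steps, not $n\asymp 1/\delta$.
\end{itemize}

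The lemma allows any sufficiently large $N\le q^{c\delta}$, including $\log N=o(\log q)$. In that range $N^{-\delta}$ is not a power saving in $q$, and your flattening stalls. Your argument does go through when $N\ge q^{\eta}$ for a fixed $\eta>0$, but only with $\kappa=\kappa(\delta,\eta)$, not uniformly as stated.

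For the uniform statement you need escape from proper subgroups that improves exponentially over $\asymp\log q/\log N$ steps. That is the job girth does in the classical machine, and \cite{sh_BG} obtains it without girth. Your proposal supplies neither the classical input nor a substitute.
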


    Taking $S= [N]\times [N]$, $\d=1$ and $gx=1/x$, we 
    get 
    an analogue of Lemma \ref{l:T-action} for the correspondent  two--dimensional family of modular transformations.
    The proof of Lemma \ref{l:BG_new} does not use Kesten's Lemma \cite{Kesten} and free group computations in the so-called first stage of the Bourgain--Gamburd machine (see \cite{BG} and \cite[Lemma 12]{MMS_popular}).

\bigskip 

Lemmas \ref{l:T-action}, \ref{l:BG_new} give us a direct consequence. 

\begin{corollary}   
     Let $q \in \mathcal{Z}$, 
     $\Lambda_1,\Lambda_2 \subset \Z/q\Z$ be sets, $I=[N]$ and  $A=I \dotplus \Lambda_1$, 
     $B=I \dotplus \Lambda_2$.
     Then there is an absolute constant $\kappa >0$ such that 
\begin{equation}\label{f:chi_interval}
    |A\cap B^{-1}| 
    - \frac{|A||B|}{q} 
    \ll 
    \sqrt{|A||B|} N^{-\kappa} \,.
\end{equation} 
\label{c:A_A*_asymptotic}
\end{corollary}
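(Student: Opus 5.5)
The idea is to smooth the indicator of $A=I\dotplus\Lambda_1$ along the interval direction and then apply Lemma \ref{l:T-action} (or Lemma \ref{l:BG_new}) to the shifted copies. Because $A$ is a direct sum, each $a\in A$ has a unique representation $a=\lambda_1+i$ with $\lambda_1\in\Lambda_1$, $i\in I$, and $|A|=N|\Lambda_1|$; likewise $|B|=N|\Lambda_2|$.

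\emph{Averaging.} For a shift $c\in[N']$ with $N'=N^{1-\eta}$ ($\eta>0$ small), the translate $A-2c$ differs from $A$ by at most $2\cdot 2N'|\Lambda_1|$ elements, since each fibre $\lambda_1+I$ loses and gains at most $2c$ points. The same holds for $B-2c$. Hence
\[
|A\cap B^{-1}| = \frac{1}{N'}\sum_{c\in[N']} |(A-2c)\cap (B-2c)^{-1}| + O\Big(N'(|\Lambda_1|+|\Lambda_2|)\Big).
\]
Here I use that $|X\cap Y^{-1}|$ changes by at most $|X\triangle X'|+|Y\triangle Y'|$ when $X\to X'$ and $Y\to Y'$. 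Unwinding, the main sum equals $\frac1{N'}\,|\{(a+2c)(b+2c)=1 : a\in A-2c\ldots\}|$. More cleanly, $|(A-2c)\cap(B-2c)^{-1}|$ is the number of pairs $(a,b)\in A\times B$ with $(a-2c)(b-2c)=1$. Replacing $c$ by $-c$, or equivalently running $c$ over $[-N',-1]$, or applying Lemma \ref{l:T-action} with the sets $A-4N'$ and $B-4N'$, brings this into exactly the form of \eqref{f:T-action}.

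\emph{Applying Lemma \ref{l:T-action}.} Lemma \ref{l:T-action} with $N'$ in place of $N$ gives
\[
\sum_{c\in[N']}(\cdots)=\frac{N'|A||B|}{q}+O\big(\sqrt{|A||B|}\,N'^{1-\kappa}\big).
\]
Dividing by $N'$ produces the main term $|A||B|/q$ with error $\sqrt{|A||B|}\,N^{-(1-\eta)\kappa}$.

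\emph{Boundary error.} It remains to check that the boundary term $N'(|\Lambda_1|+|\Lambda_2|)=N^{-\eta}(|A|+|B|)$ is $\ll\sqrt{|A||B|}\,N^{-\kappa'}$. This is the main obstacle: the bound holds only when $|A|$ and $|B|$ are comparable. If, say, $|A|\ge|B|$ and $|A|/|B|\ge N^{\eta}$, the trivial estimate $|A\cap B^{-1}|\le|B|\le\sqrt{|A||B|}\,N^{-\eta/2}$ handles that case. The main term $|A||B|/q\le|B|$ is similarly small, since $|A|\le q$, which is fine. Otherwise $|A|\le N^{\eta}|B|$, so $|A|\le N^{\eta/2}\sqrt{|A||B|}$, and the boundary term is $\ll\sqrt{|A||B|}\,N^{-\eta/2}$.

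Choosing $\eta$ with $\eta/2\le\kappa/2$, say $\eta=\kappa$, gives \eqref{f:chi_interval} with a new constant $\kappa$. The only delicate points are the sign and shift bookkeeping and this balancing of $|A|$ against $|B|$.
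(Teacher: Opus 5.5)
Your proof is correct, but it takes a different route from the paper's.

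\textbf{Your route.} You average over one \emph{common} translate. Because $A$ and $B$ are unions of intervals of length $N$, you have $|A\triangle(A+2c)|\le 4c|\Lambda_1|$ for $c\le N'=N^{1-\eta}$. So $|A\cap B^{-1}|$ equals the average of $|(A+2c)\cap(B+2c)^{-1}|$ up to $O(N^{-\eta}(|A|+|B|))$, and the averaged quantity is exactly the diagonal count of Lemma~\ref{l:T-action}. Averaging over $A+2c$, $B+2c$ rather than $A-2c$, $B-2c$ would let you skip the sign bookkeeping.

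\textbf{The paper's route.} The paper smooths the two sets \emph{independently}. It writes $I=h^{-1}(H*I)+\varepsilon$ with $H=[h]$ and $h\sim\sqrt N$. The main term then becomes the two-parameter count $(a+h_*)(a'+h'_*)=1$ with $(h_*,h'_*)\in H\times H$. This is handled by Lemma~\ref{l:BG_new} with $S=H\times H$ and $gx=1/x$. The pieces involving $\varepsilon$ are supported on $H$-translates of $\Lambda_1,\Lambda_2$, and they are bounded by the same lemma.

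\textbf{Comparison.} Your version is shorter and needs only the one-parameter Lemma~\ref{l:T-action}. The price is that your boundary loss is a trivial additive bound in $|A|+|B|$, which is why you need the case split on $|A|/|B|$. You handle that split correctly: in the unbalanced case both $|A\cap B^{-1}|$ and $|A||B|/q$ lie in $[0,\min\{|A|,|B|\}]$. For a clean statement, take $\eta=\kappa\le 1/2$ and $N'=\lfloor N^{1-\kappa}\rfloor$; you then end with exponent $\kappa/2$, as the paper does.

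In the paper, every error term is automatically of size $\frac{|A||B|}{q}\cdot\frac hN$ or $\sqrt{|A||B|}\,h^{1-\kappa}N^{-1/2}$, so no balancing is needed. The paper's argument also rests on Lemma~\ref{l:BG_new}, whose proof, as the paper remarks, bypasses the first stage (Kesten's lemma and free-group computations) of the Bourgain--Gamburd machine. That fits the author's later remark that finding a single $a$ in Theorem~\ref{t:Zaremba} for prime $q$ does not require that stage. Going through Lemma~\ref{l:T-action} gives up this feature.
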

\begin{proof}
    Let $h\sim  \sqrt{N}$ be an integer parameter and write $I(i) = h^{-1} (H*I)(i)+\eps(i)$, where $H=[h]$, $\| \eps\|_\infty =1$ and the function $\eps$ is supported on two  shifts  of the interval $H$.
    We have 
\[
    |A\cap B^{-1}| = \sum_{x\in A\cap B^{-1}} 1 = 
    \sum_{(i+\la)(i'+\la') \equiv 1\pmod q}  \Lambda_1 (\la) \Lambda_2 (\la') I(i) I(i')
\]
\[
    =
    \sum_{(i+\la)(i'+\la') \equiv 1\pmod q}  \Lambda_1 (\la) \Lambda_2 (\la') (h^{-1} (H*I)(i)+\eps(i)) (h^{-1} (H*I)(i')+\eps(i'))
\]
\[
    =
    h^{-2} \sum_{a,a',h_*,h'_* ~:~ (a+h_*)(a'+h'_*)\equiv 1\pmod q} A(a) B(a') H(h_*) H(h'_*) + \mathcal{E}
\]
\begin{equation}\label{tmp:29.04_2-}
    = 
    \frac{|A||B|}{q} 
    + \mathcal{E} + O(\sqrt{|A||B|} N^{-\kappa/2}) \,,
\end{equation}
    where we have used Lemma \ref{l:BG_new}
    and 
    the error term $\mathcal{E}$ can be estimated as  
\[
    |\mathcal{E}| \ll  
    h^{-1} \sum_{a,a',h_*,h'_* ~:~ (a+h_*)(a'+h'_*)\equiv 1\pmod q} (\Lambda_1 (a) B(a') + A(a) \Lambda_2 (a')) 
    H(h_*) H(h'_*)
\]
\[
    +
    \sum_{a,a',h,h' ~:~ (a+h_*)(a'+h'_*)\equiv 1\pmod q} \Lambda_1 (a) \Lambda_2 (a')   H(h_*) H(h'_*)
\]
\begin{equation}\label{tmp:29.04_2}
    \ll 
     \frac{|A||B|}{q} \left( \frac{h}{N} + \frac{h^2}{N^2} \right) + \sqrt{|A||B|} \cdot  \left( \frac{h^{1-\kappa}}{\sqrt{N}} + \frac{h^{2-\kappa}}{N} \right) 
    \ll 
    \frac{h|A||B|}{Nq} + \frac{\sqrt{|A||B|} h^{1-\kappa}}{\sqrt{N}} \,.
\end{equation} 
    Here we have used that $h\le \sqrt{N}$ and applied Lemma \ref{l:BG_new} again. 
    Note that there are better bounds since $A$ is $I$--invariant and not just $H$--invariant, but we do not need such improvements.
    Finally, returning to \eqref{tmp:29.04_2-} and assuming that $\kappa \le 1$, we get 
\[
    |A\cap B^{-1}| 
    - \frac{|A||B|}{q} 
    \ll 
    \sqrt{|A||B|} N^{-\kappa/2} +
    \frac{h|A||B|}{Nq} 
    + \frac{\sqrt{|A||B|} h^{1-\kappa}}{\sqrt{N}} 
    \ll 
    \sqrt{|A||B|} N^{-\kappa/2}
\]
    as required. 
$\hfill\Box$
\end{proof}

\bigskip

Let $q \in \mathcal{Z}$, $\Lambda_1,\Lambda_2 \subset \Z/q\Z$ be sets, $I=[N]$ and  $A=I \dotplus \Lambda_1$, $B=I \dotplus \Lambda_2$.
Corollary \ref{c:A_A*_asymptotic} allows us to compute the size of the intersection $A\cap B^{-1}$, but what is the structure of this set? 
Using 
a suitable variant 
of the Bourgain--Gamburd machine,  as was done in Lemmas \ref{l:T-action}, \ref{l:BG_new}, one can show that the set  $A\cap B^{-1}$ is uniformly distributed inside $A$ in the sense  that  Fourier coefficients  of the characteristic function $(A\cap B^{-1}) (x)$ and the function $A(x) \cdot \frac{|A\cap B^{-1}|}{|A|}$ are close.
However, another application of Lemma \ref{l:T-action}, \ref{l:BG_new} shows that $A\cap B^{-1}$ is uniformly distributed inside $A$ in an even stronger sense.

\begin{lemma}
   Let $q \in \mathcal{Z}$, $\Lambda_1,\Lambda_2 \subset \Z/q\Z$ be sets, $I=[N_1]$ and  $A=I \dotplus \Lambda_1$, $B=I \dotplus \Lambda_2$. 
   Put $C=A\cap B^{-1}$ and 
   \[ 
        \ov{C} = \ov{C} (N_2) = C+[-N_2/2,N_2/2] \,, 
   \]
   where $N_2\le N_1$. 
   Then 
\begin{equation}\label{f:A_A*_str}
    |A|
    - O\left( q 
    \sqrt{\frac{|A|}{|B|}} \cdot N^{-\kappa}_2 \right) 
    \le |\ov{C}| \le |A|\left( 1+ \frac{2N_2}{N_1} \right) 
\end{equation}
\label{l:A_A*_str}
\end{lemma}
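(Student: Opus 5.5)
The upper bound is trivial and I would dispose of it first. Since $C\subseteq A=I\dotplus\Lambda_1$ with $I=[N_1]$, every point of $\ov{C}$ lies in $\Lambda_1+[1-N_2/2,N_1+N_2/2]$. Hence
\[
|\ov{C}|\le |\Lambda_1|(N_1+N_2) = |A|\left(1+\frac{N_2}{N_1}\right)\le |A|\left(1+\frac{2N_2}{N_1}\right).
\]

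For the lower bound I would count the points of $A$ that are \emph{not} within distance $N_2/2$ of $C$. Set $D=A\setminus \ov{C}$. Then $|\ov{C}|\ge |A\cap\ov{C}| = |A|-|D|$, so it suffices to show $|D|\ll q\sqrt{|A|/|B|}\,N_2^{-\kappa}$. Every $x\in D$ has the property that the whole window $x+[-N_2/2,N_2/2]$ misses $C$. Put $N=\lfloor N_2/4\rfloor$ and $J=[N]$. For each $x\in D$ and each shift $j$ in $J$ (shifted to be centred), the point $x+j$ is not in $B^{-1}$ whenever it lies in $A$.

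The key is to lower-bound the incidences $|\{(x,j): x\in D,\ j\in J,\ x+j\in A\}|$. Since $A$ is a union of intervals of length $N_1\ge N_2$, for $x\in A$ the point $x+j$ fails to lie in $A$ only when $x$ is within $N$ of an endpoint. Rather than handle these boundary points separately, I would thin $D$ by a factor $\gg 1$. Equivalently, cut each interval $\lambda+I$ of $A$ into consecutive blocks of length $N$; the set $D$ then gives a union of such blocks $\Lambda'+J$, where $\Lambda'$ consists of block starting points whose block meets $D$ only near its middle part. This yields a set $A'=\Lambda'\dotplus J\subseteq A$ with $|A'|\gg |D|-O(|A|N_2/N_1)$ and $A'\cap B^{-1}=\emptyset$. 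The boundary error $O(|A|N_2/N_1)$ is the delicate bookkeeping step. If it is not absorbed by the main term, I would bypass it by choosing blocks of $D$ directly, using that any $x\in D$ has all of $x+[-N,N]\cap A$ outside $C$ and that at least one half-window of length $N$ lies inside $A$.

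Now apply Corollary~\ref{c:A_A*_asymptotic} to $A'=J\dotplus\Lambda'$. The set $B$ is $I$-invariant, hence $B=J\dotplus\Lambda_2''$ after cutting $I$ into blocks of length $N$, up to negligible edge effects. This gives
\[
0=|A'\cap B^{-1}| = \frac{|A'||B|}{q}+O\left(\sqrt{|A'||B|}\,N^{-\kappa}\right).
\]
Therefore $\sqrt{|A'|}\ll q|B|^{-1/2}N_2^{-\kappa}$, that is, $|A'|\ll q^2N_2^{-2\kappa}/|B|$. Combined with $|A'|\le|A|$, this gives $|A'|\le\sqrt{|A'|\,|A|}\ll q\sqrt{|A|/|B|}\,N_2^{-\kappa}$, which is exactly the shape of \eqref{f:A_A*_str} after renaming $\kappa$.

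The main obstacle is producing from $D$ a genuinely interval-invariant set $A'$ of size $\gg|D|$ that avoids $B^{-1}$ entirely, so that the corollary can be applied. This requires the windows of length $N_2$ around points of $D$ to be usable as translates of the common interval $J$, while keeping track of the edges of the intervals of $A$ and $B$.
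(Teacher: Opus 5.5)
Your argument is correct, and it takes a genuinely different route from the paper. The paper double-counts $|\ov{C}\cap B^{-1}|$. From below it uses $C\subseteq\ov{C}$ together with Corollary~\ref{c:A_A*_asymptotic} for $A\cap B^{-1}$, which gives $|A||B|/q$. From above it cuts the components of $\ov{C}$ (each of length $\ge N_2$) into blocks of length $h\sim N_2^{1/2}$ and applies the corollary again, which gives $|\ov{C}||B|/q$. Comparing the two main terms yields the bound, with exponent $\kappa/4$.

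You instead make a single ``emptiness'' application: the part of $A$ far from $C$ is covered by a $J$-invariant set $A'$ with $A'\cap B^{-1}=\emptyset$, and the corollary forces such a set to be small. This buys a strictly stronger conclusion, $|A\setminus\ov{C}|\ll q\sqrt{|A|/|B|}\,N_2^{-\kappa}$. So $\ov{C}$ actually covers almost all of $A$, rather than merely having comparable size; note that $\ov{C}$ may spill up to $|\Lambda_1|N_2$ points outside $A$, which is not small when $N_2\asymp N_1$. This covering form is what is really used right after the lemma and in Lemma~\ref{l:dense_subint}. Your route also needs only the lower-bound direction of the corollary, so you never have to give $\ov{C}$ and $B$ a common block structure for an upper bound. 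The paper's route, in exchange, needs no covering construction.

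Two points should be tidied.

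\emph{Keep only your fallback construction.} The fixed grid on the intervals of $A$ loses $O(|A|N_2/N_1)$, which is not absorbed when $N_2\asymp N_1$, and $N_1=N_2$ is exactly the case used afterwards. The half-window version is clean. For $x\in D$, one of $[x-2N,x]$ or $[x,x+2N]$ lies in the interval of $A$ containing $x$, since $N_1\ge 4N$. That window also lies in $x+[-N_2/2,N_2/2]$, hence in $A\setminus B^{-1}$. The union of these windows is a union of intervals with at least $2N+1$ points each. Cutting each such interval from its left end into $N$-blocks keeps more than half of it, so $A'=J\dotplus\Lambda'\subseteq A\setminus B^{-1}$ with $|A'|\ge |D|/2$. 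Alternatively, use a left-aligned and a right-aligned grid and add the two bounds.

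\emph{Replace $B$ by a $J$-invariant subset.} $B$ itself is not $J$-invariant, and the cut-off is not negligible: it can be up to a quarter of $B$. Apply the corollary to the subset $B''\subseteq B$ of full $N$-blocks, which has $|B''|\ge 3|B|/4$, and use monotonicity, $A'\cap (B'')^{-1}\subseteq A'\cap B^{-1}=\emptyset$.

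With these two fixes, $\sqrt{|A'||B''|}\ll qN^{-\kappa}$, and your geometric-mean step with $|A'|\le|A|$ gives the claim, even with the full exponent $\kappa$.
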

\begin{proof}
    The upper bound in \eqref{f:A_A*_str} is trivial and hence it is enough to obtain the lower one. 
    On the one hand,  since $C\subseteq \ov{C}$, then by Corollary \ref{c:A_A*_asymptotic} one has 
\begin{equation}\label{tmp:14.05_1}
   |\ov{C} \cap B^{-1}| \ge |C\cap B^{-1}| = |A\cap B^{-1}|
   = 
   \frac{|A||B|}{q} 
    +
    O(\sqrt{|A||B|} N^{-\kappa}_1) \,.
\end{equation}
    On the other hand, we can use  the same corollary to obtain an upper bound for $|\ov{C} \cap B^{-1}|$.
    Indeed, one can easily assume that $\ov{C}$ is the disjoint union of intervals and therefore Corollary \ref{c:A_A*_asymptotic} is applicable. 
    More precisely, let $h\sim N^{1/2}_2$ be an integer parameter. 
    We have $\ov{C} = \bigsqcup_{j=1}^r I_j$, where intervals $I_j$ have sizes at least $N_2$. 
    Splitting each interval into subintervals of length $h$, we obtain 
\[
    \ov{C} \subseteq C_1 \bigsqcup C_2 \,,
\]
    where $C_1=[h] \dotplus \Lambda$, $C_2=[h] \dotplus \Lambda'$ and $|C_2|\le r h$. 
    In particular, $|C_1| \ge |\ov{C}|/2 \ge r N_2/2$ and therefore
    $|C_2| \le 2h|C_1|/N_2$. 
    By Corollary \ref{c:A_A*_asymptotic} it follows that 
\[
    |\ov{C} \cap B^{-1}| \le \frac{|C_1||B|}{q} + 
    \frac{|C_2||B|}{q} + O(\sqrt{|C_1||B|} h^{-\kappa}) 
\]
\begin{equation}\label{tmp:14.05_2}
    \le 
    \frac{|\ov{C}||B|}{q}
    +
    \frac{2h|C_1||B|}{N_2 q}
    +
    O(\sqrt{|C_1||B|} h^{-\kappa}) 
    =
     \frac{|\ov{C}||B|}{q}
     + 
     O\left(\sqrt{|\ov{C}||B|} N_2^{-\kappa/4} \right) \,.
\end{equation}
    Notice that $|\ov{C}| \ll |A|$ thanks to the condition $N_2\le N_1$. 
    Comparing bounds \eqref{tmp:14.05_1}, \eqref{tmp:14.05_2}, we derive
\[
    |A|- O\left(
    q
    \sqrt{\frac{|A|}{|B|}} \cdot N^{-\kappa/4}_2 \right) 
    \le |\ov{C}| 
\]
    as required. 
%
$\hfill\Box$
\end{proof}

\bigskip 

In particular, Lemma \ref{l:A_A*_str} shows that if $A=B = I \dotplus \Lambda$, $N_1=N_2$ and 
\begin{equation}\label{cond:initial}
    |A| N^{\kappa/2}_2 \gg q^{1+o(1)}  \,,
\end{equation}
then $A^{-1}$ intersects 
\begin{equation}\label{cons:initial}
    |\Lambda| \left( 1- O(N_2^{-\kappa/2}) \right)
\end{equation} 
intervals of the set $A$. 
Indeed, it suffices to split the set $\Lambda = \{\la_1,\dots,\la_s\}$ into two sets with $\la_j$ having even/odd indices and apply the lemma above to each half.
Finally, note that the condition \eqref{cond:initial} is natural, since in this case Corollary \ref{c:A_A*_asymptotic} gives us an asymptotic formula for the size of $A\cap A^{-1}$.
We will use this and even stronger conditions of this kind in Section \ref{sec:proof}. 

In the same spirit, let us formulate another result regarding $A\cap A^{-1}$.
Let $A = I \dotplus \Lambda$, $|I|=N$ and $N_*\le N$ be an integer parameter. 
Split any interval $I+\la$, $\la \in \Lambda$ into subintervals of length $N_*$. 
We say that an interval $I+\la$, $\la \in \Lambda$ is {\it good} (or {\it $N_*$--good}) if $A^{-1}$ intersects at least $\frac{|I|}{N_*} \cdot (1-O(N^{-\kappa/2}_*))$ of subintervals of length $N_*$, and {\it bad} otherwise. 
It is easy to see that any $N_*$--good interval is automatically $2N_*$--good. 

\begin{lemma}
   Let $q \in \mathcal{Z}$, $A = I \dotplus \Lambda$, $|I|=N$, $N_* \le N$ and 
\[
    \tilde{A} = \bigsqcup_{\la \in \Lambda,\, I+\la \mbox{ is good }} (I+\la) \subseteq A \,. 
\]
    Then 
\begin{equation}\label{f:dense_subint}
    |A\setminus \tilde{A}| \ll  q N^{-\kappa/2}_* \,.
\end{equation}
\label{l:dense_subint}
\end{lemma}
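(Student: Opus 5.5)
We argue by contradiction, in the spirit of Lemma \ref{l:A_A*_str}, comparing a lower and an upper bound for one intersection with $A^{-1}$. Split every interval $I+\la$, $\la\in\Lambda$, into subintervals of length $N_*$; up to a negligible boundary piece, this represents $A$ as $[N_*]\dotplus \Lambda_*$ with $|\Lambda_*| \approx |A|/N_*$. Let $\Lambda_{bad}\subseteq \Lambda$ index the bad intervals and put $A_{bad} = A\setminus \tilde A = I\dotplus \Lambda_{bad}$. Let $\mathcal{D}\subseteq \Lambda_*$ index those subintervals of length $N_*$ inside bad intervals that do \emph{not} meet $A^{-1}$, and put $D=[N_*]\dotplus \mathcal{D}$. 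Each bad interval misses more than $\frac{|I|}{N_*}\cdot c N_*^{-\kappa/2}$ of its subintervals, where $c$ is the implied constant in the definition of goodness. Hence
\[
|D| \gg |A_{bad}|\, N_*^{-\kappa/2}.
\]

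By construction $D\cap A^{-1}=\emptyset$. On the other hand, Corollary \ref{c:A_A*_asymptotic} applies to the pair $(D, A)$ once both are written with a common interval $[N_*]$. This is legitimate because $A$ also splits as $[N_*]\dotplus\Lambda_*$, and the corollary only needs both sets to be translates of one interval. It gives
\[
0 = |D\cap A^{-1}| = \frac{|D||A|}{q} + O\big(\sqrt{|D||A|}\,N_*^{-\kappa}\big).
\]
Therefore $\sqrt{|D||A|}/q \ll N_*^{-\kappa}$, that is, $|D| \ll q^2 N_*^{-2\kappa}/|A|$.

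Combining the two bounds on $|D|$ yields $|A_{bad}| \ll q^2 N_*^{-2\kappa+\kappa/2}/|A|$. This gives $|A_{bad}|\ll qN_*^{-\kappa/2}$ in the relevant regime. It is cleaner, though, to avoid dividing by $|A|$, as follows. If $|A_{bad}| \le qN_*^{-\kappa/2}$ we are done. Otherwise apply the corollary to $(D, A_{bad})$, using $A_{bad}\subseteq A$ so that $D\cap A_{bad}^{-1}=\emptyset$. We get $|D||A_{bad}| \ll q^2 N_*^{-2\kappa}$, hence $|A_{bad}|^2 N_*^{-\kappa/2}\ll q^2N_*^{-2\kappa}$ and $|A_{bad}|\ll qN_*^{-3\kappa/4}$, which is stronger than required. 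Since the exponent $\kappa$ is allowed to vary from line to line, the losses from the splitting step are absorbed, and this gives \eqref{f:dense_subint}.

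The main obstacle is making the application of Corollary \ref{c:A_A*_asymptotic} rigorous. First, both sets must be honest sumsets $[N_*]\dotplus\Lambda'$. The leftover pieces of length $<N_*$ at the ends of each $I+\la$ must be dropped or handled as in the proof of Lemma \ref{l:A_A*_str}, and this costs $O(|\Lambda|N_*)\le O(|A|N_*/N)$, which is harmless. Second, the error terms must be tracked carefully against the deficiency constant in the definition of ``good'', so that the contradiction closes with the stated exponent $\kappa/2$. If the corollary needs $\Lambda'$ to be well separated, we split $\Lambda_*$ into even- and odd-indexed halves, as was done after Lemma \ref{l:A_A*_str}.
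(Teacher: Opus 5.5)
Your proof is correct, and it is more direct than the paper's. The paper does not apply Corollary \ref{c:A_A*_asymptotic} to a set disjoint from $A^{-1}$. Instead it splits all of $A$ into $N_*$-blocks and invokes Lemma \ref{l:A_A*_str} with $B=A$ (in the form \eqref{cons:initial}) to assert that the blocks missing $A^{-1}$ have total size $O(qN_*^{-\kappa})$. It then does the same Markov-type count you do: each bad interval contributes at least $|I+\la|\,N_*^{-\kappa/2}$ to that missed mass, whence $\sum_{\mathrm{bad}}|I+\la|\,N_*^{-\kappa/2}\ll qN_*^{-\kappa}$.

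You instead observe that the missed blocks inside bad intervals form a sumset $D=[N_*]\dotplus\mathcal D$ with $D\cap A^{-1}=\emptyset$. Hence one application of the corollary to $(D,A')$, where $A'\subseteq A$ is the part covered by full blocks (so $|A'|\ge|A|/2$), forces $|D||A'|\ll q^2N_*^{-2\kappa}$. This bypasses the neighbourhood set $\ov{C}$ and the auxiliary $[h]$-splitting in the proof of Lemma \ref{l:A_A*_str}. It also sidesteps a step the paper leaves implicit: turning a lower bound for $|\ov{C}|$ into a count of missed blocks, which needs some control of the part of $\ov{C}$ lying outside $A$.

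Two simplifications are available. First, no ``relevant regime'' caveat is needed: since $|A'|\ge|A|/2\ge|A_{\mathrm{bad}}|/2$, the chain $|A_{\mathrm{bad}}|N_*^{-\kappa/2}\ll|D|\ll q^2N_*^{-2\kappa}/|A'|$ already gives $|A_{\mathrm{bad}}|\ll qN_*^{-3\kappa/4}$, so the second application of the corollary to $(D,A_{\mathrm{bad}})$ is superfluous. Second, the shifts need no separation beyond disjointness of the translates.

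The paper's route only buys reuse of a lemma it has already stated; yours is shorter and self-contained.
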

\begin{proof}
    The set $A$ is splitted  into some subintervals of length $N_*$.
    By Lemma \ref{l:A_A*_str} (or see inequality \eqref{cons:initial}) the collection of such $N_*$--bad intervals forms a set of size $O(q N^{-\kappa}_*)$.
    Hence
\[
    \sum_{\la \in \Lambda,\, I+\la  \mbox{ is }N_*-\mbox{bad }} |I+\la| \cdot  N^{-\kappa/2}_* \ll q N^{-\kappa}_*
\]
    as required. 
    This concludes the proof.  
$\hfill\Box$
\end{proof}

\bp 

In our main application $A=Z_M (t)$ and henceforth we will work with the set 
$\tilde{A}$ instead of $A$.
With some violation of notation, we will denote this set by the same letter $A$.
We have conditions of the form \eqref{cond:initial} or, for example, \eqref{f:eq_J_cond_q}, see below, and hence removing such a tiny set $A\setminus \tilde{A}$, $|A\setminus \tilde{A}|\le |A|N^{-\kappa/4}_*$, say,  does not destroy the {\it interval structure} of $A$ (for example, removing a positive proportion of elements of $A$ can destroy this structure). 
Indeed, a half of the set $\tilde{A}$ still can be written as $\tilde{I} \dotplus \tilde{\Lambda}$, where $\tilde{I}$ is the interval of length $N^{\kappa/2}_*$, say.

Finally, let us remark that given a good interval $I+\la = S \dotplus I_*$, where $|I_*|=N_*$, one can see that the massive set of shifts $S$ contains a lot of combinatorial configurations.
For example, $S$ contains an arithmetic progression of length $\Omega(N^{\kappa/2})$
or solutions of any affine linear equation.
We will use such  type of observations in Section \ref{sec:X}  and more precisely we need  the following simple lemma.
Giving a set $A\subseteq [N]$, $|A| = \d N$ and   a positive integer  $k\ge 2$ we say that an interval $J\subseteq [N]$ is 
{\it $k$--equidistributed} if, decomposing $J$ into $k$ subintervals $J=J_1 \bigsqcup \dots \bigsqcup J_k$, $|J_i| \ge \lfloor |J|/k \rfloor$, the condition  $|A\cap J_i|\ge \d |J_i|/2$ holds  for all $i\in [k]$.

\begin{lemma}
    Let $A\subseteq [N]$ be a set, $|A| = \d N$ and $k\ge 2$ be  a positive integer.
    Then there is $k$--equidistributed interval $J\subseteq [N]$ such that $|J|\ge N \cdot \exp (-4k \log (4k) \log (1/\d))$.
\label{l:equidistributed}
\end{lemma}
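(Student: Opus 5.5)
We will find $J$ by iterated subdivision (a density increment argument). Start with $J_0=[N]$, where the density of $A$ is $\d_0=\d$. Given an interval $J_m$ on which $A$ has density $\d_m := |A\cap J_m|/|J_m| \ge \d$, split $J_m$ into $k$ consecutive subintervals $J_m=J_{m,1}\sqcup\dots\sqcup J_{m,k}$ of lengths at least $\lfloor |J_m|/k\rfloor$. If $|A\cap J_{m,i}|\ge \d_m|J_{m,i}|/2$ for every $i$, then $J_m$ is $k$--equidistributed and we stop. Strictly, the definition uses the global density $\d$, and since $\d_m\ge\d$ this condition is stronger, so stopping is legitimate.

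Otherwise some $J_{m,i_0}$ has $|A\cap J_{m,i_0}|<\d_m|J_{m,i_0}|/2$. The remaining $k-1$ subintervals then carry at least $\d_m|J_m| - \d_m|J_{m,i_0}|/2$ elements of $A$, spread over total length $|J_m|-|J_{m,i_0}|$. Since $|J_{m,i_0}|\gtrsim |J_m|/k$, their average density is at least
\[
\d_m\frac{|J_m|-|J_{m,i_0}|/2}{|J_m|-|J_{m,i_0}|}\ge \d_m\left(1+\frac{1}{2k}\right)\,,
\]
up to rounding losses. By pigeonhole, some $J_{m,i}$ with $i\ne i_0$ has density at least $\d_m(1+1/(4k))$, say. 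Take it as $J_{m+1}$. Its length is at least $\lfloor |J_m|/k\rfloor \ge |J_m|/(2k)$ as long as $|J_m|\ge 2k$.

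Densities never exceed $1$, so the number of steps $m$ obeys $\d(1+1/(4k))^m\le 1$. This gives $m\le \log(1/\d)/\log(1+1/(4k))\le 8k\log(1/\d)$. Hence the final interval has length at least
\[
N(2k)^{-m}\ge N\exp\left(-8k\log(2k)\log(1/\d)\right)\,,
\]
which is of the stated shape $N\exp(-4k\log(4k)\log(1/\d))$ after tightening the constants. To match the constant exactly, use the sharper increment factor $1+1/(2(k-1))$ in place of $1+1/(4k)$, together with $\log(1+x)\ge x/2$, and absorb the floors into the $\log(4k)$ term.

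The main obstacle is the bookkeeping around the floors $\lfloor |J|/k\rfloor$ and the degenerate regime where $|J_m|$ becomes smaller than $k$. In that regime the subdivision into $k$ nonempty parts fails, and the density-increment inequality also breaks down. I would handle it by noting that if the claimed lower bound on $|J|$ is below $1$, the statement is trivial: any single point of $A$ is a $k$--equidistributed interval only in a degenerate sense, so instead one assumes $N\exp(-4k\log(4k)\log(1/\d))\ge 1$. Under that assumption the iteration terminates before the lengths drop below $2k$, because the number of steps is bounded as above.
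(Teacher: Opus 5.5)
Your strategy is the paper's density-increment iteration. Using the local density $\delta_m\ge\delta$ in the stopping test is harmless. The bookkeeping, however, does not deliver the lemma as stated.

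\textbf{The constant.} What you actually prove is $|J|\ge N\exp(-8k\log(2k)\log(1/\delta))$, and $8k\log(2k)>4k\log(4k)$ for every $k\ge 2$. The constant matters: with $k=5$ it is what the paper later uses to get the factor $\delta^{60}$.

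Your proposed repair does not work as a uniform per-step estimate. The increment $1+\frac{1}{2(k-1)}$ needs the bad piece to have length exactly $|J_m|/k$. When $|J_m|$ is a small multiple of $k$, the floors both lower the increment and raise the shrink factor. For instance, at $|J_m|\approx 3k$ the pieces have length $2$ or $3$. The guaranteed increment is then only about $1+\frac{1}{3k}$, while the shrink factor is about $\frac{3k}{2}$. With $\log(1+x)\ge x/2$, such a step costs about $6k\log(3k/2)$ in the exponent per unit of $\log(1/\delta)$, which exceeds $4k\log(4k)$ for large $k$.

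The factor of $2$ you lose comes from $\log(1+x)\ge x/2$, not from the increment. Keep the increment $1+\frac1{4k}$, which is valid whenever $|J_m|\ge 2k$, and use $\log(1+x)\ge\frac{x}{1+x}$ instead. Then at most $(4k+1)\log(1/\delta)$ increments occur, each shrinking the length by a factor at most $2k$. Since $2k\le 16^k$, we have $(4k+1)\log(2k)\le 4k\log(4k)$, which gives the stated bound. This is the paper's count in substance: it writes $4k\log(1/\delta)$ steps with shrink factor $4k$, and the room between $2k$ and $4k$ absorbs the $+1$.

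\textbf{Short intervals.} Your justification here is not the right one. Set $B:=N\exp(-4k\log(4k)\log(1/\delta))$ and assume $B\ge 1$. The bound on the number of steps only gives $|J_m|\ge N(2k)^{-m}\gtrsim B\ge 1$, not $|J_m|\ge 2k$. What closes the gap is integrality.
\begin{itemize}
\item Let $m$ be the first index with $|J_m|<2k$, and suppose $J_m$ is not $k$--equidistributed.
\item Then $J_m\not\subseteq A$, since a subinterval of $A$ is trivially equidistributed. Hence $\delta_m\le 1-1/|J_m|$.
\item Combined with $\delta(1+\frac{1}{4k})^m\le\delta_m$, this forces $\log(1/\delta)>\frac{m}{4k+1}+\frac{1}{2k}$.
\item Each chosen piece has length at least $\lfloor L/k\rfloor$, so $N+1\le k^m(|J_m|+1)\le 2k^{m+1}$.
\item Together these give $B<\frac{1}{8k}\bigl(\frac14(4k)^{1/(4k+1)}\bigr)^m<1$, a contradiction.
\end{itemize}
When $B<1$, a single point of $A$ is an admissible $J$, as you note. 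The paper itself only disposes of the case $N<2k$ at the first step and is silent about later steps, so you match its level of rigour here. Still, the reason you give for why the iteration never reaches a short bad interval does not follow from the step count alone.
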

\begin{proof}
    If $[N]$ is $k$--equidistributed, then we are done, otherwise $[N] = J_1 \bigsqcup \dots \bigsqcup J_k$,  $|J_i| \in \{ \lfloor N/k \rfloor, \lceil N/k \rceil\}$  and for some $i\in [k]$ one has $|J_i \cap A| \le \d |J_i|/2$.
    Hence
\[
    \frac{|A\cap ([N] \setminus J_i)|}{N-|J_i|} \ge 
    \frac{\d N - 2^{-1}\d \lceil N/k \rceil}{N-\lceil N/k \rceil} 
    \ge  \d \left( 1+ \frac{1}{4k} \right) \,,
\]
    provided $N\ge 2k$. 
    Of course we can assume the last condition because otherwise there is nothing to prove. 
    It follows that for some $l\in [k]$, $l\neq i$ the  density of $A$ in $J_l$ is at least $\d \left( 1+ \frac{1}{4k} \right)$. 
    Apply the same argument to the interval $J_l$. And so on. 
    Clearly, our algorithm must stop after at most $s:=4k \log (1/\d)$ number of  steps. 
    Also, $|J| \ge N/(4k)^s \ge  N \cdot \exp (-4k \log (4k) \log (1/\d))$ as required. 
$\hfill\Box$
\end{proof}


\bp 

We finish this section by a simple technical  Lemma \ref{l:coprimality} which we will use later. 
Put $t:=q^{1/2-\eps}$,
and let $N:=q^{2\eps}$, so 
$t=\sqrt{q/N}$. 
Also, write $A=Z_M(t)$ for brevity and for any $g|q$, $g\le q/t$ define
$A_g = \{ a\in A ~:~ g|a \}$. 
In  our method we count the number of $a$ such that 
\begin{equation}\label{f:reduced_fractions}
    \sigma (A,A) = \left| \left\{ \frac{a}{q} ~:~ \frac{a}{q} = [0;c_1,\dots, c_s],\, c_j \le M \right\} \right|
\end{equation}
and we 
prove that $\sigma(A,A) = \frac{|A|^2}{q} (1+O(N^{-\kappa}))$, provided $\eps \gg M^{-1}$, see Section \ref{sec:proof} below, 
but the problem is that the fractions $\frac{a}{q}$  in \eqref{f:reduced_fractions} can be reduced. 
Recall that for a composite $q$ the set $A^{-1}$ is the collection 
$$\{ a\in [q] ~:~(a/(a,q))^{-1}/(q/(a,q)) \in A\} \,,$$ 
where $(a/(a,q))^{-1}$ is the inverse element modulo $q/(a,q) \ge t$, see details in \cite{Mosh_A+B}.
In other words, $\sigma (A,A)$ is the size of the intersection of $A$ and $A^{-1}$ defined this way.  
So, indeed we must exclude reduced fractions $\frac{a}{q}$  from \eqref{f:reduced_fractions} to guarantee that $a$ in Theorems 
\ref{t:Larcher}, 
\ref{t:main_new} and  \ref{t:Zaremba} can be taken coprime to $q$. 
Not surprising that such results must follow from a suitable application of the M\"obius inverse formula.  
Let $\sigma_* (A,A)$ be the desired  collection of $a$ in \eqref{f:reduced_fractions} such that $(a,q)=1$. 
We have $\sigma_* (A,A) = \sum_{g|q} \mu (g) \sigma(A_g,A)$.


\begin{lemma}
    Let $M\ge 2$, $c\in (0,1)$ be a real number, 
    $\eps \gg M^{-1}$
    and $M \ll \log^{1-c} q$. 
    Then  
\begin{equation}\label{f:coprimality}
    \sigma_* (A,A) \ge \frac{|A|^2 \_phi (q)}{q^2} \cdot (1-o_c (1)) \gg \frac{|A|^2}{q^{1+o(1)}}\,.
\end{equation}
    The same it true for any set $A = I \dotplus \Lambda$, $|I|=N$, provided $\exp(O(\omega^c (q))) \ll N^{\kappa c/16}$. 
\label{l:coprimality}
\end{lemma}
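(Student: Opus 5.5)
We start from the Möbius identity $\sigma_*(A,A)=\sum_{g|q}\mu(g)\sigma(A_g,A)$ and estimate each $\sigma(A_g,A)$ by the asymptotic machinery of Corollary \ref{c:A_A*_asymptotic}. The goal is to show that, up to a small error, $\sigma(A_g,A)\approx \frac{|A|^2}{gq}$. This gives a main term $\frac{|A|^2}{q}\sum_{g|q}\frac{\mu(g)}{g}=\frac{|A|^2\_phi(q)}{q^2}$, and we must check that the accumulated errors are $o(|A|^2\_phi(q)/q^2)$.

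We will not try to control all divisors $g$ equally; instead we truncate. Write $q$'s radical as $p_1\cdots p_r$, $r=\omega(q)$, and use a Brun/Bonferroni-type truncation of the Möbius sum. We keep only squarefree $g$ composed of primes $p\le P$ with at most $2k$ prime factors, so that $g\le P^{2k}$ and the number of terms is $\le 2^{\min(r,\,\pi(P))}$. The discarded divisors are handled by monotonicity. Any $a$ divisible by some prime $p>P$ of $q$ is counted by $\sum_{p|q,\,p>P}\sigma(A_p,A)$. The trivial-type bound $\sigma(A_p,A)\le |A_p|\ll |A|/p + (\text{number of intervals})$, combined with the same asymptotic for $\sigma(A_p,A)$, gives a total of $\frac{|A|^2}{q}\sum_{p|q,\,p>P}\frac1p$. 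This is $o(\_phi(q)/q)\cdot|A|^2/q$ once $P$ is large, since $\sum_{p|q}1/p\le\log\log\log q+O(1)$ while $\_phi(q)/q\gg 1/\log\log q$. The lower Bonferroni inequality gives a lower bound for the sieve over small primes.

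For each retained $g$, we compute $\sigma(A_g,A)$ by writing $A_g$ inside $A=I\dotplus\Lambda$. Since $g\le P^{2k}$ is much smaller than $N$, each interval $I+\lambda$ contains $|I|/g+O(1)$ multiples of $g$. After the change of variables $a=ga'$, the inversion condition modulo $q/g$ becomes an incidence between a union of intervals of length $\asymp N/g$ (rescaled) and the set $A$. We then invoke Corollary \ref{c:A_A*_asymptotic}, or Lemma \ref{l:BG_new}, for the modulus $q/g\in\mathcal Z$, using the meaning of $A^{-1}$ for composite $q$ recalled above, and obtain
\[
\sigma(A_g,A)=\frac{|A|^2}{gq}+O\big(\sqrt{|A_g||A|}\,(N/g)^{-\kappa}\big).
\]
Summing over the at most $\exp(O(\omega^c(q)))$ retained $g$ (with $P$, $k$ chosen depending on $c$) gives a total error $\exp(O(\omega^c(q)))\,|A|N^{-\kappa/2}$. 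This is negligible against $|A|^2\_phi(q)/q^2$ provided $|A|N^{\kappa/4}\gg q^{1+o(1)}$, which holds for $A=Z_M(t)$ when $\eps\gg M^{-1}$ by Lemma \ref{l:A+B} and Theorem \ref{t:Hensley_HD}. The hypothesis $M\ll\log^{1-c}q$ ensures that $N=q^{2\eps}$ dominates $\exp(O(\omega^c(q)))$, because $\omega(q)\ll\log q/\log\log q$. For a general $A=I\dotplus\Lambda$, this is exactly the stated condition $\exp(O(\omega^c(q)))\ll N^{\kappa c/16}$. Finally, $\_phi(q)/q\gg 1/\log\log q=q^{-o(1)}$ gives the second inequality.

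The main obstacle is the step where the inversion for $a$ sharing a factor $g$ with $q$ is transported to modulus $q/g$. There, one must verify that the rescaled set still has the interval structure needed by Corollary \ref{c:A_A*_asymptotic}, and that $q/g$ remains in $\mathcal Z$; for the case $q=p^n$ this requires $n$ to remain large, which we keep by bounding the exponent of $g$. We would first try simply to bound the contribution of the large-$g$ terms trivially via $|A_g|$, so that the equidistribution is needed only for $g\le P^{2k}=N^{o(1)}$.
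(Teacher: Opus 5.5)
Your proposal has a genuine gap in how it treats the prime divisors $p>P$ of $q$, and it fails in two places.

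\emph{The asymptotic is not available for large $p$.} The estimate $\sigma(A_p,A)\approx |A|^2/(pq)$ holds only while $A_p$ still consists of long progressions. Your error term $\sqrt{|A_p||A|}\,(N/p)^{-\kappa}$ beats $|A|^2/(pq)$ only if $p^{1/2+\kappa}\ll N^{\kappa}|A|/q$, i.e.\ for $p$ up to a small power of $N$. For larger prime factors of $q$ you only have $\sigma(A_p,A)\le |A_p|\le |A|/p+\#\{\text{intervals}\}$. This exceeds $|A|^2/(pq)$ by the factor $q/|A|=q^{\Theta(1/M)}$. So the total over $p>P$ is not $\frac{|A|^2}{q}\sum_{p|q,\,p>P}\frac1p$. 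This range has to be cut off at a power $N^{\delta}$. The resulting loss $|A|\,\omega(q)N^{-\delta}$ is $o\bigl(|A|^2\varphi(q)/q^2\bigr)$ precisely because $\varepsilon\gg M^{-1}$, and your argument never uses this.

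\emph{The union bound with fixed $P$ is too lossy.} Even where the asymptotic holds, $\sum_{p|q,\,p>P}1/p$ is not $o(\varphi(q)/q)$ for a fixed large $P$. Take $q$ to be the product of the first $r$ primes, which lies in $\mathcal{Z}$. Then the left side is about $\log\log r-\log\log P$ and tends to infinity, while $\varphi(q)/q\sim e^{-\gamma}/\log r\to 0$. The two bounds you cite do not give what you need. You would have to let $P$ grow, e.g.\ $P\ge\omega(q)(\log\log q)^2$, so that $\sum_{p|q,\,p>P}1/p\le\omega(q)/P=o(\varphi(q)/q)$. You would then need to check that the sieve level $P^{2k}$ stays $N^{o(1)}$.

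Relatedly, your count of ``$\exp(O(\omega^c(q)))$ retained $g$'' does not follow from your set-up; for constant $P,k$ it is $O(1)$. That factor has a different source in the lemma.

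\emph{How the paper proceeds.} It does not sieve; it splits the full M\"obius sum at $T=N^{\kappa/8}$. For $g\ge T$ it uses the trivial bound $\sigma(A_g,A)\le q/g$ together with Rankin's trick:
\[
\sum_{g|q,\,g\ge T}\frac{\mu^2(g)}{g}\le T^{-c}\prod_{p|q}\bigl(1+p^{c-1}\bigr)\le N^{-\kappa c/8}\exp\bigl(O(\omega^c(q))\bigr)\ll \frac{|A|^2}{q^2}\,N^{-\kappa c/16}.
\]
The last step uses $\varepsilon\gg M^{-1}$ and $M\ll\log^{1-c}q$, and this is where $\exp(O(\omega^c(q)))$ comes from. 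For $g<T$, each $A_g$ is a union of progressions of step $g$ and length at least $\sqrt N$, so the asymptotic applies. The same Rankin bound then completes $\sum_{g|q,\,g<T}\mu(g)/g$ to $\varphi(q)/q$. Replacing your Bonferroni and union-bound step with this split closes the gap, and you can keep your reduction of $\sigma(A_g,A)$ to modulus $q/g$ for the small $g$.
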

\begin{proof}
    We prove only the first part of the result, since the second follows similarly.
    Let $\kappa>0$ be the absolute constant from Corollary \ref{c:A_A*_asymptotic}. 
    Split the sum $\sigma_* (A,A) = \sigma_1+\sigma_2$, where the sums $\sigma_1, \sigma_2$ are taken over $g\ge N^{\kappa/8} := T$ and $g< T$, correspondingly.
    Using 
    our condition 
    $M \ll \log^{1-c} q$, we obtain 
\begin{equation}\label{tmp:3.09_I--}
    \sigma_1 \ll\sum_{g|q,\, g\ge T} \mu^2 (g) 
    \cdot 
    \frac{q}{g}
    \ll 
    q^{} T^{-c} \sum_{g|q,\, g\ge T} \mu^2 (g) g^{c-1}
\end{equation}
\begin{equation}\label{tmp:3.09_I-}
        \ll 
     q^{} N^{-\kappa c/8}   \exp(O(\omega^c (q)))
    \ll \frac{|A|^2}{q} \cdot N^{-\kappa c/16} \,.
\end{equation}
    In other words, the sum $\sigma_1$ is negligible.   
    It remains to compute $\sigma_2$.
    By formula \eqref{f:Q_M_via_J} we have 
    $|A| = \sum_{\frac{u}{v} \in \overline{Q_M (t)}} |J_{u/v}|$, each $|J_{u/v}| \ge N$, $|A| \gg N t^{2w_M}$, 
    and hence in view of $g< N^{\kappa/8} \le \sqrt{N}$ one has
\begin{equation}\label{tmp:3.09_I}
    |A_g| = g^{-1} \sum_{\frac{u}{v} \in \overline{Q_M (t)}} |J_{u/v}| + \theta |\overline{Q_M (t)}| 
    =
    \frac{|A|}{g} + O(t^{2w_M}) 
    = \frac{|A|}{g} \left( 1 + O(N^{-1/2}) \right) \,.
\end{equation}
    Similarly, for $g< T \le \sqrt{N}$  the set $A_g$ is a collection of arithmetic progressions with step $g$ and length at least $N/T \ge \sqrt{N}$.
    Thus by Corollary \ref{c:A_A*_asymptotic} and formula \eqref{tmp:3.09_I}, we get 
\[
    \sigma (A_g,A) = \frac{|A_g||A|}{q} + O(N^{-\kappa/2} \sqrt{|A_g||A|})
    = 
    \frac{|A_g||A|}{q} \left( 1 + O(N^{-\kappa/4}) \right) \,.
\]
    Therefore, applying the previous formulae and using the same argument as in \eqref{tmp:3.09_I--}, \eqref{tmp:3.09_I-}, we obtain 
\[
    \sigma_2 :=  \sum_{g|q,\, g< T} \mu (g) \sigma(A_g,A)
    = 
    \sum_{g|q,\, g< T} \mu (g) 
    \frac{|A_g||A|}{q} \left( 1 + O(N^{-\kappa/4}) \right)
\]
\[
    = 
    \sum_{g|q,\, g< T} \mu(g) \frac{|A_g||A|}{q} + 
    O\left( \frac{|A|^2}{N^{\kappa/4} q} \sum_{g|q,\, g< T} \mu^2 (g) g^{-1} \right) 
    =
    \frac{|A|^2}{q} \sum_{g|q,\, g<T} \frac{\mu (g)}{g}
    + 
    O\left( \frac{|A|^2}{q} \cdot N^{-\kappa/8} \right)
\]
\[
    =
    \frac{|A|^2 \_phi (q)}{q^2} 
    + 
    \frac{|A|^2}{q} \sum_{g|q,\, g\ge T} \frac{\mu^2 (g)}{g}
    + O\left( \frac{|A|^2}{q} \cdot N^{-\kappa/8} \right)  
\]
\[
    = 
    \frac{|A|^2 \_phi (q)}{q^2} 
    +
    \frac{|A|^2}{T^c q} \sum_{g|q,\, g\ge T} \frac{\mu^2 (g)}{g^{1-c}} 
    + O\left( \frac{|A|^2}{q} \cdot N^{-\kappa/8} \right) 
    = 
    \frac{|A|^2 \_phi (q)}{q^2} 
    +
    O\left( \frac{|A|^2}{q} \cdot N^{-\kappa c/16} \right)  
    \,.
\]
    So, the error term in the last formula is also negligible and we obtain the required result.

    Let us give another even simpler argument that  works
    in the range 
    $M\ll \log \log q$,  
    and it is enough for Theorems \ref{t:Larcher} and \ref{t:Zaremba}. 
    Indeed, thanks to Lemma \ref{l:A+B}, we get (in the previous notation $d:=q/g$) 
\begin{equation}\label{tmp:21.12_1}
    \sigma_1 \le \sum_{d|q,\, d<q/T} \mu^2 (q/d) |A_d| \ll M  \sum_{d|q,\, d<q/T} \mu^2 (q/d) d^{w_M} N^{1-w_M} 
    \ll 
    M T^{-w_M} 2^{\omega (q)} q^{w_M} N^{1-w_M}
\end{equation}
\begin{equation}\label{tmp:21.12_1'}
    \ll 
        M T^{-w_M} 2^{\omega (q)} |A| 
    \ll 
        T^{-w_M/2} |A| 
\end{equation}
    and hence it is negligible. 
    For $d\ge q/T$ use the upper bound for $\sigma_2$. 
    This concludes the proof.  
$\hfill\Box$
\end{proof}

\bp 

Let us emphasize  one more time that the computations in \eqref{tmp:3.09_I--}, \eqref{tmp:3.09_I-} and in \eqref{tmp:21.12_1}, \eqref{tmp:21.12_1'} show that the set of elements $a\in A$ such that 
\begin{equation}\label{cond:gcd}
    (a,q) > N^{c_* \kappa} 
\end{equation}
(here $c_*>0$ is an arbitrary absolute constant) 
is negligible, and the complement enjoys the same interval structure.

\section{On Ahlfors--David sets in the theory of continued fractions}
\label{sec:AD}

In this section we recall some results concerning the structure of the set $Z_M (t)$, namely, we show that $Z_M (t)$ is an Ahlfors--David set, see 
\cite{BD_Iab}, \cite{BD_AD_def}, \cite{Dyatlov_AD}, \cite{DZ_AD}. 
The material in this part of our paper is probably contained  somewhere, but we could not find an exact reference, so we have included it for completeness and to help the reader to understand better the fractal structure of the set $Z_M (t)$. 
Recall that a set $A \subseteq \Z/q\Z$ is an {\it Ahlfors--David set ($N$---Ahlfors--David set) of dimension $w\in [0,1)$} if $A=I \dotplus \Lambda$, where $I=[N]$ is an interval, $\Lambda$  is an arbitrary set and there are absolute constants $C_1,C_2 \ge 1$ such that for any interval $\mathcal{I} \subseteq \Z/q\Z$  
the following holds 
\begin{equation}\label{def:Frostman}
   |A \cap \mathcal{I}| \le C_1 |\mathcal{I}|^{w} N^{1-w} \,,
\end{equation}
    and for any interval $\mathcal{I} \subseteq \Z/q\Z$, $|\mathcal{I}| \ge N$ with the center at the origin and $a\in A$
    one has 
\begin{equation}\label{def:AD}
    |A\cap (\mathcal{I}+a)| \ge C_2^{-1} |\mathcal{I}|^{w} N^{1-w} \,.
\end{equation}

\bigskip

Put $t:=q^{1/2-\eps}$ and let $N:=q^{2\eps}$, so 
$t=\sqrt{q/N}$. 
In this section we consider $Z_M (t)$ as a subset of $\Z/q\Z$.
We are now ready to show that $Z_M (t)$ is an Ahlfors--David set in the sense given above. 

\begin{lemma}
    Let  $M\ge 2$ be an integer, $t\gg M$ be a real number,  and 
    let $\mathcal{I} \subseteq \Z/q\Z$ be an arbitrary  interval. 
    Then 
\begin{equation}\label{f:AD_upper}
    |Z_M (t) \cap \mathcal{I}| \ll M^4 |\mathcal{I}|^{w_M} 
    N^{1-w_M}
    \,.
\end{equation}
    Similarly, if $|\mathcal{I}|\ge N$
    and the center of $\mathcal{I}$ belongs to $Z_M (t)$, then
\begin{equation}\label{f:AD_lower_no_Rq}
    |Z_M (t) \cap \mathcal{I}| \gg  M^{-1} |\mathcal{I}|^{w_M} N^{1-w_M} \,.
\end{equation}
\label{l:AD}
\end{lemma}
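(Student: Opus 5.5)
The plan is to use the decomposition \eqref{f:Q_M_via_J}: $Z_M(t)$ is the disjoint union of the intervals $qJ_{u/v}$ (rescaled to $\Z/q\Z$), indexed by $u/v\in\overline{Q_M(t)}$. By \eqref{f:J_uv} each $qJ_{u/v}$ has length between $q/v^2$ and $2(M+1)q/v^2$. Since $v<t\le (M+1)v$, this length is $\asymp_M q/t^2 = N$, up to a factor $O(M^2)$. Both estimates will come from comparing the number of intervals meeting $\mathcal{I}$ with the number of fractions in $Q_M(t)$ lying in a given cylinder.

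The key tool is a counting statement for cylinders. Fix a finite word $(r_1,\dots,r_l)$ with all $r_i\le M$ and denominator $v_0=q_l$. The fractions $u/v\in\overline{Q_M(t)}$ extending this word correspond to words $(r_{l+1},\dots)$ whose continuant $w$ satisfies $v_0 w\asymp t$, by the domino identity \eqref{f:domino}. Their number is $\asymp (t/v_0)^{2w_M}$, up to $M^{O(1)}$ factors. This is the same counting that gives $T\asymp t^{2w_M}$ in Lemma \ref{l:A+B}, applied with $t$ replaced by $t/v_0$ and using the quasi-multiplicativity of continuants. The corresponding cylinder in $\Z/q\Z$ has length $\asymp q/v_0^2$, and $N=q/t^2$. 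So the cylinder contains
\[
\asymp (t/v_0)^{2w_M}\cdot N \asymp (q/v_0^2)^{w_M} N^{1-w_M}
\]
elements of $Z_M(t)$, again up to polynomial factors in $M$. This is exactly the Ahlfors--David scaling.

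For the upper bound \eqref{f:AD_upper}, I would reduce to cylinders as follows.
\begin{itemize}
\item If $|\mathcal{I}|\le N$, the interval $\mathcal{I}$ meets $O(M^2)$ intervals $J_{u/v}$. Indeed, distinct such intervals are separated by gaps of comparable size coming from the Cantor structure, and each has length $\gg N/M^2$. The bound is then trivial, since $|\mathcal{I}|\le |\mathcal{I}|^{w_M}N^{1-w_M}$.
\item If $|\mathcal{I}|>N$, choose the largest cylinder scale $v_0$ with $q/v_0^2\ge|\mathcal{I}|$. The interval $\mathcal{I}$ meets only $O(M)$ cylinders of the next level, whose lengths are within a factor $M^2$ of $|\mathcal{I}|$, because consecutive denominators grow by at most a factor $M+1$.
\item Summing the cylinder counts over these $O(M)$ cylinders gives the bound with an $M^4$-type loss.
\end{itemize}

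For the lower bound \eqref{f:AD_lower_no_Rq}, let the center $a$ lie in some $qJ_{u/v}$, so $a/q$ has a prefix word bounded by $M$. Take the shortest prefix whose cylinder has length at most $|\mathcal{I}|/2$; its length is $\gg |\mathcal{I}|/M^2$. This cylinder contains $a/q$, so it lies inside $\mathcal{I}$ as long as its length is below $|\mathcal{I}|/2$. Applying the cylinder count from below then gives the claim.

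The main obstacle is making the cylinder count uniform with only polynomial loss in $M$. In particular, the lower bound $\#\{w : \mathcal{K}(\text{word})\le X,\ \text{entries}\le M\}\gg X^{2w_M}/M$ must hold for all $X\ge1$. I would derive it from Lemma \ref{l:A+B} (equivalently Hensley's estimates) for the full range, and combine it with the domino identity to handle prefixes at a cost of a factor $2$ per concatenation.
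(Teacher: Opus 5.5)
Your outline has the same structure as the paper's proof. You decompose $Z_M(t)$ via \eqref{f:Q_M_via_J}, place one interval of the Cantor structure containing the center inside $\mathcal{I}$ for \eqref{f:AD_lower_no_Rq}, and cover $Z_M(t)\cap\mathcal{I}$ by a bounded number of such intervals for \eqref{f:AD_upper}. In each interval you count by rescaling Lemma~\ref{l:A+B} through \eqref{f:domino}. What your argument does not deliver is the stated dependence on $M$. As written it gives Ahlfors--David bounds with unspecified factors $M^{\pm O(1)}$, not $M^{4}$ and $M^{-1}$.

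\textbf{Lower bound.} The loss comes from working with bare cylinders. The first prefix of $a/q$ whose cylinder has length $\le|\mathcal{I}|/2$ can be shorter than its parent by a factor $\asymp r_k^2$, up to $M^2$, so you only know its length is $\gg|\mathcal{I}|/M^2$. The count then gives at best $M^{-2w_M}|\mathcal{I}|^{w_M}N^{1-w_M}$, and this is weaker than $M^{-1}$ because $w_M>1/2$. Using the count $\gg X^{2w_M}/M$ you propose makes it worse still, $M^{-1-2w_M}$, although Lemma~\ref{l:A+B} already gives $|\overline{Q_M(X)}|\gg X^{2w_M}$ with absolute constants.

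The paper instead takes the interval $J_{u_*/v_*}$ of the unique convergent or semiconvergent $u_*/v_*\in\overline{Q_M(t_*)}$ of $a/q$, where $t_*^2=16(M+1)q/|\mathcal{I}|$. Since $t_*/2\le v_*<t_*$, \eqref{f:J_uv} puts its length in $\Z/q\Z$ between $q/t_*^2=|\mathcal{I}|/(16(M+1))$ and $8(M+1)q/t_*^2=|\mathcal{I}|/2$. A $J$-interval is essentially a union of sibling cylinders, so its size is pinned down to within a factor $O(M)$ rather than $O(M^2)$. Counting extensions with Lemma~\ref{l:A+B} at scale $\epsilon t/t_*$, each leaf contributing at least $N$ points, then gives $(\epsilon t/t_*)^{2w_M}N\asymp M^{-w_M}|\mathcal{I}|^{w_M}N^{1-w_M}$.

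There is also a smaller point. Your step ``its length is $\gg|\mathcal{I}|/M^2$'' uses $r_k\le M$, which is guaranteed only while $q_k<t$. So when $|\mathcal{I}|$ is within a polynomial factor in $M$ of $N$, you need a separate easy argument, for example via the leaf interval of length $\ge N$ containing $a$.

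\textbf{Upper bound.} Your reason for ``$O(M)$ cylinders'' is that consecutive denominators grow by at most a factor $M+1$. That only shows the cylinder lengths are within a factor $(M+1)^2$ of $|\mathcal{I}|$, which by disjointness gives $O(M^2)$ cylinders, not $O(M)$. The paper uses the separation estimate \eqref{tmp:upper_I} to cover $Z_M(t)\cap\mathcal{I}$ by just two intervals $J_{\tilde u/\tilde v}$ at scale $\tilde t^{\,2}=q/((M+2)|\mathcal{I}|)$. The factor $M^4$ then arises as the product of $(M+1)^2$ from the rescaled scale $2(M+1)t/\tilde t$, the factor $M+2$ in $\tilde t$, and the bound $16MN$ on leaf lengths.
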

\begin{proof}
    Let $\D = |\mathcal{I}| \le q$. 
    Without loosing of the generality we can assume in the first case that $\D \ge N$ (and even $\D \gg M^{4/(1-w_M)} N \gg M^4 N$, provided $\D\le q$) and that the center $a$ of $\mathcal{I}$ belongs to $Z_M(t)$. It allows us to obtain formulae \eqref{f:AD_upper}
    and 
    \eqref{f:AD_lower_no_Rq}
    simultaneously.

    Recall that if we consider the set $Z_M(t)$ as the set of rationals, then 
    $Z_M(t) = Q_M(t) \cap R_q$. 
    Choose $t^2_*=16(M+1)q/\D < t^2$ and find the unique fraction $u_*/v_* \in \overline{Q_M (t_*)}$ such that $u_*/v_*$ is a convergent or a semiconvergent to $a/q$, see \cite[Lemma 1]{Mosh_A+B}. 
    Let us show that 
\begin{equation}\label{f:inclusion_ZM}
    J_{u_*/v_*} \cap Q_M (t) \cap R_q \subseteq Z_M (t) \cap \mathcal{I} \,.
\end{equation}
    Indeed, by the definition of the set  $\overline{Q_M (t_*)}$ one has $t_*/2 \le v_* < t_*$ and since $u_*/v_*$ is a convergent or a semiconvergent to $a/q$, it follows for any $u/v\in J_{u_*/v_*}$ that (see formula \eqref{f:J_uv}) 
\[
    \left| \frac{a}{q} - \frac{u}{v}\right| \le 
    |J_{u_*/v_*}| \le \frac{2(M+1)}{v^2_*} \le \frac{8(M+1)}{t^2_*} \le \frac{\D}{2q} 
\]
    and \eqref{f:inclusion_ZM} follows. 
    Conversely, put $(\tilde{t})^2=q/((M+2)\D) < t_*^2$ 
    and 
    find the unique fraction  $\a = \tilde{u}/\tilde{v} = p_l (\a)/q_l (\a) \in \overline{Q_M (\tilde{t})}$ such that $\tilde{u}/\tilde{v}$ is a convergent or a semiconvergent to $a/q$.
    Let us show that 
\begin{equation}\label{f:exclusion_ZM}
    Z_M (t) \cap \mathcal{I}' \subseteq J_{\tilde{u}/\tilde{v}} \cap Q_M (t) \cap R_q \,,
\end{equation}
    where $\mathcal{I}'$ 
    either left or right 
    half 
    of the interval $\mathcal{I}$. 
    Suppose that there exists   $b/q \in Z_M (t) \cap \mathcal{I}'$ such that  $b/q \notin J_{\tilde{u}/\tilde{v}}$.
    It means that $b/q$ is not a convergent or a semiconvergent to $a/q$.
    Hence
\begin{equation}\label{tmp:upper_I}
    \frac{1}{2(M+2) (\tilde{t})^2} < \frac{1}{2q_l (\a)(q_l (\a) +q_{l+1} (\a))} \le \left| \frac{a}{q} - \frac{b}{q} \right| \le \frac{\D}{2q} 
\end{equation}
    and this is a contradiction. 
    So, we have covered a half of the interval $\mathcal{I}$. 
    Take the nearest point $a_2/q$ in the uncovered part of $Z_M (t) \cap \mathcal{I}$ (if it exists) and apply the previous argument with $a_2/q$. 
    We find another interval $J_{\tilde{u}_2/\tilde{v}_2}$ such that  $\tilde{u}_2/\tilde{v}_2$ is a convergent or a semiconvergent to $a_2/q$. 
    It remains to recall that  
    $J_{\tilde{u}/\tilde{v}} \cap J_{\tilde{u}_2/\tilde{v}_2} = \emptyset$ and then computations in \eqref{tmp:upper_I} show that the union of these two intervals cover the whole set $Z_M (t) \cap \mathcal{I}$.

    Now our task is 
    to count  the number of elements of $R_q$ 
    in the sets \eqref{f:inclusion_ZM}, \eqref{f:exclusion_ZM}.
    For example, consider $J_{u_*/v_*}$, $\frac{u_*}{v_*}=[0;r_1,\dots,r_l] = \frac{p_l}{q_l}$
    and then we need to find the number of fractions $u/v \in Q_M (t)$ which starts with $r_1,\dots, r_{l-1}$ in the continued fraction expansion, see formula \eqref{f:u',v'}. 
    Let 
    $$  
        \frac{u}{v} = [0;r_1,\dots,r_{l-1}, c_l, \dots,c_s] \in Q_M (t) \,.
    $$
    We have $t_* > \mathcal{K}(r_1,\dots,r_{l-1})\ge \frac{t_*}{2(M+1)}$. 
    Using \eqref{f:domino} it follows that 
    \begin{equation}\label{tmp:23.12_1}
    \frac{\mathcal{K}(r_1,\dots, r_{l-1}, c_l,\dots,c_s)}{2\mathcal{K}(r_1,\dots,r_{l-1})}
    \le 
    \mathcal{K}(c_l,\dots,c_s) \le \frac{2 (M+1)t}{t_*} 
    \end{equation}
    and hence, in particular, the ratio $\mathcal{K}(c_{l+1},\dots,c_s)/\mathcal{K}(c_l,\dots,c_s)$ belongs to the set $Q_M (2 (M+1)t/t_*)$. 
    On the other hand, by Lemma \ref{l:A+B} 
    we know that 
    the 
    intervals of $Z_M(t)$ (we know that all of them have lengths at least $N$) indexed by  the 
    continuants $\mathcal{K}(r_1,\dots, r_{l-1}, c_l,\dots,c_s)$.
    Similarly, take continuants 
    $\mathcal{K}(c_l,\dots,c_s)$
    such that the ratio 
     $\mathcal{K}(c_{l+1},\dots,c_s)/\mathcal{K}(c_l,\dots,c_s)$
    belongs to the set 
    $\overline{Q_M (\epsilon t/t_*)} \subseteq Q_M (\epsilon t/t_*)$, where $\epsilon>0$ is a constant. 
    By the first inequality in \eqref{tmp:23.12_1}
    the later set belongs to \eqref{f:inclusion_ZM} for a sufficiently small constant $\epsilon$.
    Thus by Lemma \ref{l:A+B} and by the definition of the number $t_*$ we have at least 
\[
    \left( \frac{\epsilon t}{t_*} \right)^{2w_M} N \gg M^{-1} \D^{w_M} 
    N^{1-w_M}
\]
    fractions 
    in \eqref{f:inclusion_ZM}. 
    Thus we have 
    proved 
    \eqref{f:AD_lower_no_Rq}. 
    The upper bound \eqref{f:AD_upper} can be obtained in a similar way, in which case we work with the set 
    $Q_M (2 (M+1)t/\tilde{t})$.
This completes the proof. 
$\hfill\Box$
\end{proof}



\section{On critical denominators of elements from  $Z_M (t) \cap Z^{-1}_M (t)$}
\label{sec:X}

Put  $t:=q^{1/2-\eps}$, $\eps \in (0,1/2)$, and let $N:=q^{2\eps}$, so 
$t=\sqrt{q/N}$. 
Sometimes we consider the set  $Z_M (t)$ as a subset of 
$\Z$. 
Notice right now that conditions \eqref{cond:k=2}, \eqref{cond:k=3}, \eqref{cond:number_x} below give us an {\it upper} bound for the parameter $\eps$. 
In other words, even at this stage of the proof one has to deal with rather 
short 
intervals where classical analytical methods do not work.

In this section we study the denominators 
\begin{equation}\label{f:x_s(a)_def} 
    x=x(a) = x_s (a) = \mathcal{K} (r_1,r_2,\dots, r_s) \in [t,q/t]
\end{equation}
of some convergents to $a/q$, where 
$a\in Z_M (t) \cap Z^{-1}_M (t)$, 
which we call {\it critical denominators}. 
Given an $a\in Z_M (t) \cap Z^{-1}_M (t)$ one can has several of such $x(a)$ and the collection of them is denoted by $\mathcal{X}(a)$. 
Similarly,  given a set $S\subseteq \Z/q\Z$ we put $\mathcal{X}(S) = \bigcup_{s\in S} \mathcal{X} (s)$. 
We mostly study critical denominators $x$ with the additional 
constraint 
$x\le \sqrt{q}$, which we call {\it critical denominators of Type I}, and if $x>\sqrt{q}$, then we say that $x$ has {\it Type II}.
Below we will see (consult Section \ref{sec:proof}) that critical denominators of Type II are easily transformed into Type I due to the symmetry of our problem.
In a similar way a critical denominator is called {\it $\tilde{M}$--critical denominator} if in addition $x|ax| \le q/\tilde{M}$. 
Notice that 
if there are no critical denominators, then by Lemma \ref{l:M_crit} all partial quotients of $a$ are bounded by $\max\{ M,\tilde{M}\}$.
So, if the reader does not interested in obtaining correct number of $a$ in Theorems \ref{t:Larcher}, \ref{t:main_new}, then it is possible to assume 
\[
    \mbox{for {\it any} $a\in Z_M (t) \cap Z^{-1}_M (t)$ there exists  $\tilde{M}$--critical denominator $x(a)$}
\]
(with a suitable $\tilde{M}$).
To obtain a lower bound for the number of such $a$ we make the following stronger assumption.

\begin{assumption} 
Let $I \subseteq Z_M(t)$ be a $N^{1/100}$--good interval. Split $Z_M(t)$ into subintervals of length $N^{1/100}$ and let $\d\in [1/2,1]$ be a real parameter.  
We say that 
$I$ satisfies  $\d$--Assumption if
$\d$--proportion 
of such subintervals contain a point  $a\in I \cap Z^{-1}_M (t)$ such that there exists  $\tilde{M}$--critical denominator $x(a)$.
Furthermore, we assume that condition \eqref{cond:gcd} holds for this $a$ with $c_*=1/1000$, say.
\label{a:half}
\end{assumption}

If Assumption \ref{a:half} violates for a half of intervals of $Z_M(t)$ with $\d =1/2$, say, than we obtain Theorems \ref{t:Larcher}, \ref{t:main_new}, \ref{t:Zaremba} immediately (if $M$ is any number equal to $o(\sqrt{\log q})$, then we actually obtain 
a huge set of 
$a$ in Theorem \ref{t:Zaremba}, namely, the number of these $a$ exceeds the expectation). 
Indeed, let $\tilde{Z}_M(t)$ be a collection of subintervals of  $Z_M(t)$ having no $a$ with a $\tilde{M}$--critical denominator $x(a)$, $|\tilde{Z}_M(t)| \ge (1-\d)|Z_M(t)|/2 = |Z_M(t)|/4$. 
Than thanks to  Corollary \ref{c:A_A*_asymptotic} (or see the beginning of Section \ref{sec:proof}) there is an asymptotic formula for the cardinality of the set $\tilde{Z}_M(t) \cap \tilde{Z}^{-1}_M(t)$ and  all partial quotients of an arbitrary $a\in \tilde{Z}_M(t) \cap \tilde{Z}^{-1}_M(t)$ are bounded by $\max\{ M,\tilde{M}\}$.
Similarly, if the reader is not interested in the case of composite $q$, then the second part of Assumption \ref{a:half} holds trivially and can therefore be ignored.

Let us pay attention right now that  
the set $Z_M (t) \cap Z^{-1}_M (t)$  has some special properties and therefore the corresponding  collection of critical denominators $\mathcal{X}(Z_M (t) \cap Z^{-1}_M (t))$ is rather specific. 
For example, it is easy to see (or consult Section \ref{sec:proof}) that 
elements of $Z_M (t) \cap Z^{-1}_M (t)$ 
have the following simple repulsion property.
For simplicity, we consider here the case of a prime 
denominator 
$q$.

\begin{proposition}
    Let $q$ be a prime number and $a\in Z_M (t) \cap Z^{-1}_M (t)$.
    Then for {\bf any} $1\le x\le q/(4Mt)$ one has $|ax| \ge t$. 
\label{p:repulsion}
\end{proposition}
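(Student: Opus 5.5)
The plan is to argue by contradiction and exploit the symmetry between $a$ and $a^{-1}$. Suppose that for some $1\le x\le q/(4Mt)$ we have $y:=|ax|<t$, where $|\cdot|$ denotes the absolute least residue modulo $q$. Since $q$ is prime and $1\le x<q$, $a$ is invertible and $y\neq 0$. From $ax\equiv \pm y \pmod q$ we get $a^{-1}y\equiv \pm x\pmod q$. Hence, for the element $b:=a^{-1}$, the small number $y<t$ plays the role of a denominator, and $|by|\le x$. By hypothesis $b\in Z_M(t)$.

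The key step is a \emph{local} version of Lemma \ref{l:M_crit}:
\[
\text{if } b\in Z_M(t) \text{ and } 1\le y<t, \text{ then } y\,|by|\ge q/(4M).
\]
With this in hand we would obtain $x\ge |by|\ge q/(4My)$, hence $y\ge q/(4Mx)\ge t$, contradicting $y<t$.

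To prove the local statement, let $b/q=[0;c_1,\dots,c_s]$ with convergents $p_j/q_j$, and choose $j$ with $q_j\le y<q_{j+1}$.
\begin{itemize}
\item By the best approximation property of convergents, $|by|=\|y b/q\|\,q\ge |q_j b-qp_j|$.
\item The standard identity gives $|q_jb-qp_j|=q/(q_{j+1}+\theta q_j)\ge q/(q_{j+1}+q_j)$ for some $\theta\in[0,1]$.
\item Since $q_{j+1}=c_{j+1}q_j+q_{j-1}\le (c_{j+1}+1)q_j$, we get $y|by|\ge q_j\cdot q/((c_{j+1}+2)q_j)=q/(c_{j+1}+2)$.
\end{itemize}
This is at least $q/(4M)$ once $c_{j+1}\le M$, because $M+2\le 4M$ for $M\ge 1$.

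The main obstacle I expect is the indexing in the definition \eqref{def:Z_M(t)}: it bounds $c_i$ only for those $i$ with $q_i<t$. We know $q_j\le y<t$, but $q_{j+1}$ may exceed $t$, so $c_{j+1}\le M$ is not immediate. I would handle this as follows.
\begin{itemize}
\item If $q_{j+1}<t$, then $c_{j+1}\le M$ directly and the bound above applies.
\item If $q_{j+1}\ge t$, I would first check which convention the paper uses, namely whether $c_j$ is attached to $q_j$ or to $q_{j-1}$. This is consistent with the description of $Z_M(t)$ through the intervals $J_{u/v}$ in \eqref{f:u',v'}, where the next partial quotient is constrained by passing to $[\dots,M+1]$. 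That is, the elements of $Z_M(t)$ lie in $J_{u/v}$ with $u/v\in\overline{Q_M(t)}$, which forces the partial quotient following the last convergent of denominator $<t$ to be at most $M$ (up to the $r_l-1$ boundary case).
\item If necessary, I would instead use the interval $J_{u/v}$ containing $b/q$ and estimate $|b/q-p_j/q_j|$ from below by the distance from $p_j/q_j$ to the endpoint $[0;r_1,\dots,r_{l-1},M+1]$. This gives $|q_jb-qp_j|\gg q/(Mq_j)$, which is the same conclusion.
\end{itemize}
The constant $4$ in the statement leaves room for the loss in this boundary case.
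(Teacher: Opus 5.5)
Your route is the paper's route. Its proof is the same contrapositive, using \eqref{f:beg_end_ax}, which is exactly your local bound $y|by|\ge q/(4M)$ for $b=a^{-1}\in Z_M(t)$ and $1\le y<t$; the paper simply attributes that bound to Lemma~\ref{l:M_crit}. Your derivation of the bound when $c_{j+1}\le M$ is correct. The boundary case $q_j\le y<t\le q_{j+1}$, however, is a genuine gap, and both repairs you suggest are false.

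The interval $J_{u/v}$ is $\{[0;r_1,\dots,r_{l-1},\xi] : r_l-1+\frac{1}{M+1}\le \xi\le M+1\}$. Its endpoint $M+1$ bounds $c_l$, not $c_{l+1}$. Moreover the centre $u/v$, whose denominator $v=q_l$ is $<t$, is an \emph{interior} point of $J_{u/v}$: $\xi=r_l+1/\eta$ is admissible for every $\eta\ge 1$. So $J_{u/v}$ contains $b/q=[0;r_1,\dots,r_l,C,\dots]$ with $C$ arbitrarily large, and for $y=v<t$ this gives $y|by|=vq/(\alpha' v+q_{l-1})<q/C$, where $\alpha'\ge C$. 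Only in the branch $c_l=r_l-1$ does the lower endpoint constrain $c_{l+1}$.

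Your third bullet fails for the same reason. The relevant convergent is $p_l/q_l=u/v$, which lies inside $J_{u/v}$. Hence $|b/q-u/v|$ can be as small as $1/(qv)$, and no distance to an endpoint bounds it from below.

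This cannot be fixed by a sharper argument, because under the literal wording of \eqref{def:Z_M(t)} the proposition is false. Take $a=1$: since $1/q=[0;q]$ has $q_1=q\ge t$, we get $1\in Z_M(t)$ vacuously. Also $1^{-1}=1$, and $x=1$ gives $|ax|=1<t$.

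The statement, and the paper's \eqref{f:beg_end}, become valid once $Z_M(t)$ is read as also requiring $c_{j+1}\le M$ whenever $q_j<t$. Alternatively one can take ``$x|ax|\ge q/(4M)$ for all $1\le x\le t$'' directly as the defining property. Under either reading your case $q_{j+1}\ge t$ never arises, and your first computation is the complete proof. You should state that reading explicitly rather than try to extract it from the $J_{u/v}$ description, which does not contain it.
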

\begin{proof}
    If $|ax| < t$, then $x|ax| < q/4M$. 
    But then we know that both $x,|ax| \in [t,q/(4Mt)]$ (or see formulae \eqref{f:beg_end}, \eqref{f:beg_end_ax}) and this is a contradiction. 
$\hfill\Box$
\end{proof}

\subsection{Independency of two  critical denominators}  
\label{ssec:independency}

The main observation of this subsection is that the elements of an interval $J_{u/v}$ are quite close to each other (both in the Euclidean sense and in the continued fraction expansion sense), and hence the behavior of the various $x(a_k)$, where $a_k \in J_{u/v}$, resembles the behavior of the denominators of $x(a)$ for a single $a\in J_{u/v}$.
In particular, it can be shown that the set of critical denominators $x(a)$, $a\in J_{u/v}$ is quite  sparse and even ``independent'' in 
the sense that we are ready to define. 

\begin{definition} 
Let  $C_1,\dots, C_k \ge 1$ be some parameters and $\vec{C}:=(C_1,\dots,C_k)$. We say that elements $x_1,\dots,x_k \in \Z/q\Z$ are $\vec{C}$--independent if any equation 
\[
    \a_1 x_1 + \dots + \a_k x_k \equiv  0 \pmod q \,,
    \quad \quad 
    \mbox{where}
    \quad \quad
    \a_j \in \Z,\,\quad |\a_j| \le C_j 
\]
implies that $\a_1=\dots = \a_k=0$. 
\end{definition}

For example, in the next lemma we study $(C,C)$--independency of two critical denominators. 




\begin{lemma}
    Let $M\ge 2$, $1\le C < t/4$,  $a_1,a_2 \in J_{u/v} \subseteq Z_M (t)$, $u/v=[0;r_1,\dots,r_l]$  
    and 
    $t<x_1 \neq x_2  <q/t$ 
    be  denominators of some convergents to $a_1/q,a_2/q$.  
    Then there are no $\a,\beta \in \Z$, $\a \beta \neq 0$, $|\a|, |\beta| \le C$
    such that 
\begin{equation}\label{f:k=2}
   \a x_1 \equiv \beta x_2 \pmod q \,,
\end{equation}
    provided 
\begin{equation}\label{cond:k=2}
    N^{3/2} \le \frac{\sqrt{q}}{2(M+2)^2 C} \,.
\end{equation}
\label{l:k=2}
\end{lemma}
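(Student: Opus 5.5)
Throughout I write $y_i$ for the least absolute residue of $a_i x_i$ modulo $q$. The approach is to exploit the fact that $a_1,a_2$ lie in the same tiny interval $J_{u/v}$, so their continued fraction expansions agree up to $[0;r_1,\dots,r_{l-1}]$. Hence the convergent denominators of $a_1/q$ and $a_2/q$ of size below roughly $v$ coincide. Since the intervals $J_{u/v}$ of $Z_M(t)$ have $v\asymp t$ up to factors of $M$, both $x_1,x_2>t$ lie beyond this common part. I would first record the standard facts. Since $x_i$ is a convergent denominator of $a_i/q$, we have $|y_i|=|a_ix_i-qp_i|\le q/x_i'$, where $x_i'$ is the next denominator. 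Because $a_i\in Z_M(t)$ and $x_i\ge t$, the partial quotients before $x_i$ are at most $M$. This gives the hyperbola-type bounds $x_i|y_i|\le q$ and $|y_i|\ge q/((M+2)x_i)$-type control from Lemma \ref{l:M_crit}. Writing $x_i=\mathcal{K}(r_1,\dots,r_{l-1},\dots)$ and using the domino identity \eqref{f:domino}, I would express $x_i = v_0 X_i + v_{-1} X_i'$, where $v_0=\mathcal{K}(r_1,\dots,r_{l-1})$ and $v_{-1}$ is the previous continuant.

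Next, assume $\alpha x_1\equiv \beta x_2 \pmod q$ with $0<|\alpha|,|\beta|\le C$. Multiply by $a_1$ and use $a_2=a_1+d$ with $|d|\le |J_{u/v}|q\ll Mq/v^2\ll M^2 N$ (recall $t^2=q/N$). This gives
\[
\alpha y_1 \equiv \beta a_1x_2 = \beta y_2 - \beta d x_2 \pmod q .
\]
So $\alpha y_1-\beta y_2+\beta d x_2\equiv 0 \pmod q$. I would then bound the absolute size of the left side. We have $|\alpha y_1|,|\beta y_2|\le Cq/t$. The term $|\beta d x_2|$ is at most about $C M^2 N\cdot q/t = CM^2 N^{3/2}\sqrt q$, which is too big. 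So I would instead combine the congruence in the other direction: work with $x$ and $y$ symmetrically, choosing whichever of $x_2$ or $|y_2|$ is at most $\sqrt{q}$ (Type I/II symmetry). In the good case the perturbation term is $\le CM^2N\sqrt q$. The condition \eqref{cond:k=2}, $2(M+2)^2 C N^{3/2}\le\sqrt q$, is exactly tuned so that the total is $<q$. The congruence then becomes an integer equality $\alpha y_1-\beta y_2+\beta d x_2=0$, and together with $\alpha x_1=\beta x_2+mq$ (with $|m|$ small) we get a $2\times2$ linear system in integers.

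Finally, I would derive a contradiction from this integer system using the determinant $x_1y_2-x_2y_1$ type identities. Because $x_1\ne x_2$ are distinct convergent denominators sharing the prefix $r_1,\dots,r_{l-1}$, and $a_1,a_2$ are so close, the vectors $(x_i,y_i)$ are short lattice vectors of nearby lattices $\{(x,a_ix \bmod q)\}$. An integer relation with small coefficients would force $(x_1,y_1)$ and $(x_2,y_2)$ to be proportional, via $\alpha x_1=\beta x_2$ exactly once $m=0$ is forced by size considerations. Since $(x_i,p_i)$ is coprime, $x_1=\beta' g$ and $x_2=\alpha' g$. Then $x_1\neq x_2$ would contradict the uniqueness of convergents given the shared prefix and the bound $C<t/4$: two convergent denominators $>t$ that are multiples of a common $g\ge x_i/C>4$ cannot both arise with quotients bounded as required. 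The main obstacle is this last step, together with controlling the perturbation term $\beta d x_2$. I expect to need to handle the case $x_2>\sqrt q$ separately by passing to the dual variable $y$, and then to check carefully that \eqref{cond:k=2} makes all wrap-around modulo $q$ impossible.
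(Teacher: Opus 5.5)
Your proposal has a genuine gap at the decisive step. After the detour you reach $\alpha x_1=\beta x_2$ in $\Z$, write $g=\gcd(x_1,x_2)\ge x_i/C$, and then assert that two distinct convergent denominators $>t$ with such a common factor ``cannot both arise''. No argument is given, and this assertion is precisely the content of the lemma.

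The detour itself is unnecessary, and parts of it are wrong:
\begin{itemize}
\item The integer identity is immediate. Since $C<t/4$ and $x_i<q/t$, you have $|\alpha x_1|,|\beta x_2|<q/4$, so the congruence already holds in $\Z$. No multiplication by $a_1$, no $y_i$, no perturbation term $\beta d x_2$ and no ``$m=0$ by size'' argument are needed.
\item The Type I/II duality you invoke is not available. The lemma does not assume $a_i\in Z_M^{-1}(t)$, and $x_i$ may be as large as $q/t$.
\item Condition \eqref{cond:k=2} is not what makes your perturbation term small. That would be a condition of the shape $CM^2N<\sqrt q$, not one involving $N^{3/2}$.
\item Membership in $Z_M(t)$ only bounds the partial quotients with $q_j<t$. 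So the quotients of $a_i$ between $t$ and $x_i$ are uncontrolled, and Lemma \ref{l:M_crit} gives no lower bound on $|y_i|$ here.
\end{itemize}

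The missing idea is to actually use the decomposition you wrote down in your first paragraph and then abandoned. Put $\mathcal{K}=\mathcal{K}(r_1,\dots,r_{l-1})$ and $\mathcal{K}'=\mathcal{K}(r_1,\dots,r_{l-2})$. The domino identity \eqref{f:domino} gives $x_i=c_i\mathcal{K}+\hat c_i\mathcal{K}'$, where $c_i,\hat c_i$ are the tail continuants, with
\begin{itemize}
\item $1\le \hat c_i\le c_i<x_i/\mathcal{K}$;
\item $\gcd(c_i,\hat c_i)=1$ and $\gcd(\mathcal{K},\mathcal{K}')=1$;
\item $t/(M+2)<\mathcal{K}<t/(M+1)$, which comes from the endpoints \eqref{f:u',v'} of $J_{u/v}$.
\end{itemize}
Substituting into $\alpha x_1=\beta x_2$ gives $(\alpha c_1-\beta c_2)\mathcal{K}=(\beta\hat c_2-\alpha\hat c_1)\mathcal{K}'$, so $\mathcal{K}\mid \beta\hat c_2-\alpha\hat c_1$. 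But $|\beta\hat c_2-\alpha\hat c_1|<2Cq/(t\mathcal{K})$, and this is less than $\mathcal{K}$ exactly when $t^3\ge 2C(M+2)^2q$, which is \eqref{cond:k=2}. Hence $\alpha\hat c_1=\beta\hat c_2$ and $\alpha c_1=\beta c_2$, so $c_1\hat c_2=c_2\hat c_1$. Coprimality then forces $(c_1,\hat c_1)=(c_2,\hat c_2)$, i.e.\ $x_1=x_2$, a contradiction.

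This uniqueness of representation in the ``base'' $(\mathcal{K},\mathcal{K}')$ for small coefficients is how the paper proves the lemma. It is also what \eqref{cond:k=2} is calibrated for. Without this step your argument does not close.
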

\begin{proof}
    Suppose that \eqref{f:k=2} takes place. 
    The assumption $C< 4t$ implies that actually this equation takes place in $\Z$. 
    By \eqref{f:u',v'} 
    we have 
\[
    \frac{a_1}{q} = [0;r_1,\dots,r_{l-1},r^{(1)}_l, \dots,r^{(1)}_{s_1}]\,,
    \quad \quad 
    \frac{a_2}{q} = [0;r_1,\dots,r_{l-1},r^{(2)}_l, \dots,r^{(2)}_{s_2}] \,.
\]
    Let $\mathcal{K}= \mathcal{K}(r_1,\dots,r_{l-1})$ and $\mathcal{K}'= \mathcal{K}(r_1,\dots,r_{l-2})$. 
    Later we will show that $l>2$, so $\mathcal{K}'>0$ is correctly defined. 
    Now thanks to the definition of the interval $J_{u/v}$, as well as formulae \eqref{f:u',v'},  \eqref{f:domino} one has 
\[
    \mathcal{K}'(M+2) \le  \mathcal{K}(M+1)+\mathcal{K}' \le t
\]
    and 
\[
    t \le \mathcal{K}+\mathcal{K}(r_1,\dots,r_l) < (M+2) \mathcal{K} < (M+1) (M+2) \mathcal{K}' \,.
\]
    Therefore, in view of the inequality  $t\ge 8M^2$, which follows from \eqref{cond:k=2} (of course, one can assume that $M$ is sufficiently small in terms of $q$), we derive 
\begin{equation}\label{tmp:K,K'}
    \frac{t}{M+2} < \mathcal{K} < \frac{t}{M+1}
    \quad 
        \mbox{ and }
    \quad 
    1<\frac{t}{(M+1)(M+2)} < \mathcal{K}' \le \frac{t}{M+2} \,.
\end{equation}
	In particular, $l>2$. 
    Using formula \eqref{f:domino} again, we get 
\begin{equation}\label{sys:*}
    x_1 = c_1 \mathcal{K} + \FF{c}_1 \mathcal{K}'
    \quad 
        \mbox{ and }
    \quad 
    x_2 = c_2 \mathcal{K} + \FF{c}_2 \mathcal{K}' \,,
\end{equation}
    where $c_1 \ge x_1/(2\mathcal{K})>1$, $c_2 \ge x_2/(2\mathcal{K})>1$, thanks to the bounds $\mathcal{K} < \frac{t}{M+1}$ and  $x_1,x_2 >t$. 
    Similarly, from \eqref{sys:*}, we see that 
\begin{equation}\label{tmp:c_12}
    1\le \FF{c}_1 \le c_1 <  x_1/\mathcal{K}
        \quad 
        \mbox{ and }
    \quad 
       1\le \FF{c}_2 \le c_2 <  x_2/\mathcal{K} \,.
\end{equation}
    Here $c_1, \FF{c}_1$ depend on $x_1$ and $c_2, \FF{c}_2$ depend on $x_2$.  
    Later we will use 
    stronger 
    lower bounds for $\FF{c}_1, \FF{c}_2$.
    Namely, one has $c_1 \le (M+1) \FF{c}_1$, $c_2 \le (M+1) \FF{c}_2$ and therefore 
\begin{equation}\label{tmp:c'_12}
    \FF{c}_1 \ge \frac{x_1}{2(M+1)\mathcal{K}}
            \quad 
        \mbox{ and }
    \quad 
    \FF{c}_2 \ge \frac{x_2}{2(M+1)\mathcal{K}} \,.
\end{equation}
    Finally, substituting \eqref{sys:*} into \eqref{f:k=2}, we obtain 
\[
    (\a c_1 - \beta c_2) \mathcal{K} = (\beta \FF{c}_2 - \a \FF{c}_1) \mathcal{K}' \,.
\]
    We know that $\mathcal{K}, \mathcal{K}'$ are coprime numbers and therefore $\mathcal{K}$ divides $\beta \FF{c}_2 - \a \FF{c}_1$.  
    But 
    using  \eqref{tmp:c_12}  
    and our assumptions on $|\a|, |\beta|$, we see that the last number does not exceed
\begin{equation}\label{tmp:C_K}
    \frac{2Cx_1}{\mathcal{K}} \le \frac{2Cq}{t\mathcal{K}} < \mathcal{K}
\end{equation}
    thanks to the bound $\mathcal{K}>\frac{t}{M+2}$ and the 
    condition 
    \eqref{cond:k=2}. 
    Hence $\beta \FF{c}_2 = \a \FF{c}_1$ and $\a c_1 = \beta c_2$.
    It follows that $c_1 \FF{c}_2 = c_2 \FF{c}_1$. 
    We know from the formula \eqref{f:domino} that $c_1,c_2, \FF{c}_1, \FF{c}_2$ are some continuants defined by 
    continuants $x_1,x_2$.
    Hence it is easy to see that  $(c_1,\FF{c}_1) = (c_2,\FF{c}_2) =1$. 
    Thus  the identity $c_1 \FF{c}_2 = c_2 \FF{c}_1$ gives us 
    $c_2=kc_1$ and $\FF{c}_2=k\FF{c}_1$ for some integer $k$. 
    But then system \eqref{sys:*} implies that $x_2 = kx_1$, thus $x_1$ divides $x_2$,  and, similarly, $x_2$ divides $x_1$.
    Thus $x_1=x_2$ and therefore relation \eqref{f:k=2} is impossible.
    This concludes the proof.  
$\hfill\Box$
\end{proof}


\begin{remark}
\label{r:coprimality_J}
 The same argument shows that 
 any two critical denominators $x_1 = x_1(a_1)$, $x_2= x_2(a_2)$ are almost coprime, that is,  
    $g:=(x_1,x_2) < 4(M+2)^2 N^2$.
    Indeed, let $x_1 = g \tilde{\a}$, $x_2 = g \tilde{\beta}$, where $(\tilde{\a}, \tilde{\beta}) = 1$. 
    Then $\tilde{\beta} x_1 = \tilde{\a} x_2$ and 
    we have that either $|\tilde{\a}|$ or $|\tilde{\beta}|$ is at least $C$, where $C>\mathcal{K}^2/2x_1$, see computations in \eqref{tmp:C_K}. 
    It follows that 
    \[
        g=(x_1,x_2) \le x_1/C \le 4 x^2_1/\mathcal{K}^2 < 4(M+2)^2 x^2_1/t^2 \le 4(M+2)^2 N^2 \,.
    \]
    Thus, the collection  of critical denominators is not just sparse or ``independent'' in the sense of Lemmas \ref{l:k=2}, \ref{l:k=3}, 
    but they are also almost coprime to each other.  This is reminiscent of the behavior of the denominators of $x(a)$ for a fixed fraction $a/q$.
\end{remark}


Let us make one remark on $\vec{C}$--dependency between three critical denominators which easily follows from Lemma \ref{l:k=2}. 
The result presented below shows that the coefficients $(\a,\beta,\gamma)$ in \eqref{f:k=3_projecive_1} are uniquely determined  by $x,y,z$, provided condition \eqref{cond:k=3_projecive} holds.  

\begin{proposition}
    Let $M\ge 2$, $C_1,C_2,C_3, C'_1, C'_2, C'_3 \ge 1$,  $a,b,c \in J_{u/v} \subseteq Z_M (t)$, $u/v=[0;r_1,\dots,r_l]$  
    and 
    $x,y,z\in [t,q/t]$
    be  denominators of some convergents to $a/q$, $b/q$ and $c/q$, correspondingly. 
    Suppose that 
\begin{equation}\label{f:k=3_projecive_1}
    \a x +\beta y + \gamma z = 0 \,, \quad (\a,\beta,\gamma)=1\,, \quad |\a| \le C_1\,, |\beta|\le C_2\,, |\gamma| \le C_3 \,,
\end{equation}
    and 
\begin{equation}\label{f:k=3_projecive_2}
    \a' x +\beta' y + \gamma' z = 0 \,, \quad (\a',\beta',\gamma')=1\,, \quad |\a'| \le C'_1\,, |\beta'|\le C'_2\,, |\gamma'| \le C'_3 \,. 
\end{equation}
    Finally,  we assume that $16 \| \vec{C}\|_\infty \| \vec{C}'\|_\infty \le t$, 
    and 
\begin{equation}\label{cond:k=3_projecive}
    N^{3/2} \le \frac{\sqrt{q}}{32 M^2 \| \vec{C}\|_\infty \| \vec{C}'\|_\infty}  \,.
\end{equation}
    Then $\a=\a'$, $\beta = \beta'$ and $\gamma = \gamma'$. 
\label{p:k=3_projecive}
\end{proposition}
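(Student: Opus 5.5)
Assume that the two primitive relations \eqref{f:k=3_projecive_1} and \eqref{f:k=3_projecive_2} are not proportional; we derive a contradiction. The plan is to eliminate one of the three denominators and reduce to a two-term relation, to which Lemma \ref{l:k=2} applies. Multiply \eqref{f:k=3_projecive_1} by $\gamma'$ and \eqref{f:k=3_projecive_2} by $\gamma$, then subtract. This gives
\[
    (\a\gamma' - \a'\gamma) x + (\beta\gamma' - \beta'\gamma) y = 0 .
\]
Its coefficients are bounded in absolute value by $C:=2\|\vec{C}\|_\infty \|\vec{C}'\|_\infty$. The hypothesis $16\|\vec{C}\|_\infty\|\vec{C}'\|_\infty \le t$ gives $C< t/4$. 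Condition \eqref{cond:k=3_projecive} gives
\[
    N^{3/2}\le \frac{\sqrt q}{16M^2 C}\le \frac{\sqrt q}{2(M+2)^2C}
\]
for $M\ge 2$, since $(M+2)^2\le 8M^2$ there. So \eqref{cond:k=2} holds.

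Now split into cases. If $x\neq y$ and both coefficients are nonzero, Lemma \ref{l:k=2} with $a_1=a$, $a_2=b$ forbids the relation. Hence both coefficients vanish, or exactly one vanishes. Exactly one vanishing is impossible, because $x,y\ge t>0$. If $x=y$, the relation reads $(\a+\beta)\gamma' = (\a'+\beta')\gamma$, and we instead eliminate $y$ and work with the pair $(x,z)$. If $x=y=z$, the two relations carry very little information, and the statement must be read up to this degenerate case. I would handle it by noting that the convergents then coincide, or by assuming the three denominators are distinct, as the paper's application presumably does. Apart from that case, we get $\a\gamma'=\a'\gamma$ and $\beta\gamma'=\beta'\gamma$. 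Running the same elimination with $x$ or $y$ in place of $z$ gives $\a\beta'=\a'\beta$. So the vectors $(\a,\beta,\gamma)$ and $(\a',\beta',\gamma')$ are proportional.

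Since both vectors are primitive, proportionality forces $(\a',\beta',\gamma')=\pm(\a,\beta,\gamma)$. We then fix the sign, for example by the normalization that the first nonzero coefficient is positive. The statement implicitly assumes such a normalization, because $(\a,\beta,\gamma)$ and $-(\a,\beta,\gamma)$ define the same relation.

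The main obstacle is the degenerate configurations. One is when some of $x,y,z$ coincide. Another is when the eliminated coefficient, say $\gamma$ or $\gamma'$, is zero. In that case one of the relations is itself a two-term relation between distinct denominators, and Lemma \ref{l:k=2} kills it directly, since $C'_j\le C$. I would organize the proof by first using Lemma \ref{l:k=2} to show that every coefficient in each relation is nonzero when $x,y,z$ are distinct. With that in hand, the elimination above is fully rigorous.
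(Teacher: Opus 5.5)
Your proposal is correct and is essentially the paper's argument. The paper eliminates $x$ instead of $z$, applies Lemma \ref{l:k=2} to the two-term relation $(\beta\a'-\a\beta')y+(\gamma\a'-\a\gamma')z=0$ to get $\beta\a'=\a\beta'$ and $\gamma\a'=\a\gamma'$, and concludes by primitivity. Your extra care is justified, since the paper's one-line proof passes over these points: primitivity only gives $(\a',\beta',\gamma')=\pm(\a,\beta,\gamma)$; the case $x=y=z$ must be excluded (the denominators are distinct where the proposition is applied, in Lemma \ref{l:k=3}); and deriving all three $2\times 2$ minors avoids the paper's tacit use of $\a\neq 0$.
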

\begin{proof}
    Using \eqref{f:k=3_projecive_1}, \eqref{f:k=3_projecive_2}, we obtain 
\[
    (\beta \a' - \a \beta')y +  (\gamma \a' - \a \gamma')z = 0 \,.
\]
    By Lemma \ref{l:k=2} and our condition  \eqref{cond:k=3_projecive} we see that $\beta \a' = \a \beta'$ and $\gamma \a' = \a \gamma'$. 
    The last equalities imply  $\a=\a'$, $\beta = \beta'$ and $\gamma = \gamma'$ in view of coprimality of $\a,\beta,\gamma$ and $\a',\beta',\gamma'$. 
    This completes the proof. 
$\hfill\Box$
\end{proof}

\bp

In our study of $\vec{C}$--independency between three critical denominators we use the basic 
quantity 
$d(x,y)$.
Namely, let  $x = x(a) = \mathcal{K} (r_1,r_2,\dots, r_s)$ be the denominator of a convergent to $a/q$ and $y= y(b) = \mathcal{K} (r'_1,r'_2, \dots, r'_{s'})$  be the denominator of a convergent to $b/q$.
Then we put 
\begin{equation}\label{def:d(x,y)}
    \FF{x} = \mathcal{K} (r_2,\dots, r_s) \,, 
    \quad \quad 
    \FF{y} = \mathcal{K} (r'_2, \dots, r'_{s'}) \,,
    \quad \quad \mbox{and}  \quad \quad 
    d(x,y) := x \FF{y} - y \FF{x} \,.
\end{equation}
Recall that there are two natural ways how to measure the distance between $x(a)$ and $y(b)$.
The first one is the Euclidean distance between $a$ and $b$ (or, equivalently, between $a/q$ and $b/q$). 
The second way is a non--archimedean: we say that $x=\mathcal{K} (r_1,\dots,r_{l-1}, r_l, \dots, r_s)$ is close to $y = \mathcal{K} (r_1,\dots,r_{l-1}, r'_l, \dots, r'_{s'})$, where $r_l \neq r'_l$, if $l$ is large. 
In other words, $x,y$ are close if they belong to the same fundamental interval defined by $r_1,\dots,r_{l-1}$ or, equivalently, the same cylinder defined by the initial digits $r_1,\dots,r_{l-1}$. 
Therefore, 
the modulus of the 
wedge product 
$|d(x,y)|$ in \eqref{def:d(x,y)} can be considered  as a version of this non--archimedean distance. 
The following instructive Lemma \ref{l:d(x,y)} below reflects this intuition.
For example,  formula \eqref{f:d(x,y)_bound} is  a version of the well--known fact that if two fractions are close  in the non--archimedean sense, then they  are automatically close in the Euclidean sense. 

\begin{lemma}
    Let $s,s',l$ be positive integers and 
\[
    x =x(a)=\mathcal{K} (r_1,\dots,r_{l-1}, r_l, \dots, r_s) \,, \quad 
    y = y (b) =\mathcal{K} (r_1,\dots,r_{l-1}, r'_l, \dots, r'_{s'})
\] be $\tilde{M}$--critical denominators $x, y$.
    Then 
\begin{equation}\label{f:d(x,y)_distance}
      \left| 
        \frac{|a-b|}{q}  - 
        \frac{|d(x,y)|}{x y}
    \right|
    \le \frac{1}{\tilde{M} x^2} + \frac{1}{\tilde{M} y^2} \,.
\end{equation}
    Also, for any $x,y$, assuming $r_l \neq r'_l$  and $s,s' \ge l+1$ 
    one has 
\begin{equation}\label{f:d(x,y)_bound}
    |d(x,y)| \ge \frac{x y |a-b| }{16 (r_l+r'_l) q} \,.
\end{equation}
    Finally, for arbitrary $x,y$ if $r_l \ge r'_l+2$ and $s,s' \ge l+1$, then the following holds 
\begin{equation}\label{f:d(x,y)_up_bound}
    |d(x,y)| \le \frac{8 x y r'_l |a-b|}{q} \,.
\end{equation}
\label{l:d(x,y)}
\end{lemma}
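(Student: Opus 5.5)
The plan is to work with convergent matrices and read $d(x,y)$ as a wedge product. Write $p_x/x$ for the convergent to $a/q$ with denominator $x$, and $p_y/y$ for the convergent to $b/q$ with denominator $y$. The key identity is
\[
\frac{\FF{x}}{x} = [0;r_s,r_{s-1},\dots,r_1]
\]
(the mirror formula). The continuant $\FF{x}=\mathcal{K}(r_2,\dots,r_s)$ is also the numerator of the convergent: indeed $p_x = \mathcal{K}(r_2,\dots,r_s) = \FF{x}$. So $\FF{x}=p_x$ and $\FF{y}=p_y$, and therefore
\[
d(x,y) = x p_y - y p_x,\qquad \frac{d(x,y)}{xy} = \frac{p_y}{y}-\frac{p_x}{x}.
\]
This turns $d$ into a difference of the two convergents.

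For \eqref{f:d(x,y)_distance} I will use the triangle inequality. Write
\[
\frac{a-b}{q} = \Bigl(\frac{a}{q}-\frac{p_x}{x}\Bigr) + \Bigl(\frac{p_x}{x}-\frac{p_y}{y}\Bigr) + \Bigl(\frac{p_y}{y}-\frac{b}{q}\Bigr).
\]
By the $\tilde{M}$--critical condition $x|ax|\le q/\tilde{M}$, we have $|a/q - p_x/x| = |ax - qp_x|/(qx) = |ax|/(qx) \le 1/(\tilde{M}x^2)$, where the residue $ax \bmod q$ is exactly $\pm(ax-qp_x)$. The same bound holds for $y$. Combining these with the identity above gives \eqref{f:d(x,y)_distance} immediately.

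For \eqref{f:d(x,y)_bound} and \eqref{f:d(x,y)_up_bound} I will use cylinder geometry and avoid the criticality hypothesis. Both fractions $p_x/x$ and $a/q$ lie in the cylinder of depth $l$ given by $(r_1,\dots,r_{l-1},r_l)$; here $s\ge l+1$ ensures we are strictly inside and not at an endpoint. Likewise $p_y/y$ and $b/q$ lie in the cylinder given by $(r_1,\dots,r_{l-1},r'_l)$. Let $Q=\mathcal{K}(r_1,\dots,r_{l-1})$. The depth-$l$ cylinder with digit $r$ inside the depth-$(l-1)$ cylinder has length about $1/(r^2Q^2)$.

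For the lower bound, $|a/q-b/q|$ is at most the span of the cylinders $r_l$ and $r'_l$ together with the ones between them, which is $\ll |1/r_l-1/r'_l|/Q^2$. The difference of the convergents is at least the gap between these cylinders when they are nonadjacent. In the adjacent case it is bounded below via $|d|\ge1$ together with the digit-$(l+1)$ structure, since the convergents sit at least a fixed fraction of a cylinder away from the shared endpoint.

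It is cleaner to compute exactly with the domino identity \eqref{f:domino}. Write $x = c\,Q + \FF{c}\,Q'$ with $Q'=\mathcal{K}(r_1,\dots,r_{l-2})$, as in \eqref{sys:*}, and similarly $y = e\,Q + \FF{e}\,Q'$. Then
\[
d(x,y) = \pm (c\FF{e} - e\FF{c}),
\]
because $QQ'$-type terms cancel by the determinant relation of the continuants. Here $c/\FF{c} = [r_l; r_{l+1},\dots,r_s]$ lies in $(r_l, r_l+1)$, strictly because $s\ge l+1$; likewise $e/\FF{e}\in(r'_l,r'_l+1)$. So
\[
|c\FF{e}-e\FF{c}| = \FF{c}\,\FF{e}\,\Bigl|\frac{c}{\FF{c}}-\frac{e}{\FF{e}}\Bigr|.
\]
I will then use $x\asymp cQ$ with $c\asymp r_l\FF{c}$, and $y\asymp eQ$ with $e\asymp r'_l\FF{e}$. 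Similarly $a/q - b/q$ is a Möbius image, $|a/q-b/q|\asymp|\theta_a-\theta_b|/(Q^2\theta_a\theta_b)$, where $\theta_a\in(r_l,r_l+1)$ is the tail $[r_l;r_{l+1},\dots]$ of $a/q$. This gives $|d|\asymp xy|\theta-\theta'|/(Q^2 r_lr'_l\theta\theta')\cdot(\ldots)$.

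The main obstacle is comparing $|c/\FF{c}-e/\FF{e}|$ with $|\theta_a-\theta_b|$ up to factors involving $r_l+r'_l$, since the tails of the convergents differ from the tails of $a/q$ and $b/q$. When $r_l\ge r'_l+2$ the intervals $(r_l,r_l+1)$ and $(r'_l,r'_l+1)$ are separated by at least $1$, so both differences are $\asymp r_l-r'_l$ within a factor $2$. This yields \eqref{f:d(x,y)_up_bound} with the factor $r'_l$ coming from $\theta\theta'/(r_lr'_l)$ bookkeeping. For \eqref{f:d(x,y)_bound} with merely $r_l\neq r'_l$, the intervals may be adjacent. Then I will bound $|\theta_a-\theta_b|\le r_l+r'_l$ crudely and $|c/\FF{c}-e/\FF{e}|\ge 1/(\FF{c}\FF{e})$, i.e.\ $|d|\ge1$, and combine with the size relations. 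Chasing the constants to get $16$ is routine.
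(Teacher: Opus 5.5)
Your argument for \eqref{f:d(x,y)_distance} is correct and matches the paper's. $\FF{x}=\mathcal{K}(r_2,\dots,r_s)$ is the numerator of the convergent, so $|d(x,y)|/(xy)=|\FF{x}/x-\FF{y}/y|$, and criticality plus the triangle inequality finish it. The ``mirror formula'' you quote is misstated: $[0;r_s,\dots,r_1]=q_{s-1}/q_s$, whereas $\FF{x}/x=[0;r_1,\dots,r_s]$. Since you only use $\FF{x}=p_x$, this is harmless.

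Your comparison of $|d(x,y)|=\FF{c}\,\FF{e}\,|c/\FF{c}-e/\FF{e}|$ with $xy|a-b|/q$ also works when $D=|r_l-r'_l|\ge 2$. Up to an absolute factor, $xy|a-b|/q$ equals $\FF{c}\,\FF{e}\,|\theta_a-\theta_b|$, and both differences lie in $[D-1,D+1]$. If you keep the exact M\"obius distortion factors $\frac{Q\theta+Q'}{Q\theta'+Q'}$ rather than $\asymp$, this gives \eqref{f:d(x,y)_up_bound} with constant $8$ (even without the factor $r'_l$). It also gives \eqref{f:d(x,y)_bound} in that range. This is a cleaner route than the paper's expansion in $\omega_j=\mathcal{K}(r_j,\dots,r_s)$.

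The gap is the adjacent case $D=1$ of \eqref{f:d(x,y)_bound}. Pairing $|d|\ge 1$ with a crude bound on $|\theta_a-\theta_b|$ only works if $\FF{c}\,\FF{e}\ll r_l+r'_l$, which fails in general. No constant-chasing can repair this, because the inequality itself is false in this case. Your claim that $c/\FF{c}$ lies strictly inside $(r_l,r_l+1)$ breaks exactly when $s=l+1$ and $r_{l+1}=1$. Then $c/\FF{c}=r_l+1$ is the common endpoint of the two cylinders, and $e/\FF{e}=[r_l+1;R]$ lies within $1/R$ of it, while $\theta_a\in(r_l+\tfrac12,r_l+1)$ need not be close. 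Concretely, take $l=1$:
\begin{itemize}
\item $a/q=[0;2,1,2,\dots]$, with $x=q_2(a)=\mathcal{K}(2,1)=3$ and $\FF{x}=1$;
\item $b/q=[0;3,R,\dots]$, with $y=q_2(b)=3R+1$ and $\FF{y}=R$.
\end{itemize}
Then $d(x,y)=3R-(3R+1)=-1$, while $a/q>4/11$ and $b/q<1/3$ give
\[
\frac{xy|a-b|}{16(r_1+r'_1)q}>\frac{3(3R+1)}{33\cdot 80}=\frac{3R+1}{880}>1\qquad (R\ge 300).
\]
Prepending a common prefix does not help. Neither does criticality: make the third partial quotient of $a/q$ equal to $\tilde{M}$, put a partial quotient $\ge\tilde{M}$ after $R$ in $b/q$, and take $R\gg\tilde{M}$.

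The paper's own proof has the same hole, so it does not close your gap. Assuming $r_l>r'_l$, it claims that $2\omega_{l+2}\le\omega_{l+1}$ already forces $|d(x,y)|\ge\frac12\omega_{l+1}\omega'_{l+1}$. That condition does not control the negative term $-\omega_{l+1}\omega'_{l+2}$. In the example above, with the digit-$3$ side unprimed, $\omega_{l+1}\omega'_{l+1}=R$ while $|d|=1$. What your approach establishes is \eqref{f:d(x,y)_distance}, \eqref{f:d(x,y)_up_bound}, and \eqref{f:d(x,y)_bound} under $|r_l-r'_l|\ge 2$. The adjacent case needs an extra hypothesis that excludes this endpoint configuration.
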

\begin{proof}
    Let us obtain estimate \eqref{f:d(x,y)_distance}. 
    As in \eqref{sys:*}, we have (see formula \eqref{f:domino})
\[
    \FF{x} = c_1 \mathcal{K}_* + \FF{c}_1 \mathcal{K}'_* \,,
    \quad \quad 
    \FF{y} = c_2 \mathcal{K}_* + \FF{c}_2 \mathcal{K}'_* \,,
\]  
    where $\mathcal{K}_* = \mathcal{K} (r_2,\dots,r_{l-1})$, $\mathcal{K}'_* = \mathcal{K} (r_2,\dots,r_{l-2})$.
    In terms of $x,y,\FF{x}, \FF{y}$ one has 
\[
    d(x,y) = x \FF{y} - y \FF{x} = (c_1 \FF{c}_2 - c_2 \FF{c}_1) (\mathcal{K} \mathcal{K}'_* - \mathcal{K}' \mathcal{K}_*) \,.
\]
    Then we 
    obtain 
\[
    \left| 
        \left| \frac{a}{q} - \frac{b}{q} \right| 
        -
        \left| \frac{c_2 \mathcal{K}_* + \FF{c}_2 \mathcal{K}'_*}{c_2 \mathcal{K} + \FF{c}_2 \mathcal{K}'} - \frac{c_1 \mathcal{K}_* + \FF{c}_1 \mathcal{K}'_*}{c_1 \mathcal{K} + \FF{c}_1 \mathcal{K}'} \right|
    \right|
    =
    \left| 
        \left| \frac{a}{q} - \frac{b}{q} \right| - 
        \frac{|c_1 \FF{c}_2 - c_2 \FF{c}_1| |\mathcal{K}' \mathcal{K}_* - \mathcal{K}'_* \mathcal{K}|}{x y}
    \right|
\]
\[
    =
      \left| 
        \left| \frac{a}{q} - \frac{b}{q} \right| - 
        \frac{|d(x,y)|}{x y}
    \right|
    =
    \left| 
        \left| \frac{a}{q} - \frac{b}{q} \right| - \left| \frac{\FF{x}}{x} - \frac{\FF{y}}{y} \right|
    \right|
\]
\[
    \le 
    \left| \frac{a}{q} - \frac{\FF{x}}{x} \right| + \left| \frac{b}{q} - \frac{\FF{y}}{y} \right| 
    \le \frac{1}{\tilde{M} x^2} + \frac{1}{\tilde{M} y^2} 
\]
    due to $\mathcal{K}' \mathcal{K}_* - \mathcal{K}'_* \mathcal{K} = \pm 1$. 
    Also, notice that 
    $|d(x,y)| = |c_1 \FF{c}_2 - c_2 \FF{c}_1|$.

    To get formula \eqref{f:d(x,y)_bound}, we need a more careful analysis. 
    Let $\o_j = \mathcal{K} (r_j, \dots, r_s)$, $\o'_j=\mathcal{K} (r'_j, \dots, r'_{s'})$, $j=l,\dots, l+3$. 
    Then $\o_l = r_l \o_{l+1} + \o_{l+2}$,
     $\o'_l = r'_l \o'_{l+1} + \o'_{l+2}$ and therefore 
\begin{equation}\label{tmp:d_omega}
    |d(x,y)| = (r_{l} - r'_{l}) \o_{l+1} \o'_{l+1}
    + \o_{l+2} \o'_{l+1} - \o_{l+1} \o'_{l+2}
    = 
    (r_{l} - r'_{l}) \o_{l+1} \o'_{l+1}
    \pm  d(\o'_{l+1}, \o_{l+1}) \,,
\end{equation}
    where we have assumed 
    without loosing of the generality that $r_{l} > r'_{l}$.
    Then if either $2\o_{l+2} \le \o_{l+1}$ or 
    $2\o'_{l+2} \le \o'_{l+1}$, then $|d(x,y)| \ge 2^{-1} \o_{l+1} \o'_{l+1}$. 
    If not, then $r_{l+1} = r'_{l+1} = 1$.
    In this case $|d(\o'_{l+1}, \o_{l+1})| = |d(\o'_{l+2}, \o_{l+2})|$ and using 
    $\o_{l+1} \ge \o_{l+2} + \o_{l+3} \ge 2 \o_{l+3}$,
    $\o'_{l+1} \ge 2 \o'_{l+3}$, we see that in any case $|d(x,y)| \ge 2^{-1} \o_{l+1} \o'_{l+1}$. 
    Thus using the notation of Lemma \ref{l:k=2} and formula \eqref{f:domino}, we derive 
\[
    |d(x,y)| \ge \frac{\o_{l+1} \o'_{l+1}}{2} \ge \frac{x y}{2(r_l+2)(r'_l+2) \mathcal{K}^2} \,.
\]
    It remains to use bounds 
\[
    \left| \frac{a}{q} - \frac{\FF{\mathcal{K}}}{\mathcal{K}} \right| \le \frac{1}{r_l \mathcal{K}^2} \,,
    \quad \quad 
    \left| \frac{b}{q} - \frac{\FF{\mathcal{K}}}{\mathcal{K}} \right| \le \frac{1}{r'_l \mathcal{K}^2} \,,
\]
    and derive 
\[
    |d(x,y)| \ge \frac{x y}{2(r_l+2)(r'_l+2) \mathcal{K}^2} 
    \ge 
    \frac{x y |a-b| ((r_l)^{-1}+(r'_l)^{-1})^{-1}}
    {2(r_l+2)(r'_l+2)q} \ge 
    \frac{x y |a-b|}{16 (r_l+r'_l) q} 
\]
    as required.

    Finally, to get \eqref{f:d(x,y)_up_bound} let us return to \eqref{tmp:d_omega}. Then we have 
\[
    |d(x,y)| \le (r_{l} - r'_{l} + 1) \o_{l+1} \o'_{l+1} 
    \le r_l \o_{l+1} \o'_{l+1} \le \frac{x y}{r'_l \mathcal{K}^2} \,.
\] 
    Since $r_l \ge r'_l+2$, it follows that the Euclidean distance between $a/q$ and $b/q$ is at least the length of the fundamental interval 
    defined by $r_1,\dots,r_{l-1}, r'_l+1$. 
    In other words, it is at least $(2(r'_l+1)^2 \mathcal{K}^2)^{-1}$. 
    Therefore, 
\[
    |d(x,y)|  \le \frac{x y}{r'_l \mathcal{K}^2} 
    \le 
    \frac{2(r'_l+1)^2 x y |a-b|}{r'_l q} 
    \le 
    \frac{8  x y r'_l |a-b|}{q}  \,.
\] 
    This completes the proof. 
$\hfill\Box$
\end{proof}

\bp

 There are general formulae of the cross-ratio type (or, alternatively, of the Euler type) 
    involving 
    the quantities $d(x,y)$.
    For example, for {\it any} $x,y,z$ one has 
\begin{equation}\label{f:d_CR}
    d(y,z)x+d(z,x)y+d(x,y)z=0 \,,
\end{equation}
    and, similarly,
\begin{equation}\label{f:d_CR_tilde}
    d(y,z) \FF{x} + d(z,x) \FF{y} +d(x,y) \FF{z}=0 \,. 
\end{equation}   
    Also, for an arbitrary $x_*,x,y,z$ the following holds  
\begin{equation}\label{f:d_CR_CR}
    d(x,x_*) d(y,z) =  d(x,y) d(x_*,z) - d(x,z) d(x_*,y)\,. 
\end{equation}  
    If $x= x(a),y=y(b), z=z(c) \sim \sqrt{q}$, say, then in  view of formula  \eqref{f:d(x,y)_distance} all quantities  $|d(y,z)|$, $|d(x,z)|$, $|d(x,y)|$ do not exceed 
    $O(\mathrm{diam}\{a,b,c\})$, 
    and therefore 
    we see that if $a,b,c$ are close to each other, then {\it there are} linear 
    relations 
    \eqref{f:d_CR} with 
    small coefficients 
    between {\it any} denominators $x,y,z$. 
    That is precisely why, when we construct a $\vec{C}$-independent triple $(x(a), y(b), z(c))$ in the next subsection, the numbers $a$, $b$, and $c$ will be situated sufficiently far apart from one another.

\subsection{Independency of three critical denominators}

Now we study 
$\vec{C}$--independency 
between three critical denominators 
from an interval $J_{u/v}$. 
There is no need for independence at a deeper level, as this would only improve some absolute constants below.
Let us underline one more time that in contrast to Lemma \ref{l:k=2} for a giving 
$a\in Z_M (t)$ {\it there are} such linear dependencies between its  continuants, e.g., $q_n (a) = A q_{m} (a) + B q_{m-1} (a)$ and, moreover, 
$q_n (a) = C q_{m_1} (a) + D q_{m_2} (a)$, where $n>m$, $n>m_1>m_2$, see formula \eqref{f:domino}.
Besides 
the coefficients $A,B,C,D$ can be 
rather 
small even if we perturb  $a$ a little bit. 
Finally, recall that there is a general formula \eqref{f:d_CR} of the cross--ratio--type which  connects $q_n(a)$ for distinct $a$ (but coefficients in this formula are 
rather 
large).
So, it is important to choose $q_{n_1} (a), q_{n_2} (b), q_{n_3} (c)$ for {\it distinct}  
and specific 
$a,b,c$ and we will take 
elements 
$a,b,c \in Z_M (t)$ to be rather far from each other
 in the sense that we defined in the previous subection. 


\bp 

We now proceed to the proof of our main technical result concerning $(C_1, C_2, C_3)$--in\-de\-pen\-den\-ce, namely Lemma \ref{l:k=3}.
Since this argument is quite lengthy, we break it down into several lemmas.
A key observation is that the coefficients in any linear relation \eqref{eq:k=3_L1} can be expressed in terms of the non--archimedean distances $|d(x,y)|, |d(x,z)|, |d(y,z)|$ and their divisors, such as $\mathcal{D} (x,y,z) = (d(x,y), d(x,z), d(y,z)) > 0$, which we call the {\it discriminant} of equation \eqref{eq:k=3_L1}.
By Proposition \ref{p:k=3_projecive} the discriminant does not depend on the coefficients of the equation, provided condition \eqref{cond:k=3_projecive} takes place. 
The 
discriminant is an important characteristic of our linear equation 
and in our first result we relate $\mathcal{D} (x,y,z)$ to  the coefficients of \eqref{eq:k=3_L1}.

\begin{lemma}
    Let $2\le M, N$ and $C_1, C_2, C_3 \ge 1$  be some parameters $\vec{C} := (C_1,C_2,C_3)$, 
    and suppose that 
\begin{equation}\label{cond:k=3_L1}
    N^{3/2} \le \frac{\sqrt{q}}{16 M^2 \| \vec{C}\|_\infty}  \,.
\end{equation}
    Also, let $a,b,c \in J_{u/v} \subseteq Z_M (t)$, 
    and 
    $x=x(a), y=y(b), z=z(c) \in [t,q/t]$ 
    be  critical denominators of some convergents to $a/q,b/q$  and $c/q$. 
    Suppose that 
\begin{equation}\label{eq:k=3_L1}
     \a x+ \beta y + \gamma z =0 \,, \quad \quad (\a,\beta,\gamma)=1 \,, 
\end{equation}
    where  $\a,\beta, \gamma \in \Z$, 
    $|\a| \le C_1, |\beta| \le C_2, |\gamma| \le C_3$ and $(\a,\beta,\gamma) \neq \vec{0}$. 
    Then 
\begin{equation}\label{f:d_xyz_L1}
    d (y,x) = \pm \mathcal{D} \gamma\,,
    \quad 
    d (x,z) = \pm \mathcal{D} \beta\,,
    \quad 
    d (z,y) = \pm \mathcal{D} \a \,,
\end{equation}
    where $\mathcal{D} = \mathcal{D} (x,y,z)$. 
\label{l:k=3_L1}
\end{lemma}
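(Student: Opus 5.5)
The plan is to combine the two cross-ratio identities \eqref{f:d_CR}, \eqref{f:d_CR_tilde} with the given relation \eqref{eq:k=3_L1}. The triple $(d(y,z),d(z,x),d(x,y))$ annihilates both vectors $(x,y,z)$ and $(\FF{x},\FF{y},\FF{z})$. Hence, whenever these two vectors are linearly independent over $\mathbb{Q}$, the triple is proportional to their cross product. Indeed, the cross product $(x,y,z)\times(\FF{x},\FF{y},\FF{z})$ has coordinates exactly $(y\FF{z}-z\FF{y},\ z\FF{x}-x\FF{z},\ x\FF{y}-y\FF{x})=(d(y,z),d(z,x),d(x,y))$.

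Next I would show that $(\a,\beta,\gamma)$ is also orthogonal to $(\FF{x},\FF{y},\FF{z})$, that is, $\a\FF{x}+\beta\FF{y}+\gamma\FF{z}=0$. Once this is established, $(\a,\beta,\gamma)$ is orthogonal to both vectors, so it is parallel to the cross product:
\[
(d(y,z),d(z,x),d(x,y))=\lambda(\a,\beta,\gamma), \qquad \lambda\in\mathbb{Q}.
\]
Since $(\a,\beta,\gamma)=1$ and the left side is an integer vector, $\lambda$ is an integer. Taking the gcd of the coordinates gives $|\lambda|=\mathcal{D}(x,y,z)$, and this yields \eqref{f:d_xyz_L1} with the signs as stated, after using $d(y,x)=-d(x,y)$ and so on. Nondegeneracy also needs checking: not all $d$'s vanish. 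If they all vanished, then $x\FF{y}=y\FF{x}$ for each pair; because $(x,\FF{x})=1$ this forces $x=y=z$, and then \eqref{eq:k=3_L1} reduces to a two-term relation, contradicting Lemma \ref{l:k=2} or coprimality.

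The main step is proving $L:=\a\FF{x}+\beta\FF{y}+\gamma\FF{z}=0$. I would follow the proof of Lemma \ref{l:k=2}. Write, via \eqref{f:domino}, $x=c_1\mathcal{K}+\FF{c}_1\mathcal{K}'$, and similarly for $y,z$, and also $\FF{x}=c_1\mathcal{K}_*+\FF{c}_1\mathcal{K}'_*$ with the same $c_i,\FF{c}_i$. The relation \eqref{eq:k=3_L1} becomes $P\mathcal{K}+Q\mathcal{K}'=0$, where $P=\a c_1+\beta c_2+\gamma c_3$ and $Q=\a\FF{c}_1+\beta\FF{c}_2+\gamma\FF{c}_3$. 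Since $(\mathcal{K},\mathcal{K}')=1$, $\mathcal{K}$ divides $Q$. By \eqref{tmp:c_12}, $|Q|\le 3\|\vec{C}\|_\infty q/(t\mathcal{K})$, and this is less than $\mathcal{K}$ by \eqref{tmp:K,K'} together with \eqref{cond:k=3_L1}; this is the same computation as \eqref{tmp:C_K}. Hence $Q=0$, and then $P=0$. It follows that $L=P\mathcal{K}_*+Q\mathcal{K}'_*=0$.

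The expected obstacle is verifying this size inequality with the exact constant $16M^2$ in \eqref{cond:k=3_L1}. One needs $3\|\vec{C}\|_\infty q(M+2)^2<t^2$, i.e. $N\cdot 3(M+2)^2\|\vec{C}\|_\infty<1\cdot q/t^2\cdot\ldots$; recalling $t^2=q/N$, this requires $N^{3/2}$-type room, which \eqref{cond:k=3_L1} provides with some slack to absorb $(M+2)^2\le 4M^2$. A second point to check is that the same $c_i,\FF{c}_i$ represent both $x$ and $\FF{x}$. This holds because $x$ and $\FF{x}$ are continuants sharing the tail $r_l,\dots,r_s$ after the common prefix given by $J_{u/v}$, exactly as used in the proof of Lemma \ref{l:d(x,y)}.
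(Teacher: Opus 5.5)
Your proof is correct. Its decisive step is exactly the paper's \eqref{f:c_j_0_L1}: you write $x,y,z$ via \eqref{f:domino} as $c_i\mathcal{K}+\FF{c}_i\mathcal{K}'$, get $\mathcal{K}\mid Q$, and use the size bound to force $P=Q=0$. Only the algebraic endgame differs from the paper.

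The paper's endgame runs as follows. It first reduces to $\a\beta\gamma\neq 0$ via Lemma \ref{l:k=2}. It then uses $(c_i,\FF{c}_i)=1$ to show that $\a,\beta,\gamma$ are pairwise coprime. Finally it eliminates $\a$ and reads off $\gamma\mid d(y,x)$, $\beta\mid d(x,z)$, $\a\mid d(z,y)$ with a common cofactor $\mathcal{D}$.

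You instead transfer $P=Q=0$ to $\a\FF{x}+\beta\FF{y}+\gamma\FF{z}=0$. You then use that a primitive integer vector orthogonal to two independent integer vectors is a rational (hence integer) multiple of their cross product. The coordinates of that cross product are exactly $d(y,z),d(z,x),d(x,y)$. You could even skip $\FF{x}$ and cross $(c_1,c_2,c_3)$ with $(\FF{c}_1,\FF{c}_2,\FF{c}_3)$ directly. The change of basis from $(c_i,\FF{c}_i)$ to $(x_i,\FF{x}_i)$ has rows $(\mathcal{K},\mathcal{K}')$, $(\mathcal{K}_*,\mathcal{K}'_*)$ and determinant $\pm1$, and those coordinates are precisely the paper's differences $c_2\FF{c}_1-c_1\FF{c}_2$ and so on.

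What each route buys: yours is shorter, needs neither Lemma \ref{l:k=2} nor pairwise coprimality, and gives the same sign in all three identities. The paper's route yields pairwise coprimality of $\a,\beta,\gamma$ as a by-product.

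Two small slips, neither affecting correctness.

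First, the inequality you need is $3\|\vec{C}\|_\infty q<t\mathcal{K}^2$. By $\mathcal{K}>t/(M+2)$ from \eqref{tmp:K,K'}, this reduces to $3(M+2)^2\|\vec{C}\|_\infty N^{3/2}\le\sqrt{q}$; note it involves $t^3$, not $t^2$. This follows from \eqref{cond:k=3_L1} because $3(M+2)^2\le 12M^2$ for $M\ge 2$.

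Second, your nondegeneracy argument is wrong as stated. If $x=y=z$, the relation only says $\a+\beta+\gamma=0$, which is compatible with $(\a,\beta,\gamma)=1$. Lemma \ref{l:k=2} does not help either, since it assumes $x_1\neq x_2$. In that case, however, all three $d$'s vanish and $\mathcal{D}=0$, so \eqref{f:d_xyz_L1} holds trivially and no contradiction is needed.
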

\begin{proof}
    By assumption one has   $(\a,\beta,\gamma) \neq \vec{0}$. 
    Then we can assume that in fact $\a \beta \gamma \neq 0$, 
    otherwise one can apply Lemma \ref{l:k=2}. 
    Using the notation and the argument of the proof of this lemma, we have 
    \begin{equation}\label{sys:*_3_L1}
    x = c_1 \mathcal{K} + \FF{c}_1 \mathcal{K}'\,, 
    \quad 
    y = c_2 \mathcal{K} + \FF{c}_2 \mathcal{K}'
    \,,
    \quad 
    z = c_3 \mathcal{K} + \FF{c}_3 \mathcal{K}'\,,
\end{equation}
    where $c_1 = c_1(x), \FF{c}_1 = \FF{c}_1 (x)$ and so on. 
    Substituting these formulae into \eqref{eq:k=3_L1}, we get 
\[
    (\a c_1 + \beta c_2 + \gamma c_3) \mathcal{K} = -
    (\a \FF{c}_1 + \beta \FF{c}_2 + \gamma \FF{c}_3) \mathcal{K}' \,.
\]
    As above using the assumption \eqref{cond:k=3_L1}, we obtain 
\[
    \frac{3\| \vec{C}\|_\infty x}{\mathcal{K}} \le \frac{3\| \vec{C}\|_\infty q}{t\mathcal{K}} < \mathcal{K}
\]
    and hence 
\begin{equation}\label{f:c_j_0_L1}
    \a c_1 + \beta c_2 + \gamma c_3 =
    \a \FF{c}_1 + \beta \FF{c}_2 + \gamma \FF{c}_3 = 0 \,.
\end{equation}
    By assumption 
    all three numbers $\a,\beta$ and $\gamma$ are coprime. 
    Let $g= (\beta,\gamma)$,  
    and then from \eqref{f:c_j_0_L1} we see that $g$ divides $c_1$ and $\FF{c}_1$.
    Recalling that $(c_1,\FF{c}_1) = (c_2,\FF{c}_2) = (c_3,\FF{c}_3) =1$, we derive that $g=1$.
    Similarly, we see that in fact all numbers $\a,\beta,\gamma$ are pairwise coprime. 
    Applying \eqref{f:c_j_0_L1} one more time and excluding $\a$, we obtain 
\[
    \beta (c_2 \FF{c}_1 - c_1 \FF{c}_2) = \gamma (c_1 \FF{c}_3 - c_3 \FF{c}_1) \,.
\]
    As in the proof of Lemma \ref{l:k=2}, we know that $c_2 \FF{c}_1 - c_1 \FF{c}_2 \neq 0$  and $c_1 \FF{c}_3 - c_3 \FF{c}_1 \neq 0$. 
    Then we see that $\gamma$ divides $c_2 \FF{c}_1 - c_1 \FF{c}_2 = \pm d (y,x)$ and $\beta$ divides $c_1 \FF{c}_3 - c_3 \FF{c}_1 = \pm d (x,z)$.
    Using the same argument for $\a$ and $\beta$, we get for some non--zero integer $\mathcal{D} = \mathcal{D}(\a,\beta,\gamma,x,y,z)$ that 
\begin{equation}\label{f:c_xyz_L1}
    c_2 \FF{c}_1 - c_1 \FF{c}_2 = \mathcal{D} \gamma\,,
    \quad 
    c_1 \FF{c}_3 - c_3 \FF{c}_1 = \mathcal{D} \beta\,,
    \quad 
    c_3 \FF{c}_2 - c_2 \FF{c}_3 = \mathcal{D} \a \,,
\end{equation}
    and therefore 
\begin{equation}\label{f:d_xyz}
    d (y,x) = \pm \mathcal{D} \gamma\,,
    \quad 
    d (x,z) = \pm \mathcal{D} \beta\,,
    \quad 
    d (z,y) = \pm \mathcal{D} \a \,.
\end{equation}
    Equivalently,  $\mathcal{D}  = \mathcal{D} (x,y,z) = \pm (d (y,x),d (x,z), d (z,y))$. 
    Notice also that thanks to \eqref{tmp:K,K'}, \eqref{tmp:c_12}  we automatically have 
    $
   \max\{|d(x,y)|, |d(x,z)|, |d(y,z)| \}
    \le 2q^2/\mathcal{K}^2 t^2 \le 2(M+2)^2 N^2$.
    So, if the parameter $\| \vec{C}\|_\infty$ is small enough (i.e., condition \eqref{cond:k=3_L1} 
    holds) and equation \eqref{eq:k=3_L1} takes place, then one can assume that $\| \vec{C}\|_\infty$ is even smaller than it guaranties by 
    \eqref{cond:k=3_L1}, namely, we get $\| \vec{C}\|_\infty \le 2(M+2)^2 N^2$.  
$\hfill\Box$
\end{proof}

\bp

In the proof we systematically 
use the following procedure: given 
two linear equations 
\begin{equation}\label{f:I_II}
    \a x + \beta y + \gamma z = 0
    \quad \quad 
    \mbox{and}
    \quad \quad 
    A X + B y + C z = 0
\end{equation}
such that $(\a,\beta,\gamma)=1$ and $(A,B,C)=1$, 
one can eliminate  $z$ (or $y$) and arrive to the new linear equation 
\begin{equation}\label{f:III}
    \tilde{\a} x+ \tilde{\beta} X+ \tilde{\gamma} y = 0 \,,
\end{equation}
    in which, after some work,  the new coefficients $\tilde{\a}, \tilde{\beta}, \tilde{\gamma}$ can be considered  coprime and can be expressed in terms of divisors of $\a,\beta,\gamma$ and $A,B,C$. 
    This can be viewed as a ``multiplication'' operation for equations \eqref{f:I_II}.
    Let us formulate this formally.

\begin{lemma}
    Suppose that all conditions of Lemma \ref{l:k=3_L1} take place and we have the following equations 
\begin{equation}\label{f:I_II_L2}
    \a x + \beta y + \gamma z = 0
    \quad \quad 
    \mbox{and}
    \quad \quad 
    A X + B y + C z = 0 \,,
\end{equation}
    where critical denominators $x=x(a) < X=X(a)$, $y=y(b)$, $z=z(c)$ belong to $[t,q/t]$.
    Put $\mathcal{D}_1 = \mathcal{D} (x,y,z)$ and $\mathcal{D}_2 = \mathcal{D} (X,y,z)$. 
    Then for some $g \in \Z$ one has 
\begin{equation}\label{f:a,A_L2}
    \a = \pm \frac{\mathcal{D}_2 g}{(\mathcal{D}_1, \mathcal{D}_2)} := \a_* g\,, \quad \quad 
    A = \pm \frac{\mathcal{D}_1 g}{(\mathcal{D}_1, \mathcal{D}_2)} := A_* g 
    \,,
\end{equation}
    and we have a new equation with  $\eps_1, \eps_2 = \pm 1$ and some nonzero  $k$,
    $1\le |k|\le 2\|\vec{C}\|^2_\infty \sqrt{Nq}/y$: 
\begin{equation}\label{f:y_eq_L2}
    ky = \eps_1 A_* \gamma_* X + \eps_2 \a_* C_* x \,, \quad \quad (k, A_* \gamma_*, \a_* C_*)=1 \,,
\end{equation}
    provided 
\begin{equation}\label{cond:k=3_L2}
    N^{3/2} \le \frac{\sqrt{q}}{16 M^2 \| \vec{C}\|^2_\infty}  \,.
\end{equation}
    In particular,   if $x= q_n (a) = \mathcal{K}(r_1,\dots,r_n)$ and $X = q_{n+m} (a) = \mathcal{K}(r_1,\dots,r_{n+m})$, then 
\begin{equation}\label{f:gamma_L2}
    |\gamma_*| \le \frac{y}{x} + \frac{\mathcal{D}_2 C_3 x}{X} 
    \quad \quad 
    \mbox{and}
     \quad \quad 
     |C_*| \le \frac{|C|}{|\gamma|} \cdot |\gamma_*|
    \,.
\end{equation}
\label{l:k=3_L2}
\end{lemma}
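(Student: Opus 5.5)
We apply Lemma \ref{l:k=3_L1} to each equation in \eqref{f:I_II_L2} separately. For the first equation it gives
$d(y,x) = \pm \mathcal{D}_1 \gamma$, $d(x,z) = \pm\mathcal{D}_1\beta$ and $d(z,y) = \pm \mathcal{D}_1 \a$. For the second, with $X$ in place of $x$, it gives $d(y,X)=\pm\mathcal{D}_2 C$, $d(X,z)=\pm \mathcal{D}_2 B$ and $d(z,y)=\pm\mathcal{D}_2 A$. The quantity $d(z,y)$ is common to both triples. Hence $\mathcal{D}_1 \a = \pm \mathcal{D}_2 A$. Dividing by $(\mathcal{D}_1,\mathcal{D}_2)$ and using that $\mathcal{D}_1/(\mathcal{D}_1,\mathcal{D}_2)$ and $\mathcal{D}_2/(\mathcal{D}_1,\mathcal{D}_2)$ are coprime, we get $\a = \pm \a_* g$ and $A = \pm A_* g$ with the same integer $g$. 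This is \eqref{f:a,A_L2}.

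Next we eliminate $z$. Multiplying the first equation by $C$, the second by $\gamma$ and subtracting gives $(C\beta - \gamma B) y = \gamma A X - C \a x$. Substituting $\a,A$ from \eqref{f:a,A_L2} and dividing through by $g$ (together with the common factor of $\gamma$ and $C$ coming from $\mathcal{D}_1\gamma = \pm d(y,x)$ and $\mathcal{D}_2 C=\pm d(y,X)$), we write $\gamma = \gamma_* h$, $C = C_* h$ and obtain $ky = \eps_1 A_*\gamma_* X + \eps_2 \a_* C_* x$. Then we remove any remaining common divisor of $k$, $A_*\gamma_*$ and $\a_*C_*$ to reach the coprimality in \eqref{f:y_eq_L2}. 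Two points need checking.

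\begin{enumerate}
\item $k\neq 0$. If $k=0$, then $X$ and $x$ satisfy a two-term relation with small coefficients, which Lemma \ref{l:k=2} forbids since $x<X$ (condition \eqref{cond:k=3_L2} is stronger than \eqref{cond:k=2} with $C=\|\vec C\|_\infty^2$).
\item The size bound on $k$. Since $|A_*\gamma_*|,|\a_*C_*|\le \|\vec C\|^2_\infty$ and $x,X\le q/t=\sqrt{Nq}$, we get $|k| y \le 2\|\vec C\|_\infty^2 \sqrt{Nq}$.
\end{enumerate}

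For \eqref{f:gamma_L2} we specialise to $x=q_n(a)$ and $X=q_{n+m}(a)$. We use $\gamma=\pm d(y,x)/\mathcal{D}_1$ and expand $d(y,x)= y\FF{x}-x\FF{y}$ through the continuant structure \eqref{f:domino} of $X$ over $x$, namely $X = \mathcal{K}(r_{n+1},\dots)x + \ldots$. Comparing $d(y,x)$ with $d(y,X)$, the term $\frac{x}{X}d(y,X)$ contributes at most $\mathcal{D}_2 C_3 x/X$, and the remainder contributes at most $y/x$ by the estimate $|\FF{x}/x-\FF{X}/X|\le 1/(xX)$. The bound on $C_*$ is immediate from $C_*=C\gamma_*/\gamma$.

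The main obstacle is the bookkeeping of gcds. We must show that after dividing by $g$ and $h$ the coefficients are exactly $\pm A_*\gamma_*$ and $\pm\a_*C_*$, and that the signs are consistent. I would handle this by working throughout with the identities \eqref{f:d_CR} and \eqref{f:d_CR_CR} for the $d$'s instead of with $\a,\beta,\gamma$ directly, so that every coefficient becomes a ratio of $d$-values by $\mathcal{D}_1$ or $\mathcal{D}_2$.
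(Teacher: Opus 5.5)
Your derivation of \eqref{f:a,A_L2}, your use of Lemma \ref{l:k=2} to show $k\neq0$, and your size bound on $k$ are fine. However, the heart of the lemma, that $k$ is an \emph{integer}, is not established.

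Eliminating $z$ gives $(C\beta-\gamma B)\,y=gh\,(\pm A_*\gamma_*X\mp\a_*C_*x)$ with $h=(\gamma,C)$. You then divide by $gh$ as though $gh\mid C\beta-\gamma B$, but only $h\mid C\beta-\gamma B$ is trivial. In fact \eqref{f:d_CR_CR} and \eqref{f:d_xyz_L1} give $C\beta-\gamma B=\pm g\,d(x,X)/(\mathcal{D}_1,\mathcal{D}_2)$, so you would need $h(\mathcal{D}_1,\mathcal{D}_2)\mid d(x,X)$.

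Your suggested remedy, working with \eqref{f:d_CR} and \eqref{f:d_CR_CR}, cannot supply this. Those are formal identities valid for arbitrary integer pairs $(x,\FF{x})$, and for a non-primitive pair $(y,\FF{y})$ the divisibility can fail. The missing arithmetic input is $(y,\FF{y})=1$, which is exactly what the paper uses. Applying \eqref{f:d_xyz_L1} to both equations and using \eqref{f:a,A_L2} gives $\a_*C_*\,d(x,y)=\pm A_*\gamma_*\,d(X,y)$. Expanding $d(\cdot,y)$ then yields
\[
y\bigl(\a_*C_*\FF{x}\mp A_*\gamma_*\FF{X}\bigr)=\FF{y}\bigl(\a_*C_*x\mp A_*\gamma_*X\bigr),
\]
so $y$ divides $\a_*C_*x\mp A_*\gamma_*X$. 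This is \eqref{f:y_eq_L2} with $k\in\Z$. Together with the coprimality below, the same relation also gives $k\mid d(x,X)$, hence $|k|\le X/x$.

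There are two smaller corrections.

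First, you may not ``remove any remaining common divisor'' of $k$, $A_*\gamma_*$, $\a_*C_*$, because that would change the prescribed coefficients. Instead, observe that the proof of Lemma \ref{l:k=3_L1} shows $\a,\beta,\gamma$ (and likewise $A,B,C$) are pairwise coprime. Hence $(\a_*,\gamma_*)=(A_*,C_*)=1$, which together with $(\a_*,A_*)=(\gamma_*,C_*)=1$ gives $(A_*\gamma_*,\a_*C_*)=1$. The coprimality in \eqref{f:y_eq_L2} is then automatic.

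Second, for \eqref{f:gamma_L2}, bounding $|\gamma|$ directly via $d(y,x)-\frac{x}{X}\,d(y,X)=\frac{y}{X}\,d(X,x)$ is a valid alternative to the paper's route, which reads the bound off \eqref{f:y_eq_L2} using $|k|\le X/x$. But your estimate $|\FF{x}/x-\FF{X}/X|\le 1/(xX)$ holds only for $m=1$. In general use $|d(x,X)|=\mathcal{K}(r_{n+2},\dots,r_{n+m})\le X/x$ from \eqref{f:domino}, which still gives the term $y/x$.
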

\begin{proof}
    In equations \eqref{f:I_II_L2}  we have the same numbers $y,z$ and hence the same quantity $d(y,z)$. 
    Thanks to Lemma \ref{l:k=3_L1} it follows that 
\begin{equation}\label{f:d_two_ways}
    d(y,z) = \pm \a \mathcal{D}_1 = \pm A \mathcal{D}_2 
\end{equation}
    and we obtain \eqref{f:a,A_L2}.
    In particular, we see that the discriminants $\mathcal{D}_1$, $\mathcal{D}_2$ control $\a_*$ and $A_*$.

    Similarly, using  formulae \eqref{f:d_xyz_L1} and \eqref{f:d_two_ways}, we get  
\[
    d(x,y) \a_* C_* = 
    \pm 
    d(X,y) \gamma_* A_* \,,
\]
    and therefore 
\[
    y
    \left(\a_* C_* \FF{x} \mp   \gamma_* A_* \FF{X} \right) 
    = \FF{y} 
    \left(\a_* C_* x \mp \gamma_* A_* X \right)\,.
\]
    Since $(y,\FF{y})=1$, it follows that 
\begin{equation}\label{tmp:19.06_1}
    ky =  \eps_1 A_* \gamma_* X + \eps_2 \a_* C_* x
\end{equation}
    for some nonzero  $k$,
    $1\le |k|\le 2\|\vec{C}\|^2_\infty \sqrt{Nq}/y$. 
Finally, from the definitions $(A_* \gamma_*, \a_* C_*)=1$ and hence by Lemma \ref{l:k=3_L1}, we have      $(k, A_* \gamma_*, \a_* C_*)=1$. 
In particular,  the integer $k$ divides 
\[
    |d(x,X)| = |\mathcal{K}(r_{n+2}, \dots, r_{n+m})|  \le X/x
\]
by virtue of formula \eqref{f:domino} (or see formula \eqref{tmp:23.06_1} below), which means that $k$ is an even smaller number.
    The first estimate of \eqref{f:gamma_L2} immediately follows from \eqref{tmp:19.06_1} and the bounds $|C_*| \le |C| \le C_3$ and $|\a_*| \le \frac{\mathcal{D}_2}{(\mathcal{D}_1, \mathcal{D}_2)} \le \mathcal{D}_2$. 
    The second one is a consequence of the identity $C/\gamma = C_*/\gamma_*$. 
    This completes the proof. 
$\hfill\Box$
\end{proof}

\bp 

Now we specify the choice of $x$ and $X$ in \eqref{f:I_II_L2}, namely,  we assume that $x=q_n(a) = \mathcal{K}(r_1,\dots,r_n)$ and $X=X_m = q_{n+m}(a) = \mathcal{K}(r_1,\dots,r_{n+m})$ for some $a\in J_{u/v} \subseteq Z_M (t)$, $a/q = [0;r_1, \dots, r_s]$ and a positive integer $m$. 
Let $\mathcal{D}_m = \mathcal{D} (X_m,y,z)$, so  $\mathcal{D}_0 = \mathcal{D} (x,y,z)$. 
We show (see formulae \eqref{f:k=3_L3_gcd}---\eqref{f:k=3_L3_D_small}
below) that if one controls partial quotients of $a$, then it is always possible to find a linear equation with a  small discriminant $\mathcal{D}_m$.

\begin{lemma}
    Suppose that all conditions of Lemma \ref{l:k=3_L2} take place.
    Then 
\begin{equation}\label{f:k=3_L3_gcd}
    (\mathcal{D}, \mathcal{D}_m) ~|~ \mathcal{K} (r_{n+2}, \dots, r_{n+m}) \,,
\end{equation}
    and 
    the product  $\prod_{j=1}^m \mathcal{D}_j$ divides $\a G_m$, where the number $G_m$ does not exceed 
\begin{equation}\label{f:k=3_L3_C1_small}
    G_m \le 
    \prod_{i,j =0,\, i<j}^m (\mathcal{D}_i, \mathcal{D}_j) 
    \,.
\end{equation}
    In particular, if $r_i \le M_*$ for all $i\ge n+2$, then there is $j\in [m]$ such that 
\begin{equation}\label{f:k=3_L3_D_small}
    \mathcal{D}_j \le (2M_*)^{(m+1)^2} C^{1/m}_1 \,.
\end{equation}
\label{l:k=3_L3}
\end{lemma}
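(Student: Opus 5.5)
The plan is to work with the wedge--product interpretation of $d(\cdot,\cdot)$. To each critical denominator $w=\mathcal{K}(r_1,\dots,r_s)$ attach the integer vector $v_w=(w,\FF{w})\in\Z^2$. Then $d(w,w')=\det(v_w,v_{w'})$. Each $v_w$ is primitive, since $(w,\FF{w})=1$ for consecutive continuants. By Lemma \ref{l:k=3_L1}, the discriminant $\mathcal{D}_i=\mathcal{D}(X_i,y,z)$ divides $d(X_i,y)$, $d(X_i,z)$ and $d(y,z)$, where $X_0=x$.

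\emph{Step 1: formula \eqref{f:k=3_L3_gcd}.} I will in fact prove the more general statement that $(\mathcal{D}_i,\mathcal{D}_j)$ divides $d(X_i,X_j)$ for all $0\le i<j\le m$. Fix a prime power $p^k$ dividing $g=(\mathcal{D}_i,\mathcal{D}_j)$. Since $v_y$ is primitive, the congruence $\det(w,v_y)\equiv 0 \pmod{p^k}$ forces $w\equiv\lambda v_y\pmod{p^k}$ for some scalar $\lambda$. Indeed, one completes $v_y$ to a basis of $\Z^2$, and the coordinate of $w$ along the complementary vector equals $\pm\det(w,v_y)$. Applying this to $w=v_{X_i}$ and to $w=v_{X_j}$ gives $v_{X_i}\equiv\lambda v_y$ and $v_{X_j}\equiv\mu v_y \pmod{p^k}$. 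Hence $\det(v_{X_i},v_{X_j})\equiv 0\pmod{p^k}$. Multiplying over all prime powers dividing $g$ yields $g\mid d(X_i,X_j)$. Here $X_i=q_{n+i}(a)$ and $X_j=q_{n+j}(a)$ are continuants of the same $a$, so by \eqref{f:domino} (the standard identity $q_{N}q_{K-1}'-\dots$, compare the computation of $|d(x,X)|$ in the proof of Lemma \ref{l:k=3_L2}) we have $|d(X_i,X_j)|=\mathcal{K}(r_{n+i+2},\dots,r_{n+j})$. For $i=0$, $j=m$ this is exactly \eqref{f:k=3_L3_gcd}.

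\emph{Step 2: the divisibility $\prod_j\mathcal{D}_j \mid \a G_m$.} By \eqref{f:d_two_ways}, $d(y,z)=\pm\a\mathcal{D}_0=\pm A_j\mathcal{D}_j$ for each $j$, so $\mathcal{D}_j\mid \a\mathcal{D}_0$. Consequently $\mathcal{D}_j/(\mathcal{D}_j,\mathcal{D}_0)$ divides $\a$ for each $j\in[m]$, and so $\mathrm{lcm}_j\bigl(\mathcal{D}_j/(\mathcal{D}_j,\mathcal{D}_0)\bigr)$ divides $\a$. Next I use the elementary fact that for integers $e_1,\dots,e_m$ the product $\prod e_j$ divides $\mathrm{lcm}(e_1,\dots,e_m)\cdot\prod_{1\le i<j\le m}(e_i,e_j)$. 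This is checked prime by prime: the sum of all exponents is at most the maximum exponent plus the sum of the pairwise minima. Combining the two facts, $\prod_{j=1}^m\mathcal{D}_j$ divides $\a\cdot\prod_{j}(\mathcal{D}_j,\mathcal{D}_0)\cdot\prod_{1\le i<j}(\mathcal{D}_i,\mathcal{D}_j)$, which is $\a G_m$ with $G_m$ as in \eqref{f:k=3_L3_C1_small}.

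\emph{Step 3: formula \eqref{f:k=3_L3_D_small}.} By Step 1, $(\mathcal{D}_i,\mathcal{D}_j)\le \mathcal{K}(r_{n+i+2},\dots,r_{n+j})\le\prod(r_\nu+1)\le(2M_*)^{j-i}$ whenever $r_\nu\le M_*$ for $\nu\ge n+2$. Therefore
\[
G_m\le(2M_*)^{\sum_{0\le i<j\le m}(j-i)}\le(2M_*)^{m(m+1)^2},
\]
and also $|\a|\le C_1$. Taking the minimum over $j\in[m]$,
\[
\min_{j\in[m]}\mathcal{D}_j\le\Bigl(\prod_{j=1}^m\mathcal{D}_j\Bigr)^{1/m}\le (C_1G_m)^{1/m}\le C_1^{1/m}(2M_*)^{(m+1)^2}.
\]

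The main obstacle I expect is Step 1, namely handling prime powers rather than primes in the lattice argument. This is exactly where the primitivity of $v_y$ is used. One also has to check that the hypotheses of Lemma \ref{l:k=3_L1}, in particular condition \eqref{cond:k=3_L2}, hold simultaneously for every equation indexed by $X_j$, so that each $\mathcal{D}_j$ is well defined and \eqref{f:d_two_ways} applies for every $j$.
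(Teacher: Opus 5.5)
Your proposal is correct and follows the paper's three-step structure: the gcd of discriminants divides a continuant, each $\mathcal{D}_j/(\mathcal{D}_j,\mathcal{D}_0)$ divides $\a$ and these are combined through pairwise gcds, and the bound follows from a geometric-mean pigeonhole. Your Step 1 recasts the paper's recurrence-plus-coprimality argument as a primitivity-of-$v_y$ argument modulo $g$, which works directly modulo $g$ with no prime-power reduction, and it makes explicit the pairwise divisibility $(\mathcal{D}_i,\mathcal{D}_j)\mid\mathcal{K}(r_{n+i+2},\dots,r_{n+j})$ that the paper uses tacitly when bounding $G_m$; likewise your lcm/exponent inequality in Step 2 is just the paper's induction carried out prime by prime.
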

\begin{proof}
    Let $g=(\mathcal{D}_m, \mathcal{D})$ and $\mathcal{K}  := \mathcal{K} (r_{n+2}, \dots, r_{n+m})$. 
    By 
    the recurrence formula, 
    we get 
\begin{equation}\label{tmp:23.06_1}
    d(X_m, y) = \mathcal{K} (r_{n+1}, \dots, r_{n+m}) d(x,y) + \mathcal{K} (r_{n+2}, \dots, r_{n+m}) d(x_1,y) \,.
\end{equation}
    By Lemma \ref{l:k=3_L1} we know that 
    $\mathcal{D}_m$ divides $d(X_m, y)$ and $\mathcal{D}$ divides $d(x, y)$.
    It follows 
    that $g$ divides $\mathcal{K} d(x_1,y)$. 
    So, it remains to check that $d(x,y)$ and  $d(x_1,y)$  are coprime. 
    The latter can be viewed in various ways.
For instance, if one assumes the contrary and uses the recurrence relation, then we see that $d((0,1),y)$ and $d((1,c),y)$ (where $c\in \Z$) share a common factor. This is impossible, since $y = y(b)$ is the critical denominator of the fraction $b/q$. 
    Thus formula \eqref{f:k=3_L3_gcd} follows.

    For any $j\in [m]$ put $\tilde{\mathcal{D}}_j = \mathcal{D}_j/(\mathcal{D}_j, \mathcal{D}_{0})$. 
    By formula \eqref{f:a,A_L2} of Lemma \ref{l:k=3_L2} we know that for any 
    $j\in [m]$ the number $\tilde{\mathcal{D}}_j$ divides $\a$.
    Hence formula \eqref{f:k=3_L3_C1_small} takes place for $m=1$. 
    For $m>1$  
    we use induction to prove the auxiliary formula: 
\begin{equation}\label{f:ind_D_tilde}
    \left(\prod_{j=1}^m  \tilde{\mathcal{D}}_j \right) ~|~ 
    \a G'_m 
     \,,
    \quad \quad  \mbox{ where } \quad \quad 
G'_m \le \prod_{i,j=1,\, i<j}^m (\mathcal{D}_i, \mathcal{D}_j) \,.
\end{equation}
     Clearly, 
     formula \eqref{f:ind_D_tilde} implies 
     \eqref{f:k=3_L3_C1_small}. 
     By induction we know that $\prod_{j=1}^m  \tilde{\mathcal{D}}_j$ divides $\a G'_m$ and $\tilde{\mathcal{D}}_{m+1}$ always divides $\a$. 
    Hence $\prod_{j=1}^{m+1} \tilde{\mathcal{D}}_j$ divides $\a G'_{m+1}$, where 
\[  
    G'_{m+1} \le G'_m \cdot (\tilde{\mathcal{D}}_{m+1}, \prod_{j=1}^m \tilde{\mathcal{D}}_j)
    \le G'_m \prod_{j=1}^m (\mathcal{D}_{m+1}, \mathcal{D}_j) 
\]
    as required.

    Finally, to obtain \eqref{f:k=3_L3_D_small} it is enough to notice  that $\mathcal{K} (r_{n+2}, \dots, r_{n+m}) \le (2M_*)^{m-1}$ and therefore $G_m \le (2M_*)^{m(m+1)^2}$. 
    Using the Dirichlet principle, we find a discriminant  $\mathcal{D}_j$ with the required property.  
   This completes the proof. 
$\hfill\Box$
\end{proof}

\bp 

Now we are ready to obtain our 
main technical 
Lemma \ref{l:k=3}.
Let us underline that the $\tilde{M}$--critical denominators $y(b), z(c)$ below are of Type I but the critical denominator $x(a)$ can be of any type (in the final argument, it will be of Type II);
moreover, we do not require that $x(a)$ be a $\tilde{M}$-critical denominator.
This appears reasonable, since one can say that if we are given the linear equation \eqref{eq:k=3}, it suffices to control only two variables -- and the equation will then automatically provide the necessary information regarding the remaining variable.

\begin{lemma}
     Let $m$  be a positive integer, $2 \le M\le M_* \le N^{1/100}$, $2\le \tilde{M}  \le N^{1/100}$, 
     $C_1, C_2, C_3 \ge 1$, 
    $H \le N^{49/100}$ 
    be some parameters, $\vec{C} := (C_1,C_2,C_3)$, 
    and suppose that 
\begin{equation}\label{cond:k=3}
    N^{3/2} \le \frac{\sqrt{q}}{16 M^2 \| \vec{C}\|^2_\infty}  \,.
\end{equation}
    Let $I \subseteq J_{u/v} \subseteq Z_M (t)$, $u/v=[0;r_1,\dots,r_l]$  be an interval, 
    satisfying $\d$--Assumption, 
    and assume that 
\begin{equation}\label{cond:H^2}
    |I| \ge \frac{20N}{\d^{60} \tilde{M}H^2} \,.
\end{equation}
    Also, 
    suppose that 
    for $q(a) \ge \sqrt{qN}/H$  all partial quotients of $a$ are bounded by $M_*$. 
    Then there exists 
    an interval $J_* \subseteq I$, 
    $|J_*|\ge \d^{60} |I|/5$, 
    $a\in J_*$, $b,c\in I$, $a<b<c$, $c-a \le 3 (b-a), 3(c-b)$ 
    and  distinct $\tilde{M}$--critical denominators $y=y(b), z=z(c)$, $\D:= \max\{y,z\} \in [tH,\sqrt{q}]$ with the following property.
    Given two critical denominators $x_{s_0} = q_{s_0} (a)$ 
    and $x_{s_*} = q_{s_*} (a)$, $s_0<s_*$
    both at least $\sqrt{Nq}/H$ 
     there is a critical denominator $x=q_s (a)$ such that any linear equation 
\begin{equation}\label{eq:k=3}
  \a x+ \beta y + \gamma z =0 \,, 
\end{equation}
 where  $\a,\beta, \gamma \in \Z$, 
 $|\a| \le C_1, |\beta| \le C_2, |\gamma| \le C_3$
 has only the trivial solution $\a=\beta=\gamma = 0$ and such that      
     either $s\in \{ s_*,\dots, s_*+m\}$, 
 or  $x=q_s (a)$, $s\in \{ s_0 ,\dots, s_0+m^2\}$ under the additional condition 
\begin{equation}\label{cond:Delta_large}
    |I| \ge  
    \frac{25 C_3 q}{\d^{60} x_{s_*} \Delta} \cdot (2M_*)^{m(m+1)^2} (C^{3}_1 C_3)^{1/2m}  \,.
\end{equation}
\label{l:k=3}
\end{lemma}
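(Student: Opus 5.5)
The plan has two stages. First we choose the points $a,b,c$ and the denominators $y,z$ using the $\d$--Assumption and Lemma \ref{l:equidistributed}. Then, for each candidate $x=q_s(a)$, we rule out small linear relations using Lemmas \ref{l:k=3_L1}--\ref{l:k=3_L3}.

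\textbf{Choice of $a,b,c$.} Split $I$ into subintervals of length $N^{1/100}$. By the $\d$--Assumption, a $\d$--proportion of them contain a point of $I\cap Z_M^{-1}(t)$ that has an $\tilde{M}$--critical denominator. Apply Lemma \ref{l:equidistributed} with $k=5$ (say) to the set of these marked subintervals. This gives a $5$--equidistributed subinterval of length $\gg \d^{60}|I|$; note $\exp(-4k\log(4k)\log(1/\d))\ge \d^{60}$ for $k=5$. Take $J_*$ to be the first fifth, with $a\in J_*$. Take $b$ in the third fifth and $c$ in the fifth fifth, each carrying an $\tilde{M}$--critical denominator. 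Then $c-a\le 3(b-a)$ and $c-a\le 3(c-b)$.

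\textbf{Choice of $y,z$.} By the Type I/Type II symmetry we may take $y(b),z(c)\le\sqrt q$. To get $\D\ge tH$, use Lemma \ref{l:d(x,y)} in the form \eqref{f:d(x,y)_distance}: $|b-c|/q\approx |d(y,z)|/(yz)$, and $|d(y,z)|\ge 1$ by Lemma \ref{l:k=2}. If both $y,z$ were below $tH$, the $\tilde M$--criticality error terms together with \eqref{cond:H^2} would make this relation incompatible with $|b-c|\gg \d^{60}|I|$. The same comparison, via Lemma \ref{l:k=2}, shows $y\neq z$.

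\textbf{Independence of $x$.} Suppose every candidate $x=q_s(a)$, $s\in\{s_*,\dots,s_*+m\}$, satisfies a nontrivial relation \eqref{eq:k=3}; we may assume its coefficients are coprime. By Lemma \ref{l:k=3_L1} each relation has discriminant $\mathcal D_j$, and $d(z,y)=\pm\mathcal D_j\a_j$, $d(x_j,z)=\pm\mathcal D_j\beta_j$, $d(y,x_j)=\pm\mathcal D_j\gamma_j$. By Lemma \ref{l:k=3_L2} all $\a_j$ are determined up to the factor $\mathcal D_j/(\mathcal D_0,\mathcal D_j)$. Since the partial quotients of $a$ beyond $q(a)\ge\sqrt{qN}/H$ are bounded by $M_*$, Lemma \ref{l:k=3_L3} produces some $j\in[m]$ with $\mathcal D_j\le (2M_*)^{(m+1)^2}C_1^{1/m}$.

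\textbf{Deriving the contradiction.} With $\mathcal D_j$ small, $|d(y,z)|=\mathcal D_j|\a_j|\le (2M_*)^{(m+1)^2}C_1^{1+1/m}$. On the other hand, \eqref{f:d(x,y)_bound} gives $|d(y,z)|\gg yz|b-c|/(Mq)$. Since $|b-c|\gg\d^{60}|I|$ and $\D\ge tH$, the lower bound on $|I|$ in \eqref{cond:H^2} should make these incompatible in most cases. Where it does not, we fall back on the window $s\in\{s_0,\dots,s_0+m^2\}$. There we run Lemma \ref{l:k=3_L3} with $m^2$ steps, which gives the exponent $(C_1^3C_3)^{1/2m}$. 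We combine it with the estimate for $\gamma_*$ in \eqref{f:gamma_L2}, namely $|\gamma_*|\le y/x+\mathcal D_2C_3x/X$ with $X=x_{s_*}$. This produces an upper bound on $|d(y,x)|$, and hence, by \eqref{f:d(x,y)_distance}, on $|a-b|$ of order $C_3q(2M_*)^{m(m+1)^2}(C_1^3C_3)^{1/2m}/(x_{s_*}\D)$. That contradicts \eqref{cond:Delta_large}, since $b$ and $a$ are separated by $\gg\d^{60}|I|$.

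\textbf{Main obstacle.} The hardest step is this last bookkeeping. One has to track how the gcds $(\mathcal D_i,\mathcal D_j)$ divide the continuants $\mathcal K(r_{n+2},\dots)$ via \eqref{f:k=3_L3_gcd}, and convert a small discriminant into a Euclidean distance bound. I would first try to make the case split explicit, according to whether $\D$ is large compared to $\sqrt{qN}/H$.
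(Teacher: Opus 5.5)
\textbf{Verdict: there is a genuine gap.} Your choice of $a,b,c$ matches the paper: the $\d$--Assumption plus Lemma \ref{l:equidistributed} with $k=5$, and $J_*$ taken as the first fifth. The decisive contradiction, however, is not established.

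\textbf{The single-window comparison fails.} You compare $|d(y,z)|=\mathcal D_j|\a_j|\le (2M_*)^{(m+1)^2}C_1^{1+1/m}$ with $|d(y,z)|\gg yz|b-c|/q$. For a single relation the identity $|d(z,y)|=\mathcal D|\a|$ is perfectly consistent with \eqref{f:d(x,y)_distance}. The paper uses exactly this identity, \eqref{f:alpha_rho}, to get \emph{lower} bounds for the coefficients. Here $yz|b-c|/q\le |I|$, and nothing in the hypotheses makes $|I|$ large compared with $C_1$. With $y,z\ge tH$, condition \eqref{cond:H^2} only gives $|d(y,z)|\gg 1/\tilde M$. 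So there is no ``most cases'' split. You must assume dependence for all $s$ in \emph{both} windows simultaneously and play a low-index relation against a high-index one.

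A smaller point: $\D\ge tH$ cannot be extracted from \eqref{f:d(x,y)_distance} and \eqref{cond:H^2}, because small denominators are compatible with large separation. The paper simply restricts to $\tilde M$--critical denominators in $[tH,q/t]$. Distinctness then follows from $|b-a|,|c-b|\ge |J|/5\ge \bar N_*=2N/(\tilde M H^2)$.

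\textbf{The fallback lacks the key mechanism.} You have the right ingredients, but not the step that yields \eqref{cond:Delta_large}. The paper argues as follows.
\begin{itemize}
\item Lemma \ref{l:k=3_L3} gives $s_1\le m$ and $\tilde s_*\le m$ with $\mathcal D_{s_1},\mathcal D_{s_*+\tilde s_*}\le D:=(2M_*)^{(m+1)^2}C_1^{1/m}$. So $X$ must be $x_{s_*+\tilde s_*}$, not $x_{s_*}$, for $\mathcal D_2$ in \eqref{f:gamma_L2} to be small.
\item Lemma \ref{l:k=3_L2} eliminates $z$, giving an equation $ky=\dots$.
\item The coefficient of $x_{s_1}$ in that equation is bounded via $|C_*|\le |C||\gamma_*|/|\gamma_{s_1}|$. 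Into this one feeds the separation-based \emph{lower} bound $|\gamma_{s_1}|=|d(x_{s_1},y)|/\mathcal D_{s_1}\ge x_{s_1}y|J|/(10Dq)$.
\item Combining with $|J|Xy/(5q)\le|d(X,y)|$ puts $|J|$ on both sides: $|J|^2\ll D^3C_3^2q^2\,\mathcal D(x_{s_1},X,y)/(yX)^2$.
\item Only then is Lemma \ref{l:k=3_L3} iterated along $x_{s_1+\dots+s_j}$, using at most $m^2$ indices with $(X,y)$ fixed. This gives $\mathcal D(\cdot,X,y)\le (DC_3)^{1/m}(2M_*)^{m(m+1)^2}$.
\end{itemize}
It is the square root, not the $m^2$ steps, that produces the factor $(C_1^3C_3)^{1/2m}$.

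Your version bounds $|d(x,y)|=\mathcal D'|A_*\gamma_*|$ directly by \eqref{f:gamma_L2}. That inequality is linear in $|J|$ and gives at best $|J|\ll q\mathcal D'/x^2+C_3D\mathcal D'q/(yX)$. This is weaker than the paper's bound by a factor of order $(\mathcal D'/D)^{1/2}$. In general (for instance when $C_1$ is bounded) it is larger than the right-hand side of \eqref{cond:Delta_large}, so no contradiction follows.
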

\begin{proof}
    Let $N_* := N/H^2 \ge N^{1/50}$.
    Consider the collection $\mathcal{C}$ of $\tilde{M}$--critical denominators $x\in [tH,q/t]$ for $x=x(a)$, where $a$ belongs to the interval $I$. 
    We are going to choose 
    $x = x(a)$ and $y = y(b), z=z(c)$ both in $[tH,\sqrt{q}]$ 
    in a special way and 
    we assume for concreteness that $a<b<c$. 
    Having two points $a,b$ we notice that if $\mathcal{X}(a) \cap \mathcal{X}(b) \neq \emptyset$,  then
    for some $w\in \mathcal{X}(a) \cap \mathcal{X}(b)$  one has $|aw| \le q/(\tilde{M}w)$ and $|bw| \le q/(\tilde{M}w)$. 
    Thus the distance $|b-a| \le |I| \le 16MN$ does not exceed $2q/(\tilde{M}w^2)$ (the same can be seen from formula \eqref{f:d(x,y)_distance}).
    Below we consider $w\ge tH$ only and hence the distance between $a$ and $b$ is at most
\begin{equation}\label{f:N/H^2}
    \frac{2q}{\tilde{M}w^2} \le \frac{2N}{\tilde{M}H^2} = \frac{2N_*}{\tilde{M}} := \bar{N}_* \ge \sqrt{N_*} \ge N^{1/100} \,.
\end{equation}
    Put 
    $I_* = [\bar{N}_*]$ and let 
    $I = S \dotplus I_*$.
    By assumption  the interval 
    $I$
    is $N^{1/100}$--good and hence the collection of shifts $s\in S$ such that $s+I_*$ intersects $Z^{-1}_M(t)$ has cardinality at least 
    $|S|(1-O(N^{-\kappa/2}_*)) \ge |S|/2$.
    Moreover, we know that the interval $I$ satisfies $\d$--Assumption and hence denoting by 
    $G = G(I) \subseteq S$ the collection  of shifts $s\in S$ such that for some 
    $a_s \in (s+I_*) \cap Z^{-1}_M (t)$ there is a $\tilde{M}$--critical denominator, $w_s = w(a_s)$, we see that $|G|\ge \d |S|$.  
    Now applying Lemma \ref{l:equidistributed} with 
    $N=|I|/\bar{N}_*$, 
    $A=G$,
    $\d = \d$ and $k=5$, we find an interval $J\subseteq I$ such that $J$ is $5$--equidistributed and $|J|\ge \d^{60} |I|$. 
    Split $J$ into five parts $J_1,\dots, J_5$, let $a\in J_1$, $b\in J_3$ and $c\in J_5$. 
    Then $|a-b|, |b-c| \in [|J|/5, 3|J|/5]$ and  $|a-c| \in [3|J|/5,|J|]$.
    In other words, the distances $|a-b|, |a-c|, |b-c|$ are comparable. 
    Also, one can observe that by our assumption \eqref{cond:H^2} one has 
\[
    \bar{N}_* = \frac{2N }{\tilde{M} H^2} \le  \frac{\d^{60} |I|}{5} \le \frac{|J|}{5}
\]
    and hence 
    any $x(a), y(b), z(c) \ge tH$ are distinct, see computations in \eqref{f:N/H^2}. 
    So, we can put $J_*:= J_1$.

    Further let us fix $a,b,c$ and also  $y$, $z$ (so, the specific choice of $y=y(a)$, $z=z(b) \in [tH, \sqrt{q}]$ is not particularly  important) and 
    vary the index $s$ in the continuant 
\[
    x_{s} = x_{s+s_0} (a) = \mathcal{K} (r_1,\dots, r_l, \dots, r_{s_0}, \dots, r_{s+s_0})  \ge tH \,, 
\]
    where $r_j \le M_*$ for $j>s_0$. 
    So, we assume that all triples 
    $(x_s(a), y(b), z(c))$
    are not $\vec{C}$--independent and therefore one has
\begin{equation}\label{eq:x_s}
    \a_s x_s + \beta_s y + \gamma_s z =0 \,,
\end{equation}
    where $|\a_s| \le C_1$, $|\beta_s| \le C_2$ and  $|\gamma_s| \le C_3$. 
    Put $\mathcal{D}_s = \mathcal{D} (x_s, y, z)$ and let $\a=\a_0$, $\beta=\beta_0$, $\gamma = \gamma_0$, $\mathcal{D} = \mathcal{D}_0$ for convenience.

    Using Lemma \ref{l:k=3_L3} with $x=x_0$, we find $x_{s_1}$
    such that $s_1 \le m$ and 
    \[
    \mathcal{D}_{s_1} \le (2M_*)^{(m+1)^2} C^{1/m}_1 := D \,.
    \] 
    Similarly, applying Lemma for $x=x_{s_*}$ one can find $x_{s_*+\t{s}_*}$ with $\t{s}_* \le m$ and $\mathcal{D}_{s_*+\t{s}_*} \le D$.
    Now by Lemma \ref{l:k=3_L2} with $x=x_{s_1}$ and $X=x_{s_*+\t{s}_*}$, we arrive to the equation \eqref{f:y_eq_L2}, namely, 
\begin{equation}\label{eq:y,x,X} 
    ky = \pm \a^*_{s_*+\t{s}_*} \gamma^*_{s_1} x_{s_*+\t{s}_*}  \pm \a^*_{s_1} \gamma^*_{s_*+\t{s}_*} x_{s_1} \,.
\end{equation}
    Here the numbers $\a^*_{s_*+\t{s}_*}$, $\gamma^*_{s_1}$, $\a^*_{s_1}$, $\gamma^*_{s_*+\t{s}_*}$ are defined in \eqref{f:a,A_L2}. 
    By formula \eqref{f:a,A_L2} of Lemma  \ref{l:k=3_L2}, we have 
\begin{equation}\label{tmp:alpha}
    |\a^*_{s_1}|,\,  |\a^*_{s_*+\t{s}_*}| \le D \,.
\end{equation}
    Combining  both  inequalities  of \eqref{f:gamma_L2}, 
    we derive 
\begin{equation}\label{tmp:gamma}
    |\gamma^*_{s_*+\t{s}_*}| \le \frac{4D C^2_3 x_{s_1}}{|\gamma_{s_1}| x_{s_*+\t{s}_*}} \,.
\end{equation}
    By assumption \eqref{cond:H^2} and $x,y,z\ge tH$, 
    we have in view of formulae \eqref{f:d(x,y)_distance}, \eqref{f:d_xyz_L1} that 
\begin{equation}\label{f:alpha_rho}
    \frac{yz|b-c|}{2|\mathcal{D}| q} \le |\a| = \frac{|d(z,y)|}{|\mathcal{D}|} \le  \frac{2yz |b-c|}{|\mathcal{D}| q} \,,
\end{equation}
    and similarly for $|\beta|, |\gamma|$. 
    Of course, the same is applicable to any equation \eqref{eq:x_s} and therefore 
    one we can estimate $|\gamma_{s_1}|$ from below as 
    $|\gamma_{s_1}| \ge \frac{x_{s_1} y |J|}{10Dq}$.
    Substituting the last two bounds into \eqref{tmp:gamma}, we derive 
\begin{equation}\label{tmp:20.06_1}
    |\gamma^*_{s_*+\t{s}_*}| \le \frac{40 D^2 C^2_3 q}{y x_{s_*+\t{s}_*} |J|} \,.
\end{equation}
    Applying the same argument to equation \eqref{eq:y,x,X} and using \eqref{tmp:alpha}, we obtain 
\[
    \frac{|J| x_{s_*+\t{s}_*} y}{5q} \le |d(x_{s_*+\t{s}_*},y)| \le D
    |\gamma^*_{s_*+\t{s}_*}| \cdot 
    \mathcal{D} (x_{s_1}, x_{s_*+\t{s}_*}, y) 
    \le 
     \frac{40 D^3 C^2_3 q}{yx_{s_*+\t{s}_*} |J|} \cdot \mathcal{D} (x_{s_1}, x_{s_*+\t{s}_*}, y) \,.
\]
    Recalling that $|J|\ge \d^{60} |I|$, it gives us 
\begin{equation}\label{f:final?}
    \d^{60} |I| \le |J| < \frac{25 D^{3/2} C_3 q}{y x_{s_*+\t{s}_*} } \cdot 
    \mathcal{D}^{1/2} (x_{s_1}, x_{s_*+\t{s}_*}, y) \,.
\end{equation}
    It remains to estimate the discriminant $\mathcal{D} (x_{s_1}, x_{s_*+\t{s}_*}, y)$; however, the same reasoning can be used for this purpose.
    Indeed, applying  Lemma \ref{l:k=3_L3} several times,  
    we find $s_2, \dots, s_k \le m$ such that 
    $\mathcal{D}_{s_1+s_2+\dots + s_j} \le D$ for all $j\in [k]$. 
    For each of these indices  we have the corresponding  equation \eqref{eq:y,x,X}  and  hence the corresponding bound \eqref{f:final?} with the discriminant 
    \begin{equation}\label{tmp:disc_j}
    \mathcal{D} (x_{s_1+s_2+\dots + s_j},  x_{s_*+\t{s}_*}, y) 
    \end{equation}
    instead of $\mathcal{D} (x_{s_1}, x_{s_*+\t{s}_*}, y)$. 
    But by the argument of Lemma \ref{l:k=3_L3} one of these discriminants \eqref{tmp:disc_j} does not exceed 
\[
    |\a^*_{s_1} \gamma^*_{s_*+\t{s}_*}|^{1/m} 
    (2M_*)^{m(m+1)^2} 
    \le 
    (D C_3)^{1/m} (2M_*)^{m(m+1)^2} \,.
\]
    Thus estimate  \eqref{f:final?}
     contradicts  our assumption 
\eqref{cond:Delta_large}. 
    This completes the proof. 
$\hfill\Box$
\end{proof}

\subsection{The middle interval}  
\label{ssec:mid_int}

We had deal with independency in the last subsection to be able to apply some results on   linear Diophantine approximations.
We start with a simple lemma.

\begin{lemma}
    Let $k$ be a positive integer, $T\ge 1$ be a real parameter,  $x_1,\dots,x_k \in \Z/q\Z$,
    $|x_j| \le X_j$, $j\in [k]$ and 
\begin{equation}\label{cond:linear_problem}
    (8k)^k \cdot \prod_{j=1}^k X_j \le T q^{k-1} \,.
\end{equation}
    Put 
\begin{equation}\label{fl:R_j_def}
        R_j = X^{-1}_j \left( \frac{4k \prod_{j=1}^k X_j}{T} \right)^{1/(k-1)} \,,
\end{equation}
    and 
    assume that $R_j \ge 1$ for all $j\in [k]$ and $R_1 \dots R_k \ge 2$.
    Then there are $m_j \in \Z$ not all equal zero, $|m_j| \le 2R_j$ such that either 
\begin{equation}\label{f:linear_problem_I}
    \sum_{j=1}^k m_j x_j = 0 \,,
\end{equation}
    or 
\begin{equation}\label{f:linear_problem_II}  
    0< \left|  \sum_{j=1}^k m_j x_j  \right|
    \le
    T \,.
\end{equation}
\label{l:linear_problem}
\end{lemma}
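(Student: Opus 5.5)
This is a pigeonhole (Dirichlet/Minkowski-type) statement. I will apply the pigeonhole principle to the map $\vec m \mapsto \sum_j m_j x_j \pmod q$, with $\vec m$ ranging over a box, and take differences.

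Set $L:=\left(4k\prod_{j=1}^k X_j/T\right)^{1/(k-1)}$, so that $R_j = L/X_j$. Consider the box $\mathcal{B}=\prod_{j=1}^k\{0,1,\dots,\lfloor R_j\rfloor\}$. Since $R_j\ge1$ we have $\lfloor R_j\rfloor+1\ge \max(R_j,2)$. Hence $|\mathcal{B}|\ge \prod_j R_j \ge 2$, and in fact $|\mathcal{B}| \ge \prod_j R_j$ with room to spare. For $\vec m\in\mathcal{B}$, regard each $x_j$ as its integer representative with $|x_j|\le X_j$. The integer $\sum_j m_jx_j$ then lies in an interval of length at most $\sum_j R_jX_j = kL$.

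Cover this interval by subintervals of length $T$. There are at most $kL/T+1$ of them. The key inequality to verify is $|\mathcal{B}| > kL/T + 1$. We have $\prod_j R_j = L^k/\prod_j X_j$, and by the definition of $L$, $L^{k-1} = 4k\prod_j X_j/T$. Therefore $\prod_j R_j = L\cdot 4k/T = 4\cdot(kL/T)$. It remains to handle the $+1$. Either $kL/T\ge 1$, in which case $4kL/T > kL/T+1$. Or $kL/T<1$, in which case we use $|\mathcal{B}|\ge 2 > kL/T+1$ whenever $kL/T<1$, which holds by the hypothesis $R_1\cdots R_k\ge2$ together with $|\mathcal{B}|\ge2$.

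By pigeonhole, two distinct vectors $\vec m',\vec m''\in\mathcal{B}$ have sums in the same subinterval. Put $m_j=m'_j-m''_j$. Then $|m_j|\le R_j\le 2R_j$, not all $m_j$ vanish, and $\bigl|\sum_j m_jx_j\bigr|\le T$ as an integer. If this integer is zero we get \eqref{f:linear_problem_I}; otherwise we get \eqref{f:linear_problem_II}.

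So far I have worked with integer lifts, which already gives a statement over $\Z$. The role of \eqref{cond:linear_problem} is presumably to guarantee that $kL<q$, equivalently $|m_j|\le 2R_j$ stays compatible with reduction modulo $q$. Then "zero modulo $q$" and "small modulo $q$" coincide with the integer statements, and the dichotomy is meaningful in $\Z/q\Z$. Concretely, \eqref{cond:linear_problem} gives $L^{k-1}\le 4k\,q^{k-1}/(8k)^k$, so $L\le q/2$ (roughly), and hence $\sum_j 2R_j X_j = 2kL$ is less than $q$ up to the constants.

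The main obstacle is the bookkeeping of constants, $(8k)^k$ versus $4k$ and $2R_j$ versus $R_j$. In particular, the edge case where $\prod_j R_j$ is only about $2$ must be handled via the hypothesis $R_1\cdots R_k\ge 2$. If the floor losses are too costly, I would instead use the symmetric box $|m_j|\le R_j$ together with Minkowski's first theorem. The lattice $\{(\vec m, n): n=\sum_j m_jx_j\}$ has determinant $1$, and the convex body $\{|m_j|\le 2R_j,\ |n|\le T\}$ has volume $2^{k+1}\cdot 2^k\prod_j R_j\, T \ge 2^{k+1}$. This yields the conclusion directly, with the factor $2$ in $2R_j$ absorbing the slack.
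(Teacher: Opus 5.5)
Your pigeonhole argument is correct and is essentially the paper's proof. The paper takes the symmetric box $|n_j|\le [R_j]$ and splits $[-kX,kX]$ (with $X=R_jX_j$, your $L$) into $\prod_j(2[R_j]+1)-1$ pieces; it uses \eqref{cond:linear_problem} only to place these sums in $[-q/2,q/2]$, so that the integer conclusion is the one in $\Z/q\Z$ (indeed it gives $L\le q/(8k\cdot 2^{1/(k-1)})$, which settles your ``roughly''). Your one-sided box $\{0,\dots,\lfloor R_j\rfloor\}$ even yields $|m_j|\le R_j$. The closing Minkowski fallback is unnecessary and, as written, not right: $\{(\vec m,\sum_j m_jx_j)\}$ is only a rank-$k$ lattice in $\mathbb{R}^{k+1}$, so it has no determinant $1$ there; nothing in your main argument depends on it.
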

\begin{proof}
    Put $X:= R_1 X_1 = \dots = R_k X_k$.  
    In other words, by \eqref{fl:R_j_def}, we define the number $X$ as $TX^{k-1} = 4k \prod_{j=1}^k X_j$. 
    Consider $\prod_{j=1}^k (2[R_j]+1)$ sums $\sum_{j=1}^k n_j x_j$, where $n_j$ are integers such that $|n_j|\le [R_j] \le R_j$. 
    Clearly, all these sums belong to $[-kX,kX]$ and thanks to condition \eqref{cond:linear_problem} we see that this interval belongs to $[-q/2, q/2]$.
    Suppose that two of such sums coincide. 
	Then we have $\sum_{j=1}^k n_j x_j = \sum_{j=1}^k n'_j x_j$ and hence
	\[
	\sum_{j=1}^k (n_j-n'_j) x_j = \sum_{j=1}^k m_j x_j  = 0 \,,
	\]
	where the integers $m_j := n_j-n'_j$ are not all equal to zero and $|m_j|\le 2R_j$. 
	Thus we have obtained \eqref{f:linear_problem_I}. 
    Otherwise all these $\Pi := \prod_{j=1}^k (2[R_j]+1) >1$  sums are distinct. 
    Split $[-kX,kX]$ into $n=\Pi-1$ segments.
    We have 
    $R_1 \dots R_k + 1< \Pi$ and hence by the Dirichlet principle 
    there exist integers 
	$m_j$ that  are 
	not all equal  to zero, $|m_j| \le 2R_j$ such that 
	\[
    0 < \left|\sum_{j=1}^k m_j x_j \right| \le \frac{2kX+1}{n} \le \frac{4kX}{R_1\dots R_k} 
    = 
    \frac{4kX_1 \dots X_k}{X^{k-1}} =  T
    \,.
	\]
    This concludes the proof.  
$\hfill\Box$
\end{proof}

\bp 


Now we apply Lemma \ref{l:linear_problem} to the set $Z_M (t) \cap Z^{-1}_M (t)$. 
One can consider the corollary below as an improvement of Proposition \ref{p:repulsion} for a larger number of variables. 
Again, the greatest common divisor in \eqref{f:repulsion_dep2}, as well as the condition $(a,q) \le t/(4k\| \vec{C}\|_\infty)$ can be ignored for a prime $q$.

\begin{corollary}
    Let $a\in Z_M (t)$, $k\ge 2$ be a positive integer, 
    and $1\le x_j \le X_j$ be $\vec{C}$--independent numbers, where
\begin{equation}\label{c:C_j_def}
        C_j = 2X^{-1}_j \left( \frac{4k \prod_{j=1}^k X_j}{t} \right)^{1/(k-1)}
        \ge 4
\end{equation}
    and such that condition \eqref{cond:linear_problem} takes place with $T=t$.
    Then there are $m_1,\dots,m_k\in \Z$, $|m_j| \le C_j$ such that 
\begin{equation}\label{f:repulsion_dep}
    |a(\sum_{j=1}^k m_j x_j)| \ge \frac{q}{4Mt} \,.
\end{equation}
    Similarly, suppose that  $a\in Z^{-1}_M (t)$, 
    $1\le ax_j \le X_j$, 
    $(a,q) \le t/(4k\| \vec{C}\|_\infty)$, 
    $x_1,\dots, x_k \le q/t$ be $\vec{C}$--independent numbers, where $C_j$ are defined in \eqref{c:C_j_def} and  condition \eqref{cond:linear_problem} takes place with $T=t$.
    Then there are $m_1,\dots,m_k\in \Z$, $|m_j| \le C_j$ such that 
\begin{equation}\label{f:repulsion_dep2}
    |\sum_{j=1}^k m_j x_j| \ge \frac{q}{4Mt (a,q)^2} \,.
\end{equation}
\label{c:repulsion_dep}
\end{corollary}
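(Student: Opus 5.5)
We apply Lemma \ref{l:linear_problem} with $T=t$ to the $k$ numbers $x_1,\dots,x_k$ in the first part, and to $ax_1,\dots,ax_k$ in the second part. The choice \eqref{c:C_j_def} gives $C_j = 2R_j$, where $R_j$ is as in \eqref{fl:R_j_def}. The assumption $C_j\ge 4$ gives $R_j\ge 2$ for all $j$, so in particular $R_1\dots R_k\ge 2$. Condition \eqref{cond:linear_problem} is assumed. Hence the lemma yields integers $m_j$, not all zero, with $|m_j|\le 2R_j=C_j$, such that either \eqref{f:linear_problem_I} or \eqref{f:linear_problem_II} holds.

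For the first part, we apply the lemma to $x_1,\dots,x_k$ with bounds $X_j$. Alternative \eqref{f:linear_problem_I} says $\sum m_j x_j\equiv 0\pmod q$ with $|m_j|\le C_j$ not all zero, which contradicts $\vec C$--independence. So alternative \eqref{f:linear_problem_II} holds: $X:=\sum m_jx_j$ satisfies $0<|X|\le t$ as an integer in $[-q/2,q/2]$. After replacing $(m_j)$ by $(-m_j)$ if necessary, we may take $1\le X\le t$. Now $a\in Z_M(t)$, so every convergent denominator below $t$ has its partial quotient bounded by $M$. The second half of Lemma \ref{l:M_crit}, applied at the level $t$, shows that any solution of $aX\equiv y$ with $1\le X< t$ satisfies $X|y|\ge q/4M$. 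The precise reduction is the truncated form encoded in the definition of $Z_M(t)$, equivalently the repulsion behind Proposition \ref{p:repulsion}. Hence $|aX|\ge q/(4MX)\ge q/(4Mt)$, which is \eqref{f:repulsion_dep}. The case $X=t$ exactly is harmless and is absorbed by the constants.

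For the second part, we apply the lemma to the residues $ax_1,\dots,ax_k$ with bounds $X_j$. Alternative \eqref{f:linear_problem_I} gives $a\sum m_jx_j\equiv 0\pmod q$. Then $q/(a,q)$ divides $\sum m_jx_j$, and $|\sum m_jx_j|\le k\|\vec C\|_\infty q/t\le q/(4(a,q))<q/(a,q)$ by the hypothesis on $(a,q)$. Therefore $\sum m_jx_j=0$ in $\Z$, which again contradicts independence. So $Y:=a\sum m_jx_j$ satisfies $0<|Y|\le t$. Write $x:=\sum m_jx_j$, so that $ax\equiv Y$. We have $a\in Z_M^{-1}(t)$, meaning $a'=(a/(a,q))^{-1}$ modulo $q'=q/(a,q)$ lies in $Z_M(t)$. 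Reducing modulo $q'$ gives $(a/(a,q))\,x\equiv Y/(a,q)$; this step needs care when $(a,q)\nmid Y$, where we instead multiply through by $(a,q)$. Hence $a'Y\equiv (a,q)x$ up to a factor of $(a,q)$. Applying the first-part argument to $a'\in Z_M(t)$ with the small denominator $Y$ (or $(a,q)Y$) gives $|Y|\cdot|(a,q)x|\gg q'/M$. This yields $|x|\ge q/(4Mt(a,q)^2)$, which is \eqref{f:repulsion_dep2}.

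The main obstacle is the bookkeeping with $(a,q)$ in the composite case. We must correctly define the inverse modulo $q/(a,q)$, track the factors of $(a,q)$ that produce the square in \eqref{f:repulsion_dep2}, and check that the wraparound $|\sum m_jx_j|<q/(a,q)$ holds. For prime $q$ this step is trivial and the argument reduces to Lemma \ref{l:linear_problem} together with Lemma \ref{l:M_crit}.
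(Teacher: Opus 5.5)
Your proof is correct and follows the paper's argument. You apply Lemma \ref{l:linear_problem} with $T=t$ and $R_j=C_j/2$, and use $\vec{C}$--independence to exclude alternative \eqref{f:linear_problem_I}; in the second part the conditions $(a,q)\le t/(4k\|\vec{C}\|_\infty)$ and $x_j\le q/t$ turn the congruence modulo $q/(a,q)$ into an identity in $\Z$, exactly as in the paper. You then invoke the truncated form of Lemma \ref{l:M_crit}, for $a$ in the first part and for the inverse of $a/(a,q)$ modulo $q/(a,q)$ in the second.

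The case $(a,q)\nmid Y$ you hedge against cannot occur, since $(a,q)$ divides both $ax$ and $q$. Your multiply-through congruence $a'Y\equiv (a,q)x \pmod{q/(a,q)}$ reproduces exactly the stated factor $(a,q)^2$. Dividing $Y$ by $(a,q)$ first would even give the sharper bound $|\sum_j m_jx_j|\ge q/(4Mt)$.
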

\begin{proof}
   Let us apply 
   Lemma \ref{l:linear_problem} with $T=t$ and so the numbers $R_j = C_j/2$ are defined in \eqref{fl:R_j_def}. 
   By this lemma we find $m_j \in \Z$, $|m_j| \le 2R_j=C_j$ such that 
\[
    0<|\sum_{j=1}^k m_j x_j| \le t \,,
\]
    Since $a\in Z_M (t)$, we see that  \eqref{f:repulsion_dep} follows thanks to Lemma \ref{l:M_crit}.

    To get \eqref{f:repulsion_dep2} put $a=a^{-1}$, $x_j = ax_j$ and apply estimate \eqref{f:repulsion_dep}. 
    The multiple $(a,q)^2$ appears due to the fact that $aa^{-1} \equiv  (a,q)^2 \pmod q$ according to our definition of $a^{-1}$ for composite $q$.
    Now suppose that 
\[
    \a_1 ax_1 +\dots + \a_k ax_k \equiv 0 \pmod q \,, 
\]
    where $|\a_j|\le C_j$, $j\in [k]$.
    In other words, we assume that the numbers $ax_1,\dots, ax_k$ are  $\vec{C}$--dependent. Dividing by $(a,q)$, we obtain 
    $\a_1 a' x_1 +\dots + \a_k a' x_k \equiv 0 \pmod {q/(a,q)}$, where $a' =a/(a,q)$. 
    Now we can multiply the last identity by $(a')^{-1}$ and therefore, we get 
\[
    \a_1 x_1 +\dots + \a_k x_k \equiv 0 \pmod {q/(a,q)} \,.
\]
    Thanks to the conditions $(a,q) \le t/(4k\| \vec{C}\|_\infty)$ and  
    $x_1,\dots, x_k \le q/t$ we see that the last equation is actually in $\Z$. 
    This is a contradiction with $\vec{C}$--independency of $x_1,\dots, x_k$. 
    This completes the proof. 
$\hfill\Box$
\end{proof}

\bp 

We are ready to  obtain the main result of this section. 
Basically, we show that there are few elements $a\in Z_M(t) \cap Z^{-1}_M(t)$ 
having $\tilde{M}$--critical denominator around $\sqrt{q}$
(more precisely, we should consider a smaller set 
$(Z_M(t) \cap Z_{M_*} (tH)) \cap (Z_M(t) \cap Z_{M_*} (tH))^{-1}$). 
The point is that if there are many of them, then some triplets of their  denominators are independent, and we get too good approximations in the linear Diophantine problem \eqref{f:linear_problem_II}.
The later  is 
impossible by repulsion inequalities \eqref{f:repulsion_dep}, \eqref{f:repulsion_dep2} of Corollary \ref{c:repulsion_dep}.

\begin{proposition}
    Let $2\le M \le M_* \le N^{10^{-10}}$, 
    $\tilde{M} \le N^{1/1000}$, 
    $H = N^{9/20}$, 
    and suppose that 
\begin{equation}\label{cond:number_x}
    N \le \frac{q^{1/9}}{2^{30} M^{} M^{}_* \tilde{M}} \,.
\end{equation}
    Let $I \subseteq J_{u/v} \subseteq Z_M (t) \cap Z_{M_*} (tH)$ 
    be an interval  and assume that 
\begin{equation}\label{cond:H^2_prop}
    \frac{20N}{\d^{60} \tilde{M} H^2} \le 
    |I| \le \frac{40 N}{\d^{60} \tilde{M} H^2} \,.
\end{equation}
    Then either $I$ does not satisfy $\d$--Assumption, or there is an $a\in I \cap Z^{-1}_M (t)$ such that 
    for some $q(a) \in [\sqrt{qN}/H, \sqrt{qN}]$ one has $q(a) \ge M_*$. 
\label{p:number_x}
\end{proposition}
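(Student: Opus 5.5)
We argue by contradiction. Suppose $I$ satisfies $\d$--Assumption and that for every $a\in I\cap Z^{-1}_M(t)$ all partial quotients attached to denominators $q(a)\in[\sqrt{qN}/H,\sqrt{qN}]$ are bounded by $M_*$. (The conclusion ``$q(a)\ge M_*$'' is read as ``the corresponding partial quotient exceeds $M_*$''.) Since $I\subseteq Z_{M_*}(tH)$, the partial quotients of $a$ with $q(a)\ge tH=\sqrt{qN}/H$ are then controlled by $M_*$ in the relevant range. This is exactly the hypothesis of Lemma \ref{l:k=3}.

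Take $H=N^{9/20}$, so that $H\le N^{49/100}$ and $N_*=N/H^2=N^{1/10}$. The lower bound in \eqref{cond:H^2_prop} is \eqref{cond:H^2}. Lemma \ref{l:k=3} then supplies an interval $J_*\subseteq I$, points $a<b<c$ at mutually comparable distances $\asymp\d^{60}|I|$, and distinct $\tilde{M}$--critical denominators $y=y(b)$, $z=z(c)$ of Type I with $\D\in[tH,\sqrt q]$. It also supplies a critical denominator $x=q_s(a)$ such that the triple $(x,y,z)$ is $\vec C$--independent.

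Take $s_0,s_*$ to be consecutive critical indices of $a$ near $\sqrt{qN}/H$ and near $\sqrt{qN}$, respectively. Then either $x$ is of size about $\sqrt{qN}$ (Type II), or we are in the branch $s\in\{s_0,\dots,s_0+m^2\}$. In the second branch we must check \eqref{cond:Delta_large}. Its right-hand side is about $q/(\sqrt{qN}\cdot tH)\cdot M_*^{O(m^3)}=N^{1-9/20}\cdot M_*^{O(m^3)}/ \sqrt{N}\cdot\dots$, which has to be compared with $|I|\asymp N^{1/10}/\tilde M$. We fix $m$ to be a large absolute constant and use $M_*\le N^{10^{-10}}$ to absorb the loss $M_*^{O(m^3)}$. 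The $C_j$ will be chosen so that \eqref{cond:k=3} holds; this is where \eqref{cond:number_x} and the exponent $1/9$ enter.

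Next we apply Corollary \ref{c:repulsion_dep} with $k=3$ and $T=t$ to the independent triple. The first part uses $a\in Z_M(t)$ together with the sizes $x\le X_1$, $y,z\le X_2=X_3=\sqrt q$. The second part uses the inverses: the $\tilde M$--critical property gives $|by|\le q/(\tilde M y)$ and $|cz|\le q/(\tilde M z)$, so we consider instead $a^{-1}$-type quantities, with $a,b,c\in Z^{-1}_M(t)$ lying close together. We choose
\[
C_j=2X_j^{-1}\left(\frac{12X_1X_2X_3}{t}\right)^{1/2},
\]
and check that $C_j\ge4$ and that condition \eqref{cond:linear_problem} holds with $T=t$. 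By $\vec C$--independence, the $m_j$ produced by Lemma \ref{l:linear_problem} cannot give a vanishing combination, so \eqref{f:linear_problem_II} holds. Thus a nonzero combination $\sum m_jx_j$ of size at most $t$ exists, while its multiple by $a$ (or $b$, $c$ up to the small shifts $|a-b|,|a-c|\ll|I|$) is small, roughly $\ll |I|\cdot t+\sum_j |m_j|q/(\tilde M x_j)$. This contradicts \eqref{f:repulsion_dep}, which requires size at least $q/(4Mt)$, once the parameters in \eqref{cond:number_x} and the upper bound in \eqref{cond:H^2_prop} are inserted. The gcd condition needed in the second part is supplied by \eqref{cond:gcd} in Assumption \ref{a:half}.

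The main obstacle is the simultaneous parameter bookkeeping. One set of constraints requires the $C_j$ to be small enough for \eqref{cond:k=3} and for \eqref{cond:Delta_large}; the other requires them large enough that Lemma \ref{l:linear_problem} gives an approximation with error below $q/(4Mt)$ after multiplication by $a$. Passing from $a$ to $b,c$ adds an error of order $|I|\cdot|m_j|x_j$. I would first work out the exponents at the extreme case $\D=tH$, $x\approx\sqrt{qN}$, and verify that the choice $H=N^{9/20}$ together with $N\le q^{1/9}$ closes all the inequalities, with $M,M_*,\tilde M$ losses absorbed by the constant $2^{30}$.
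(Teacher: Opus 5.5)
The gap is in your estimate of the final error. You bound the multiple by $a$ of the nonzero combination $m_1x+m_2y+m_3z$ by ``roughly $|I|\cdot t+\sum_j|m_j|q/(\tilde M x_j)$''. But the three denominators belong to three different points, so
\[
a(m_1x+m_2y+m_3z)\equiv m_1(ax)+m_2(by)+m_3(cz)+m_2(a-b)y+m_3(a-c)z \pmod q .
\]
The shift error is therefore $|m_2||a-b|\,y+|m_3||a-c|\,z$, not $|I|\cdot|\sum_j m_jx_j|\le |I|t$. This term is genuinely large: $|a-b|,|a-c|\ge \delta^{60}|I|/5\ge 4N^{1/10}/\tilde M$ and $y,z\ge tH$ force $|ay|\asymp |a-b|\,y$ and $|az|\asymp|a-c|\,z$. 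With your boxes $X_1=x$, $X_2=X_3=\sqrt q$ one has $C_2=C_3=2(12x/t)^{1/2}$. So for $y$ near $\sqrt q$ you need $(x/t)^{1/2}|I|\sqrt q\lesssim q/(Mt)=\sqrt{qN}/M$, i.e.\ $x\lesssim \sqrt q\,N^{3/10}$ (using $|I|\asymp N^{1/10}$ up to $\delta,\tilde M$ factors). Changing the boxes does not help. The termwise requirements $C_j|ax_j|<q/(12Mt)$ for $j=2,3$, together with $X^2=12X_1X_2X_3/t$ and $X_1\ge x$, imply $x\,|ay|\,|az|\lesssim q^2/(M^2t)$. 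This fails for $x\approx\sqrt{qN}$ whatever $y,z\in[tH,\sqrt q]$ are.

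So your choice of $x_{s_*}$ ``near $\sqrt{qN}$'' breaks the first branch of Lemma~\ref{l:k=3}, where $x=q_s(a)\ge x_{s_*}$, and the contradiction with \eqref{f:repulsion_dep} is never reached. The missing idea is that the two branches of Lemma~\ref{l:k=3} pull $x_{s_*}$ in opposite directions:
\begin{itemize}
\item \eqref{cond:Delta_large} (branch $s\in\{s_0,\dots,s_0+m^2\}$) needs roughly $x_{s_*}\gtrsim\sqrt q\,N^{1/4}$;
\item the final inequality needs $x\le (M_*+1)^m x_{s_*}\lesssim \sqrt q\,N^{3/10}$.
\end{itemize}
The paper resolves this by taking $x_{s_*}$ to be the largest critical denominator of $a$ in $[\sqrt{qN}/H,T]$ with $T=\sqrt q\,N^{2/7}$, and $m=200$ so that $(2M_*)^{m(m+1)^2}\le N^{1/1000}$.

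The paper also runs the dual form of your last step. It applies the second part of Corollary~\ref{c:repulsion_dep} at the base point $b\in Z^{-1}_M(t)$ to the residues $bx,by,bz$, with $X_1=2I_*x$, $X_2=q/(\tilde{\mathcal M}y)$, $X_3=2I_*z$. Here $I_*=c-a$, and $y$ is re-chosen so that $\tilde{\mathcal M}y=\sqrt q$. The shift errors then appear explicitly in $X_1,X_3$ and give the same restriction \eqref{tmp:I,H_1}. The contradiction is $C_1x+C_2y+C_3z<q/(4Mt)$ against \eqref{f:repulsion_dep2}. Your base-point-$a$ version via \eqref{f:repulsion_dep} appears workable once the error is computed correctly and $T$ is chosen in this window, but as written it does not close.

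A minor point: $tH=\sqrt q\,N^{-1/20}\ne\sqrt{qN}/H=\sqrt q\,N^{1/20}$. So the $M_*$-bound needed in Lemma~\ref{l:k=3} comes from the contradiction hypothesis, not from $I\subseteq Z_{M_*}(tH)$.
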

\begin{proof}
     Suppose that $I$ satisfies $\d$--Assumption and for all $a\in I \cap Z^{-1}_M (t)$ we have $a\in Z^{-1}_{M_*} (\sqrt{qN}/H)$. 
     In other words, we assume that for $q(a) \in [\sqrt{qN}/H, \sqrt{qN}]$ all partial quotients of $a$ are bounded by $M_*$. 
%
%
%
    Put  $x_{s_0} = q_{s_0} (a)$ and $x_{s_*} = q_{s_*} (a)$ to be the minimal and the maximal critical denominators in $[\sqrt{Nq}/H, T]$, where $T := \sqrt{q} N^{2/7}$.
    By Lemma \ref{l:k=3} there exists 
    an interval $J_* \subseteq I$, $|J_*|\ge \d^{-60} |I|/5$, $a\in J_*$, $b,c\in I$, $a<b<c$, $c-a \le 3 (b-a), 3(c-b)$ 
    and distinct $\tilde{M}$--critical denominators $y_0=y_0(b), z=z(c)$, 
    and a critical denominator $x = q_{s} (a)$ such that either  
     $s\in \{ s_*,\dots, s_*+m\}$ or 
     $s\in \{ s_0 ,\dots, s_0+m^2\}$, 
     provided additional condition \eqref{cond:Delta_large} takes place and such that equation \eqref{eq:k=3}  has no non--trivial solutions.
    We will check 
    conditions \eqref{cond:k=3},   \eqref{cond:Delta_large}, and \eqref{cond:H^2_prop} later.
    Also, we will choose the vector $\vec{C}=(C_1,C_2,C_3)$ and set the parameter $m$ to $200$. 
    Let $I_* = c-a \le |I|$ be the diameter of the set $\{a,b,c\}$. 
    We assume that  $\max\{y_0,z\} =y_0$ (the opposite case $\max\{y_0,z\} =z$ can be considered in a similar way). 
    Moreover, let   $y\ge y_0$ be the maximal critical denominator such that $y = y_n(b) \le \sqrt{q}$ and $c_{n+1} (b) y > \sqrt{q}$. 
    Define the quantity  $\t{\mathcal{M}} \le c_{n+1} (b)$ by $\t{\mathcal{M}} y =\sqrt{q}$ and let $\D = \max\{y,z\} = y$.  
    Further 
    using the fact  that all critical denominators are greater than $tH$, we have   
\begin{equation}\label{tmp:14.08_1}
    |b x| = |a x + (b-a)x| \le |a x| + I_* x 
    \le \frac{q}{\tilde{M} x_{s_0}} + I_* x
    \le 2I_* x := X_1
    \,, 
\end{equation}
    similarly, 
\begin{equation}\label{tmp:14.08_1+}
    |b z| \le \frac{q}{\tilde{M} z} + I_* z \le  2I_* z := X_3 \,,
\end{equation}
    and, finally, 
\[
    |by| \le \frac{q}{\tilde{M}  y_0}\,,\quad \quad 
     |by| \le  \frac{q}{c_{n+1} (b) y} 
    \le \frac{q}{\tilde{\mathcal{M}} y} := X_2 \,.
\]
    In deriving estimates  \eqref{tmp:14.08_1} and \eqref{tmp:14.08_1+} we have used the bound $H^2 |I_*| \tilde{M} \ge N^{}$, which 
    follows easily from condition \eqref{cond:H^2_prop} 
    and the estimate $|I_*| \ge |J_*| \ge \d^{60} |I|/5$.

    Applying the second part of Corollary \ref{c:repulsion_dep}
    with 
    $C_j \ge 4$ defined in \eqref{c:C_j_def} and $a=b$, 
    we find integers $m_1,m_2,m_3$, $|m_j| \le C_j$ such that 
\begin{equation}\label{tmp:CH_q/t}
    C_1 x + C_2 y + C_3 z \ge 
    |m_1 x + m_2 y + m_3 z| 
    \ge \frac{q}{4Mt} \,,
\end{equation}
    provided bound \eqref{cond:linear_problem} takes place,
    and $(b,q) \le t/(12\| \vec{C}\|_\infty)$. 
    We will check all  
    these conditions 
    later
    and moreover let us assume for simplicity that $(b,q)=1$.
    Now notice that according to \eqref{c:C_j_def} 
    we have 
\[
    C_1 =2 \left( \frac{12 X_2 X_3}{tX_1} \right)^{1/2} \,,
    \quad 
    C_2 =2 \left( \frac{12 X_1 X_3}{tX_2} \right)^{1/2} \,,
    \quad 
    C_3 =2 \left( \frac{12 X_1 X_2}{tX_3} \right)^{1/2} \,,
\]
    and 
    we check that all these numbers are at least 
    four 
    later.
    Returning to \eqref{tmp:CH_q/t}, we arrive to a contradiction, 
    if we have 
    three additional inequalities 
\[
    C_1 x,\, C_2 y,\, C_3 z 
    < \frac{q}{12Mt} \,.
\]
    Considering the first expression $C_1 x$ (we have exactly the same  calculations for $C_3 z$), 
    one has 
\begin{equation}\label{f:C1_bound}
    (C_1 x)^2 = 
    \frac{48 q xz}{\tilde{\mathcal{M}} yt} 
    \le \left(\frac{q}{12Mt} \right)^2 \,,
\end{equation}
    provided 
    $y \tilde{\mathcal{M}} \ge 2^{13} M^2 txz/q$. 
    Since $\D = y$, the last condition is satisfied, provided 
\begin{equation}\label{f:x_large}
    tx \le 
    \frac{\tilde{\mathcal{M}} q}{2^{14} M^2} \,,
\end{equation}
    and we will verify this  at the very end of our proof. 
    Now the condition 
\[
    (C_2 y)^2 = \frac{192 \tilde{\mathcal{M}} |I|^2 xz y^3}{tq} \le \left(\frac{q}{12Mt} \right)^2 
\]
    gives us 
\begin{equation}\label{tmp:I,H_1-}
    y^3 z 
    \le \frac{q^3}{2^{19} M^2 \tilde{\mathcal{M}} |I|^2 xt} 
\end{equation}
    Recall the definition of the number $\D$, 
    we arrive to a new assumption
\begin{equation}\label{tmp:I,H_1}
    |I|^2 \D^4 \le \frac{q^3}{2^{19} M^2 \tilde{\mathcal{M}}  xt} \,.
\end{equation}
    It remains to check conditions  \eqref{cond:linear_problem}, \eqref{cond:k=3},  \eqref{cond:Delta_large} and other less important conditions, 
    and we always use Lemma \ref{l:A+B} which gives us the following rather crude bound  $|I| \le |J_{u/v}| \le 16 MN$.
    The first 
    condition \eqref{cond:linear_problem} 
    is $(24)^3 \cdot X_1 X_2 X_3 \le q^2 t$ and thanks to our choice of $y$ it follows from 
\[
    \frac{2^{17} |I|^2 xz q}{\tilde{\mathcal{M}} y} 
    \le 
    2^{26} M^2 q^{3/2} N^{5/2} 
    \le q^2 t \,. 
\]
    So, we should have $2^{26} N^3 M^2 \le q$ and 
    the last inequality easily follows from \eqref{cond:number_x}. 

    To check \eqref{cond:k=3} one can insure that $\|\vec{C}\|_\infty = C_2$ 
    (see conditions \eqref{cond:H^2_prop} and  $\tilde{M} \le N^{1/1000}$, $H=N^{9/20}$) 
    and hence 
\begin{equation}\label{tmp:06.11_1}
    16 M^2 C^2_2 N^{3/2} = 2^4 M^2 N^{3/2}  \cdot \frac{192 \tilde{\mathcal{M}} |I|^2 xzy}{t q} 
    \le
    2^{15} M^4 N^{9/2}
    \le \sqrt{q}
\end{equation}
    and this follows from our condition \eqref{cond:number_x}. 
    Similarly, if $g:=(a,q) > t/(12\| \vec{C}\|_\infty) \gg \sqrt{q}/N^{O(1)}$, then $g$ divides $a$ and by computations in \eqref{tmp:3.09_I-} of Lemma \ref{l:coprimality} we see that the set of such $a$ has size $O(|A| N^{-\kappa c/16})$ and therefore it is negligible. 
    Moreover, we have assumed condition \eqref{cond:gcd} 
    and then the same lemma shows that the set of $a$ where \eqref{cond:gcd}  fails is also negligible. 
    This will change the final calculations below slightly in case $(b,q), (c,q) > 1$.
    Now let us check that $C_2 \ge C_3\ge 4$ and we postpone the inequality $C_1\ge 4$ at the very end of the proof.  
    As for the condition $C_2 \ge 4$, it follows from $|I|^2 xzy \ge |I|^2 t^3 H^3\ge tq$ and the last inequality takes place due to $H\ge N^{1/3}$. 
    The bound $C_3 \ge 4$ is a consequence of  $xq\ge ytz$ and this last inequality is obvious.

    Finally, if $x=q_s (s)$ and  $s\in \{ s_0 ,\dots, s_0+m^2\}$ 
    it remains 
    to check condition \eqref{cond:Delta_large}, namely, 
\[
    |I| \ge  
        \frac{25 C_3 q}{\d^{60} x_{s_*} \Delta} \cdot (2M_*)^{m(m+1)^2} (C^{3}_1 C_3)^{1/2m}  \,.
\]
    Since $C_1\le \frac{q}{12Mtx}$, $C_3 \le \frac{q}{12Mtz}$ and $x\ge \sqrt{Nq}/H$, $z\ge tH$, we see that $C^3_1 C_3 < H^3 \cdot N/H = NH^2 < N^{2}$ and hence taking $m=200$, we obtain
\begin{equation}\label{tmp:m^3_small}
    (2M_*)^{m(m+1)^2} \cdot (C^{3}_1 C_3)^{1/2m}  
    \le 
    (2M_*)^{m(m+1)^2} \cdot N^{1/200}
    \le 
    N^{1/1000} \cdot N^{1/200}
    < N^{1/100} \,.
\end{equation}
    Similarly,  in the case $x=q_s (s)$ and $s\in \{ s_0 ,\dots, s_0+m^2\}$ one has $x\le \sqrt{qN}/H \cdot N^{1/1000}$.
    Applying the last estimate, assumption \eqref{cond:H^2_prop} and 
    the identity 
    $\t{\mathcal{M}} y =\sqrt{q}$, 
    we obtain 
\[
    \frac{625 C^2_3 q^2}{\d^{120} x^2_{s_*} \Delta^2} \cdot (2M_*)^{2m(m+1)^2} (C^{3}_1 C_3)^{1/m} 
    \le 
        \frac{2^{15} q^3 x}{\d^{120} \tilde{\mathcal{M}} tz x^2_{s_*} \Delta^3} 
        \cdot 
    N^{1/80}
    \le 
\frac{2^{15} N^{} q^{5/2}}{\d^{120}  H z x^2_{s_*} \Delta^2} 
        \cdot 
    N^{1/70}
\]
\[
    \le 
    \left( \frac{N}{\d^{60} \tilde{M} H^2} \right)^2 
    \le 
    |I|^2
\]
    or, in other words, 
\[
    2^{15} \t{M}^2 H^3 q^{5/2} \cdot  N^{1/70} 
    \le H^3 N^{-1/2} x^2_{s_*} q^{3/2}
    \le N^{} z x^2_{s_*} \D^2   
\]
    and this is obviously true thanks to our choice of 
    $T= \sqrt{q} N^{2/7}$. 
    Returning to \eqref{tmp:I,H_1}, let us check that 
\[
    |I|^2 \le \left(\frac{40 N}{\d^{60}\tilde{M} H^2} \right)^2 \le \frac{q^3}{2^{19} M^2 \tilde{M} xt \D^4}
\]
    or, in other words, thanks to the bound $x\le (2M_*)^{(m+1)^2+1} T < N^{1/1000} T$, we need to verify 
\[
    2^{30} \d^{-120} M^2 N^2 t \D^4 x
    \le 
    2^{30} \d^{-120} M^2  N^2 t \D^4 T \cdot N^{1/1000} 
    \le \tilde{M} q^3 H^4 \,.
\]
    One can check that our choice of $T= \sqrt{q} N^{2/7}$ 
    and the assumptions $2\le M \le M_* \le N^{10^{-10}}$
    imply  the last bound holds for  $H = N^{9/20}$. 
    The last inequality we need to check is $C_1\ge 4$,  and it follows from
\[
    qz \ge qtH \ge 
    tx \sqrt{q} = \tilde{\mathcal{M}} txy \,,
\] 
    and hence we need to have  (see computations in \eqref{tmp:m^3_small}) 
\[
    x \le (2M_*)^{(m+1)^2 m+1} T 
    \le N^{1/1000} T
    \le H\sqrt{q} 
\]
    and this is 
    trivially 
    true, thanks to $H=N^{9/20}$ and $T = \sqrt{q} N^{2/7}$. 
    In particular, we have $x\le q/t$ and moreover condition \eqref{f:x_large} takes place.  
    This concludes the proof.  
$\hfill\Box$
\end{proof}

\section{The proofs of Theorems \ref{t:Larcher}, \ref{t:main_new} and \ref{t:Zaremba}}
\label{sec:proof}

The proofs of Theorems \ref{t:Larcher}, \ref{t:main_new} and \ref{t:Zaremba} are carried out according to similar schemes and differ only in the choice of parameters. Let us start with Theorem \ref{t:Larcher}, 
but first of all, we recall the proof of Theorem \ref{t:main} from \cite{MMS_Korobov}, since we will use some parts of the argument, as well as the notation. 
We also simplify some steps of the proof of Theorem \ref{t:main} by using the exact asymptotic formula from Corollary \ref{c:A_A*_asymptotic}.

\subsection{Previous argument}
\label{ssec:previous}

Take a parameter $\eps \in (0,1/2]$, which we will choose later and let $N=q^{2\eps}$ and $t=q^{1/2-\eps} = \sqrt{q/N}$. 
For simplicity, the reader may assume that $q$ is a prime number.
We definitely assume that 
$t=o(\sqrt{q})$, 
$q\to \infty$ and hence we have  the condition 
\begin{equation}\label{cond:eps_p}
    \eps \gg  \frac{1}{\log q} \,.
\end{equation}
Put $A = A_M := Z_M (t)$. 
Thus, our set $A$ depends on two parameters: $\eps$ and $M$ (and the parameter $N$ depends on $\eps$ only). 
Thanks to Lemma \ref{l:A+B} we know that the set $A$ is a collection of disjoint intervals of lengths from  $N$ to 
$16MN$. 
Also, one has  
\[
    q^{w_M} N^{1-w_M} \ll TN \le |A| \ll MTN \ll M q^{w_M} N^{1-w_M}\,, 
\]
where $T\sim t^{2w_M}$. 
Now consider the set $Z_* = A\cap A^{-1}$ (recall that for composite $q$ the set $A^{-1}$ is the collection $\{ a\in [q] ~:~(a/(a,q))^{-1}/(q/(a,q)) \in A\}$, where $(a/(a,q))^{-1}$ is the inverse element modulo $q/(a,q) \ge t$.
Note that $Z_*$ is symmetric in the sense that $Z^{-1}_* = Z_*$. 
By Corollary \ref{c:A_A*_asymptotic} in the form of Lemma \ref{l:coprimality} (recall that we want to have an $a$ coprime to $q$)
the following holds 
\begin{equation}\label{f:Z_*_as_f}
    |Z_*| \ge \frac{|A|^2 \_phi (q)}{q^2} - O(|A| N^{-\kappa}) 
    \ge 
    \frac{|A|^2 \_phi (q)}{2q^2} 
    \,, 
\end{equation}
provided $|A|N^\kappa \gg q\cdot q/\_phi(q)$.
In other words, we must have
\begin{equation}\label{f:eq_J_cond_q}
    q^{2\eps(1+\kappa -w_M)} \gg q^{1-w_M}  \cdot q/\_phi(q)
\end{equation}
or, equivalently, (recall that $1-w_M \sim  1/M$) 
\begin{equation}\label{cond:eps}
    \eps \gg \frac{1}{M} \,,
\end{equation}
    as well as $\eps \gg \frac{\log \log \log q}{\log q}$ (we will see that this condition is weaker than \eqref{cond:eps}). 
Thus under the condition \eqref{cond:eps} the set $Z_*$ is nonempty and   we see that there are $z_1,z_2\in A$ with $z_1z_2 \equiv 1 \pmod q$.  
Put $a=z_1$.
By Lemma \ref{l:M_crit} we have that for all $x\le t$ and $1\le |y| < q$ with $ax\equiv y \pmod q$ the following relation $x|y| \ge q/4M$ holds.
Now let us recall the well--known fact that continued fractions are related to the question of finding the reciprocal fraction $a^{-1}$ modulo $q$, see \cite{Hinchin}. 
This formulae (alternatively, a direct application of Lemma \ref{l:M_crit}) allows us to transform  denominators of Type I to denominators of Type II. 
More precisely, we have 
\begin{equation}\label{f:inverse1}
\frac{a^{-1}}{q}=\left[0 ; c_{s}, c_{s-1} \ldots, c_{1}\right]
\quad \quad \quad \quad 
\text {if }  s \text { is even }
\end{equation}
and 
\begin{equation}\label{f:inverse2} 
\frac{a^{-1}}{q}=\left[0 ; 1, c_{s}-1, c_{s-1} \ldots, c_{1}\right] \quad \text { if } s \text { is odd. }
\end{equation} 
Thus 
in view of formulae \eqref{f:inverse1}, \eqref{f:inverse2} for any  $x\le t$ and $1\le |y| <q$ with $a^{-1} x\equiv y \pmod q$ one has $x|y| \ge q/4M := q_M$.
The last modular equation is equivalent to 
$x \equiv ya \pmod q$ and hence any solution of \eqref{eq:M_crit} 
satisfy  
\begin{equation}\label{f:beg_end}
    x|y| = x|ax| \ge q_M 
    \quad \quad 
    \mbox{for} \quad \quad x\in [t] 
    \quad \quad \mbox{ and } \quad \quad 
    x\in  \left[ q_M/t, q \right) \,.
\end{equation}
Now suppose that there are $x,y \in [t,q_M/t]$ such that $x|y| = x|ax|< q_M$.  
Recall that we call such denominators
$x=x(a)$ 
as {\it critical denominators}. 
Notice that since $Z_* = Z^{-1}_*$ 
one can assume 
that either $x$ or $y$ less than $\sqrt{q}$, see formula \eqref{f:domino}. Moreover,  if, say, $x\le y$, then from $x|y|< q_M$, we derive $x<\sqrt{q_M} < \sqrt{q/M}$.   
After that we chose  $M\sim \frac{\log q}{\log \log q}$ in \cite{MMS_Korobov} and after some computations we came to a contradiction,  
since $x<\sqrt{q/M}$, but at the same time,  
$x\ge t$ (for details, see \cite{MMS_Korobov}).

Finally, let us give a rough upper bound for the partial quotients of elements of $Z_*$, which is, firstly, very instructive, and secondly, can in principle be used in an alternative proof of Proposition \ref{p:number_x}, replacing the argument with the quantity $M_*$. 
For simplicity, assume that $q$ is a prime number. 
Suppose that $a\in Z^{-1}_M (t)$ and then putting $x=|ax|$ in \eqref{f:beg_end}, we get 
\begin{equation}\label{f:beg_end_ax}
    |x| |ax| = |ax| |a^{-1} (ax)|  \ge q_M 
    \quad \quad 
    \mbox{for} \quad \quad |ax| \in [t] 
    \quad \quad \mbox{ and } \quad \quad 
    |ax| \in  \left[ q_M/t, q \right) \,.
\end{equation}
Therefore, if we assume that for $x\in [t,q_M/t]$ one has $x|ax| \le q/G$, where $G\ge 4M$, then
\begin{equation}\label{f:G_bound}
    t^2 \le x|ax| \le q/G\,.
\end{equation}
    Thus by Lemma \ref{l:M_crit} all partial quotients of $a$ are bounded by $G = \max\{ N, 4M\}$. 
    One can easily refine the last bound removing some elements from $Z_M(t)$ and modifying condition \eqref{cond:eps} slightly. 
    Indeed, consider the dyadic decomposition $A = \bigcup_{\D_1,\D_2} A_{\D_1, \D_2}$, where $\D_1,\D_2 \in [t, q_M/t]$ and 
\[
    A_{\D_1, \D_2} = \{ a\in A ~:~ \D_1 \le x \le 2\D_1,\, \D_2 \le |ax| \le 2\D_2,\, x|ax| < q_M \} \,.
\]
    Then consider the set $A' = \bigcup_{\D_1,\D_2 ~:~ \D_1 \D_2 < qN^{-c}} A_{\D_1, \D_2}$, where $c\in (0,1)$ is an arbitrary constant.
    Each set $A_{\D_1, \D_2}$ belongs to $\pm [\D_2, 2\D_2]/[\D_1, 2\D_1]$ and hence $|A_{\D_1, \D_2}| \le 8 \D_1 \D_2$.
    Thus  thanks to our main condition \eqref{cond:eps} (now $\eps \gg_c M^{-1}$ of course), we get 
\[
    |A'| \ll q N^{-c} \cdot \log^2 N \le |A| N^{-c/2} \,.
\]
    Therefore, the number of such elements $a$ in $A$ is negligible and for the rest we have 
\[
    q N^{-c} \le \D_1 \D_2 \le x|ax| \le q/G \,.
\]
    In other words, 
\begin{equation}\label{f:rough_G}
    G = \max\{ N^c, 4M\} \,.
\end{equation}
    Thus, in particular,  we see that our set $Z_M (t) \cap Z^{-1}_M (t)$ is far from being random (we have already seen this in Proposition \ref{p:repulsion}).

\subsection{The proof of Theorem \ref{t:Larcher}}
\label{ssec:Larcher}

To 
obtain 
Theorem \ref{t:Larcher} 
we choose 
\begin{equation}\label{f:choice_M}
    M \sim \sqrt{\log \log q} 
\end{equation}
and assume that conditions \eqref{cond:eps_p}, \eqref{cond:eps} are satisfied. In particular, we have the estimate \eqref{f:Z_*_as_f} and therefore the set $Z_*$ is nonempty. 
In addition to \eqref{cond:eps_p} and \eqref{cond:eps} we also assume that 
\begin{equation}\label{cond:I&M}
    N=q^{2\eps} \gg \left(\max \left\{ M, \frac{q}{\_phi(q)}
    \right\}  \right)^C \,,
\end{equation}
    where $C>1$ is a sufficiently large but absolute constant.
    Obviously, the condition $q^\eps \gg q/\_phi(q)$ implies \eqref{cond:eps_p}, and therefore the latter can be freely ignored.

\bp

Our next aim is to delete some elements from $Z_*$ and construct a smaller set $Z'_* \subseteq Z_*$ such that still $|Z'_*| = \Omega (|Z_*|)$ and for any $a\in Z'_*$ all partial quotients $c_j (a)$ up to $tH$, $H=N^{9/20}$ are bounded by 
\begin{equation}\label{f:choice_M*}
    M_* 
    \ll M^{-1} \log q 
    \ll \log q \,. 
\end{equation}
    Of course, we also assume that $M_*\ge M$. 
 As we know from Lemma \ref{l:A+B} 
    the set  $A_M$ is a Cantor--type set.
    More precisely, it is the intersection of the progression $R_q$
    with the  Cantor--type set $Q_M(t)$ 
    which 
    is a disjoint union of some intervals $J_{u/v}$ of length at least $N$ and these intervals are indexed by the set $\overline{Q_M(t)}$.
    Now to reach the required set $A' = A'_M \subseteq A_M$ and then the set $Z'_* = A'\cap (A')^{-1}$ we successfully  delete some intervals from the set $Q_M(t) = \bigsqcup_{u,v} J_{u/v}$.
    We can remove all
    $N^{1/100}$--bad 
    intervals $J_{u/v}$  and by 
    Lemma \ref{l:dense_subint} we delete at most $qN^{-\kappa /200}$ points, so it is negligible.
    Further, taking an interval $J_{u/v}$, $\frac{u}{v}=[0;r_1,\dots,r_l] = \frac{p_l}{q_l}$ 
    we need to remove all points of the form  $[0;r_1,\dots,r_l,r_{l+1}, \dots]$, where for some $j\ge l+1$ one has $r_{j} >M_*$. 
    In other words, we need to remove all fundamental intervals 
    $$I(r_1,\dots,r_l, r_{l+1}, \dots, r_s) = \{ \a \in J_{u/v} ~:~ \a = [0;r_1,\dots,r_l, r_{l+1}, \dots, r_s]\}$$ such that for  some $j\ge l+1$ one has $r_{j} >M_*$. 
    Recall 
    that $v<v',v''<(M+1)v$ and $v^{-2}<|J_{u/v}|<2(M+1) v^{-2}$, see formula \eqref{f:J_uv}. 
    From the theory of the continued fractions  \cite[Chapter III, Sections 12, 13]{Hinchin} we see that 
     at each step we delete at most 
\begin{equation}\label{f:T_portion_up}
    \sum_{r> M_*} \frac{2}{3r^2 q_l (q_l+q_{l-1})} \le \frac{2}{3 M_* v^2} \le \frac{2 |J_{u/v}|}{3 M_*} 
\end{equation}
    points. 
    Further, any fraction $a/q$ from $Z_*$ has partial quotients bounded by 
    $O(M)$
    with, possibly,
    $4 \log (q_M/ t^2)$ 
    exceptions in 
    the interval $[t,q_M/t]$
    and thus if we take 
\begin{equation}\label{cond:T_lower}
    M_* \gg \o^{-1} M^{-1} \log q \ge 8 \o^{-1} \eps \log q \ge 4 \o^{-1}\log (q_M/t^2) 
    \,,
\end{equation}
    then it guaranties, in particular, that we arrive to a set $A'$ of size  
\begin{equation}\label{f:large_A'}
    |A'| \ge (1-\omega)|A_M| \,.
\end{equation}
    Here $\o \in (0,1)$ is any absolute constant, say, $\o = 1/32$. 
    Nevertheless, the set $A'$ is rather unstructured and 
    to have $|Z'_*| \gg |Z_*|$ we consider just subintervals $I(r_1,\dots,r_l, r_{l+1}, \dots, r_s) \subseteq J_{u/v}$, $v\ge t$ of size at least $h:=N/H^2 = N^{1/10}$.
    Also, we impose the assumption 
\begin{equation}\label{cond:J_uv}
    h \gg M_* \,. 
\end{equation}
    Then as in \eqref{f:Z_*_as_f}, we see that 
\begin{equation}\label{tmp:Z_*_N_to_h}
    |Z'_*| \ge \frac{|A'|^2 \_phi (q)}{q^2} - O(|A| h^{-\kappa}) 
    \gg 
    \frac{|A|^2 \_phi (q)}{q^2} 
\end{equation}
    thanks to our basic inequality \eqref{f:eq_J_cond_q}.
    Also, condition \eqref{cond:J_uv} is satisfied, see formulae \eqref{f:choice_M} and \eqref{f:choice_M*}. 
    Notice that to arrive to the set $Z'_*$ one can use even simpler argument which nevertheless depends on the choice of the parameter $M$ in a crucial way. 
    Indeed, by lower bound \eqref{f:AD_lower_no_Rq}  of Lemma \ref{l:AD}   we have a set $A' \subseteq A$ such that all partial quotients of $a\in A'$ are bounded by $M$, provided the denominator of $a/q$ does not exceed $tH$ and the size of $A'$ is at least
\begin{equation}\label{f:A_M/M_*}
    M^{-1} t^{2w_M} N^{w_M} (N/H^2)^{1-w_M} = M^{-1} t^{2w_M} N H^{2(w_M-1)} \,.
\end{equation}
    To have estimate \eqref{f:large_A'} we need $H^{2(w_M-1)} \gg 1$ and hence $M\gg \sqrt{\log q}$ (one can check that, actually, the term $M^{-1}$ in \eqref{f:A_M/M_*} is acceptable). 
    Of course, we have just condition \eqref{f:choice_M} but below our choice of $M$ is \eqref{f:M_Z_new} and therefore this alternative argument works. 
    Anyway, with some abuse of the notation, below we write $Z_*$ instead of  $Z'_*$.

    It remains to remove  from 
    $Z_*$ 
    the subintervals $J_{u/v}(r_1,\dots,r_l, r_{l+1}, \dots, r_s)$ that have  sizes less than 
    $h=N^{1/10}$
    but if we apply the previous 
    procedure,
    then, of course, this may turn out to be too wasteful.
    Let us remark that since 
    $qv^{-2} <|J_{u/v}| < 2(M+1)q v^{-2}$ thanks to \eqref{f:J_uv} (here we consider $J_{u/v}$ as an interval of $\Z/q\Z$), 
    it follows that 
    $x = x(a) =v> \sqrt{q} h^{-1/2} = tH$.
    Similarly, if $a\in Z_* \cap Z^{-1}_*$, then $x(a) \le q_M/tH < \sqrt{qN}/H$. 
    By Proposition  \ref{p:number_x} applied with the parameters $M,M_*$, 
    $\tilde{M} = 10 \d^{-60} \le 2^{70}$, $c_* =1/1000$
    and $H=N^{9/20}$, we see that  
    subintervals of $J_{u/v}$ of lengths, say,  
    $\frac{40h}{\d^{60} \tilde{M}}$
    cannot 
    satisfy $\d$--Assumption.    
    Of course, it is necessary to assume that condition \eqref{cond:number_x} is satisfied.
    So, all partial quotients of $A'$ are bounded by $4\max\{M,M_*,\tilde{M}\}$. 
    Now take $C=10^{10}$ in \eqref{cond:I&M} and observe that  condition
    \eqref{cond:I&M} is satisfied  
    by 
    our choice of the parameter $M$ due to a similar  bound 
\begin{equation}\label{cond:I&M_h}
    h^{} 
    \gg \max \{ M, q/\_phi(q)\}^C \,.
\end{equation}
    Indeed, the later one is follows from 
\begin{equation}\label{f:M_bad}
    M \ll 
    \frac{\log q}{\log \log q} \,.
\end{equation}
    Finally, to apply Proposition \ref{p:number_x} we need to check condition \eqref{cond:number_x}.
    This is equivalent to the inequality 
\begin{equation}\label{cond:eps_upper}
    \eps < \frac{1}{18} \,, 
\end{equation}    
    and since $\eps \sim 1/M$, then, taking a sufficiently large $M$, we can always assume that the last estimate takes place.
    Now let $\tilde{A} \subseteq A$ be the union of all points from $A$, containing in the remaining  subintervals.
    Then $|\tilde{A}| \gg |A|$ and thus we can define $Z'_* := \tilde{A}\cap \tilde{A}^{-1} \subseteq Z_*$. 
    By the same computations as in  \eqref{f:Z_*_as_f}, \eqref{tmp:Z_*_N_to_h}, we see that $|Z'_*| \gg |Z_*|$. 

\bp 

Finally, to prove Theorem \ref{t:Larcher} it remains to make the last step and we follow the method from \cite{Larcher} and \cite[proof of Theorem 2]{MU_Larcher}.
Namely, above we constructed a set $Z'_* \subseteq Z_*$ such that for all $a\in Z'_*$ and an arbitrary $x\notin [t,q_M/t]$ the equation $ax \equiv y \pmod q$ implies $x|y| \gg q/M$. 
In other words, thanks to Lemma \ref{l:M_crit} all partial quotients $c_j$ of $a/q=[c_1,\dots,c_s]$ are bounded by $O(M)$ excluding the segment when $q_j (a) \in [t,q_M/t]$.
Given $a\in Z'_*$ denote by $S'(a)$ the sum of $c_j (a)$ such that $q_j (a)\in [t,q_M/t]$ and by $S''(a)$ the sum of $c_j (a)$ such that $q_j (a)\in [t,tH]$.  
We know that $c_j (a)\le \max\{M_*, \tilde{M}\} := \bar{M}_*$ for any $j\in [s]$. 
Then by the Abel summation (or see the details in \cite{Larcher}, \cite{MU_Larcher}) 
we have  that 
\[
    \sigma:= \sum_{a\in Z'_*} S'(a) \ll \sum_{2\le c \le  \bar{M}_*} 
    \left| \{ (a,x) \in Z'_* \times [t,q_M/t] ~:~ \| ax/q\| < 1/(cx) \} \right| \,.
\]
    Splitting the sets $[\bar{M}_*]$ and $[t,q_M/t]$ in dyadic way, we decompose the sum $\sigma$ into $O(\log \bar{M}_*)$ and $O(\log (q_M/t^2))$ subsums. 
    Fixing a pair of such 
    subsums 
    as well as  an element $a\in Z'_*$,  it remains to count the number of $j$ such that $\Delta_1 \le q_j (a) < 2\Delta_1$, $\Delta_2 \le c_j(a) <2\Delta_2$ and such that 
    $\| ax/q\| < 1/\Delta_1 \Delta_2$. 
    Here $\D_1 \in [t,q_M/t]$ and $\Delta_2 \in [2,c]$ are some numbers. 
    But clearly the number of $q_j (a)$ in such a segment is at most two and hence the number of $c_j (a)$ is at most four (see details in \cite{MU_Larcher}).     
    Thus we obtain 
\begin{equation}
    \sigma \ll |Z'_*| \cdot \log \bar{M}_* \cdot \log (q_M/t^2) \,.
\end{equation}
    Another way to see the same is just to fix an interval $I := J_{u/v} \subseteq A_M$, 
    and repeat the original argument of \cite{Larcher} and \cite[proof of Theorem 2]{MU_Larcher}. 
    Namely,  as above the set $\{(a,x)\in I \times [t,tH] ~:~ \| ax/q\| < 1/(cx) \}$ has size $O(c^{-1}|I| \log (q_M/t^2))$ and after integration over $c$ we obtain $O(|I| \log \ov{M}_* \cdot \log (q_M/t^2))$.
    In other words, we have found a set $A''\subseteq A_M$, $|A''| \gg |A_M|$ such that for all $a\in A''$ one has $S''(a) = O(\log \ov{M}_* \cdot \log (q_M/t^2))$. 
    After that we can apply the argument from lines  \eqref{f:large_A'}---\eqref{cond:eps_upper}, since  
    our new set $A''$ 
    preserves  the interval structure. 
    Therefore 
    there is $a\in Z'_*$ such that the sum of its {\it all} partial quotients does not exceed (recall that  $M \sim \sqrt{\log \log q}$ and $M_* \ll \log q$, see formulae \eqref{f:choice_M}, \eqref{f:choice_M*}) 
    the quantity 
\[
    \log \bar{M}_* \cdot \log (q_M/t^2) + M \log q
    \ll 
    M^{-1} \log M_* \cdot \log q + M \log q
    \ll 
    (M^{-1} \log \log q  + M) \log q
\]
\[
    \ll 
    \sqrt{\log \log q} \cdot \log q
\]
    as required. 
    It is easy to check that bound \eqref{f:Larcher_number} follows from our construction if one puts $\d = 1/2$.
    The additional term 
$
\exp \left( \Omega \left(\frac{\log q}{\log \log q} \right) \right)
$ 
is $N^{1-w_M} = q^{\Omega(M^{-2})}$.
This completes the proof. 
$\hfill\Box$

\subsection{The proof of Theorem \ref{t:main_new}}
\label{ssec:main_new}

To obtain Theorem \ref{t:main_new} 
we 
take 
\begin{equation}\label{f:M_Z_new}
    M \sim \sqrt{\log q} = O \left(\frac{\log q}{\log \log q} \right) 
\end{equation}
to satisfy condition \eqref{f:M_bad} and hence conditions  
\eqref{cond:eps_p} and \eqref{cond:I&M}.
Similarly,      condition \eqref{cond:J_uv} 
holds. 
As always we assume that the inequality \eqref{cond:eps} takes place.
By the previous argument we have constructed the set $Z_*$ such that for any $a\in Z_*$ one has $c_j (a) =O(M)$, provided $q_j (a) \notin [t,q_M/t]$ and $c_j (a) =O(\bar{M}_*) = O(M^{-1} \log q)$ for 
$q_j(a) \in [t,q_M/t]$,
see formula \eqref{f:choice_M*}.
Hence the choice $M \sim \sqrt{\log q}$ leads to  (see Remark \ref{r:M_crit})
\begin{equation}\label{f:T2_maxs}
    \max\{ M+2, M_*\} = 
    O(\sqrt{\log q}) 
\,.
\end{equation}
Now by \eqref{f:main_M_new} we choose $M = C \sqrt{\log q}$, where $C>1$ is a sufficiently large absolute constant. 
Taking $C$ large enough we can make the maximum in \eqref{f:T2_maxs} to be $M+2$.
In other words, 
all partial quotients are bounded by $M+2$. 
As above bound \eqref{f:main_new_number} follows from the construction and our choice of $\d = 1/2$. 
This completes the proof of the theorem. 
$\hfill\Box$


\begin{remark}
\label{r:optimality}
    If one is interested in obtaining optimal bounds for the Zaremba numbers $a$, then 
    we expect 
    that the limit of our method is 
    $M\sim \sqrt{\log q}$ 
    and hence the upper bound in \eqref{f:main_M_new} is close to the optimal if one uses this circle of ideas. 
    For simplicity, let us assume that $q=p$ is a prime number.
    In the notation above 
    $|A| \gg p^{w_M} N^{1-w_M}$ 
    thanks to Lemma \ref{l:A+B} (also, see estimate \eqref{f:AD_lower_no_Rq} of Lemma \ref{l:AD}) and  therefore 
    \[
        |Z_*| \gg \frac{|A|^2}{p} \gg p^{2w_M-1} N^{2(1-w_M)} \,.
    \]
    On the other hand, using  the average argument one can easily see that the expectation of the size of the set $Z_*$ cannot be greater than $p^{2w_M-1}$ (consult paper \cite{hensley1989distribution} for sharp results). 
    Hence we must have 
    $N^{2(1-w_M)} = O(1)$. 
    Recalling that $1-w_M \sim 1/M$ and $N\sim p^{\eps} \sim p^{\Omega(1/M)}$, we obtain 
\[
    p^{\Omega(1/M^2)} \ll
    1 
\]
    and therefore 
    $M \gg \sqrt{\log p}$. 
\end{remark}


\begin{problem}
\label{pr:correct_a}
Let $q$ be a sufficiently large positive integer and 
\begin{equation*}
        1 \ll M
    = o (\sqrt{\log q}) 
\,.
\end{equation*}
    Prove that there are $\Omega(q^{2w_M-1-o(1)})$  positive integers $a$, $(a,q)=1$ with $M(a) \le M$. 
\end{problem}

    \bp

Let us make one last remark.
In papers 
\cite{chang2011partial}, 
\cite[Corollary 5]{Larcher}, 
\cite{Mosh_A+B}, 
\cite{MU_Larcher}, \cite{sh_representation_SP}
the authors study the following question.
Suppose that $q=p$ is a prime number, and the maximum in \eqref{def:M(a)} and the sum in \eqref{def:S(a)} are taken over a sufficiently large multiplicative subgroup of $\G$.
Is it possible to obtain analogs of \eqref{f:Korobov_log}, \eqref{f:main_M}, \eqref{f:Larcher_old} and \eqref{f:main_M_new} under the additional assumption that $a\in \G \subseteq \F_p \setminus \{0\}$?
Using exactly the same argument as in \cite[Theorem 5]{sh_representation_SP}, one can find $a\in \G$ such that Theorem \ref{t:main_new} (and similarly for Theorems  \ref{t:Larcher} and \ref{t:Zaremba}, the parameter $M$ can vary) hold for $a\in \G$, provided that
\[
|\G| \gg p^{1-\frac{\kappa_*}{M}} 
\gg p \cdot \exp (-O(\sqrt{\log p})) \,, 
\]
where $\kappa_* >0$ is an absolute constant.

\subsection{The proof of Theorem \ref{t:Zaremba}}
\label{ssec:Zaremba}

Now we are ready to prove Theorem \ref{t:Zaremba} and we use the notation and the argument of the previous subsection. 
We do not use the exhaustion procedure as we did in the proofs of Theorems \ref{t:Larcher}, \ref{t:main_new}. 
Instead of this we consider the set $A = A_M := Z_M (t) \cap Z_{M_*} (tH)$.
As before we assume that condition  \eqref{cond:eps} takes place.
Similarly, put $N_* = N/H^2 = N^{1/10}$ and defining $\eps_*$ from $q^{1/2-\eps_*} = tH$, we assume that 
\begin{equation}\label{cond:eps*}
    \eps_* \gg \frac{1}{M_*} \,.
\end{equation}
In other words, $\eps_* = \eps/10$ and our basic condition  \eqref{cond:eps} implies \eqref{cond:eps*} if one puts $M_* \approx 10 M$ (more precisely, we need $1-w_{M} = 10 (1-w_{M_*})$), consult Theorem \ref{t:Hensley_HD}.
Further we compute the size of $A$ using Lemma \ref{l:AD} and arguments as in  \eqref{f:A_M/M_*}.
Namely, using \eqref{cond:I&M_h}, we get 
\[
    |A| \gg 
        M_*^{-1} t^{2w_M} N^{w_{M_*}} (N/H^2)^{1-w_{M_*}} = M^{-1}_* t^{2w_M} N H^{2(w_{M_*}-1)} 
        =
        M^{-1}_* q^{w_M} N^{1-w_M} H^{2(w_{M_*}-1)}  \,,
\]
and hence by Corollary \ref{c:A_A*_asymptotic} and Lemma \ref{l:coprimality} applied to the set $A$, we obtain 
\[
    |A\cap A^{-1}| \ge \frac{|A|^2 \_phi (q)}{q^2} - O(|A| N^{-\kappa}_*) \ge \frac{|A|^2\_phi (q)}{2q^2} \,,
\]
    provided 
\[
    M^{-1}_* q^{w_M} N^{1-w_M} H^{2(w_{M_*}-1)} N^{\kappa}_* 
    \gg 
    M^{-1} q^{w_M} N^{w_{M_*}-w_M+(1-w_{M_*})/10}  \cdot  N^{\kappa/10} 
\]
\[
    \ge 
        M^{-1} q^{w_M} N^{\kappa/10}
    \gg q^{1+o(1)}
\]
and this is follows from \eqref{cond:eps*}. 
Notice that if $M \to \infty$, then  the final condition on $\eps$ is a strengthening of \eqref{cond:eps_upper}, namely,
\begin{equation}\label{cond:eps_upper_final}
		\eps< \frac{1}{18} - o_M (1) \,.
\end{equation}
Thus, there exists a subset of $A\cap A^{-1}$ of positive density such that all partial quotients of its elements 
are bounded by
\begin{equation}\label{f:M_upper_bound}
    \max\{ O(M_*), \tilde{M}\} \sim \max\{ 10 (\kappa \eps)^{-1}, 100 \d^{-60} \} = O(1) \,,
\end{equation}
where $\eps$ satisfies \eqref{cond:eps_upper_final} and $\eps \sim \kappa$ in the case of a general composite $q$. 
If $\delta \to 1$ (of course $\d$ should be less than $1-O(N^{-\kappa})$, see Lemma \ref{l:dense_subint}), then the contribution of the second term in the maximum from \eqref{f:M_upper_bound} is negligible (but the number of $a$ in \eqref{f:Zaremba} decreases) and we see that indeed  the main losses arise  thanks to the quantity $\kappa$ and hence from the results concerning the growth in the modular group.
This completes the proof of the theorem. 
$\hfill\Box$

\section{Appendix}
\label{sec:appendix}

In this section, we give a rough upper bound for the absolute constant $\mathcal{M}$ from  Theorem \ref{t:Zaremba} for the main case where the denominator $q$ is  prime. 
Moreover, we want to find only a single $a$  and do not address the question of lower bounds for the numerators in Zaremba's problem.
For prime denominators, the situation is simpler: the condition \eqref{cond:gcd} is satisfied automatically, and, furthermore, we do not make $\d$--Assumption assuming that {\it each} $a\in A\cap A^{-1}$ has a $\tilde{M}$--critical denominator. 
We claim that for sufficiently large $q=p$ one can take 
\begin{equation}\label{f:M_upper_bound_p}
    \mathcal{M} =  2^{2000} \,.
\end{equation}
The proof of \eqref{f:M_upper_bound_p} relies on arguments from  \cite{MMS_popular} and \cite{MMS_Korobov}, incorporating specific elements from our other papers.
First of all, it is necessary to transform the approximation $M_* \approx 10 M$ from Subsection \ref{ssec:Zaremba} into a rigorous statement (recall that we need to solve the inequality $1-w_{M} \ge 10 (1-w_{M_*})$).
To this end we need \cite[Theorem VIII]{Kurzweil_w_M} 
(a similar bound for any $M\ge 2$ is contained in \cite[Theorem 1]{hensley1989distribution}).

\begin{theorem}
    For $M\ge 1000$ one has
\[
    1-\frac{0.99}{M}  \le w_M \le 1-\frac{1}{4M} \,.
\]
\label{t:w_M_bounds}
\end{theorem}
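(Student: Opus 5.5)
The plan is to characterise $w_M$ thermodynamically and compare with the Gauss map. For $s\in(1/2,1]$ put
\[
  (\mathcal{L}_s f)(x)=\sum_{c=1}^{M} (c+x)^{-2s} f\!\left(\frac{1}{c+x}\right), \qquad x\in[0,1].
\]
Let $P_M(s)=\log \lambda_M(s)$, where $\lambda_M(s)$ is the leading eigenvalue of $\mathcal{L}_s$. Equivalently,
\[
  P_M(s)=\lim_{n\to\infty} n^{-1}\log \sum_{c_i\le M}\mathcal{K}(c_1,\dots,c_n)^{-2s}.
\]
By Bowen's formula (or directly, by covering $F_M$ with fundamental intervals of length $\asymp \mathcal{K}^{-2}$), $w_M$ is the unique zero of the strictly decreasing convex function $P_M$. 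Both inequalities will come from the test-function principle: if $f>0$ and $\mathcal{L}_s f\ge \lambda f$ (resp.\ $\le\lambda f$) pointwise, then $P_M(s)\ge \log\lambda$ (resp.\ $\le$).

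\textbf{Step 1: the point $s=1$.} Take $f(x)=1/(1+x)$, the Gauss density, which is fixed by the untruncated operator. Telescoping gives the exact formula
\[
  \mathcal{L}_1 f(x)=f(x)-\sum_{c>M}\frac{1}{(c+x)(c+x+1)}=f(x)-\frac{1}{M+1+x}.
\]
Hence
\[
  1-\frac{2}{M+2}\le \frac{\mathcal{L}_1 f}{f}\le 1-\frac{1}{M+2},
\]
so that $P_M(1)=-\theta/M$ with $\theta\in[0.99\cdot 1,\,2]$ roughly. For the lower bound on $w_M$ this crude two-sided estimate is not quite sharp enough, so I would refine it. I would iterate once, i.e.\ use $f+\mathcal{L}_1 f$ or a first-order corrected density $f+g/M$. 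This pins $P_M(1)=-\tfrac{1}{M\log 2}\int_0^1 \tfrac{dx}{(1+x)}\cdot(\ldots)+O(M^{-2})$, and the sharp constant is $1/\log 2$ times the Gauss mass of $[0,1/(M+1)]$, namely $\approx 1/(M\log 2)$.

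\textbf{Step 2: moving from $s=1$ to $s=1\mp\eta/M$.} By convexity,
\[
  P_M(s)\ \text{lies between}\ P_M(1)+(1-s)\,\Lambda(1)\ \text{and}\ P_M(1)+(1-s)\,\Lambda(s),
\]
where $\Lambda(s)=-P_M'(s)=2\int\log(1/T'^{1/2})\,d\mu_s$ is the Lyapunov exponent of the equilibrium state $\mu_s$.

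For the lower bound $w_M\ge 1-0.99/M$, I need $P_M(1-0.99/M)\ge 0$, which by convexity follows from $0.99\,\Lambda(1)\ge \theta$. Here $\mu_1$ is close to the Gauss measure, so $\Lambda(1)\approx \pi^2/(6\log 2)\approx 2.37$, while $\theta\approx 1/\log 2\approx 1.44$. This leaves ample room, so explicit comparison of the density of $\mu_1$ with $1/((1+x)\log 2)$, up to a factor $1\pm O(1/M)$ from Step 1, suffices.

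For the upper bound $w_M\le 1-1/(4M)$, I need $P_M(1-1/(4M))<0$, i.e.\ $\Lambda(s)\le 4\theta$ for $s\in[1-1/(4M),1]$.

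\textbf{Main obstacle.} The hard step is the uniform upper bound on $\Lambda(s)$ near $s=1$. A naive bound $\Lambda\le 2\log(M+1)$, or the crude factor $(c+x)^{2(1-s)}\le (M+1)^{1/(2M)}$, loses a $\log M$ and fails.

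To get around this, I would bound $\mathcal{L}_s f/f$ for $f=1/(1+x)$ directly. I would write $(c+x)^{2(1-s)}\le 1+2(1-s)\log(c+x)\,(M+1)^{2(1-s)}$ and sum
\[
  \sum_{c\le M}\frac{\log(c+1)}{c^2}\le 1.2 .
\]
This gives
\[
  \frac{\mathcal{L}_s f}{f}\le 1-\frac{1}{M+2}+\frac{3}{4M}\cdot 1.2\cdot 1.01 ,
\]
which is still $<1$, so the bound closes with explicit constants for $M\ge 1000$. That is exactly where the threshold $1000$ enters, to absorb $(M+1)^{1/(2M)}$ and $O(M^{-2})$ terms.
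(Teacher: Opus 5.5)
\textbf{Verdict: the framework is viable, but as written the lower bound has a genuine gap and the upper bound relies on a false constant.}

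The paper gives no proof of this statement; it quotes it from Kurzweil's Theorem VIII. Your transfer-operator route is therefore independent of the paper, and its skeleton is sound. In particular, $f(x)=1/(1+x)$, with the telescoping identity $\mathcal{L}_1 f=f-\frac{1}{M+1+x}$, is the right test function.

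\textbf{Lower bound.} The gap is here. Your convexity route needs a quantitative lower bound on $\Lambda(1)$, the Lyapunov exponent of the equilibrium state $\mu_1$ of the \emph{truncated} system. Your justification is to compare ``the density of $\mu_1$'' with $1/((1+x)\log 2)$ using Step 1, and this does not work. The measure $\mu_1=h_1\nu_1$ lives on the Lebesgue-null Cantor set $F_M$, so it has no density. Step 1 controls only the eigenvalue $\lambda_M(1)$, not $h_1$ or $\nu_1$. The promised refinement of $P_M(1)$ is also left unfinished.

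The detour through $\Lambda(1)$ is unnecessary. Take $s=1-\eta/M$ with $\eta=0.99$. The inequality $(c+x)^{2\eta/M}\ge 1+\frac{2\eta}{M}\log(c+x)$ gives
\[
\mathcal{L}_s f(x)\ \ge\ f(x)-\frac{1}{M+1+x}+\frac{2\eta}{M}\,S(x),\qquad S(x):=\sum_{c=1}^{M}\frac{\log(c+x)}{(c+x)(c+x+1)} .
\]
The function $u\mapsto \log u/(u(u+1))$ decreases on $[2,\infty)$. Hence $S(x)\ge\sum_{c=3}^{M+1}\frac{\log c}{c(c+1)}>0.6$ for $M\ge 1000$. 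Then $2\eta S(x)>1\ge \frac{M}{M+1+x}$, so $\mathcal{L}_s f\ge f$ and $P_M(s)\ge 0$. Bowen's formula then gives $w_M\ge s$. Note that this direction genuinely needs Bowen's formula or a mass-distribution argument. Covering $F_M$ by fundamental intervals, as in your parenthetical, only gives upper bounds on the dimension.

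\textbf{Upper bound.} Your inequality $(c+x)^{2(1-s)}\le 1+2(1-s)\log(c+x)\,(M+1)^{2(1-s)}$ is correct, but the constant is not. The sum $\sum_{c\le M}\log(c+1)/c^2$ already exceeds $1.2$ at $M=4$ and is about $1.80$ for $M\ge 1000$. The factor $\frac{3}{4M}$ in your final display is also unexplained. Dividing by $f$ costs a factor $1+x\le 2$. If you bound the main term and the error separately in $x$ with the correct constant, you get $1-\frac{1}{M+1}+\frac{1.8}{M}>1$, and the argument fails.

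The estimate does close if you compare pointwise before dividing by $f$. With $2(1-s)=\frac{1}{2M}$ you need
\[
\frac{(M+1)^{1/(2M)}}{2M}\,S(x)<\frac{1}{M+1+x}.
\]
Even the crude bound $S(x)\le\sum_{c\ge 1}\log(c+1)/c^2<1.81$ gives $0.91/M<0.998/M$. Hence $\mathcal{L}_s f\le f-\frac{0.08}{M}\le\bigl(1-\frac{0.08}{M}\bigr)f$, using $f\le 1$. So $P_M\bigl(1-\frac{1}{4M}\bigr)<0$ and $w_M\le 1-\frac{1}{4M}$.

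With these two repairs, the single test function $1/(1+x)$ and the elementary bounds $1+y\le e^y\le 1+ye^y$ prove the theorem. That argument is self-contained, whereas the paper simply relies on Kurzweil's result.
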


Applying Theorem \ref{t:w_M_bounds}, we see that $1-w_{M} \ge  10 (1-w_{M_*})$, provided $M_* \ge 40 M$ and $M\ge 1000$.

Now we are ready move to the main part of our calculations. 
Since we do not make $\d$--Assumption the constant $\tilde{M}$ can be taken $\tilde{M} = 200$, say, and in view of the second maximum in \eqref{f:M_upper_bound}, we see that it is enough to compute the constant $\kappa$ from Lemmas \ref{l:T-action}, \ref{l:BG_new}. 
To this end we follow the notation \cite[Section 2]{MMS_Korobov}.
We consider the set  of matrices 
\begin{equation}\label{def:G} 
G=\left\{\left(\begin{array}{cc}
1 & -2 j \\
2 j & 1-4 j^{2}
\end{array}\right): 1 \leq j \leq N\right\} \subset \SL_2 (\Z/p\Z) \,,
\end{equation}
and put 
$\tau =1/4$, 
$m = \tau \log_N p$, $K(G^m) = p^{\tau/6}$ 
(see \cite[Lemmas 12, 21]{MMS_popular} and \cite[Page 7]{MMS_Korobov}). 
In this terms 
the final formula for the constant $\kappa$ is (see \cite[Page 9]{MMS_Korobov})
\[
    \log N \cdot \kappa = (6m)^{-1} \tau \d \log p = 
    6^{-1} \d \log N \,, 
\] 
where $\d = c/2^{k+4}$, $k = \lceil c^{-1} \rceil +1$ and $c>0$ is an absolute constant.

It remains to estimate the constant $c$ from below and 
in the proof of the required lower bound for $c$ 
we used a version of the Balog--Szemer\'edi--Gowers theorem \cite{TV} that belongs to Murphy \cite{Murphy_group_action}. 
The constants $C_1$, $C_2$ from Theorem \ref{t:BSzG_Murphy}  can be obtained if one combines Theorem 2.29 and Lemma 2.13 (which is \cite[Proposition 4.5]{Tao_non-commutative}) of \cite{TV}, see details in \cite{Murphy_group_action}.  

\begin{theorem} 
    Let $\Gr$ be a group, $A\subseteq \Gr$ and $\E(A) \ge |A|^3/K$. 
    Then there is $a\in A$ and  a set $A_* \subseteq a^{-1}A$ such that $|A_*| \gg |A|/K^{C_1}$ and  $|A^3_*| \ll K^{C_2} |A_*|$.
    One can take $C_1=9$ and $C_2 = 32$. 
\label{t:BSzG_Murphy}
\end{theorem}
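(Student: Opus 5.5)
This statement is the non-commutative Balog--Szemer\'edi--Gowers theorem in the form due to Murphy. The paper only needs the explicit exponents $C_1=9$, $C_2=32$, so the plan is to assemble it from two standard ingredients of \cite{TV} and track constants. First, from $\E(A)\ge |A|^3/K$ I would apply the graph-theoretic BSzG lemma, Theorem 2.29 of \cite{TV} in its asymmetric non-commutative form. It produces a popular-difference structure: there is a large subset $A'\subseteq A$ with $|A'|\gg |A|/K^{c_1}$ such that the multiplicative doubling $|A'A'^{-1}|$ (or $|A'^{-1}A'|$) is at most $K^{c_2}|A'|$. The route is the usual one. Energy lower bound $\Rightarrow$ the popularity graph $\{(a,b): r_{A^{-1}A}(a^{-1}b)\ge |A|/2K\}$ has density $\gg 1/K$. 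Then the path-of-length-two/three lemma gives $A'$ in which most pairs are connected by $\gg |A|^2/K^{O(1)}$ paths. Counting representations $a^{-1}b=(a^{-1}x)(x^{-1}y)(y^{-1}b)$ bounds $|A'^{-1}A'|$.

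Second, I pass from small doubling of $A'^{-1}A'$ to small tripling of a translate. I would use Tao's Proposition 4.5 (Lemma 2.13 in \cite{TV}, from \cite{Tao_non-commutative}): if $|A'^{-1}A'|\le L|A'|$, then there is a set $H$, namely a symmetric "approximate group" built as the set of $g$ with $|A'\cap A'g|\ge |A'|/2L$, satisfying $|H^3|\ll L^{O(1)}|H|$. This $H$ is covered by $L^{O(1)}$ translates related to $A'$. To land on a subset of a translate $a^{-1}A$, I would take $a\in A'$ maximizing $|a^{-1}A'\cap H|$ (pigeonhole over translates). Then I set $A_*:=a^{-1}A'\cap H$, which gives $|A_*|\gg |A'|/L^{O(1)}$, and $A_*^3\subseteq H^3$ gives $|A_*^3|\le |H^3|\ll L^{O(1)}|H|\ll L^{O(1)}|A_*|$.

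Finally I would compose the exponents. With $L=K^{c_2}$, the bounds become $|A_*|\gg |A|/K^{c_1+O(c_2)}$ and $|A_*^3|\ll K^{O(c_2)}|A_*|$. Reading off the explicit constants from \cite[Theorem 2.29]{TV} and Tao's proposition should give $C_1=9$, $C_2=32$, as recorded in \cite{Murphy_group_action}. The main obstacle is purely bookkeeping: the exponents must actually come out to $9$ and $32$ rather than merely $O(1)$. In particular, the pigeonhole step choosing $a$ must not lose an extra power of $K$. I would try first to carry out the translate selection inside the path-counting argument, by choosing $a$ as a high-degree vertex, to save that factor.
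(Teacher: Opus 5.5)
Your route is the one the paper indicates. The paper gives no argument of its own: it only says that the constants follow by combining Theorem~2.29 of \cite{TV} with Tao's Proposition~4.5 \cite{Tao_non-commutative}, with details in \cite{Murphy_group_action}.

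\textbf{The second step is false as you state it.} It fails exactly where non-commutativity matters. Under the hypothesis $|A'^{-1}A'|\le L|A'|$, the set $H=\{g: |A'\cap A'g|\ge |A'|/2L\}\subseteq A'^{-1}A'$ need not have small tripling. In $\SL_2(\F_p)$ take $U$ the upper unipotent subgroup, $w$ the Weyl element, and $A'=U\cup Uw$. Then $|A'^{-1}A'|\le 4p=2|A'|$. But $A'g\supseteq U$ for $g\in U$ and $A'g\supseteq Uw$ for $g\in w^{-1}Uw$, so $H\supseteq U\cup w^{-1}Uw$. Hence $|H^3|\ge |w^{-1}Uw\,U|=p^2$, while $|H|\le 4p$.

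The hypothesis that matches this $H$ is $|A'A'^{-1}|\le L|A'|$. It is also the one compatible with the conclusion $A_*\subseteq a^{-1}A$: with $|A'^{-1}A'|$ small, the mirror-image argument only produces a subset of a right translate $A'c^{-1}$. You obtain $|A'A'^{-1}|\le L|A'|$ by applying BSG to the pair $(A,A^{-1})$ and then Ruzsa's inequality $|B'|\,|A'A'^{-1}|\le |A'B'^{-1}|^2$.

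\textbf{The corrected argument.} With that hypothesis the argument closes with polynomial losses.
\begin{itemize}
\item Each $h\in H$ equals $c^{-1}d$ with $c,d\in A'$ in at least $|A'|/2L$ ways. The identity $ah_1\cdots h_nb^{-1}=(ah_1\cdots h_{n-1}c^{-1})(db^{-1})$ then gives $|A'H^nA'^{-1}|\le 2L^2|A'H^{n-1}A'^{-1}|$. Hence $|H^3|\le |A'H^3A'^{-1}|\le 8L^7|A'|$.
\item Next, $\sum_g|A'\cap A'g|^2=\#\{c_2c_1^{-1}=d_2d_1^{-1}\}\ge |A'|^3/L$. The terms with $g\notin H$ contribute less than half of this sum. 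Therefore $\sum_{a\in A'}|a^{-1}A'\cap H|=\sum_{g\in H}|A'\cap A'g|\ge |A'|^2/2L$.
\item So some $a\in A'$ gives $A_*=a^{-1}A'\cap H$ with $|A_*|\ge|A'|/2L$ and $|A_*^3|\le 16L^8|A_*|$.
\end{itemize}
The choice of $a$, which you flagged as the danger, therefore costs only a factor $2L$. It rests on the energy, not on covering $H$ by translates.

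\textbf{The explicit constants.} Your sketch does not deliver $C_1=9$, $C_2=32$. Like the paper, you take them on trust from \cite{Murphy_group_action}. Moreover, symmetrizing the BSG output first, as you propose, squares the BSG loss. Combined with the $L^8$ above, this overshoots $K^{32}$ considerably, so reaching those values needs more economical bookkeeping than the route you describe.
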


Also, we need \cite[Theorem 5]{RS_SL2+}. 

\begin{lemma}
\label{l:Helfgott}
    Let $p$ be a prime number and $A\subseteq \SL_2 (\F_p)$ be a generating set. 
    Then 
\[
    |A^3| \gg \min\{ |\SL_2 (\F_p)|, |A|^{1+c_H}\} \,,
\]
    where $c_H=1/20$. 
\end{lemma}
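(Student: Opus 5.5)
The plan is to argue by contradiction in the standard form of Helfgott's growth theorem, while keeping every exponent explicit so that $c_H=1/20$ comes out at the end. Suppose $A\subseteq \SL_2(\F_p)$ generates, $|A|^{1+c_H}\le |\SL_2(\F_p)|$, and $|A^3|\le K|A|$ with $K=|A|^{c_H}$ times a small constant. The Ruzsa--Helfgott tripling lemma (non-commutative Pl\"unnecke--Ruzsa, as in \cite{Tao_non-commutative}) then gives $|A^{k}|\le K^{O(k)}|A|$ for every fixed $k$. The goal is to contradict this polynomial control for some bounded $k$. We may also assume $|A|$ is neither tiny nor close to $p^3$: small $|A|$ is absorbed into the implied constant. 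For large $|A|$, Gowers' quasirandomness of $\SL_2(\F_p)$ (minimal nontrivial representation dimension $(p-1)/2$) gives $A^3=\SL_2(\F_p)$ once $|A|\gg p^{3-1/3}$.

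First I would work with tori. Since $A$ generates and $p$ is large, escape from subvarieties shows that $A^{k_0}$, for a bounded $k_0$, contains many regular semisimple elements. Consider the trace map $\mathrm{tr}\colon A^2\to \F_p$. Its fibres are (subsets of) conjugacy classes, which are two-dimensional quadric surfaces. Using Rudnev's point--plane incidence theorem for these quadrics, as in \cite{RS_SL2+}, I would bound the trace energy: the number of quadruples $(g,h,g',h')$ with $\mathrm{tr}(gh)=\mathrm{tr}(g'h')$. This should show that $|\mathrm{tr}(A^2)|\gg |A|^{1/3}K^{-O(1)}$, and dually that many fibres, that is, many maximal tori $T$, satisfy $|A^{2}\cap T|\gg |A|^{1/3}K^{-O(1)}$. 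The incidence bound replaces Helfgott's sum--product step, and it is what makes the exponents explicit.

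Next comes the pivot argument. Call a torus $T$ involved if $|A^{2}\cap T|\ge |A|^{1/3}K^{-C}$. Because $A$ generates and the tori form a single conjugacy orbit, either some $a\in A$ maps an involved torus $T$ to an uninvolved torus $aTa^{-1}$, or every conjugate of an involved torus is involved. In the first case, the map $(x,y)\mapsto x\,a y a^{-1}$ on $(A^2\cap T)\times (A^2\cap T)$ is nearly injective, because $T\cap aTa^{-1}$ is small. Combining this with the count of tori involved gives $|A^{k}|\ge |A|^{1+\eta}K^{-O(1)}$. In the second case, all $\asymp p^2$ tori are involved, so $|A^2|\gg p^2|A|^{1/3}K^{-O(1)}$. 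Together with $|A|\le p^3$ this gives growth once more. Optimising $C$ and $k$ against $K^{O(k)}$ fixes the value of $c_H$.

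I expect the main obstacle to be the quantitative trace/torus step: obtaining $|A^2\cap T|\gtrsim |A|^{1/3}$ with only polynomial losses in $K$. This requires a careful incidence count on the quadrics $\{\mathrm{tr}(g)=\lambda\}$, including control of degenerate lines lying on these quadrics. I would try first to reduce to Rudnev's theorem after removing the rich lines, which correspond to cosets of unipotent subgroups. Those removed elements I would handle separately by the escape argument.
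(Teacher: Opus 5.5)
The paper does not prove this lemma. It quotes it as \cite[Theorem 5]{RS_SL2+}, and the value $c_H=1/20$ comes out of the explicit bookkeeping there. Your sketch attempts an independent proof, but it has a genuine gap: the growth step fails as written. In your first pivot case, the map $(x,y)\mapsto x\,aya^{-1}$ on $(A^2\cap T)\times(A^2\cap T)$ has fibres inside $T\cap aTa^{-1}\subseteq\{\pm 1\}$ for \emph{every} $a\notin N(T)$, whether or not $aTa^{-1}$ is involved. So the hypothesis that $aTa^{-1}$ is uninvolved is never used. Moreover, the image has size comparable to $|A^2\cap T|^2$, which is of order $|A|^{2/3}$ for the intersections you work with (one-dimensional tori meet $A^2$ in $\ll K^{O(1)}|A|^{1/3}$ points). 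That is not even $|A|$. Helfgott's pivot needs a full factor of $A$, e.g.\ $(b,t)\mapsto b\,ata^{-1}$ on $A\times(A^{k'}\cap T)$, whose fibres lie in $A^{-1}A\cap aTa^{-1}$. This produces growth only if ``uninvolved'' means that $A^{k}\cap aTa^{-1}$ (with $A$ symmetric, $k\ge 2$) contains no regular semisimple element, so the fibres are central. At the same time, ``involved'' (meeting $A^k$ in one regular semisimple element) must force $|A^{k'}\cap T|\gg K^{-O(1)}|A|^{1/3}$ for a bounded $k'$. With your threshold definition both intersections sit near the same threshold, so the ratio $|A^{k'}\cap T|/|A^{-1}A\cap aTa^{-1}|$ is not bounded below by any power of $|A|$.

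This exposes a second problem: your incidence step is aimed at the wrong estimate. Fibres of $\mathrm{tr}$ are conjugacy classes, not tori; a torus meets each fibre $\mathrm{tr}^{-1}(\lambda)$, $\lambda\neq\pm2$, in at most two points. So a lower bound on $|\mathrm{tr}(A^2)|$ is not ``dual'' to large torus intersections. What converts class bounds into torus bounds is orbit--stabiliser: $|A|\le|\{aga^{-1}:a\in A\}|\cdot|A^{-1}A\cap C(g)|$, where $\{aga^{-1}:a\in A\}\subseteq A^{k+2}\cap\mathrm{tr}^{-1}(\mathrm{tr}\,g)$. The dichotomy above then needs $|A^{k+2}\cap\mathrm{tr}^{-1}(\lambda)|\ll K^{O(1)}|A|^{2/3}$ \emph{uniformly} in $\lambda\neq\pm 2$. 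A trace-energy bound controls fibres only on average, so it gives large intersections for many tori but not for every torus centralising an element of $A^k$.

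Your second case is also misstated. The bound $|A^2|\gg p^2|A|^{1/3}K^{-O(1)}$ together with $|A|\le p^3$ gives no growth; via the tripling lemma it only gives $|A|\gg p^3K^{-O(1)}$. You must then invoke quasirandomness ($|A|\ge 2p^{8/3}$ forces $A^3=\SL_2(\F_p)$), which ties $c_H$ to the implicit exponents.

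Finally, no exponent is computed anywhere. ``Optimising $C$ and $k$ against $K^{O(k)}$'' yields at best Helfgott's qualitative statement with some $c_H>0$. The specific value $1/20$ is exactly what the lemma asserts beyond that, and it is not established.
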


Then,  following the argument of the proof of \cite[Theorem 49]{sh_non_survey} in the case of $\SL(\F_p)$, we find 
\[
    c \ge \min\{1/3,  (8C_2)^{-1}, \tau (4C_2)^{-1}, 0.5 c_H (C_1+ C_2)^{-1} \} = 1/1640 \,.
\] 


It implies that  $k = \lceil c^{-1} \rceil +1 = 1641$.
So, $\kappa \ge 2^{-1656}$, and in view of the bound \eqref{f:M_upper_bound} our desired estimate \eqref{f:M_upper_bound_p} follows.


\bibliographystyle{abbrv}

\bibliography{bibliography}{}

\begin{thebibliography}{10}

\bibitem{ABH_Ruk+}
C.~Aistleitner, B.~Borda, and M.~Hauke.
\newblock {On the distribution of partial quotients of reduced fractions with fixed denominator}.
\newblock {\em Transactions of the American Mathematical Society}, 377(02):1371--1408, 2024.

\bibitem{BD_Iab}
J.~Bourgain and S.~Dyatlov.
\newblock {Fourier dimension and spectral gaps for hyperbolic surfaces}.
\newblock {\em Geometric and Functional Analysis}, 27:744--771, 2017.

\bibitem{BD_AD_def}
J.~Bourgain and S.~Dyatlov.
\newblock {Spectral gaps without the pressure condition}.
\newblock {\em Annals of Mathematics}, 187(3):825--867, 2018.

\bibitem{BG_p^n}
J.~Bourgain and A.~Gamburd.
\newblock Expansion and random walks in $\mathrm{SL}_d (\mathbb{Z}/p^n \mathbb{Z})$: I.
\newblock {\em Journal of the European Mathematical Society}, 10(4):987--1011, 2008.

\bibitem{BG}
J.~Bourgain and A.~Gamburd.
\newblock {Uniform expansion bounds for Cayley graphs of $\mathrm{SL}_2 (\mathbb{F}_p)$}.
\newblock {\em Annals of Mathematics}, pages 625--642, 2008.

\bibitem{BGS_affine}
J.~Bourgain, A.~Gamburd, and P.~Sarnak.
\newblock {Affine linear sieve, expanders, and sum-product}.
\newblock {\em Inventiones mathematicae}, 179(3):559--644, 2010.

\bibitem{bourgain2011zarembas}
J.~Bourgain and A.~Kontorovich.
\newblock {On Zaremba's conjecture}.
\newblock {\em Comptes Rendus Mathematique}, 349(9-10):493--495, 2011.

\bibitem{BK_Zaremba}
J.~Bourgain and A.~Kontorovich.
\newblock {On Zaremba's conjecture}.
\newblock {\em Annals of Mathematics}, pages 137--196, 2014.

\bibitem{CSP_Zaremba}
S.~H. Chan, S.~Heilman, and G.~Panova.
\newblock {Independent Sets and Continued Fractions}.
\newblock {\em arXiv preprint arXiv:2604.19094}, 2026.

\bibitem{CKP_trees_Zaremba}
S.~H. Chan, A.~Kontorovich, and I.~Pak.
\newblock {Spanning trees and continued fractions}.
\newblock {\em arXiv preprint arXiv:2411.18782}, 2024.

\bibitem{CKP_planar_CF2}
S.~H. Chan, A.~Kontorovich, and I.~Pak.
\newblock {Effective resistance in planar graphs and continued fractions}.
\newblock {\em arXiv preprint arXiv:2505.19168}, 2025.

\bibitem{CP_linear}
S.~H. Chan and I.~Pak.
\newblock Linear extensions and continued fractions.
\newblock {\em European Journal of Combinatorics}, 122:104018, 2024.

\bibitem{CPS_loglog}
S.~H. Chan, I.~Pak, and I.~Shkredov.
\newblock {\em preprint}, 2025.

\bibitem{chang2011partial}
M.-C. Chang.
\newblock {Partial quotients and equidistribution}.
\newblock {\em Comptes Rendus Mathematique}, 349(13-14):713--718, 2011.

\bibitem{Dyatlov_AD}
S.~Dyatlov.
\newblock {An introduction to fractal uncertainty principle}.
\newblock {\em Journal of Mathematical Physics}, 60(8), 2019.

\bibitem{DZ_AD}
S.~Dyatlov and J.~Zahl.
\newblock {Spectral gaps, additive energy, and a fractal uncertainty principle}.
\newblock {\em Geometric and Functional Analysis}, 26(4):1011--1094, 2016.

\bibitem{Fraser_almostAP}
J.~M. Fraser and H.~Yu.
\newblock {Arithmetic patches, weak tangents, and dimension}.
\newblock {\em Bulletin of the London Mathematical Society}, 50(1):85--95, 2018.

\bibitem{Helfgott_growth}
H.~A. Helfgott.
\newblock {Growth and generation in $\SL_2(\Z/p\Z)$}.
\newblock {\em Annals of Mathematics}, pages 601--623, 2008.

\bibitem{hensley1989distribution}
D.~Hensley.
\newblock {The distribution of badly approximable numbers and continuants with bounded digits}.
\newblock {\em Th{\'e}orie des nombres (Quebec, PQ, 1987)}, pages 371--385, 1989.

\bibitem{hensley1992continued}
D.~Hensley.
\newblock {Continued fraction Cantor sets, Hausdorff dimension, and functional analysis}.
\newblock {\em Journal of number theory}, 40(3):336--358, 1992.

\bibitem{hensley_SL2}
D.~Hensley.
\newblock {The distribution mod $n$ of fractions with bounded partial quotients}.
\newblock {\em Pacific Journal of Mathematics}, 166(1):43--54, 1994.

\bibitem{hensley1996}
D.~Hensley.
\newblock {A polynomial time algorithm for the Hausdorff dimension of continued fraction Cantor sets}.
\newblock {\em Journal of Number Theory}, 58(1):9--45, 1996.

\bibitem{Hinchin}
A.~Y. Hinchin.
\newblock {Continuous Fractions}, 1967.

\bibitem{Hlawka}
E.~Hlawka.
\newblock {Funktionen von beschr{\"a}nkter variatiou in der theorie der gleichverteilung}.
\newblock {\em Annali di Matematica Pura ed Applicata}, 54(1):325--333, 1961.

\bibitem{KanIV}
I.~D. Kan.
\newblock {A strengthening of a theorem of Bourgain and Kontorovich IV}.
\newblock {\em Izvestiya: Mathematics}, 80(6):1094, 2016.

\bibitem{Kan_3thms}
I.~D. Kan.
\newblock {A strengthening of the Bourgain-Kontorovich method: three new theorems}.
\newblock {\em Sbornik: Mathematics}, 212(7):921--964, 2021.

\bibitem{Kesten}
H.~Kesten.
\newblock {Symmetric random walks on groups}.
\newblock {\em Transactions of the American Mathematical Society}, 92(2):336--354, 1959.

\bibitem{Koksma}
J.~F. Koksma.
\newblock {A general theorem from the theory of uniform distribution modulo $1$}.
\newblock {\em Mathematica B (Zutphen)}, 11:7--11, 1942.

\bibitem{Kontorovich_survey}
A.~Kontorovich.
\newblock {From Apollonius to Zaremba: local-global phenomena in thin orbits}.
\newblock {\em Bulletin of the American Mathematical Society}, 50(2):187--228, 2013.

\bibitem{Korobov_optimal_I}
N.~M. Korobov.
\newblock {The evaluation of multiple integrals by method of optimal coefficients}.
\newblock {\em Vestnik Moskovskogo universiteta}, (4):19--25, 1959.

\bibitem{Korobov_Zaremba}
N.~M. Korobov.
\newblock {Properties and calculation of optimal coefficients}.
\newblock {\em Soviet Math. Dokl.}, (1):696--700, 1960.

\bibitem{Korobov_optimal_II}
N.~M. Korobov.
\newblock {Properties and calculation of optimal coefficients}.
\newblock {\em Dokl. Akad. Nauk SSSR}, 132(5):1009--1012, 1960.

\bibitem{Korobov_book}
N.~M. Korobov.
\newblock {Number-theoretic methods in numerical analysis}.
\newblock {\em Fizmatgis, Moscow}, 37, 1963.

\bibitem{KS_linear}
N.~Kravitz and A.~Sah.
\newblock {Linear extension numbers of n-element posets}.
\newblock {\em Order}, 38:49--66, 2021.

\bibitem{KN_book}
L.~Kuipers and H.~Niederreiter.
\newblock {\em {Uniform distribution of sequences}}.
\newblock Courier Corporation, 2012.

\bibitem{Kurzweil_w_M}
J.~Kurzweil.
\newblock {A contribution to the metric theory of diophantine approximations}.
\newblock {\em Czechoslovak Mathematical Journal}, 1(3):149--178, 1951.

\bibitem{Larcher}
G.~Larcher.
\newblock {On the distribution of sequences connected with good lattice points}.
\newblock {\em Monatshefte f{\"u}r Mathematik}, 101:135--150, 1986.

\bibitem{Larcher_survey}
G.~Larcher.
\newblock {Discrepancy estimates for sequences: new results and open problems}.
\newblock {\em Uniform Distribution and Quasi-Monte Carlo Methods: Discrepancy, Integration and Applications}, 15:171, 2014.

\bibitem{Mcmullen}
C.~T. McMullen.
\newblock {Uniformly Diophantine numbers in a fixed real quadratic field}.
\newblock {\em Compositio Mathematica}, 145(4):827--844, 2009.

\bibitem{Mercat_Z}
P.~Mercat.
\newblock {Construction de fractions continues p{\'e}riodiques uniform{\'e}ment born{\'e}es}.
\newblock {\em Journal de th{\'e}orie des nombres de Bordeaux}, 25(1):111--146, 2013.

\bibitem{PC1}
N.~Moshchevitin and I.~Kan.
\newblock Personal communications.

\bibitem{MMS_popular}
N.~Moshchevitin, B.~Murphy, and I.~Shkredov.
\newblock {Popular products and continued fractions}.
\newblock {\em Israel J. Math.}, 238(2):807--835, 2020.

\bibitem{MMS_Korobov}
N.~Moshchevitin, B.~Murphy, and I.~Shkredov.
\newblock {On Korobov bound concerning Zaremba’s conjecture}.
\newblock {\em IMRN, accepted}, pages 1--19, 2026.

\bibitem{Mosh_A+B}
N.~G. Moshchevitin.
\newblock {Sets of the form $A+B$ and finite continued fractions}.
\newblock {\em Sbornik: Mathematics}, 198(4):537--557, 2007.

\bibitem{Mosh_survey}
N.~G. Moshchevitin.
\newblock {On some open problems in Diophantine approximation}.
\newblock {\em arXiv preprint arXiv:1202.4539}, 2012.

\bibitem{MS_Zaremba_mod}
N.~G. Moshchevitin and I.~D. Shkredov.
\newblock On a modular form of {Z}aremba's conjecture.
\newblock {\em Pacific J. Math.}, 309(1):195--211, 2020.

\bibitem{MU_Larcher}
N.~G. Moshchevitin and D.~M. Ushanov.
\newblock {On Larcher's theorem concerning good lattice points and multiplicative subgroups modulo $p$}.
\newblock {\em Uniform Distribution Theory}, 5(1):45--52, 2010.

\bibitem{Murphy_group_action}
B.~Murphy.
\newblock {Group action combinatorics}.
\newblock {\em arXiv preprint arXiv:1907.13569}, 2019.

\bibitem{Niederreiter_dyadic}
H.~Niederreiter.
\newblock Dyadic fractions with small partial quotients.
\newblock {\em Monatshefte f{\"u}r Mathematik}, 101:309--315, 1986.

\bibitem{RS_SL2+}
M.~Rudnev and I.~D. Shkredov.
\newblock {On the growth rate in $\SL_2 (\F_p)$, the affine group and sum-product type implications}.
\newblock {\em Mathematika}, 68(3):738--783, 2022.

\bibitem{Ruk}
M.~Rukavishnikova.
\newblock {Probabilistic bound for the sum of partial quotients of fractions with a fixed denominator}.
\newblock {\em Chebyshevskii sbornik}, 7:113--121, 2006.

\bibitem{Schmidt}
W.~Schmidt.
\newblock {Irregularities of distribution, VII}.
\newblock {\em Acta Arithmetica}, 21(1):45--50, 1972.

\bibitem{sh_non_survey}
I.~D. Shkredov.
\newblock {Non-commutative methods in additive combinatorics and number theory}.
\newblock {\em Russian Mathematical Surveys}, 76(6):1065, 2021.

\bibitem{s_Chevalley}
I.~D. Shkredov.
\newblock {Growth in Chevalley groups relatively to parabolic subgroups and some applications}.
\newblock {\em Revista Mathematica Iberoamericana}, 38(6):1945--1973, 2022.

\bibitem{sh_BG}
I.~D. Shkredov.
\newblock {On a girth--free variant of the Bourgain--Gamburd machine}.
\newblock {\em Finite Fields and Their Applications}, 90:102225, 2023.

\bibitem{sh_representation_SP}
I.~D. Shkredov.
\newblock {Some applications of representation theory to the sum-product phenomenon}.
\newblock {\em arXiv:2307.03156}, 2023.

\bibitem{Shulga_Zaremba}
N.~Shulga.
\newblock {Radical bound for Zaremba's conjecture}.
\newblock {\em Bulletin of the London Mathematical Society}, 56(8):2615--2624, 2024.

\bibitem{Tang_Zhang_super_app}
J.~Tang and X.~Zhang.
\newblock {Super approximation for $SL_2 \times SL_2$ and $ASL_2$}.
\newblock {\em arXiv preprint arXiv:2308.09982}, 2023.

\bibitem{Tang_Zhang_SP}
J.~Tang and X.~Zhang.
\newblock {Sum-product in quotients of rings of algebraic integers}.
\newblock {\em Journal d'Analyse Math{\'e}matique}, 2025.

\bibitem{Tao_non-commutative}
T.~Tao.
\newblock {Product set estimates for non-commutative groups}.
\newblock {\em Combinatorica}, 28(5):547--594, 2008.

\bibitem{TV}
T.~Tao and V.~Vu.
\newblock {\em Additive combinatorics}, volume 105 of {\em Cambridge Studies in Advanced Mathematics}.
\newblock Cambridge University Press, Cambridge, 2006.

\bibitem{zaremba1966}
S.~K. Zaremba.
\newblock {Good lattice points, discrepancy, and numerical integration}.
\newblock {\em Annali di matematica pura ed applicata}, 73:293--317, 1966.

\bibitem{zaremba1972methode}
S.~K. Zaremba.
\newblock {La m{\'e}thode des “bons treillis” pour le calcul des int{\'e}grales multiples}.
\newblock In {\em Applications of number theory to numerical analysis}, pages 39--119. Elsevier, 1972.

\bibitem{Zhang_Zaremba_q}
X.~Zhang.
\newblock {Expansion in $SL_2(\Z/q\Z)$ and Zaremba's conjecture}.
\newblock {\em arXiv preprint arXiv:2605.02518}, 2026.

\end{thebibliography}

\end{document}

Lemma \ref{l:M_crit} immediately implies Korobov's bound \eqref{f:Korobov_log}. 

\begin{corollary}
    Let $q$ and $M$ be positive integers.
    Then 
\begin{equation}\label{f:M_crit}   
    \# \{ 1\le a <q ~:~ M(a) > M \} \ll \frac{q\log q}{M} \,.
\end{equation}
\label{cor:M_crit}   
\end{corollary}
\begin{proof}
    In view of the first part of Lemma \ref{l:M_crit}, we see that if $M(a)>M$, then there is $1\le x <q$ such that 
\[
    ax \in P_x := \{ y ~:~ |y| < q/(Mx)\} \,.
\]
    Thus the number of $a$ in \eqref{f:M_crit} does not exceed 
\[
    O\left( \sum_{x=1}^{q-1} \frac{q}{xM} \right) = O \left( \frac{q\log q}{M} \right) 
\]
    as required. 
$\hfill\Box$
\end{proof}

In particular, he obtained the following result. 

    \begin{theorem}
		For 
		$M\to \infty$ 
		one has  
		$$
		w_M := \mathcal{HD} \left(\{ \a = [0; c_1,c_2,\dots] \in [0,1] ~:~ \forall c_j \le M \} \right) 
		=
		1-\frac{6}{\pi^2 M} - \frac{72 \log M}{\pi^4 M^2} + O\left( \frac{1}{M^2} \right)
		\,. 
		$$
    Also, for  $M=2$ 
    the following holds 
    $$
	w_2  = 0.5312805062772051416244686...  > \frac{1}{2}
	$$	
	\label{t:Hensley_HD}
	\end{theorem}

\end{comment}


Let $\mathcal{B}=\{0,1,\dots,c q/t^2-1\} = [0,1,\dots,cq^{2\eps}-1]$, where $c = \min\{ c_1/(4c_2), 1/4\}$. 
Then for a certain set of shifts $\mathcal{A}$ and a set $\Omega$, $|\Omega|\le |\mathcal{B}| T \le c c_2 q^{2\eps} t^{2w_M}$ one has 
\begin{equation}\label{f:dec_Z}
    Z_M:= Z_M (t) = (\mathcal{B}+ (\mathcal{B}\dotplus \mathcal{A})) \bigsqcup \Omega = (\mathcal{B}+Q) \bigsqcup \Omega = \tilde{Z}_M \bigsqcup \Omega  \,.
\end{equation} 
We have $|Z_M|\ge c_1 q^{2\eps} t^{2w_M}/2$ and hence 
$|\tilde{Z}_M| \ge |Z_M|/2$. 
Let $J$ be the maximal interval such that $2\cdot J \subset \mathcal{B}$. Thus $N:=|J| \ge |\mathcal{B}|/4$. 
Using Lemma \ref{l:T-action}
(recall once again that $q$ is a prime number and thus one can apply this lemma) 
with $A=B=Q = \mathcal{B}\dotplus \mathcal{A}$ and $J=J$, we obtain for a certain absolute constant $C>0$ that 
\begin{equation}\label{f:eq_J}
|\{ (a+i)(b+i) = 1 ~:~ a,b \in Q,\, i\in 2\cdot J \}| \ge \frac{N|Q|^2}{q} - C |Q| N^{1-\kappa} 
\ge \frac{N|Q|^2}{2q} > 0 \,.
\end{equation}
To satisfy the last inequality, we need the condition


\begin{remark}
\label{r:optimality}
    It is easy to see that the limit of our method is $M\sim \sqrt{\log q}$ and hence bound  \eqref{f:main_M_new} is close to the optimal if one uses this circle of ideas. 
    For simplicity, let us assume that $q=p$ is a prime number.
    In the notation above $|A| \sim p^{w_M} N^{1-w_M}$ and  
    \[
        |Z_*| \sim \frac{|A|^2}{p} = p^{2w_M-1} N^{2(1-w_M)} \,.
    \]
    From the average argument one can easily see that the size of $Z_*$ cannot be greater than $p^{2w_M-1}$. 
    Hence we must have $N^{2(1-w_M)} = O(1)$. 
    Recalling that $1-w_M \sim 1/M$ and $N\sim p^{\eps} \sim p^{1/M}$, we obtain 
\[
    p^{1/M^2} \ll 1
\]
    and therefore $M \gg \sqrt{\log p}$. 
\end{remark}


\begin{lemma}
    Let $M\ge 2$, $C\ge 1$,  
    $a_1,a_2,a_3,a_4 \in J_{u/v} \subseteq Z_M (t)$,
    and $t< x_1, x_2, x_3, x_4 \le \sqrt{q}$ be  denominators of some convergents to $a_1/q,a_2/q,a_3/q,a_4/q$.
    For any three elements $x,y,z\in \{x_1,x_2,x_3,x_4\}$ consider the equation 
\begin{equation}\label{eq:k=3}
  \a x+ \beta y + \gamma z =0 \,, 
\end{equation}
 where  $\a,\beta, \gamma \in \Z$, 
 $|\a|, |\beta|, |\gamma| \le C \le q^{1/6}/2$.
Then for one of $\binom{4}{3}$ equations \eqref{eq:k=3} we have $\a=\beta=\gamma = 0$,
    provided 
\begin{equation}\label{cond:k=3}
    N \le \frac{\sqrt{q}}{4(M+2)^2 C^3} \,.
\end{equation}
\label{l:k=3}
\end{lemma}
\begin{proof}
    Suppose that for any equation \eqref{eq:k=3} one has $(\a,\beta,\gamma) \neq \vec{0}$.
    Then we can assume that actually $\a \beta \gamma \neq 0$ due to otherwise one can apply Lemma \ref{l:k=2}. 
    Consider the system
\begin{equation}\label{sys:1}
  \left\{
    \begin{aligned}
      & \a x+ \beta y + \gamma z =0 \\
      & \a' x+ \beta' y + \d'w =0 \\
      & \a'' x+ \gamma'' z + \d'' w=0 \,,
    \end{aligned} 
  \right.
\end{equation}
    where $\{x,y,z,w\} = \{x_1,x_2,x_3,x_4\}$. 
    Permuting the variables one can assume without loss of the generality that $\beta, \gamma$ have the same sign. 
    Further multiplying the first equation of system \eqref{sys:1} by $-1$, we can suppose that $\beta \a'  \ge \a \beta'$. 
    Finally, multiplying by $-1$ the third equation, we can assume that $\a', \a''$ have the same sign.

    Excluding $x$ from \eqref{sys:1}, we get 
\begin{equation}\label{sys:2}
  \left\{
    \begin{aligned}
      & (\beta \a' - \a \beta') y +  \gamma \a' z - \a \d' w=0 \\
      & \beta \a'' y +  (\gamma \a'' - \a \gamma'') z - \a \d'' w=0 
    \end{aligned} 
  \right.
\end{equation}    
    We excluding $w$, we arrive to 
\begin{equation}\label{eq:3}
    ((\beta \a' - \a \beta')\d'' - \beta \a'' \d' ) y 
    = 
    ((\gamma \a'' - \a \gamma'') \d' - \gamma \a' \d'') z  \,.
\end{equation}     
    The coefficients of the last equations do not exceed $2C^3$. 
    Applying Lemma \ref{l:k=2} with $C=2C^3$, we see that \eqref{eq:3} is a trivial equation.
    In other words, 
\begin{equation}\label{sys:3}
  \left\{
    \begin{aligned}
      & (\beta \a' - \a \beta')\d'' = \beta \a'' \d' \\
      & (\gamma \a'' - \a \gamma'') \d' = \gamma \a' \d''
    \end{aligned} 
  \right.
\end{equation}    
    Excluding $\d',\d''$, we derive 
\begin{equation}\label{eq:4}  
    \beta \gamma \a' \a'' = (\beta \a' - \a \beta') (\gamma \a'' - \a \gamma'') \,.
\end{equation} 
    By our choice of the signs we see that the left--hand side of \eqref{eq:4} is positive and  $\beta \a' - \a \beta' > 0$. 
    Hence $\gamma \a'' - \a \gamma'' > 0$. Then trivially estimating the right--hand side of \eqref{eq:4}, we obtain 
    $\beta \gamma \a' \a''$
    This concludes the proof.  
$\hfill\Box$
\end{proof}


    , namely, let $x\sim y \sim z$ 
    be $\tilde{M}$--critical denominators of some $a,b,c\in I$ such that $X(a), X(b), X(c)$ are disjoint. 
    Since the fundamental interval with the denominator $x$ has 
    length 
    \[
    \frac{q}{x^2} \le \frac{q}{t^2 H^2} = \frac{N}{H^2}  \le \frac{|I|}{16}
    \]
    we see that thanks to our conditions \eqref{cond:k=3_H}, $|\mathcal{E} (I)| \ge \log N$,  and the Dirichlet principle such choice of $x,y,z$ is possible. 
    Put $w= ay$, $w' = az$. 
    By Proposition \ref{p:repulsion} we know that $|w|, |w'| \ge t$.
    On the other hand, using assumption \eqref{cond:k=3_H} one more time,  as well as Lemma \ref{l:A+B}, we get 
\[
    |w| = |ay| = |by + (a-b)y| \le |by| + |I| y\le \frac{q}{\tilde{M} y} + |I|y
    \le 2|I| y \le 32 MN \sqrt{q} 
\]
    and similarly for $w'$. 
    Thus $|w|,|w'| \in [\sqrt{q/N}, 32 MN \sqrt{q}]$ and hence using the Dirichlet principle one more time, combining with the assumption  $|\mathcal{E} (I)| \ge 2\log N \cdot \log MN$, we can choose $x \sim y \sim z$ such that $|w| \sim |w'|$.


\begin{claim}
    Suppose that 
\[
    N \le \frac{\sqrt{q}}{2^{9} M^2 C^2} \,, 
\]
    and 
    let 
\begin{equation}\label{cl:sys}
  \a x+ \beta y + \gamma z =0 \,, 
  \quad \quad 
  \a' x+ \beta' y' + \gamma' z =0 \,, 
\end{equation}
 where  $\a,\beta, \gamma, \a', \beta', \gamma'  \in \Z\setminus\{0\}$, 
 $\gcd(\a,\beta,\gamma)=1$, $\gcd(\a',\beta',\gamma')=1$
 and all these coefficients are bounded in absolute value by $C$.
    Then all $|d(x,y)|, |d(x,y')|, |d(z,y)|, |d(z,y')|$ are bounded  by $C^2 |d(y,y')|$.
 \label{cl:2_eq}
\end{claim}
\begin{proof}
    Excluding $z$ from \eqref{cl:sys}, we have 
\[
    (\a \gamma' - \a' \gamma) x + \beta \gamma' y - \beta' \gamma y' = 0 \,,
\]
    and 
    reducing the common multiple, we get 
\begin{equation}\label{tmp:omega}
    \o_1 x + \o_2 y + \o_3 y' = 0\,,
\end{equation}
    where $\o_1,\o_2,\o_3$ are coprime.
    Using the previous argument, namely,  the formula \eqref{f:d_xyz}, we obtain 
    $d(y,y') = \o_1 \d$, $d(x,y) = \o_3 \d$, $d(x,y') = \o_2 \d$, where\\ 
    $\d = \gcd(d(y,y'), d(x,y), d(x,y'))$. 
    Hence 
\[
    |d(x,y)| =  \frac{|d(y,y')| |\o_3|}{|\o_1|}
    =
    \frac{|d(y,y')| |\beta' \gamma|}{|\a \gamma' - \a' \gamma|}
    \le |d(y,y')| |\beta' \gamma| \le C^2 |d(y,y')| 
\]
    thanks to $\omega_1 \neq 0$.
    This concludes the proof of the claim.  
$\hfill\Box$
\end{proof}


\section{Zaremba's conjecture}
\label{sec:Zaremba}


In this section,  developing the method of Section \ref{sec:proof}, we obtain Theorem \ref{t:Zaremba}.
As we said above, 
our argument does not 
allow us to obtain the correct number of Zaremba's numerators $a$, for which we obtain only the lower bound $\Omega(q^{1-O(1/M)})$.

\bp

We start with 
our new purely combinatorial observation. 

\begin{lemma}
    Let $\D>0$, $K\ge 1$ be real numbers and 
    $A= \bigsqcup_{i=1}^{m} I'_i$, $B= \bigsqcup_{j=1}^n I''_j$
    be disjoint unions of intervals $I'_i, I''_j$ such that $|I'_i| \le K \Delta$, $i\in [m]$ and  $|I''_j| \ge \Delta$, $j\in [n]$. 
    Then $A\cap B$ contains a set $C$ of the same form 
    $C=\bigsqcup_{k=1}^r I_k$ such that $|C|\ge |A\cap B|/2$, $r\le (K+1) m$ and for any $k\in [r]$ one has $|I_k|\ge \frac{|A\cap B|}{2(K+1) m}$.
\label{l:A_s_split}
\end{lemma}
\begin{proof}
    If we fix an interval $I'_i$, then there are at most  $K+1$ intervals $I''_j$, intersecting $I'_i$. Hence 
\[
    A\cap B = \bigsqcup_{i=1}^m \bigsqcup_{j\in S_i} I_{ij} \,,
\]
    where intervals $I_{ij}$ belong to $I'_i$, $i\in [m]$ and the sets $S_i$ have cardinalities  at most $K+1$. 
    Let $C$ be the collection of intervals $I_{ij}$ such that 
    $|I_{ij}|\ge \frac{|A\cap B|}{2(K+1)m}$. Then the rest has size at most 
\[
    \frac{|A\cap B|}{2(K+1)m} \cdot (K+1)m = \frac{|A\cap B|}{2} 
\]    
    as required. 
$\hfill\Box$
\end{proof}

\bp

\bp 

Unfortunately, 
our choice of the sequence of shifts $\vec{s} = (s_1,\dots,s_k)$ must be atypical. 
Let us formulate an additive--combinatorial lemma concerning sets 
$A_{\vec{s}}$ for some specific $\vec{s}$. 

\begin{lemma}
    Let $A\subseteq \Z/q\Z$ be a set, $k$ be a positive integer and $0<T_1, T_2 \le q/2$ be parameters such  that 
\begin{equation}\label{cond:s_typical}
    T_2 \ge 4k^2 T_1 \cdot \left( \frac{q}{|A|} \right)^{k} \,. 
\end{equation}
    Then there are $0=s_0,s_1, \dots, s_k \in \Z/q\Z$ such that for any  
    $i,j\in \{0,1,\dots,k\}$, $i\neq j$ 
    we have 
    $|s_i-s_j|\ge T_1$, for all 
    $j\in [k]$ one has $|s_j| \le T_2$, 
    and 
\begin{equation}\label{f:s_typical}
     |A\cap (A+s_1) \cap \dots \cap (A+s_k)|     
     \ge \frac{|A|^{k+1}}{2q^{k}} \,. 
\end{equation} 
\label{l:s_typical}
\end{lemma}
\begin{proof}
    Let $P = [-T_2/2,T_2/2]$ and $\Omega = \{s \in \Z/q\Z ~:~ |s| < T_1 \}$. 
    In the proof we use representation function notations like $r_{A+B} (x)$ or $r_{A-B} (x)$,
    which counts the number of ways $x \in \Z/q\Z$ can be expressed as a sum $a+b$ or $a-b$ with $a\in A$, $b\in B$,  respectively. 
    Applying 
    the H\"older inequality twice, we have   
\[
    \sum_{s_1, \dots, s_k} |A\cap (A+s_1) \cap \dots \cap (A+s_k)|  r_{P-P} (s_1) \dots r_{P-P} (s_k)  =
    \sum_{a\in A} r^k_{A+P-P} (a) 
\]
\begin{equation}\label{tmp:exp_s}
    \ge |A|^{-(k-1)}     \left( \sum_{a\in A} r_{A+P-P} (a) \right)^{k}
    \ge 
    \frac{|A|^{k+1} |P|^{2k}}{q^{k}} \,.
\end{equation}
	Notice that the summation above  is taken over $s_i \in [-T_2,T_2]$ for all $i\in [k]$. 
    Further 
\[
	\sum_{s_1-s_2 \in \Omega}\, \sum_{s_3, \dots, s_k} |A\cap (A+s_1) \cap \dots \cap (A+s_k)|  r_{P-P} (s_1) \dots r_{P-P} (s_k)  
\]
\[
	=
     \sum_{s_1, s_2} \Omega(s_1-s_2)  \sum_{a\in A} A(a-s_1) A(a-s_2) r_{P-P} (s_1) r_{P-P} (s_2) r^{k-2}_{A+P-P} (a)
\]
\[
     \le
     |P|^{2(k-2)+1}  \sum_{s_1, s_2} \Omega(s_1-s_2)  \sum_{a\in A} A(a-s_1) A(a-s_2) r_{P-P} (s_1) 
\]
\[
     \le 
     |P|^{2k-3} |\Omega| \sum_{s_1}  \sum_{a\in A} A(a-s_1)  r_{P-P} (s_1) 
     \le 
     |\Omega| |P|^{2k-1} |A|
     \,.
\]
    Combining the last bound and \eqref{tmp:exp_s} and using our assumption \eqref{cond:s_typical}, we obtain 
\[
	\sum_{s_1, \dots, s_k ~:~ s_i -s_j \notin \Omega} |A\cap (A+s_1) \cap \dots \cap (A+s_k)|  r_{P-P} (s_1) \dots r_{P-P} (s_k)
\]
\begin{equation}\label{tmp:Omega_s}
	\ge 
	\frac{|A|^{k+1} |P|^{2k}}{q^{k}} - \binom{k+1}{2} |\Omega| | |P|^{2k-1} |A|
	\ge 
	\frac{|A|^{k+1} |P|^{2k}}{2q^{k}} \,,
\end{equation} 
	and the result follows from the average argument.    
   This concludes the proof.
$\hfill\Box$
\end{proof}

\bp 

Now we are ready to prove Theorem \ref{t:Zaremba} and we use the notation and the argument of the previous section. 
Apply Lemma \ref{l:s_typical} with $A=\tilde{A}$ (with some abuse of the notations we denote the last set by $A$), let $k=4$ and, 
finally, let
$T_1 = 32 MN$, $T_2 = 2^{10} T_1 (q/|A|)^4$. 
Let 
$\mathcal{A} = A_{s_1,s_2,s_3,s_4}$, 
$B=\mathcal{A} \cap \mathcal{A}^{-1}$ be the symmetric set, $|B| \gg |A|^{10}/q^{9}$, thanks to \eqref{cond:eps_k}, \eqref{f:B_s_size},  and 
$|s_i-s_j|\ge T_1$, $i\neq j$, $|s_j| \le T_2$, where $s_0=0$ and $i,j\in \{0,\dots,4\}$. 
As we know the set $\mathcal{A}$ is a collection of intervals of length $\delta(\mathcal{A}) \gg \sqrt{N}$, see formula \eqref{f:delta}, and each such an interval $I$ belongs to a certain interval $J_{u/v} \subseteq Z_M (t)$. 
We say that an interval $I \subseteq \mathcal{A}$ is of Type I if it has at least  $|B\cap I|/2$ points of Type I, and let $I$ be of Type II otherwise. 
Also, recall that the set $B$ is uniformly distributed in $\mathcal{A}$, see   Lemma \ref{l:A_A*_str}, say.  
Further, the set $B = B^{-1}$ is symmetric and, therefore,  applying  the Dirichlet principle,  we can assume that there exist $s^*_1, s^*_2, s^*_3 \in \{0,s_1,s_2,s_3,s_4\}$ such that  for a certain $a\in B$ all numbers  $a-s^*_1, a-s^*_2, a-s^*_3$ belong to some intervals $I_1, I_2, I_3$ of Type I.
Put  $\tilde{M}= M^2 $. 
Then there are $\tilde{M}$--critical denominators 
$x\in \mathcal{X} (a-s^*_1)$, $y\in \mathcal{X} (a-s^*_2)$ and $z\in \mathcal{X} (a-s^*_3)$.
\begin{equation}\label{f:basic_aj}
    (a-s^*_i) x^{}_i = y^{}_i \,,
    \quad \quad 
    \mbox{where}
    \quad \quad 
    |y^{}_i| < \frac{q}{\tilde{M} x^{}_i}  < \frac{q}{\tilde{M} t} = \frac{\sqrt{Nq}}{\tilde{M}} \,,
    \quad 
    i\in [3]\,.
\end{equation}
    We want to apply Lemma \ref{l:linear_problem} with the parameter $k=3$ and the following  vectors 
    \[
        \vec{v}^{}_i =(x^{}_i,y^{}_i+s^*_i  x^{}_i) = (x^{}_i, a x^{}_i) \,, \quad \quad i\in [3] \,,
    \]
    further, we choose $R = 16000  M T_2$, 
    $X= \sqrt{q}h^{-1/2} = \sqrt{q} N^{-1/16} < \sqrt{q}$, 
    $Y=2T_2 X$, 
    the parameters $T_1,T_2$ as above, 
    and the numbers $m,n$ such that 
\begin{equation}\label{tmp:m,n_choice}
    \frac{12 RX}{m} \le t 
    \quad \quad 
    \mbox{and}
    \quad \quad 
    \frac{12 RY}{n} \le \frac{q_M}{2t} \,.
\end{equation}
    To satisfy conditions \eqref{tmp:m,n_choice} one can 
    take 
    $m=\lceil 12 R X/t \rceil \ge 2$ and $n = \lceil 24 R Y t/q_M \rceil \ge 2$. 
    Thanks to our choice of the parameter $R$ one gets  
\[
    R^3 \ge \frac{4000 R^2 T_2 X^2}{q_M} = \frac{2000 R^2 XY}{q_M} \ge mn \,.
\]
	We will require 
    one more condition 
\begin{equation}\label{cond:R_N_new}
    2^6 M^2 R N^3 T_2 \le \sqrt{q} 
\end{equation}
	which we will check below. 
    It is easy to 
    see  
    that inequality  \eqref{cond:R_N_new} 
    implies $24 R \max\{X,Y\} = 48 R T_2 X  \le q$ and hence the condition of Lemma \ref{l:linear_problem}  is satisfied. 
    Finally, notice that 
\[
    |y^{}_i +s^*_i  x^{}_j| \le \frac{\sqrt{qN}}{\tilde{M}} + T_2 X \le 2 T_2 X = Y
    \,.
\]

    Now by Lemma \ref{l:linear_problem} there exist $m_j \in \Z$ that are not all equal to zero, $|m_j| \le 2R$ such that 
    one of 
    \eqref{f:linear_problem_I}, \eqref{f:linear_problem_II} holds. 
    In fact, we will show that under assumption 
    \eqref{cond:R_N_new}
    the first case is impossible, and for this we will need a lemma.
    Below $J_{u/v} \subseteq Z_M(t)$ is a fixed interval (containing the interval $I_3$ in our case).

\begin{lemma} Let $T_2 \ge T_1 \ge 32MN$,  
$2^{10} M^2 R N^3 T_2 \le \sqrt{q}$.
 Consider the collection $\mathcal{C}$ of $32(M+1)$-- critical denominators $x\in [t,\sqrt{q}]$ 
    for some 
    elements $a\in Z_M (t)$.
    Then there are three distinct critical denominators $x,y,z$ 
    with the following properties: $x,z\in \mathcal{C}$, $x<y$ two consecutive denominators and, finally, any linear equation 
\begin{equation}\label{tmp:abc1}
  \a x+ \beta y + \gamma z =0 \,, 
\end{equation}
 where  $\a,\beta, \gamma \in \Z$, 
 $|\a|, |\beta|, |\gamma| \le 2R$ has only the trivial solution $\a=\beta=\gamma = 0$. 
\label{l:independence_xyz}
\end{lemma}
\begin{proof}
    Put $\tilde{M}=32(M+1)$.
	Let us take just one equation from \eqref{tmp:abc1}, say, 
\begin{equation}\label{tmp:abc1-}
  \a x_1 + \beta x_2+ \gamma x_3=0
\end{equation}
    and we first consider the case when one of the numbers $\a,\beta,\gamma$ vanishes.
	Without loss of the generality assume that $\gamma =0$. 
    One can also assume that $|\a| \ge |\beta|$, for concreteness. 
    Then multiplying \eqref{tmp:abc1-} by $a-s^*_2$, we get 
\begin{equation}\label{tmp:22.08_1}
    \a y_1 + \beta y_2 = \a x_1 (s^*_2 - s^*_1) \,.
\end{equation}
    The left--hand side of the equality above does not exceed $\frac{2|\a|q_M}{t}$ and the right--hand side is at least $|\a| x_1 T_1 \ge  |\a| t T_1$. 
    Also, $2|\a| x_1 T_2 \le 4R T_2 \sqrt{q}  \le q/2$ and  $\frac{2|\a|q_M}{t}\le \frac{4R q_M}{t} < 4R T_2 \sqrt{q}$. 
    Thus  
    equation \eqref{tmp:22.08_1} 
    can be considered as an equation in $\Z$. 
    Thanks to our choice of $T_1 \ge N/M$ we obtain  a contradiction.

    Now we choose      
    $x_1,x_2,x_3 \ge t$
    in a special way.
    First of all, consider $Z_M(t_*) \supseteq Z_M (t)$, where $q/t^2_* \ge 2T_2$ and choose $I_1, I_2$ belonging the same interval from $\ov{J}_{u/v} \subseteq Z_M (t_*)$. 
    By the Dirichlet principle it is easy to see that such choice is possible.
    Further, let $x_1<x_2$ be two consecutive denominators of some convergents to  $a/q$, where $a\in I_1$,
    $x$ be a $\tilde{M}$--critical denominator, 
    and let $z \in [t,\sqrt{q}]$, $z \neq x,y$ be 
    an arbitrary 
    $\tilde{M}$--critical denominator 
    of some $b$, $b\in I_2$.
    We suppose that formula \eqref{tmp:abc1-} holds and we want to obtain a contradiction. 
    Then we can assume that actually $\a \beta \gamma \neq 0$ due to otherwise one can apply the argument above. 
    Indeed, now $x_2 \le (G+1) x_1$ thanks to estimate \eqref{f:G_bound} and therefore we need to check 
    $2|\a| x_1 T_2 \le 4R T_2 (G+1) \sqrt{q} \le R N^3 T_2 \sqrt{q} \le q/2$.
    The last bound takes place thanks to our assumption.
    Now it remains to consider the pair $x_2$ and $z$.
    By Lemma \ref{l:A+B} one has  $|I_1| \le 16MN$ and we know that the distance between $x_1$ and $z$ is at least $T_1$.
    Thus the distance between $x_2$ and $z$ is greater than $T_1-16MN \ge N/M$ and at most $T_2+16MN \le 2T_2$. So, we can repeat the previous argument.

    Using the notation and the argument of the proof of Lemma \ref{l:k=2} applied to the larger interval $\ov{J}_{u/v} \subseteq Z_M (t_*)$, we have 
    \begin{equation}\label{sys:*_3_ov}
    x_1 = c_1 \mathcal{K} + c'_1 \mathcal{K}'\,, 
    \quad 
    y_2 = c_2 \mathcal{K} + c'_2 \mathcal{K}'
    \,,
    \quad 
    z_3 = c_3 \mathcal{K} + c'_3 \mathcal{K}'\,,
\end{equation}
    where now our new interval $\ov{J}_{u/v}$, $\frac{u}{v}=[0;r_1,\dots,r_l] = \frac{p_l}{q_l}$ has  ends 
\begin{equation}\label{f:u',v'_ov}
    \frac{u'}{v'}=[0;r_1,\dots,r_{l-1},M+1]\,,  
    \quad \quad 
    \frac{u''}{v''}=[0;r_1,\dots,r_{l-1},r_l,M+1] \,.
\end{equation}
    As in the proof of Lemma \ref{l:k=3}, we get 
\[
    (\a c_1 + \beta c_2 + \gamma c_3) \mathcal{K} = -
    (\a c'_1 + \beta c'_2 + \gamma c'_3) \mathcal{K}' \,,
\]
    and hence 
\begin{equation}\label{f:c_j_0_ov}
    \a c_1 + \beta c_2 + \gamma c_3 =
    \a c'_1 + \beta c'_2 + \gamma c'_3 = 0 \,,
\end{equation}
    provided 
\[
    \frac{6Rx_1}{\mathcal{K}} \le \frac{6R\sqrt{q}}{\mathcal{K}} < \mathcal{K} \,.
\]
    Since $\mathcal{K} > \frac{t_*}{M+2}$, we get the condition $\sqrt{q} \ge 12 (M+2)^2 R T_2$. 
    Using the same argument as in the proof of Lemma \ref{l:k=3}, we obtain 
\[
    |d (y,x)| = 1 \,,
    \quad 
    |d (x,z)| = |\beta| \,,
    \quad 
    |d (z,y)| = |\a| \,.
\]
For concreteness take $x=\mathcal{K} (r_1,\dots,r_{l-1}, r_l, \dots, r_{s-1},r_s)$ and consider\\ 
    $\tilde{x}=\mathcal{K} (r_2,\dots,r_{l-1}, r_l, \dots, r_{s})$ and, similarly, for $z$,  $\tilde{z}$. 
    Further, as in \eqref{sys:*_3}, we have (see formula \eqref{f:domino})
\[
    \tilde{x} = c_1 \mathcal{K}_* + c'_1 \mathcal{K}'_* \,,
    \quad \quad 
    \tilde{z} = c_3 \mathcal{K}_* + c'_3 \mathcal{K}'_* \,,
\]  
    where $\mathcal{K}_*$, $\mathcal{K}'_*$ are some continuants. 
    Then we 
    obtain 
\[
    \left| 
        \left| \frac{a}{q} - \frac{b}{q} \right| 
        -
        \left| \frac{c_3 \mathcal{K}_* + c'_3 \mathcal{K}'_*}{c_3 \mathcal{K} + c'_3 \mathcal{K}'} - \frac{c_1 \mathcal{K}_* + c'_1 \mathcal{K}'_*}{c_1 \mathcal{K} + c'_1 \mathcal{K}'} \right|
    \right|
    =
    \left| 
        \left| \frac{a}{q} - \frac{b}{q} \right| - 
        \frac{|c_1 c'_3 - c_3 c'_1| |\mathcal{K}' \mathcal{K}_* - \mathcal{K}'_* \mathcal{K}|}{xz}
    \right|
\]
\[
    =
      \left| 
        \left| \frac{a}{q} - \frac{b}{q} \right| - 
        \frac{|\a|}{xz}
    \right|
    =
    \left| 
        \left| \frac{a}{q} - \frac{b}{q} \right| - \left| \frac{\tilde{z}}{z} - \frac{\tilde{x}}{x} \right|
    \right|
    \le 
    \left| \frac{a}{q} - \frac{\tilde{x}}{x} \right| + \left| \frac{b}{q} - \frac{\tilde{z}}{z} \right| 
    \le \frac{1}{x^2} + \frac{1}{z^2}
\]
    due to $\mathcal{K}' \mathcal{K}_* - \mathcal{K}'_* \mathcal{K} = \pm 1$. 
    Thus in view of the bound $T_2 \ge N$, we obtain 
\[
    |\a| \le \frac{2xz T_2}{q} + \frac{x}{z} + \frac{z}{x} \le \frac{4xz T_2}{q} \,,
\]
    and similarly 
\[
    |\beta| \le \frac{2yz T_2}{q} + yz \left( \frac{1}{x^2} + \frac{1}{z^2} \right)  \le \frac{4yz T_2}{q} \,.
\]
    On the other hand, by computations in \eqref{f:C_H_below}, we get 
\[
    |\beta| \ge \frac{\tilde{M} yz}{8(M+1)\mathcal{K}^2} 
    \ge 
    \frac{\tilde{M} (M+1) yz}{8 t^2_*} 
    \ge 
    \frac{\tilde{M} (M+1) T_2 yz}{4 q}  \,.
\] 
    The last two inequalities contradict each other. 
    Thus $\a=\beta=\gamma=0$ as required. 
$\hfill\Box$
\end{proof}


\begin{remark}
\label{r:coprimality}
    The same argument shows that if $T_1 \ge N/M$, then any two critical denominators $x_1,x_2$ are almost coprime, that is,  
    $g:=(x_1,x_2) < 8 T_2$.
    In other words, an analogue of Remark \ref{r:coprimality_J} takes place. 
    Indeed, let $x_1 = g \tilde{\a}$, $x_2 = g \tilde{\beta}$, where $(\tilde{\a}, \tilde{\beta}) = 1$. 
    Then $\tilde{\beta} x_1 = \tilde{\a} x_2$ and hence
\[
    \tilde{\beta} y_1 = \tilde{\beta} (a-s^*_1) x_1 = \tilde{\a} (a-s^*_2 + s^*_2 - s^*_1)x_2 = \tilde{\a} y_2 +  \tilde{\a} x_2 (s^*_2 - s^*_1) \,.
\]
    Thus 
\begin{equation}\label{tmp:19.08_1}
    \tilde{\beta} y_1 - \tilde{\a} y_2 = \tilde{\a} x_2 (s^*_2 - s^*_1)
\end{equation}
    and acting as in the lemma above, we get 
\[
    \frac{q_M}{2t} \ge T_1 t 
\]
    and this is a contradiction. 
    The only thing we need to check is that the equality \eqref{tmp:19.08_1} is actually an equality in $\Z$ and to this end we need the condition 
\[
    4T_2 |\tilde{\a}| x_2 \le 4T_2 x_1 x_2/g \le q/2
\]
    and therefore we must have $g< 8T_2$. 
 %
 %
\end{remark}


    Let us 
    finish 
    the proof of Theorem \ref{t:Zaremba}. 
    We know that thanks to Lemma \ref{l:independence_xyz} the second alternative \eqref{f:linear_problem_II} of Lemma \ref{l:linear_problem} takes place for some $x_1, x_2 \in I_1, x_3 \in I_2$. 
    One has 
\[
    W:= a(\sum_{i=1}^3 m_i x_i) = \sum_{i=1}^3 m_i (a-s^*_i + s^*_i) x_i
    = \sum_{i=1}^3 m_i (y_i + s^*_i x_i) \,.
\]
    Using the first inequality of \eqref{tmp:m,n_choice},  Lemma \ref{l:M_crit} and the fact the $a\in Z_M (t)$, we see that $|W| \ge \frac{q_M}{t}$. 
    One the other hand, the second inequality of \eqref{tmp:m,n_choice} gives us $|W| \le \frac{q_M}{2t}$ and this is a contradiction. 
    Thus by Lemma \ref{l:M_crit} all partial quotients of one of $a-s^*_j$ are bounded by $4\tilde{M} = O(M)$.

    It remains to check condition \eqref{cond:R_N_new} or, in other words, 
\begin{equation}\label{tmp:last_N}
     M^2 R N^3 T_2 \ll M^3 N^3 T^2_2 \ll M^3 N^3 T^2_1 (q/|A|)^8
     \ll M^5 N^5 (q/|A|)^8 \ll \sqrt{q} \,.
\end{equation}
	Recalling our condition \eqref{cond:I&M} (more precisely \eqref{cond:I&M_h}) and $|A| \gg q^{w}$, we see that     \eqref{tmp:last_N} follows from our basic assumptions \eqref{cond:eps}, \eqref{cond:eps_k} for sufficiently large $M$. 
	Notice that if $M \to \infty$, then  the final condition \eqref{tmp:last_N} is a strengthening of \eqref{cond:eps_upper}, namely,
\begin{equation}\label{cond:eps_upper_final}
		\eps< \frac{1}{20} - o_M (1) \,.
\end{equation}
	This completes the proof of Theorem \ref{t:Zaremba}. 
    $\hfill\Box$
